\newtheorem{theor}{Theorem}[section]
\newtheorem{lemma}[theor]{Lemma}
\newcommand{\hd}{\mathrm{hd}}
\newcommand{\soc}{\mathrm{soc}}
\newcommand{\Hom}{\mathrm{Hom}}
\newcommand{\End}{\mathrm{End}}
\newcommand{\cont}{\mathrm{cont}}
\newcommand{\res}{\mathrm{res}}
\newcommand{\diag}{\mathrm{diag}}
\newcommand{\s}{{\sf S}}
\newcommand{\A}{{\sf A}}
\newcommand{\md}{\mbox{-}\mathrm{mod}}
\def\Md#1{\text{ }(\text{\rm mod } #1)\,}
\newcommand{\Z}{\mathbb{Z}}
\newcommand{\N}{\mathbb{N}}
\newcommand{\Q}{\mathbb{Q}}
\newcommand{\1}{\mathbf{1}}
\newcommand{\sgn}{\mathbf{\mathrm{sgn}}}
\newcommand{\dbl}{\mathrm{dbl}}
\newcommand{\dblb}{\overline{\mathrm{dbl}}}
\renewcommand{\epsilon}{\varepsilon}
\newcommand{\eps}{\epsilon}
\renewcommand{\phi}{\varphi}
\newcommand{\xymat}{\xymatrix@R=6pt@C=10pt}
\newcommand{\la}{\lambda}
\newcommand{\be}{\beta}
\newcommand{\al}{\alpha}
\newcommand{\ga}{\gamma}
\newcommand{\de}{\delta}
\newcommand{\ze}{\zeta}
\newcommand{\si}{\sigma}
\newcommand{\da}{{\downarrow}}
\newcommand{\ua}{{\uparrow}}
\def\Par{{\mathscr P}}
\def\Parinv{{\mathscr P}^A}
\def\Paro{{\mathscr P}_{\text{odd}}}
\def\Parod{{\mathscr P}_{2,\text{odd}}}
\renewcommand{\nmid}{\centernot\mid}
\begin{document}

\title[Tensor products for alternating groups]{Irreducible tensor products for alternating groups in characteristic 2}

\author{\sc Lucia Morotti}
\address
{Institut f\"{u}r Algebra, Zahlentheorie und Diskrete Mathematik\\ Leibniz Universit\"{a}t Hannover\\ 30167 Hannover\\ Germany} 
\email{morotti@math.uni-hannover.de}

\thanks{The author was supported by the DFG grant MO 3377/1-2.}

\subjclass[2020]{20C30, 20C20.}

\begin{abstract}
In this paper we completely characterise irreducible tensor products of a basic spin module with an irreducible module for alternating groups in characteristic 2.  This completes the classification of irreducible tensor products of representations of alternating groups.
\end{abstract}

\maketitle

\section{Introduction}

Let $F$ be an algebraically closed field of characteristic $p$ and $G$ be a group. Given two irreducible $FG$-representations $V$ and $W$ one may ask if the tensor product $V\otimes W$ is also irreducible. This is always the case if $V$ or $W$ is 1-dimensional. On the other hand if neither $V$ nor $W$ is 1-dimensional then in general $V\otimes W$ is not irreducible, but there are cases where this happens. Irreducible tensor products $V\otimes W$ where neither $V$ nor $W$ is 1-dimensional are called non-trivial. 

For $G$ an alternating group the question of completely classifying irreducible tensor products of representations has been answered in \cite{bk3} in characteristic 0 (and implicitly in \cite{z1}), showing that there is only one family of non-trivial tensor products. This family of non-trivial irreducible tensor products can be generalised in a natural way to give a family of non-trivial irreducible tensor products in arbitrary characteristic, see \cite{bk2,m2,m4}. In characteristic at least 3 it has also been proved that non-trivial irreducible tensor products belong to this family (apart for one exceptional case in characteristic 3). In characteristic 2 however it had remained open to study which tensor products belonging to a further family are irreducible. In this paper we study this family of tensor products and show that, apart for a single case, none of them is irreducible.

Before being able to state the main result of this paper we need to introduce some notation. Let $p$ be a prime and $\Par_p(n)$ be the set of $p$-regular partitions of $n$ (that is, partitions were no part is repeated $p$ or more times). It is well known (see for example \cite{JamesBook}) that, up to isomorphism, irreducible representations of the symmetric group $\s_n$ in characteristic $p$ are labelled by $\Par_p(n)$. So, given $\la\in\Par_p(n)$, we let $D^\la$ be the corresponding irreducible representation. By \cite{Benson}, when $D^\la$ is restricted to the alternating group $\A_n$, two things can happen: either $D^\la\da_{\A_n}\cong E^\la$ is irreducible or $D^\la\da_{\A_n}\cong E^\la_+\oplus E^\la_-$ is the direct sum of two non-isomorphic irreducible representations. We say that a partition $\la\in\Par_p(n)$ is a JS-partition if the restriction $D^\la\da_{\s_{n-1}}$ is irreducible.

Reductions modulo 2 of spin representations of the covering groups $\widetilde{\s}_n$ or $\widetilde{\A}_n$ can also be viewed as representations of $\s_n$ or $\A_n$. With this identification we say that, in characteristic 2, an irreducible representation of $\s_n$ or $\A_n$ is a basic spin representation if it is a composition factor of the reduction modulo 2 of a basic spin representation of $\widetilde{\s}_n$ or $\widetilde{\A}_n$, see \cite{Benson,Wales}.

In this paper we will prove the following theorem:

\begin{theor}\label{MT}
Let $p=2$ and let $V$ and $W$ be irreducible $FA_n$-modules of dimension larger than 1. If $W$ is basic spin then $V\otimes W$ is irreducible if and only if $n=5$ and, up to exchange of $V$ and $W$, $V\cong E^{(3,2)}_+$ and $W\cong E^{(3,2)}_-$, in which case $V\otimes W\cong E^{(4,1)}$.
\end{theor}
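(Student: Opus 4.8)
The plan is to reduce the problem to a dimension comparison combined with known results on irreducible tensor products for symmetric groups. First I would recall the structure of basic spin modules in characteristic $2$: the reduction modulo $2$ of a basic spin representation of $\widetilde{\s}_n$ has a very small composition factor, and the relevant irreducible $F\A_n$-modules $W$ that are basic spin are labelled by explicit partitions (roughly $(n)$ or $(n-1,1)$-type labels, or the split versions $E^\la_\pm$ for appropriate $\la$), with dimension roughly $2^{\lfloor (n-1)/2\rfloor}$ or half of that. The key numerical fact is that $\dim W$ grows like $2^{n/2}$, so $\dim(V\otimes W)=\dim V\cdot\dim W$; for this to be the dimension of an irreducible $F\A_n$-module, and since all irreducible $F\A_n$-modules other than the basic spin ones and a bounded list of exceptions have dimension that is either much larger or constrained by James's lower bounds, one gets strong restrictions.

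Next I would split into cases according to whether $D^\la\da_{\A_n}$ is irreducible or splits, both for $W$ and for $V$, and lift the tensor product to $\s_n$: if $V\otimes W$ were irreducible as $F\A_n$-module, then either it or its induction to $\s_n$ would be related to an irreducible (or sum of two irreducibles) $F\s_n$-module, and one can invoke the classification of irreducible tensor products for $\s_n$ in characteristic $2$ (from \cite{bk2,m2,m4}), which only allows the ``natural'' family involving $D^{(n-1,1)}$-type factors and basic spin factors in a controlled way. The point is that a basic spin module for $\s_n$ tensored with another irreducible is essentially never irreducible for $\s_n$ in characteristic $2$ except in the small natural family, so the alternating-group version must also be extremely restricted. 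I would then use branching rules and the classification of JS-partitions to pin down the surviving candidates to small $n$.

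The main obstacle — and where the real work lies — is handling the small and boundary cases where the generic dimension estimates fail: small $n$ (say $n\le 10$ or so), the cases where both $V$ and $W$ restrict irreducibly versus split, and the delicate situation where $\dim V$ is small (e.g. $V\cong E^{(n-1,1)}$ or a basic spin module itself) so that $\dim(V\otimes W)$ could plausibly match an irreducible dimension. For these I would compute composition factors of $V\otimes W$ directly, using the known decomposition of $W\otimes W$, restriction from $\s_n$, and explicit character/Brauer character data for small alternating groups, to rule out irreducibility except in the single case $n=5$. Finally, for $n=5$ I would verify directly that $E^{(3,2)}_+\otimes E^{(3,2)}_-\cong E^{(4,1)}$: here $\A_5\cong\mathrm{SL}_2(4)$, both $E^{(3,2)}_\pm$ are $2$-dimensional (the two Galois-conjugate natural modules), their tensor product is $4$-dimensional, and a quick check of Brauer characters identifies it with the $4$-dimensional module $E^{(4,1)}$, completing the proof.
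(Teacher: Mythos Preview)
Your proposal has a genuine gap that lies exactly at the heart of what makes this theorem hard. The plan to ``lift to $\s_n$ and invoke the known classification'' collapses precisely in the main case: when $n\not\equiv 2\pmod 4$ the basic spin partition $\be_n$ lies in $\Parinv_2(n)$, so $W\cong E^{\be_n}_\pm$ does \emph{not} extend to an $F\s_n$-module. If $V\cong E^\la$ with $\la\notin\Parinv_2(n)$ does extend, then $V\otimes W$ is an $\A_n$-module with no natural $\s_n$-structure, and neither the $\s_n$ classification in \cite{m1} nor the partial $\A_n$ results in \cite{bk2,m2,m4} says anything here. In fact \cite{m4} explicitly leaves open exactly this family of tensor products; you are effectively assuming the statement you are asked to prove. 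The only case where your lifting idea works cleanly is $n\equiv 2\pmod 4$, and that is the easy case (the paper dispatches it in two lines using character non-vanishing).

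The dimension argument is also far too soft to carry the weight you place on it. There is no usable general lower bound on $\dim E^\nu$ that would force $\dim V\cdot 2^{\lfloor(n-2)/2\rfloor}$ to miss every irreducible dimension, and the paper does not attempt anything of the sort. What the paper actually does in the hard split/non-split case ($n$ odd or $n\equiv 0\pmod 4$, $V\cong E^\la$, $W\cong E^{\be_n}_\pm$) is: (i) use $2$-divisibility of spin Brauer character values (Lemmas \ref{cbs}, \ref{L290520}) to pin $\nu$ down to a very short explicit list (Theorems \ref{T080620_2}, \ref{T050620_2}); (ii) analyse the submodule structure of $\End_F(D^{\be_n})$ and $\End_F(D^\la)$ via small permutation modules (Section~\ref{s7}) to bound $\dim\Hom_{\s_n}(\End_F(D^\la),\End_F(D^{\be_n}))$; and (iii) run a delicate induction on $n$ through branching (Lemmas \ref{L121120}, Theorems \ref{T100720}--\ref{T071220_6}), showing that irreducibility would force some $E^{\tilde e_i\la}_{(\pm)}\otimes E^{\be_{n-1}}_\pm$ to be irreducible, contradicting the odd-$n$ case already established. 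None of this is visible in your sketch, and there is no shortcut around it.
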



Together with \cite{bk3,bk2,m2,m4,z1} this gives the following characterisation of non-trivial irreducible tensor products of representations of alternating groups in arbitrary characteristic (see Section \ref{s11} for details):

\begin{theor}\label{T150620}
Let $V$ and $W$ be irreducible $FA_n$-modules of dimension larger than 1. Then $V\otimes W$ is irreducible if and only if one of the following holds, up to exchanging $V$ and $W$:
\begin{enumerate}
\item $p\nmid n$, $V\cong E^\la_\pm$ where $\la$ is a JS-partition and $W\cong E^{(n-1,1)}$, in which case $V\otimes W\cong E^{(\la\setminus A)\cup B}$, where $A$ is the top removable node of $\la$ and $B$ is the second-lowest addable node of $\la$,

\item $p=3$, $n=6$, $V\cong E^{(4,1^2)}_+$ and $W\cong E^{(4,1^2)}_-$, in which case $V\otimes W\cong E^{(4,2)}$,

\item $p=2$, $n=5$, $V\cong E^{(3,2)}_+$ and $W\cong E^{(3,2)}_-$, in which case $V\otimes W\cong E^{(4,1)}$.
\end{enumerate}
\end{theor}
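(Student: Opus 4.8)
The plan is to combine Theorem \ref{MT} with the previously known classification results in the other characteristics and assemble the full list. First I would split according to the characteristic $p$. If $p\geq 5$ (or more generally $p\nmid n$ and $p\geq 3$), the relevant input is \cite[Main Theorem]{bk2} together with \cite[Theorem 1.1]{m2} and \cite[Theorem 1.1]{m4}: these already show that a non-trivial irreducible tensor product $V\otimes W$ of $FA_n$-modules forces one factor, say $W$, to be $E^{(n-1,1)}$ and the other to be $E^\la_\pm$ for a JS-partition $\la$, and moreover they identify $V\otimes W\cong E^{(\la\setminus A)\cup B}$ with $A$ the top removable node and $B$ the second bottom addable node of $\la$. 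This is exactly case (i), and I would also invoke \cite[Theorem 5.6]{bk3} to cover $p=0$, where the same family is the only one. The one subtlety here is that the cited theorems are phrased for symmetric groups, so I would use the standard restriction/Clifford-theory dictionary (as recorded in \cite[Lemma 8.1]{m2} and \cite[l. 17--23 on p. 28]{m4}) to pass to alternating groups and to check that no spurious products arise when $D^\la\da_{\A_n}$ splits.

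Next I would treat the small-characteristic exceptional cases. For $p=3$, the classification in \cite{m2,m4} produces, besides the JS-family, exactly one sporadic example, namely $n=6$ with $V\cong E^{(4,1^2)}_+$, $W\cong E^{(4,1^2)}_-$ and $V\otimes W\cong E^{(4,2)}$; this is case (ii). For $p=2$, the JS-family (i) again occurs whenever $2\nmid n$, and by \cite[Theorem 1.1]{m4} the only remaining possibility for a non-trivial irreducible tensor product is one in which one of the two factors is a basic spin module. That case is precisely the content of Theorem \ref{MT}, which tells us the unique such product is $n=5$, $V\cong E^{(3,2)}_+$, $W\cong E^{(3,2)}_-$, with $V\otimes W\cong E^{(4,1)}$; this is case (iii). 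Conversely, each of the three families listed does give an irreducible tensor product, which is the forward direction already established in the cited references (for (i)) and by direct verification for the two finite exceptional cases (ii) and (iii).

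The only real work, then, is bookkeeping: one must verify that the union of the exceptional lists coming from \cite{bk2}, \cite{m2}, \cite{m4} and Theorem \ref{MT} is exactly what is stated, with no overlaps and no omissions, and in particular that the basic-spin case handled by Theorem \ref{MT} is genuinely disjoint from the JS-family (it is, since $E^{(3,2)}_\pm$ is a basic spin module for $\A_5$ but $(3,2)$ is not of the form $(n-1,1)$ and the product $E^{(4,1)}$ is not of the shape $E^{(\la\setminus A)\cup B}$ forced in (i) for any JS-partition $\la$ of $5$). I expect the main obstacle to be purely expository rather than mathematical: making sure the hypotheses ``dimension larger than $1$'' and the $p$-regularity/basic-spin labelling conventions match exactly across the several papers being stitched together, and handling the finitely many very small $n$ (where $E^{(n-1,1)}$ or a basic spin module may be $1$-dimensional or may coincide with another irreducible) by hand so that no edge case slips through.
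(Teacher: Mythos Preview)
Your proposal is correct and follows essentially the same approach as the paper: the paper derives Theorem \ref{T150620} simply by combining Theorem \ref{MT} with \cite[Theorem 5.6]{bk3}, \cite[Main Theorem]{bk2}, \cite[Theorem 1.1]{m2}, \cite[Theorem 1.1]{m4}, together with \cite[Lemma 8.1]{m2} and \cite[l.~17--23 on p.~28]{m4}, exactly the references you identified. Your write-up is in fact more detailed than the paper's, which simply lists these citations in a single sentence preceding the statement.
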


In view of \cite{bk,gj,gk,m1,z1} for symmetric groups and \cite{b2,bk4,kt,m3} for double covers of symmetric and alternating groups, this concludes the problem of describing non-trivial tensor products for symmetric and alternating groups as well as their covering groups in arbitrary characteristic (for the exceptional covering groups for $n=6$ or 7 non-trivial irreducible tensor products can be determined using character tables).

In Section \ref{s2} we will introduce some notation and give references for some basic results. In Section \ref{s3} we will consider branching of (irreducible) modules. In Section \ref{s4} we will present some results connected to the reduction modulo 2 of spin representations, and in Section \ref{s5} we will study some properties of their Brauer characters. Using these results we will then in Section \ref{s6} present reduction results which will allow us to greatly reduce the cases to be considered to prove the main result of this paper. In Section \ref{s7} we will study certain permutation modules and use their structure to study the submodule structure of certain endomorphism modules. In Sections \ref{s8} and \ref{s9} we will then consider certain tensor products in the double split or split-non-split cases respectively. This will allow us to prove Theorem \ref{MT} in Section \ref{s10}. Using Theorem \ref{MT} and the aforementioned papers we then prove Theorem \ref{T150620} in Section \ref{s11}.

\section{Notation and basic results}\label{s2}

Let $p$ be a prime (in fact we will take $p=2$ throughout the paper except in Section \ref{sbr}). For a non-negative integer $n$, let $\Par(n)$ be the set of partitions of $n$, let $\Par_p(n)$ be the set of $p$-regular partitions of $n$, let $\Paro(n)$ be the set of partitions of $n$ consisting only of odd parts, and let $\Parod(n)$ be the set of partitions of $n$ in odd distinct parts.  Note that $\Parod(n)=\Par_2(n)\cap\Paro(n)$. For any partition $\la$ let $h(\la)$ be the number of non-zero parts of $\la$ and $h_2(\la)$ the number of non-zero even parts of $\la$. In addition, let $|\la|$ be the sum of the parts of $\la$. If $\la$ is a partition and $A$ is a removable node of $\la$ then we set $\la_A:=\la\setminus\{A\}$ (identifying a partition and its Young diagram). Similarly if $B$ is an addable node of $\la$ then $\la^B:=\la\cup\{B\}$.

Sometimes we will define partitions by defining their (multi)set of parts (this will be useful to avoid splitting up certain sums depending on the different relations between the lengths of their parts). Thus, given a multiset $\mu$, we define a partition $p(\mu)$ by just reordering the parts of $\mu$. Furthermore, if $\la$ is any partition and $j\geq 1$ let $\hat\la_j:=(\la_1\ldots,\la_{j-1},\la_{j+1},\ldots)$ be the partition obtained by removing the $j$-th part of $\la$.

We will now give an overview of basic results of representation theory of symmetric and alternating groups. 
We refer the reader to \cite{JamesBook,JK} for more information and proofs of the statements. It is well known that $\Par(n)$ indexes the irreducible $\s_n$-representations in characteristic $0$. For $\la\in\Par(n)$ let $S^\la$ denote the Specht module indexed by $\la$ and $M^\la$ be the permutation module $\1\ua_{\s_\la}^{\s_n}$ (with $\1$ the trivial representation and $\s_\la\cong\s_{\la_1}\times\s_{\la_2}\times\ldots$ the corresponding Young subgroup), and identify these modules with their reduction modulo $p$. As in the introduction, given $\la\in\Par_p(n)$, let $D^\la$ be the corresponding irreducible module of $\s_n$ in characteristic $p$. By \cite[\S11]{JamesBook}, $D^\la$ is the head of $S^\la$ and $S^\la$ has no other composition factor isomorphic to $D^\la$. Furthermore $\Paro(n)$ indexes the $2$-regular conjugacy classes of $\s_n$ and, for $n\geq 2$, $\Parod(n)$ indexes the conjugacy classes of $\s_n$ which consist of elements of $\A_n$ and which split in two conjugacy classes of $\A_n$. In order to keep notation shorter when considering certain small permutation modules, if $\mu\in\Par(m)$ with $m<n$ and $(n-m,\mu)\in\Par(n)$, we define $S_\mu:=S^{(n-m,\mu)}$ and similarly for $M_\mu$ and $D_\mu$ (provided $(n-m,\mu)\in\Par_p(n)$, in the case of $D_\mu$).

For $p=2$, we define $\Parinv_2(n)\subseteq\Par_2(n)$ to be the set of partitions for which the module $D^\la\da_{\A_n}$ splits. It was proved in \cite[Theorem 1.1]{Benson} that $\Parinv_2(n)$ is given by the set
\[\{\la\in\Par_2(n)|\la_{2k-1}-\la_{2k}\leq 2\text{ and }\la_{2k-1}+\la_{2k}\not\equiv 2\Md{4}\!\text{ for }k\geq 0\}.\]
As in the introduction we then define $E^\la$ (for $\la\in\Par_2(n)\setminus\Parinv_2(n)$) and $E^\la_\pm$ (for $\la\in\Parinv_2(n)$) to be the irreducible representations of $\A_n$ in characteristic $2$ (we will not need irreducible representations of alternating groups in odd characteristic). In the following, when writing $E^\la_{(\pm)}$, we will mean either $E^\la$ if $\la\in\Par_p(n)\setminus\Parinv_p(n)$ or $E^\la_\pm$ if $\la\in\Parinv_p(n)$. 

We will also use spin representations (and their reductions modulo $2$). Spin representations of symmetric groups have been studied in \cite{s5}, see also \cite{s}. Spin representations of symmetric groups (or pairs of them) are indexed by partitions into distinct parts ($2$-regular partitions). Let $\widetilde{\s}_n$ be a proper double cover of $\s_n$. Then there is $z\in\widetilde{\s}_n$ which is central and of order 2 such that $\widetilde{\s}_n/\langle z\rangle\cong\s_n$. Irreducible spin representations of $\s_n$ are the irreducible representations of $\widetilde{\s}_n$ on which $z$ does not act trivially. These representations of $\widetilde{\s}_n$ cannot be viewed as representations of $\s_n$. In addition, let $\widetilde{\A}_n\subseteq\widetilde{\s}_n$ be the double cover of $\A_n$, and let $\la\in\Par_2(n)$. If $n-h(\la)$ is even then, in characteristic 0, there exists a unique spin representation of $\s_n$ indexed by $\la$. We denote this representation by $S(\la,0)$. It is known (see \cite[Corollary 7.5]{s}) that $S(\la,0)\da_{\widetilde{\A}_n}$ splits as direct sum of two non-isomorphic irreducible representations. If $n-h(\la)$ is odd then, in characteristic 0, there exist two non-isomorphic spin representation of $\s_n$ indexed by $\la$. We denote these representations by $S(\la,\pm)$. It is known (see again \cite[Corollary 7.5]{s}) that $S(\la,+)\da_{\widetilde{\A}_n}\cong S(\la,-)\da_{\widetilde{\A}_n}$ is irreducible.

Reductions modulo 2 of the representations $S(\la,\eps)$ may be viewed as $2$-modular representations of $\s_n$ (with $\eps=0$ or $\pm$ depending on $\la$), since $z$ acts as $\pm 1$ on irreducible representations (being central of order 2). We identify $S(\la,\eps)$ with its reduction modulo $2$ and view it in this way as a $\s_n$-representation. It is known that $S(\la,\eps)\cong S(\la,-\eps)$ when reduced modulo $2$. See \cite{Benson,bo2} for more information. We can therefore define $S(\la)$ to be the reduction modulo 2 of $S(\la,\eps)$. This module is well defined thanks to the previous remark. 

For $m>0$ an integer let $\dbl(m):=(\lceil(m+1)/2\rceil,\lfloor(m-1)/2\rfloor)$ and $\dblb(m):=(\lceil m/2\rceil,\lfloor m/2\rfloor)$. For any partition $\la$ define its double $\dbl(\la):=(\lceil(\la_1+1)/2\rceil,\lfloor(\la_1-1)/2\rfloor,\ldots,\lceil(\la_{h(\la)}+1)/2\rceil,\lfloor(\la_{h(\la)}-1)/2\rfloor)$ to be the concatenation of $\dbl(\la_1),\ldots,\dbl(\la_{h(\la)})$, and similarly define $\dblb(\la)$ to be the concatenation of $\dblb(\la_1),\ldots,\dblb(\la_{h(\la)})$. It can be easily checked that if $\la\in\Par_2(n)$ then $\dblb(\la)\in\Par(n)$.

Further if $\la\in\Par(n)$ define its regularisation $\la^R$ as in \cite[6.3.48]{JK} (for $p=2$). For example $\dbl(m)=(\dblb(m))^R$ for any $m\geq 1$ and if $\dbl(\la)\in\Par_2(n)$ then $\dbl(\la)=(\dblb(\la))^R$. 

In characteristic 2, by \cite[Theorem 6.3.50]{JK} $S^\la$ has exactly one composition factor isomorphic to $D^{\la^R}$ and all of its other composition factors are of the form $D^\mu$ with $\mu\rhd\la^R$ (where $\unrhd$ dentoes the dominance order). Similarly it has been proved in \cite[Theorem 5.1]{bo2} that $S(\la)$ has exactly $2^{\lfloor h_2(\la)/2\rfloor}$ composition factors isomorphic to $D^{\dblb(\la)^R}$ and all other composition factors are of the form $D^\mu$ with $\mu\rhd (\dblb(\la))^R$ (see also \cite[Theorem 1.2]{Benson} for the case where $\dbl(\la)\in\Par_2(n)$).

In characteristic 0, basic spin representations are the irreducible spin representations labelled by the partition $(n)$. Second basic spin representations are spin representations labelled by the partition $(n-1,1)$. By \cite[Tables III]{Wales} (and decomposition matrices/characters for small $n$) and the previous paragraph, it is known that all composition factors of the reduction modulo 2 of $S((n))$ are labelled by the partition $\dbl(n)$. Since these modules will play a crucial role in this paper we let $\be_n:=\dbl(n)$. Thus, in characteristic 2, basic spin representations of $\s_n$ or $\A_n$ are just the irreducible representations labelled by $\be_n$. Further, for $n\geq 6$, we have from \cite[Tables IV]{Wales} and \cite[Theorem 1.2]{Benson} that any composition factor of the reduction modulo 2 of $S((n-1,1))$ is either a basic spin module or is indexed by the partition $\dbl(n-1,1)$. We call any characteristic 2 irreducible representation of $\s_n$ or $\A_n$ indexed by $\dbl(n-1,1)$ a second basic spin module.

For $\la\in\Par_2(n)$ let $\psi^\la$ be the Brauer character of the irreducible characteristic 2 module $D^\la$. If $\la\not\in\Parinv_2(n)$ then $\psi^\la$ is also the Brauer character of $E^\la$. If $\la\in\Parinv_2(n)$ let $\psi^{\la,\pm}$ be the Brauer characters of $E^\la_\pm$. In addition, again for $\la\in\Par_2(n)$, let $\ze^\la$ be the Brauer character of $S(\la)$.

For $\al\in\Paro(n)$ let $C_\al$ be the $\s_n$-conjugacy class with cycle type $\al$. For $\psi$ a Brauer character of $\s_n$, let $\psi_\al$ be the value that $\psi$ takes on $C_\al$. If $\al\not\in\Parod(n)$ then $C_\al$ is also an $\A_n$-conjugacy class. In this case if $\pi$ is a Brauer character of $\A_n$ we let $\pi_\al$ be the value that $\pi$ takes on the conjugacy class $C_\al$. Note that if $\pi=\psi\da^{\s_n}_{\A_n}$ then $\pi_{\al}=\psi_\al$, so this notation does not cause a problem when identifying Brauer characters of $\s_n$ with their restrictions to $\A_n$ (in particular $\psi^\la_\al$ is well defined when $\la\in\Par_2(n)\setminus\Parinv_2(n)$). If instead $\al\in\Parod(n)$ then $C_\al$ splits into two $\A_n$-conjugacy classes $C_{\al,+}$ and $C_{\al,-}$. In this case we write $\pi_{\al,\pm}$ for the value $\pi$ takes on $C_{\al,\pm}$. Note that if again $\pi=\psi\da^{\s_n}_{\A_n}$ then $\pi_{\al,\pm}=\psi_\al$.

For a given module $W$ we write $W\sim V_1|\ldots|V_k$ if $M$ has a filtration $0=W_0\subseteq W_1\subseteq\ldots\subseteq W_k=W$ with $W_j/W_{j-1}\cong V_j$ for $1\leq j\leq k$. If $W$ is uniserial and has composition factors $C_1,\ldots,C_k$ listed from socle to head, then we also write $W\cong C_1|\ldots|C_k$.

Further for any groups $G$ and $H$ and any representations $A$ of $G$ and $B$ of $H$ define $A\boxtimes B$ to be the outer tensor product of $A$ and $B$.

\section{Branching results}\label{sbr}\label{s3}

In this section we will work in arbitrary characteristic $p$, in order to be able to state Lemmas \ref{lspechtfiltration}, \ref{L110820_2} and \ref{L090620} in general form. So let $p$ be any prime and $I:=\Z/p\Z$. We call elements of $I$ residues. In the following we will identify a residue with any of its representatives. Given any node $(a,b)$, we define its residue to be $\res(a,b):=b-a\Md{p}$. If $\al\in\Par(n)$ is a partition then its content is given by $\cont(\al):=(a_j)_{j\in I}$ where, for each residue $i$, $a_i$ is the number of nodes of $\al$ of residue $i$. It is well known (see for example \cite[2.7.41 and 6.1.21]{JK}) that if $\al,\be\in\Par(n)$ then $S^\al$ and $S^\be$ are in the same block if and only if the $p$-cores of $\al$ and $\be$ are equal, which happens if and only if $\cont(\al)=\cont(\be)$. 

Given any tuple $C=(c_j)_{j\in I}$ define $B_C$ as follows: if there exists a partition $\ga$ with $\cont(\ga)=C$ then $B_C$ is the block of $S^\ga$, while if no such partition $\ga$ exists then $B_C:=0$. In view of the above remark $B_C$ is well defined.

If $V$ is any $\s_n$-module contained in the block $B_{(c_j)}$ and $i\in I$ is a fixed residue, define $e_iV$ to be the projection of $V\da_{\s_{n-1}}$ to the block $B_{(c_j-\de_{i,j})}$, and define $f_iV$ to be the projection of $V\ua^{\s_{n+1}}$ to the block $B_{(c_j+\de_{i,j})}$, see \cite[\S11.2]{KBook}. Here $\de_{i,j}$ is the Kronecker symbol. The definitions of $e_iV$ and $f_iV$ can then be extended additively to arbitrary $S_n$-modules. This gives functors
\[e_i:F\s_n\md\to F\s_{n-1}\md\hspace{22pt}\text{and}\hspace{22pt}f_i:F\s_n\md\to F\s_{n+1}\md.\]
More generally for any $r\geq 1$ let
\[e_i^{(r)}:F\s_n\md\to F\s_{n-r}\md\hspace{22pt}\text{and}\hspace{22pt}f_i^{(r)}:F\s_n\md\to F\s_{n+r}\md\]
be the divided power functors defined in \cite[\S11.2]{KBook}. Note that $e_i^{(1)}=e_i$ and $f_i^{(1)}=f_i$. For $r=0$ define $e_i^{(0)}V$ and $f_i^{(0)}V$ to be equal to $V$ for any $\s_n$-module $V$. Then the following holds by \cite[Lemma 8.2.2(ii), Theorems 8.3.2(i), 11.2.7, 11.2.8]{KBook} for $r\geq 1$ and trivially for $r=0$.

\begin{lemma}\label{lef}
For any $i\in I$ and $r\geq 0$ the functors $e_i^{(r)}$ and $f_i^{(r)}$ are biadjoint and commute with duality. Furthermore, for any $\s_n$-module $V$
\[V\da_{\s_{n-1}}\cong e_0V\oplus \ldots\oplus e_{p-1}V\hspace{22pt}\text{and}\hspace{22pt}V\ua^{\s_{n+1}}\cong f_0V\oplus\ldots f_{p-1}V.\]
\end{lemma}

In characteristic 2 if two modules $E^\la_{(\pm)}$ and $E^\mu_{(\pm)}$ are in the same block then so are $S^\la$ and $S^\mu$, see \cite[6.1.46]{JK}. So if $E^\la_{(\pm)}$ and $E^\mu_{(\pm)}$ are in the same block then $\la$ and $\mu$ have the same content. We can then define $e_i$ and $f_i$ also for $\A_n$-modules (again defined blockwise): if $V$ is an $\A_n$-module contained in the block(s) with content $(c_j)$ then, similarly to the $\s_n$-case, define $e_iV$ to be the projection of $V\da_{\A_{n-1}}$ to the block(s) with content $(c_j-\de_{i,j})$, and define $f_iV$ to be the projection of $V\ua^{\A_{n+1}}$ to the block(s) with content $(c_j+\de_{i,j})$. By definition we have that
\[e_i:F\A_n\md\to F\A_{n-1}\md\hspace{22pt}\text{and}\hspace{22pt}f_i:F\A_n\md\to F\A_{n+1}\md\]
are biadjoint. Further $e_i(M\da^{\s_n}_{\A_n})\cong(e_iM)\da^{\s_{n-1}}_{\A_{n-1}}$ and $f_i(M\da^{\s_n}_{\A_n})\cong(f_iM)\da^{\s_{n+1}}_{\A_{n+1}}$ for any $\s_n$-module $M$.

When considering $V=D^\la$ for some $\la\in\Par_p(n)$, more is known about the modules $e_i^{(r)}D^\la$ and $f_i^{(r)}D^\la$. Given any partition $\al\in\Par(n)$ and a residue $i$, let the $i$-signature of $\al$ to be the sequence of signs consisting of a $-$ resp. $+$ for each removable resp. addable $i$-node of $\al$, read from left to right. The reduced $i$-signature of $\al$ is then obtained by recursively removing $-+$ pairs from the $i$-signature. Nodes which corresponds to $-$ resp. $+$ in the reduced $i$-signature are called $i$-normal resp. $i$-conormal. If $\al$ has at least one $i$-normal node then the leftmost $i$-normal node is called $i$-good. Similarly if $\al$ has at least one $i$-conormal node then the rightmost $i$-conormal node is called $i$-cogood. 
For any residue $i$, let $\eps_i(\al)$ be the number of $i$-normal nodes of $\al$ and $\phi_i(\al)$ the number of $i$-conormal nodes of $\al$. If $\eps_i(\al)>0$ let $\tilde e_i\al:=\al_A$ where $A$ is the $i$-good node of $\al$, while if $\phi_i(\al)>0$ let $\tilde f_i\al:=\al^B$ where $B$ is the $i$-cogood node of $\al$. 

The following two results hold by \cite[Theorems E(iv), E'(iv)]{bk6}, \cite[Theorem 1.4]{k4} and \cite[Theorems 11.2.10,  11.2.11]{KBook} (again the case $r=0$ is trivial).

\begin{lemma}\label{Lemma39}
Let $\lambda\in\Par_p(n)$. Then for any residue $i$ and any $r\geq 0$:
\begin{enumerate}[{\rm (i)}]
\item $e_i^rD^\lambda\cong(e_i^{(r)}D^\lambda)^{\oplus r!}$,
\item  $e_i^{(r)}D^\lambda\not=0$ if and only if $r\leq \eps_i(\lambda)$, in which case $e_i^{(r)}D^\lambda$ is a self-dual indecomposable module with socle and head both isomorphic to $D^{\widetilde e_i^r\la}$, 
\item  $[e_i^{(r)}D^\lambda:D^{\widetilde e_i^r\la}]=\binom{\eps_i(\lambda)}{r}=\dim\End_{\s_{n-r}}(e_i^{(r)}D^\lambda)$,
\item if $D^\mu$ is a composition factor of $e_i^{(r)}D^\lambda$ then $\eps_i(\mu)\leq \eps_i(\lambda)-r$, with equality holding if and only if $\mu=\widetilde e_i^r\la$,
\item  $\dim\End_{\s_{n-1}}(D^\lambda\da_{\s_{n-1}})=\sum_{j\in I}\eps_j(\lambda)$,
\item if $A$ is a removable node of $\la$ such that $\la_A$ is $p$-regular, then $D^{\la_A}$ is a composition factor of $e_i D^\la$ if and only if $A$ is $i$-normal, in which case $[e_i D^\la:D^{\la_A}]$ is one more than the number of $i$-normal nodes of $\la$ above $A$. 
\end{enumerate}
\end{lemma}

\begin{lemma}\label{Lemma40}
Let $\lambda\in\Par_p(n)$. Then for any residue $i$ and any $r\geq 0$:
\begin{enumerate}[{\rm (i)}]
\item $f_i^rD^\lambda\cong(f_i^{(r)}D^\lambda)^{\oplus r!}$,
\item $f_i^{(r)}D^\lambda\not=0$ if and only if $r\leq \phi_i(\lambda)$, in which case $f_i^{(r)}D^\lambda$ is a self-dual indecomposable module with socle and head both isomorphic to $D^{\widetilde f_i^r\la}$,
\item  $[f_i^{(r)}D^\lambda:D^{\widetilde f_i^r\la}]=\binom{\phi_i(\lambda)}{r}=\dim\End_{\s_{n+r}}(f_i^{(r)}D^\lambda)$,
\item if $D^\mu$ is a composition factor of $f_i^{(r)}D^\lambda$ then $\phi_i(\mu)\leq \phi_i(\lambda)-r$, with equality holding if and only if $\mu=\widetilde f_i^r\la$,
\item $\dim\End_{\s_{n+1}}(D^\lambda\ua^{\s_{n+1}})=\sum_{j\in I}\phi_j(\lambda)$,
\item if $B$ is an addable node for $\la$ such that $\la^B$ is $p$-regular, then $D^{\la^B}$ is a composition factor of $f_i D^\la$ if and only if $B$ is $i$-conormal, in which case $[f_i D^\la:D^{\la^B}]$ is one more than the number of $i$-conormal nodes of $\la$ below~$B$. 
\end{enumerate}
\end{lemma}

By parts (ii) and (iii) of Lemma \ref{Lemma39}, we have that $D^\la\da_{\s_{n-1}}$ is irreducible if and only if $\la$ has only one normal node. Thus JS-partitions are exactly the $p$-regular partitions with only one normal node.

When considering the modules $e_iD^\la$ and $f_iD^\la$ more can be said on their submodule structure.

\begin{lemma}\label{lspechtfiltration}
Let $\la\in\Par_p(n)$. Let $B_1,\ldots,B_{\eps_i(\la)}$ be the $i$-normal nodes of $\la$ labeled starting from the bottom and $C_1,\ldots,C_{\phi_i(\la)}$ be the $i$-conormal nodes of $\la$ labeled starting from the top. Then
\[e_iD^\la\sim W_{\eps_i(\la)}|\ldots|W_1\hspace{22pt}\text{and}\hspace{22pt}f_iD^\la\sim X_{\phi_i(\la)}|\ldots|X_1\]
for certain modules $W_k$, $X_k$ such that:
\begin{enumerate}[{\rm (i)}]
\item  for $1\leq k\leq\eps_i(\la)$, $W_k$ is a non-zero quotient of $S^{\la_{B_k}}$,

\item for $1\leq\ell\leq k\leq\eps_i(\la)$ if $\la_{B_\ell}\in\Par_p(n-1)$ then $[W_k:D^{\la_{B_\ell}}]=1$,

\item  for $1\leq k\leq\phi_i(\la)$, $X_k$ is a non-zero quotient of $S^{\la^{C_k}}$,

\item for $1\leq\ell\leq k\leq\phi_i(\la)$ if $\la^{C_\ell}\in\Par_p(n+1)$ then $[X_k:D^{\la^{C_\ell}}]=1$.
\end{enumerate}
\end{lemma}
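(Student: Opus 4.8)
The plan is to extract the claimed filtration of $e_iD^\la$ from the analogous (and classical) filtration of $e_iS^\la$, and then transfer the statement about composition factors from the known structure of $S^\la$ in Lemma~\ref{Lemma39}. Dually the same argument handles $f_iD^\la$, so I will only describe the $e_i$ case. First I would recall the branching rule for Specht modules: $S^\la\da_{\s_{n-1}}$ has a filtration by the Specht modules $S^{\la_A}$ as $A$ ranges over the removable nodes of $\la$, ordered so that the quotients corresponding to nodes higher up in $\la$ appear nearer the top (this is \cite[9.3]{JamesBook}). Projecting onto the block with content $(c_j-\de_{i,j})$, only the removable nodes of residue $i$ survive, and among those the $i$-normal nodes $B_1,\dots,B_{\eps_i(\la)}$ are the ones whose associated Specht module lies — after regularisation — at the appropriate end of the dominance order; the key point, which I would cite from \cite[Theorem 6.3.50]{JK} together with the combinatorics of normal nodes, is that the ``new'' top composition factor $D^{\la^R_{B}}$ contributed by a removable node $B$ of residue $i$ is detected precisely when $B$ is $i$-normal.

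Next I would pass from $S^\la$ to $D^\la$. Since $D^\la$ is the head of $S^\la$ and $S^\la$ has no other composition factor isomorphic to $D^\la$, the surjection $S^\la\twoheadrightarrow D^\la$ induces, after applying the exact functor $e_i$ (it is exact, being a block projection composed with restriction), a surjection $e_iS^\la\twoheadrightarrow e_iD^\la$. I would then define $X_k$ to be the image in $e_iD^\la$ of the $k$-th term (from the bottom) of the Specht filtration of $e_iS^\la$ described above; this immediately gives a filtration $e_iD^\la\sim X_{\eps_i(\la)}|\dots|X_1$ in which each $X_k$ is a quotient of the corresponding subquotient $S^{\la_{B_k}}$ of $e_iS^\la$, establishing (i). For the non-vanishing of $X_k$, I would use Lemma~\ref{Lemma39}(vi): $D^{\la_{B_k}}$ (when $\la_{B_k}$ is $p$-regular, and more generally $D^{\la_{B_k}^R}$) is a genuine composition factor of $e_iD^\la$ with the stated multiplicity, so none of the layers can collapse; here one must match the labelling of normal nodes from the bottom with the multiplicity count ``one more than the number of $i$-normal nodes above''.

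For (ii) I would argue as follows. Fix $\ell\le k$ with $\la_{B_\ell}$ $p$-regular. On one hand, $[S^{\la_{B_\ell}}:D^{\la_{B_\ell}}]=1$, so $[X_k:D^{\la_{B_\ell}}]\le 1$ because $X_k$ is a quotient of $S^{\la_{B_k}}$ when $k=\ell$, and for $k>\ell$ one uses that the only layers of $e_iS^\la$ that can contribute a copy of $D^{\la_{B_\ell}}$ with the ``right'' $\eps_i$-value are those indexed by nodes weakly below $B_\ell$, combined with the dominance bound ``all other composition factors of $S^{\la_{B_\ell}}$ are $\rhd\la_{B_\ell}^R$'' from \cite[Theorem 6.3.50]{JK}. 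On the other hand, summing the multiplicities over all layers $X_1,\dots,X_{\eps_i(\la)}$ must recover $[e_iD^\la:D^{\la_{B_\ell}}]$, which by Lemma~\ref{Lemma39}(vi) equals the number of $i$-normal nodes of $\la$ weakly above $B_\ell$, i.e.\ exactly $\ell$; since layers $X_1,\dots,X_\ell$ can each contribute at most $1$ and layers above $X_\ell$ contribute $0$ (again by the dominance/$\eps_i$ bound, as $D^{\la_{B_\ell}}$ cannot appear in a quotient of $S^{\la_{B_m}}$ for $m>\ell$ by Lemma~\ref{Lemma39}(iv) applied with the node count), each of $X_1,\dots,X_\ell$ contributes exactly $1$. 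In particular $[X_k:D^{\la_{B_\ell}}]=1$ for $\ell\le k$.

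The main obstacle I anticipate is the bookkeeping in (ii)--(iv): one has to reconcile three different labelings — removable nodes of $\la$ ordered top-to-bottom in the Specht branching rule, $i$-normal nodes ordered bottom-to-top, and the multiplicity formula of Lemma~\ref{Lemma39}(vi) phrased in terms of ``normal nodes above'' — and verify that a copy of $D^{\la_{B_\ell}}$ genuinely cannot escape upward into $X_k$ for $k>\ell$. Controlling this requires the interplay between the dominance estimate on the non-top composition factors of $S^{\la_{B_m}}$ and the strict monotonicity of $\eps_i$ along composition series from Lemma~\ref{Lemma39}(iv), and getting that comparison exactly right (rather than off by one layer) is the delicate point; everything else is a routine application of exactness of $e_i$ and the classical Specht branching theorem.
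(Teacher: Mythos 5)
Your proposal takes a genuinely different route from the paper. The paper simply cites the Remark on p.~83 of Brundan--Kleshchev~\cite{bk6}, which is obtained from their Theorems~C and~C' via Schur functors, i.e.\ via the translation-functor machinery for $\GL_n$. You instead try to deduce the filtration directly from James's Specht branching rule plus Lemma~\ref{Lemma39}(vi). That is an attractive, more elementary strategy, but as written it has two substantive problems.

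First, the genuine gap: the block projection $e_iS^\la$ has a Specht filtration whose layers are $S^{\la_A}$ for \emph{all} removable nodes $A$ of residue $i$, not just the $i$-normal ones. Taking images in $e_iD^\la$ therefore produces a filtration with one (possibly zero) layer for every such $A$, and you need the layers attached to removable-but-non-normal nodes of residue $i$ to vanish in order to be left with exactly $\eps_i(\la)$ layers, each a quotient of $S^{\la_{B_k}}$ for a normal node $B_k$. You never argue this, and I do not see how to extract it from Lemma~\ref{Lemma39} and dominance alone; it is precisely the kind of statement that the machinery of~\cite{bk6} supplies. Relatedly, your non-vanishing argument for $X_k$ appeals to Lemma~\ref{Lemma39}(vi), which only applies when $\la_{B_k}$ is $p$-regular; when it is not, the multiplicity of $D^{(\la_{B_k})^R}$ is not given by that lemma, so the $X_k\neq 0$ claim is not justified in general.

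Second, the dominance and multiplicity bookkeeping is reversed in two places that happen to cancel. Removing a \emph{higher} node yields a \emph{lower} partition in dominance order, so $\la_{B_m}\rhd\la_{B_\ell}$ for $m<\ell$ (not $m>\ell$); hence $D^{\la_{B_\ell}}$ cannot occur in $S^{\la_{B_m}}$ for $m<\ell$, not for $m>\ell$ as you write. Likewise, with $B_1,\ldots,B_{\eps_i(\la)}$ labelled from the bottom, the number of $i$-normal nodes weakly above $B_\ell$ is $\eps_i(\la)-\ell+1$, so Lemma~\ref{Lemma39}(vi) gives $[e_iD^\la:D^{\la_{B_\ell}}]=\eps_i(\la)-\ell+1$, not $\ell$. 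If you correct both the direction of the dominance comparison and the count of normal nodes above, the intended argument for part~(ii) does go through (the $\eps_i(\la)-\ell+1$ admissible layers each contribute at most one copy by the first issue and must account for multiplicity $\eps_i(\la)-\ell+1$, so each contributes exactly one), but as stated the proposal contains two compensating sign errors and would not survive careful checking. Even after these fixes, the gap about removable non-normal nodes remains, and it is really this point that forces the paper to invoke~\cite{bk6} rather than argue from James's branching rule.
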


\begin{proof}
The statement for $f_iD^\la$ is just the first two points of \cite[Remark on p. 83]{bk6}, which can be obtained from \cite[Theorem C]{bk6} using Schur functors. The statement for $e_iD^\la$ can be similarly obtained from \cite[Theorem C']{bk6}. See \cite[p. 84]{bk6} for some details on how this works (where some other results for representation of symmetric groups have been obtained from the corresponding results for representations of general linear groups).
\end{proof}

\begin{lemma}\label{l8}
Let $\la\in\Par_p(n)$, $i\in I$ and $\eps_i(\la)>0$. Then, for $1\leq a\leq \eps_i(\la)$, there exist submodules $V_a\subseteq e_iD^\la$ such that the following hold:
\begin{enumerate}[{\rm (i)}]
\item $[V_a:D^{\tilde{e}_i \la}]=a$,

\item $V_a$ has socle and head both isomorphic to $D^{\tilde{e}_i\la}$,

\item $V_a\subseteq V_{a+1}$ for $1\leq a<\eps_i(\la)$,

\item $V_a$ is self-dual,

\item if $V\subseteq e_iD^\la$ and $a=[V:D^{\tilde e_i\la}]$ then $V_a\subseteq V$,

\item if $V\subseteq e_iD^\la$ has head isomorphic to $(D^{\tilde e_i\la})^{\oplus b}$ with $b\geq 1$ then $V=V_a$ where $a=[V:D^{\tilde e_i\la}]$.
\end{enumerate}
\end{lemma}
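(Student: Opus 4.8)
The plan is to build the chain $V_1 \subseteq V_2 \subseteq \ldots \subseteq V_{\eps_i(\la)}$ inductively, using the structural information about $e_iD^\la$ coming from Lemma \ref{lspechtfiltration} together with the numerical facts in Lemma \ref{Lemma39}. By Lemma \ref{Lemma39}(i) we know $e_iD^\la \cong (e_i^{(1)}D^\la)$ already (the $r=1$ case is trivial), and by Lemma \ref{Lemma39}(ii) the module $M := e_iD^\la$ is self-dual, indecomposable, with $\soc M \cong \hd M \cong D^{\tilde e_i\la}$, and by Lemma \ref{Lemma39}(iii) we have $[M : D^{\tilde e_i\la}] = \eps_i(\la) = \dim\End_{\s_{n-1}}(M)$. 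Write $D := D^{\tilde e_i\la}$ and $m := \eps_i(\la)$ for brevity. The key starting point is that $\dim\End(M) = m = [M:D]$; since every endomorphism of $M$ has image and kernel whose composition multiplicities of $D$ add up to $m$, and $\soc M = \hd M = D$, the endomorphism algebra is a commutative local algebra of dimension $m$ with radical acting "nilpotently by shifting along the $D$-layers".

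First I would define $V_a$ as the preimage in $M$ of $\soc_a(M)$ intersected appropriately — more precisely, I expect the cleanest route is: let $\theta \in \End(M)$ be any element of $\rad\End(M)$ with $\theta^{m-1}\neq 0$ (a generator of the radical; one exists because $\End(M)$ is local of dimension $m$ with the socle series of $M$ forcing $\rad^{m} = 0$ and $\rad^{m-1}\neq 0$, so $\End(M) \cong F[\theta]/(\theta^m)$). Then set $V_a := \Ker(\theta^a)$. Since $\theta$ maps $M \to \rad M$ and drops the $D$-multiplicity by exactly one each time it is applied (because $\hd M$ and $\soc M$ are simple, $\Im\theta \subseteq \rad M$ has $D$-multiplicity $m-1$, and inductively $\Im\theta^a$ has $D$-multiplicity $m-a$), we get $[V_a : D] = m - (m-a) = a$, giving (i). The containment $V_a \subseteq V_{a+1}$ in (iii) is immediate from $\Ker\theta^a \subseteq \Ker\theta^{a+1}$. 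Self-duality (iv): $M$ is self-dual and $\theta^a$ corresponds under the isomorphism $M \cong M^*$ to an endomorphism whose image is $(\text{coker }\theta^a)^*$; one checks $V_a = \Ker\theta^a \cong \Im(\theta^{m-a})^* \cong (M/V_{m-a})^*$, and since $[M/V_{m-a}:D] = m-(m-a) = a$... actually self-duality of $V_a$ follows because $\Ker\theta^a$ and $\Im\theta^a$ are dual submodule/quotient pairs and a dimension/multiplicity count plus the fact that $\theta^{m} = 0$ pins down $\Ker\theta^a = \Im\theta^{m-a}$ up to the duality; I would phrase this carefully. For (ii), $\soc V_a \subseteq \soc M = D$ so the socle is simple; $\hd V_a$ is simple because $V_a = \Im\theta^{m-a}$ is a quotient of $M$ hence has head a quotient of $\hd M = D$ — provided $V_a \neq 0$, which holds as $[V_a:D]=a\geq 1$.

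For the universal properties (v) and (vi), the point is that $V_a$ is the \emph{unique} submodule of $M$ with $D$-multiplicity $a$ among those that matter. For (v): given $V \subseteq M$ with $[V:D] = a$, consider $\theta^a|_V : V \to M$; I claim $\Im(\theta^a|_V) = 0$, i.e.\ $V \subseteq \Ker\theta^a = V_a$. Indeed if $\theta^a(V) \neq 0$ then $\theta^a(V)$ is a nonzero submodule so contains $D = \soc M$, forcing $[V:D] \geq [\theta^a(V):D] + (\text{the } a \text{ copies killed along the way}) \geq a+1$, a contradiction — I would make the bookkeeping precise using Lemma \ref{Lemma39}(iv) applied to composition factors to control where $D$ sits. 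For (vi): if $\hd V \cong D^{\oplus b}$ with $b \geq 1$ and $[V:D] = a$, then $V \subseteq V_a$ by (v); conversely the surjection $V \twoheadrightarrow D^{\oplus b}$ together with biadjointness of $e_i$ and the fact that $\dim\Hom_{\s_{n-1}}(M, D) = \dim\Hom_{\s_n}(D^\la, f_iD) \leq$ (multiplicity of $D^\la$ in the relevant head, which is $1$) forces $b = 1$; then $V$ and $V_a$ are both submodules with simple head $D$ and the same $D$-multiplicity, and since $V_a = \Ker\theta^a$ has the property that any submodule with simple head mapping onto $\hd V_a$ and with multiplicity $\leq a$ must equal it (by Nakayama-type uniqueness inside the uniserial-on-$D$-layers structure), we get $V = V_a$.

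\textbf{Main obstacle.} The delicate point is justifying that $\End_{\s_{n-1}}(M)$ is genuinely of the form $F[\theta]/(\theta^m)$ and, relatedly, that the $D$-isotypic "layers" of $M$ behave uniserially — i.e.\ that applying $\theta$ really decreases $[-:D]$ by exactly one and that there is a unique submodule of each $D$-multiplicity $a$ that is detected by the socle filtration. That $\End(M)$ is commutative local of dimension $m$ is clean, but upgrading "local of dimension $m$" to "generated by one element with $\theta^m = 0$" uses that $\soc M$ and $\hd M$ are \emph{simple} (multiplicity one), which forces $\rad\End(M)/\rad^2\End(M)$ to be one-dimensional — this needs a short argument that a second radical generator would produce either a non-simple socle or a non-simple head. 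I expect to borrow this from the general theory of such "one-parameter" endomorphism rings (it is standard for $e_i$ on irreducibles, cf.\ the structure results underlying Lemma \ref{lspechtfiltration}), possibly citing \cite{bk6} or \cite{KBook} directly rather than reproving it. Everything else is then routine multiplicity bookkeeping using Lemmas \ref{Lemma39} and \ref{lspechtfiltration}.
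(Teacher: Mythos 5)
Your approach is genuinely different from the paper's (which simply cites \cite[Lemma 3.9]{kmt} for (i)--(iv) and \cite[Lemma 2.2]{gkm} together with Lemma~\ref{Lemma39}(iii) for (v)--(vi)), but it contains a real gap at the central step. You want to conclude that $\End_{\s_{n-1}}(M)\cong F[\theta]/(\theta^m)$ from the facts that $M$ is indecomposable, $\soc M\cong\hd M\cong D$ is simple, and $\dim\End(M)=[M:D]=m$, and you claim that simplicity of socle and head forces $\rad\End(M)/\rad^2\End(M)$ to be one-dimensional. That implication is false: take the local algebra $A=F[x,y]/(x^2,y^2)$ regarded as a module over itself. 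Then $A$ is indecomposable, $\soc A=(xy)$ and $\hd A=A/(x,y)$ are both the unique simple $D$, and $[A:D]=4=\dim A=\dim\End_A(A)$, yet $\End_A(A)\cong A$ has $\rad/\rad^2$ two-dimensional and is not generated by a single nilpotent (indeed every element of $\rad A$ squares to a scalar multiple of $xy$, so has nilpotency degree at most $3$, not $4$). So the hypotheses you list do not pin down the endomorphism ring, and your ``short argument'' cannot work; the truncated-polynomial structure of $\End(e_iD^\la)$ (if you want to use it) is itself a nontrivial theorem that would have to be imported wholesale, at which point you are essentially citing the same circle of results the paper cites.

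There is a second, related gap even if one grants $\End(M)\cong F[\theta]/(\theta^m)$. For (ii) you assert $\hd V_a$ is simple ``because $V_a=\Im\theta^{m-a}$,'' but $V_a$ was \emph{defined} as $\Ker\theta^a$, and you only know $\Im\theta^{m-a}\subseteq\Ker\theta^a$ with the two having the same $D$-multiplicity $a$. Since $e_iD^\la$ typically has composition factors other than $D=D^{\tilde e_i\la}$, equal $D$-multiplicity does not force the inclusion to be an equality, so the identity $\Ker\theta^a=\Im\theta^{m-a}$ (which you also lean on for self-duality in (iv)) needs its own argument and is not mere bookkeeping. The paper sidesteps all of this: \cite[Lemma 3.9]{kmt} already produces the chain $V_1\subseteq\cdots\subseteq V_{\eps_i(\la)}$ with properties (i)--(iv), and (v)--(vi) follow formally from \cite[Lemma 2.2]{gkm} once one knows $[e_iD^\la:D^{\tilde e_i\la}]=\dim\End(e_iD^\la)$, which is Lemma~\ref{Lemma39}(iii). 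If you want a self-contained argument you would need to reconstruct the content of those lemmas (ultimately going back to the Specht-filtration structure of Lemma~\ref{lspechtfiltration} and the divided-power functor results), not just the numerics you invoked.
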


\begin{proof}
The first four points are just \cite[Lemma 3.9]{kmt}. The last two part then hold by \cite[Lemma 2.2]{gkm} in view of Lemma \ref{Lemma39}(iii).
\end{proof}

\begin{lemma}\label{l9}
Let $\la\in\Par_p(n)$, $i\in I$ and $\phi_i(\la)>0$. Then, for $1\leq a\leq \phi_i(\la)$, there exist submodules $V_a\subseteq f_iD^\la$ such that the following hold:
\begin{enumerate}[{\rm (i)}]
\item $[V_a:D^{\tilde{f}_i \la}]=a$,

\item $V_a$ has socle and head both isomorphic to $D^{\tilde{f}_i\la}$,

\item $V_a\subseteq V_{a+1}$ for $1\leq a<\phi_i(\la)$,

\item $V_a$ is self-dual,

\item if $V\subseteq e_iD^\la$ and $a=[V:D^{\tilde e_i\la}]$ then $V_a\subseteq V$,

\item if $V\subseteq f_iD^\la$ has head isomorphic to $(D^{\tilde f_i\la})^{\oplus b}$ with $b\geq 1$ then $V=V_a$ where $a=[V:D^{\tilde f_i\la}]$.
\end{enumerate}
\end{lemma}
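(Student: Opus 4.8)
The plan is to deduce Lemma \ref{l9} from Lemma \ref{l8} by applying the latter to a suitable dual module, using the biadjointness and duality-compatibility of the functors $e_i$ and $f_i$ recorded in Lemma \ref{lef}. Concretely, since $e_i^{(r)}$ and $f_i^{(r)}$ are biadjoint and commute with duality, for $\la\in\Par_p(n)$ the module $f_iD^\la$ is self-dual (as $D^\la$ is self-dual and $f_i$ commutes with the contravariant duality), and the combinatorics of $i$-conormal nodes of $\la$ matches the combinatorics of $i$-normal nodes under the standard symmetry that takes $f_i$ to $e_i$. The cleanest route is: first observe $\tilde f_i\la$ is $p$-regular with $\phi_i(\la)>0$, so Lemma \ref{Lemma40}(ii),(iii) apply and give that $f_iD^\la$ is self-dual indecomposable with simple socle and head $D^{\tilde f_i\la}$ and $\dim\End_{\s_{n+1}}(f_iD^\la)=\binom{\phi_i(\la)}{1}=\phi_i(\la)$ — the exact analogue of the input used in Lemma \ref{l8}.

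Then I would repeat verbatim the argument of Lemma \ref{l8}, replacing $e_i$ by $f_i$, $\eps_i$ by $\phi_i$, and $\tilde e_i$ by $\tilde f_i$ throughout. That is: the first four points, namely the existence of submodules $V_a\subseteq f_iD^\la$ with $[V_a:D^{\tilde f_i\la}]=a$, with socle and head both $D^{\tilde f_i\la}$, with $V_a\subseteq V_{a+1}$, and with $V_a$ self-dual, are the $f_i$-version of \cite[Lemma 3.9]{kmt} (which is stated there in a form covering both $e_i$ and $f_i$, or follows from the $e_i$-case by dualizing). The last two points then follow from \cite[Lemma 2.2]{gkm} exactly as in Lemma \ref{l8}, using now Lemma \ref{Lemma40}(iii) in place of Lemma \ref{Lemma39}(iii) to identify $[f_i^{(1)}D^\la:D^{\tilde f_i\la}]=\dim\End_{\s_{n+1}}(f_iD^\la)$. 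So the body of the proof is essentially a one-line citation: ``The proof is identical to that of Lemma \ref{l8}, using Lemma \ref{Lemma40} in place of Lemma \ref{Lemma39}, and \cite[Lemma 3.9]{kmt}, \cite[Lemma 2.2]{gkm}.''

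The one genuine wrinkle is part (v), whose statement as printed refers to $e_iD^\la$ and $\tilde e_i\la$ rather than $f_iD^\la$ and $\tilde f_i\la$; I read this as a typo inherited from copying Lemma \ref{l8}, and the intended (and correct) statement is: if $V\subseteq f_iD^\la$ and $a=[V:D^{\tilde f_i\la}]$ then $V_a\subseteq V$. With that reading, part (v) is immediate from the chain $V_1\subseteq\cdots\subseteq V_{\phi_i(\la)}$ together with the fact that $D^{\tilde f_i\la}$ occurs in the socle of $f_iD^\la$ with multiplicity one (Lemma \ref{Lemma40}(ii)): any nonzero submodule $V$ contains $D^{\tilde f_i\la}=V_1$ in its socle, and more generally a submodule with $[V:D^{\tilde f_i\la}]=a$ must contain $V_a$ by the minimality/uniqueness properties packaged in \cite[Lemma 2.2]{gkm}.

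The main obstacle, such as it is, is purely bookkeeping: making sure the cited results from \cite{kmt} and \cite{gkm} are genuinely stated for the raising functors $f_i$ and not only for $e_i$, and — if they are stated only for $e_i$ — spelling out the dualization argument. Since $D^\la$ is self-dual and $f_i$ commutes with duality while being biadjoint to $e_i$, one has $f_iD^\la\cong (e_i(D^\la)^*)^*$ up to the obvious identifications on the relevant symmetric group, so the submodule structure of $f_iD^\la$ is the dual of the quotient structure of an $e_i$-module, and the $e_i$-version of the cited lemmas transports directly. I do not expect any new idea to be needed beyond this translation.
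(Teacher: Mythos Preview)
Your proposal is correct and matches the paper's own proof almost exactly: the paper simply says ``As the previous lemma, using \cite[Lemma 3.11]{kmt} and Lemma \ref{Lemma40}(iii).'' The only refinement is that \cite{kmt} has a separate $f_i$-statement (Lemma 3.11) so no dualization argument is needed, and your reading of part (v) as a typo for the $f_i$-version is the intended one.
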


\begin{proof}
As the previous lemma, using \cite[Lemma 3.11]{kmt} and Lemma \ref{Lemma40}(iii).
\end{proof}

The next lemma studies socles of certain submodules of $e_je_iD^\la$ for $i\not=j$. The case $i=j$ is not considered, since it is already covered by Lemma \ref{Lemma39}.

\begin{lemma}\label{L110820_2}
Let $i\not=j$ be residues, $\la\in\Par_p(n)$ with $\eps_i(\la)\geq k$ and $M\subseteq e_iD^\la$ with $[M:D^{\tilde e_i\la}]=k$ and $\hd(M),\soc(M)\cong D^{\tilde e_i\la}$. If $D^\mu\subseteq e_jM$ then the following hold:
\begin{itemize}
\item $\tilde f_j\mu\unrhd \la_A$ for $A$ is the $k$-th lowest $i$-normal node of $\la$,

\item $\tilde e_i\la\unrhd \mu^B$ for $B$ the lowest $j$-conormal node of $\mu$,

\item $\phi_j(\mu)\geq\phi_j(\tilde e_i\la)+1$,

\item $\eps_i(\la)\geq\eps_i(\tilde f_j\mu)+1$.
\end{itemize}
If either of the last two inequalities hold then $\eps_j(\tilde e_i\la)>0$ and $\mu=\tilde e_j\tilde e_i\la$.
\end{lemma}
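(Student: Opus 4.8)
The plan is to analyze the chain of submodules obtained by applying $e_j$ to the specific submodule $M \subseteq e_i D^\la$, using the filtration structure from Lemma~\ref{lspechtfiltration} together with the adjunction properties of the divided power functors. The key observation is that $M$ is characterized (by Lemma~\ref{l8}) as the smallest submodule of $e_i D^\la$ with $k$ copies of $D^{\tilde e_i\la}$, and in fact $M = V_k$ in the notation of that lemma, with socle and head both $D^{\tilde e_i\la}$. First I would establish the first bullet: if $D^\mu \subseteq e_j M$, then by biadjointness $0 \neq \Hom_{\s_{n-2}}(D^\mu, e_j M) \cong \Hom_{\s_{n-1}}(f_j D^\mu, M)$, so some composition factor of $M$, hence of $e_i D^\la$, appears in $f_j D^\mu$. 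By Lemma~\ref{lspechtfiltration}(i), $e_i D^\la$ has a filtration by quotients of Specht modules $S^{\la_{B_\ell}}$ where $B_1,\ldots,B_{\eps_i(\la)}$ are the $i$-normal nodes from the bottom; since $[M : D^{\tilde e_i\la}] = k$ and the copies of $D^{\tilde e_i\la}$ in $e_i D^\la$ are distributed one per $X_\ell$ (by Lemma~\ref{lspechtfiltration}(ii), as $\la_{B_\ell}$ for $\ell \le k$ dominates $\tilde e_i\la = \la_A$... need care here). More precisely, $M$ sits inside $X_k|\cdots|X_1$, so every composition factor $D^\nu$ of $M$ satisfies $\nu \unrhd \la_{B_\ell}$ for some $\ell \le k$, and since $B_\ell$ is weakly above $A = B_k$ we get $\la_{B_\ell} \unrhd \la_A = \la_{B_k}$; thus $\nu \unrhd \la_A$. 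Applying this with $D^\nu$ a composition factor of $f_j D^\mu$: the head of $f_j D^\mu$ is $D^{\tilde f_j\mu}$ and all composition factors lie in the block, but more usefully, by the Specht filtration of $f_j D^\mu$ (Lemma~\ref{lspechtfiltration}(iii)) every composition factor $D^\nu$ satisfies $\nu \unrhd \tilde f_j\mu$ is false in general — rather $\tilde f_j\mu$ is the maximal one. So I would instead argue: the composition factor $D^\nu$ of $M$ lying in $f_j D^\mu$ satisfies $\nu \unlhd \tilde f_j\mu$ (since $\tilde f_j\mu$ labels the head, which is the dominance-maximal factor by Lemma~\ref{Lemma40}(iv)), combined with $\nu \unrhd \la_A$, giving $\tilde f_j\mu \unrhd \nu \unrhd \la_A$, which is the first bullet.

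For the second bullet I would run the dual argument: $0 \neq \Hom_{\s_{n-2}}(D^\mu, e_j M)$ and $\soc(M) = D^{\tilde e_i\la}$, so composing with the inclusion $\soc(M) \hookrightarrow M$ and using that $e_j$ is exact, together with self-duality (Lemma~\ref{l8}(iv)) to pass between socles and heads, one gets that $D^{\tilde e_i\la}$ is a composition factor of $f_j D^\mu$ (applying biadjointness to $e_j$ of the head of $M$). Then by Lemma~\ref{lspechtfiltration}(iii)--(iv), $D^{\tilde e_i\la}$ appears in some quotient of $S^{\mu^{C_\ell}}$, so $\tilde e_i\la \unrhd \mu^{C_\ell}$ for some $j$-conormal node $C_\ell$ of $\mu$, and since $C_\ell$ is weakly above $B$ (the bottom $j$-conormal node) and adding a higher node gives a larger partition, $\mu^{C_\ell} \unrhd \mu^B$, hence $\tilde e_i\la \unrhd \mu^B$.

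The third and fourth bullets come from combining the first two with the $\eps/\phi$ estimates. From $\tilde e_i\la \unrhd \mu^B$ where $B$ is $j$-conormal for $\mu$: adding any $j$-conormal node increases the relevant statistic, and in fact $\phi_j(\mu^B) = \phi_j(\mu) - 1$ only when... — here I would use the combinatorics of (co)normal nodes directly, namely that $\mu^B$ has $\phi_j \geq \phi_j(\mu) - 1$, but the cleaner route is: since $D^\mu \subseteq e_j M$ and $M \subseteq e_i D^\la$, the module $e_j M$ is a subquotient of $e_j e_i D^\la$, and Lemma~\ref{Lemma39}(iv) bounds $\eps_j$ and $\eps_i$ of composition factors. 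Specifically $\eps_i(\mu') \le \eps_i(\la) - 1$ for any composition factor $D^{\mu'}$ of $e_i D^\la$ other than $D^{\tilde e_i\la}$ (where equality holds); since $D^\mu$ comes from $e_j$ of such factors, one propagates the inequality. I would combine this with $\tilde f_j\mu \unrhd \la_A$ to pin down $\eps_i(\la) \ge \eps_i(\tilde f_j\mu) + 1$, and dually $\phi_j(\mu) \ge \phi_j(\tilde e_i\la) + 1$. Finally, for the last sentence: if either inequality is actually an equality achieved with the extremal bound, then the relevant composition factor must be the "good/cogood" one, forcing $\eps_j(\tilde e_i\la) > 0$ (so that $\tilde e_j \tilde e_i\la$ is defined) and $\mu = \tilde e_j \tilde e_i\la$ by the equality clause of Lemma~\ref{Lemma39}(iv).

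\emph{Main obstacle.} The hard part will be the careful bookkeeping in the first two bullets — tracking exactly which copies of $D^{\tilde e_i\la}$ lie in the submodule $M$ versus the ambient $e_i D^\la$, and correctly orienting every dominance inequality when passing between Specht quotients, their heads, and the (co)normal node labelings. The interplay of "from the bottom" for normal nodes and "from the top" for conormal nodes, and how adding/removing a node weakly above another affects dominance order, is where sign errors are easy; I would want to fix conventions once at the start and check the base case $k = 1$ explicitly against Lemmas~\ref{l8} and~\ref{l9}.
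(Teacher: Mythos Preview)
Your overall strategy matches the paper's: identify $M$ with $V_k$ from Lemma~\ref{l8} (so $M$ is self-dual and also a quotient of $e_iD^\la$), use Lemma~\ref{lspechtfiltration} to see that every composition factor $D^\pi$ of $M$ has $\pi\unrhd\la_A$, use biadjointness to get a nonzero map $f_jD^\mu\to M$, and then read off the four bullets via Lemmas~\ref{Lemma39}(iv) and~\ref{Lemma40}(iv).

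There is one genuine gap. In your argument for the first bullet you assert that a composition factor $D^\nu$ of $f_jD^\mu$ satisfies $\nu\unlhd\tilde f_j\mu$, citing Lemma~\ref{Lemma40}(iv). That lemma bounds $\phi_j(\nu)$, not dominance; and Lemma~\ref{lspechtfiltration} only gives $\nu\unrhd\mu^{C_\ell}$ for some conormal node, which does not imply $\nu\unlhd\tilde f_j\mu$. So the chain $\tilde f_j\mu\unrhd\nu\unrhd\la_A$ is not justified. The fix (and this is exactly what the paper does) is to avoid bounding a generic $\nu$ and instead use the head and socle directly: the nonzero map $f_jD^\mu\to M$ has image whose head is $D^{\tilde f_j\mu}$ (since $\hd(f_jD^\mu)\cong D^{\tilde f_j\mu}$) and whose socle contains $\soc(M)\cong D^{\tilde e_i\la}$. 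Hence $D^{\tilde f_j\mu}$ is a composition factor of $M$, giving $\tilde f_j\mu\unrhd\la_A$ immediately, and $D^{\tilde e_i\la}$ is a composition factor of $f_jD^\mu$, giving both $\tilde e_i\la\unrhd\mu^B$ and, via Lemma~\ref{Lemma40}(iv), $\phi_j(\tilde e_i\la)\le\phi_j(\mu)-1$. The fourth bullet then follows from Lemma~\ref{Lemma39}(iv) applied to $\tilde f_j\mu$ as a composition factor of $e_iD^\la$, and the equality clauses of those two lemmas give the final sentence exactly as you sketched. With this correction your proof is the same as the paper's.
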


\begin{proof}
By Lemma \ref{l8}, there exists a self-dual submodule $V_k\subseteq e_iD^\la$ such that if $N\subseteq e_iD^\la$ and $[N:D^{\tilde e_i\la}]=k$ and $\hd(N)\cong (D^{\tilde e_i\la})^{\oplus a}$ for some $a\geq 1$, then $N=V_k$. In particular $M=V_k$, from which it also follows that $M$ is self-dual, and is the unique submodule of $e_iD^\la$ with $[M:D^{\tilde e_i\la}]=k$ and $\hd(M),\soc(M)\cong D^{\tilde e_i\la}$.

Similarly, if $N'$ is a quotient of $e_iD^\la$ with $[N':D^{\tilde e_i\la}]=k$ and $\soc(N')\cong (D^{\tilde e_i\la})^{\oplus b}$ for some $b\geq 1$, then $(N')^*$ is isomorphic to a submodule of $(e_iD^\la)^*\cong e_iD^\la$ (the last isomorphism holding by Lemma \ref{Lemma39}) and then $(N')^*\cong V_k=M$. By self-duality of $M$ it follows that $N'\cong M$.

By Lemma \ref{lspechtfiltration} if $A_\ell$ is the $\ell$-th lowest $i$-normal node of $\la$ for $1\leq\ell\leq k$, there exist quotients $W_\ell$ of $S^{\la_{A_\ell}}$ with $[W_\ell:D^{\tilde e_i\la}]=1$ for every $1\leq \ell\leq k$ and a quotient $W$ of $e_iD^\la$ such that $W\sim W_k|\ldots|W_1$. Note that $[W:D^{\tilde e_i\la}]=k$. If $Y\subseteq W$ has no composition factor isomorphic to $D^{\tilde e_i\la}$ and $Y$ is maximal with this property, it follows that $M\cong W/Y$. In particular if $D^\pi$ is a composition factor of $M$, then it is a composition factor of some $S^{\la_{A_\ell}}$ with $1\leq \ell\leq k$ and so $\pi\unrhd\la_{A_\ell}\unrhd\la_{A_k}=\la_A$. Furthermore $\eps_i(\pi)\leq \eps_i(\la)-1$ from Lemma \ref{Lemma39} and if equality holds then $\pi=\tilde e_i\la$.

Again by Lemma \ref{lspechtfiltration} if $D^\psi$ is a composition factor of $f_jD^\mu$ then there exists a $j$-conormal node $B'$ of $\mu$ such that $D^\psi$ is a composition factor of $S^{\mu^{B'}}$ and then $\psi\unrhd\mu^{B'}\unrhd\mu^B$. From Lemma \ref{Lemma40}, $\phi_j(\psi)\leq\phi_j(\mu)-1$ and if equality holds then $\psi=\tilde f_j\mu$.

By Lemma \ref{lef}
\[\dim\Hom_{\s_{n-1}}(f_jD^\mu,M)=\dim\Hom_{\s_{n-2}}(D^\mu,e_jM)\geq 1\]
and so $D^{\tilde e_i\la}\cong\soc(M)$ is a composition factor of $f_jD^\mu$ and $D^{\tilde f_i\mu}\cong\hd(f_jD^\mu)$ is a composition of factor of $M$. The lemma follows.
\end{proof}

We will now consider socles of the restrictions of $e_i^{(r)}D^\la$ to $\A_{n-r}$ in characteristic 2.

\begin{lemma}\label{L060820}
Let $p=2$ and $\la\in\Par_2(n)$. If $\eps_i(\la)\geq r$ and $\tilde e_i^r\la\in\Parinv_2(n-r)$ then $\soc((e_i^{(r)}D^\la)\da_{\A_{n-r}})\cong E^{\tilde e_i^r\la}_+\oplus E^{\tilde e_i^r\la}_-$.
\end{lemma}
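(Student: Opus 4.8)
The plan is to reduce the statement about $\A_{n-r}$ to facts about the $\s_{n-r}$-module $e_i^{(r)}D^\la$ that are already available. Write $M := e_i^{(r)}D^\la$ and $\mu := \tilde e_i^r\la$, so by Lemma \ref{Lemma39}(ii) $M$ is a self-dual indecomposable $\s_{n-r}$-module with $\soc(M) \cong \hd(M) \cong D^\mu$. The goal is to show that, since $\mu \in \Parinv_2(n-r)$, the socle of $M\da_{\A_{n-r}}$ is $E^\mu_+ \oplus E^\mu_-$ rather than something larger. Since $D^\mu\da_{\A_{n-r}} \cong E^\mu_+ \oplus E^\mu_-$, we certainly have $E^\mu_+ \oplus E^\mu_- \cong \soc(D^\mu)\da_{\A_{n-r}} \hookrightarrow \soc(M\da_{\A_{n-r}})$, using that restriction to a subgroup of index $2$ is exact and sends semisimple modules to semisimple modules (or at least preserves socles appropriately). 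So the content is the reverse inclusion: no other irreducible $\A_{n-r}$-module occurs in $\soc(M\da_{\A_{n-r}})$, and each of $E^\mu_\pm$ occurs with multiplicity exactly one.

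First I would record the general fact that for a subgroup $H \le G$ of index $2$ and an $FG$-module $X$ (with $F$ of characteristic $2$), $\soc(X\da_H)$ is contained in $(\soc_G^{(2)} X)\da_H$ where $\soc_G^{(2)} X$ is the second socle layer — more precisely, any $H$-submodule $U$ of $X$ with $U$ semisimple satisfies $U + gU$ is a $G$-submodule that is an extension of semisimples, hence killed by $\rad^2$. Concretely: if $U \subseteq X\da_H$ is irreducible then $U + gU$ (for $g \in G \setminus H$) is a $G$-submodule of length $\le 2$, so it lies in the preimage of $\soc$ under $\rad$, i.e.\ in $\soc_2(X)$, the second socle layer. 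Applying this with $X = M$ and using that $\soc(M) = D^\mu$ is irreducible, I get $\soc(M\da_{\A_{n-r}}) \subseteq \soc_2(M)\da_{\A_{n-r}}$.

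The next step is to pin down $\soc_2(M)$ as an $\s_{n-r}$-module well enough to control its restriction. By Lemma \ref{Lemma39}(iii), $[M : D^\mu] = \binom{\eps_i(\la)}{r}$ and $\dim\End_{\s_{n-r}}(M) = \binom{\eps_i(\la)}{r}$; combined with self-duality and the fact that $\soc(M) = \hd(M) = D^\mu$, this is exactly the setup of Lemma \ref{l8} (with $M = V_{\eps_i(\la)}$ in the case $a = \eps_i(\la)$, or rather: Lemma \ref{l8} applies to submodules of $e_i D^\la$, so for $r \ge 2$ I should instead invoke the general structure, e.g. that $M/\soc(M)$ has socle $D^\mu$ only if... ). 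The cleanest route: $\soc(M\da_{\A_{n-r}})$ is semisimple and self-dual (as $M$ is self-dual and duality commutes with restriction and with taking socle for self-dual modules over an index-$2$ subgroup in characteristic $2$), and contains $E^\mu_+ \oplus E^\mu_-$; I must exclude the possibility that it contains a third constituent $E^\nu_{(\pm)}$ with $\nu \ne \mu$ or an extra copy of $E^\mu_\pm$. A nonzero map $E^\nu_{(\pm)} \to M\da_{\A_{n-r}}$ induces, by adjunction ($\Hom_{\A_{n-r}}$ vs.\ $\Hom_{\s_{n-r}}$ via induction/restriction being adjoint), a nonzero map from the $\s_{n-r}$-module $E^\nu_{(\pm)}\ua^{\s_{n-r}}$ — which is $D^\nu$ if $\nu \notin \Parinv_2$, or $D^\nu$ again if $\nu \in \Parinv_2$ — into $M$, forcing $D^\nu = \soc(M) = D^\mu$, so $\nu = \mu$. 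This kills the "third constituent" possibility. For the multiplicity: $\dim\Hom_{\A_{n-r}}(E^\mu_\pm, M\da_{\A_{n-r}}) = \dim\Hom_{\s_{n-r}}(E^\mu_\pm\ua^{\s_{n-r}}, M) = \dim\Hom_{\s_{n-r}}(D^\mu, M) = \dim \soc_{D^\mu}(M)$-multiplicity $= 1$ since $\soc(M) = D^\mu$ is irreducible. Hence each of $E^\mu_+, E^\mu_-$ occurs exactly once and nothing else occurs, giving $\soc(M\da_{\A_{n-r}}) \cong E^\mu_+ \oplus E^\mu_-$.

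The main obstacle I anticipate is the adjunction/Clifford-theory bookkeeping: one must be careful that $E^\mu_\pm \ua^{\s_{n-r}} \cong D^\mu$ (not $D^\mu \oplus (\text{twist})$) exactly when $\mu \in \Parinv_2(n-r)$, and that $\Hom_{\A_{n-r}}(E, M\da) \cong \Hom_{\s_{n-r}}(E\ua, M)$ holds here — this is Frobenius reciprocity together with the fact (characteristic $2$, index $2$) that restriction and induction differ by a twist but agree on the relevant Hom-dimensions after summing over the conjugate pair. An alternative, perhaps cleaner, is to argue directly: $M\da_{\A_{n-r}}$ is self-dual, so $\soc(M\da) \cong \hd(M\da)^*$, and $\hd(M\da) = \hd(M)\da = D^\mu\da = E^\mu_+ \oplus E^\mu_-$; since $E^\mu_\pm$ are self-dual (as $\mu \in \Parinv_2$, $p = 2$, all modules of $\A_n$ are self-dual in characteristic $2$), $\soc(M\da) \cong E^\mu_+ \oplus E^\mu_-$ — provided one knows $\hd(M\da_{\A_{n-r}})$ is exactly $(\hd M)\da_{\A_{n-r}}$, which again follows because $\rad(M)\da_{\A_{n-r}} \supseteq \rad(M\da_{\A_{n-r}})$ with the reverse requiring the index-$2$ argument above applied to heads instead of socles. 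I would present the self-dual version as the main line since it avoids the induction computation entirely, using only (a) $M$ self-dual, (b) $\hd(M) = D^\mu$, (c) all irreducible $F\A_{n-r}$-modules in characteristic $2$ are self-dual, and (d) the index-$2$ lemma that $\hd(M\da_H) = (\hd M)\da_H$ when $\hd M$ is irreducible.
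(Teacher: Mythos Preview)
Your central argument---computing $\dim\Hom_{\A_{n-r}}(E^\nu_{(\pm)},M\da_{\A_{n-r}})=\dim\Hom_{\s_{n-r}}(E^\nu_{(\pm)}\ua^{\s_{n-r}},M)$ via Frobenius reciprocity and using that $\soc(M)\cong D^{\tilde e_i^r\la}$ is simple---is exactly the paper's proof, and it is correct. One small correction: for $\nu\notin\Parinv_2(n-r)$ you have $E^\nu\ua^{\s_{n-r}}\sim D^\nu|D^\nu$, not $D^\nu$; the conclusion $\nu=\mu$ still follows since every composition factor of the image is $D^\nu$ while the socle of $M$ is $D^\mu$, and in any case this situation does not arise once you know $\nu=\mu\in\Parinv_2(n-r)$.

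The second-socle-layer discussion and the self-duality alternative are unnecessary detours: the paper simply runs the adjunction computation in two lines. Your alternative route via $\hd(M\da_{\A_{n-r}})=(\hd M)\da_{\A_{n-r}}$ indeed has the gap you flag---there is no reason in general that the radical of the restriction equals the restriction of the radical for an index-$2$ subgroup---so it is best discarded in favour of the direct adjunction argument you already have.
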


\begin{proof}
For $\mu\in\Parinv_2(n-r)$ we have $E^\mu_\pm\ua^{\s_{n-r}}\cong D^\mu$. For $\mu\in\Par_2(n-r)\setminus\Parinv_2(n-r)$ we instead have that
\[E^\mu\ua^{\s_{n-r}}\cong D^\mu\da_{\A_{n-r}}\ua^{\s_{n-r}}\cong (D^\mu\otimes\1)\da_{\A_{n-r}}\ua^{\s_{n-r}}\cong D^\mu\otimes(\1\ua_{\A_{n-r}}^{\s_{n-r}})\sim D^\mu|D^\mu.\]
From Lemma \ref{Lemma39} we have that $\soc(e_i^{(r)}D^\la)\cong D^{\tilde e_i^r\la}$. So
\begin{align*}
\dim\Hom_{\A_{n-r}}(E^\mu_{(\pm)},(e_i^{(r)}D^\la)\da_{\A_{n-r}})&=\dim\Hom_{\s_{n-r}}(E^\mu_{(\pm)}\ua^{\s_{n-r}},e_i^{(r)}D^\la)\\
&=\de_{\mu,\tilde e_i^r\la}.
\end{align*}
\end{proof}

\begin{lemma}\label{L060820_2}
Let $p=2$ and $\la\in\Par_2(n)$. Assume that $\eps_i(\la)\geq r$ and $\tilde e_i^r\la\not\in\Parinv_2(n-r)$. Then:
\begin{itemize}
\item $\soc((e_i^{(r)}D^\la)\da_{\A_{n-r}})\cong (E^{\tilde e_i^r\la})^{\oplus a}$ with $1\leq a\leq 2$,

\item $\soc((e_i^{(r)}D^\la)\da_{\A_{n-r}})\cong (E^{\tilde e_i^r\la})^{\oplus 2}$ if and only if $E^{\tilde e_i^r\la}\ua^{\s_{n-r}}\subseteq e_i^{(r)}D^\la$,

\item if $\la\in\Parinv_2(n)$ and $r=1$ then $\soc((e_iD^\la)\da_{\A_{n-1}})\cong (E^{\tilde e_i\la})^{\oplus 2}$,

\item if $\tilde e_i^{\eps_i(\la)}\la\not\in\Parinv_2(n-\eps_i(\la))$ or $\tilde f_i^{\phi_i(\la)}\la\not\in\Parinv_2(n+\phi_i(\la))$, then $\soc((e_i^{(r)}D^\la)\da_{\A_{n-r}})\cong E^{\tilde e_i^r\la}$.
\end{itemize}
\end{lemma}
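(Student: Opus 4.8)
The plan is to mimic the proof of Lemma \ref{L060820}, replacing the computation of homomorphism spaces with a careful count that now accounts for $E^\mu \ua^{\s_{n-r}} \sim D^\mu | D^\mu$ when $\mu \notin \Parinv_2(n-r)$. Write $\mu := \tilde e_i^r\la$ and $D := e_i^{(r)}D^\la$, so by Lemma \ref{Lemma39}(ii) we have $\soc(D) \cong \hd(D) \cong D^\mu$. For the first two bullets, I would compute, using biadjointness (Lemma \ref{lef}) and $E^\mu \ua^{\s_{n-r}} \sim D^\mu|D^\mu$ together with the fact that $D^\mu$ is self-dual with $\soc = D^\mu$,
\[
\dim\Hom_{\A_{n-r}}(E^\mu, D\da_{\A_{n-r}}) = \dim\Hom_{\s_{n-r}}(E^\mu\ua^{\s_{n-r}}, D),
\]
which lies in $\{1,2\}$: it is at least $1$ since $D^\mu$ is a quotient of $E^\mu\ua^{\s_{n-r}}$ and maps onto $\soc(D)$, and at most $2$ because $[E^\mu\ua^{\s_{n-r}} : D^\mu] = 2$ while by Lemma \ref{Lemma39}(iv) any composition factor $D^\nu \not\cong D^\mu$ of $D$ has $\eps_i(\nu) < \eps_i(\la) - r < \eps_i(\la)$, hence $D^\nu$ cannot be a quotient of $E^\mu\ua^{\s_{n-r}}$ (all of whose composition factors $D^\nu$ satisfy $\eps_i(\nu) = \eps_i(\mu)$ or $\nu \rhd \mu$; I need to check the relevant $\eps_i$ or dominance obstruction rules this out — this is the kind of argument appearing in Lemma \ref{L110820_2}). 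For $\mu \notin \Parinv_2(n-r)$ the composition factors of $E^\mu \ua^{\s_{n-r}} \da_{\A_{n-r}}$ over $\A_{n-r}$ are $E^\mu$ with multiplicity $2$ (and possibly $E^\nu_\pm$ pairs), so $\soc(D\da_{\A_{n-r}})$ contains no other $E^\nu_{(\pm)}$ by the same homomorphism computation as in Lemma \ref{L060820}, giving $\soc(D\da_{\A_{n-r}}) \cong (E^\mu)^{\oplus a}$ with $a \in \{1,2\}$.

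For the "if and only if" of the second bullet: $a = 2$ precisely when $\dim\Hom_{\s_{n-r}}(E^\mu\ua^{\s_{n-r}}, D) = 2$, i.e. when the map $E^\mu\ua^{\s_{n-r}} \to D$ hitting $\soc(D)$ is not unique up to scalar. Since $E^\mu\ua^{\s_{n-r}} \sim D^\mu | D^\mu$ is a (self-dual, by Lemma \ref{lef}) uniserial-like extension, a second independent homomorphism forces the submodule $D^\mu$ of $E^\mu\ua^{\s_{n-r}}$ to also map nontrivially, hence the composite $D^\mu \hookrightarrow E^\mu\ua^{\s_{n-r}} \to D$ is nonzero; as $D^\mu$ is irreducible this is an embedding, and tracking images shows $E^\mu\ua^{\s_{n-r}}$ itself embeds in $D$. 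Conversely if $E^\mu\ua^{\s_{n-r}} \subseteq D$ then since $\soc(E^\mu\ua^{\s_{n-r}}) \cong D^\mu \cong \soc(D)$ we get that the two copies of $D^\mu$ in $E^\mu\ua^{\s_{n-r}}\da_{\A_{n-r}}$ both sit in $\soc(D\da_{\A_{n-r}})$ — more precisely, $E^\mu\ua^{\s_{n-r}}\da_{\A_{n-r}} \cong E^\mu \otimes (\1 \oplus \sgn)\cdots$, wait, over $\A$ it is $D^\mu\da_{\A_{n-r}} \ua^{\s_{n-r}}\da_{\A_{n-r}}$, whose socle over $\A_{n-r}$ is $(E^\mu)^{\oplus 2}$; being a submodule of $D\da_{\A_{n-r}}$ forces $\soc(D\da_{\A_{n-r}}) \supseteq (E^\mu)^{\oplus 2}$. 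This is the step I expect to be most delicate: pinning down the module structure of $E^\mu\ua^{\s_{n-r}}\da_{\A_{n-r}}$ and its socle, and verifying the socle of a submodule injects into the socle of the ambient module.

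The third bullet is a special case: if $\la \in \Parinv_2(n)$ and $r = 1$, then $e_iD^\la \cong e_i(D^\la) \cong e_i(E^\la_\pm \ua^{\s_n}) \cong (e_iE^\la_\pm)\ua^{\s_{n-1}}$ by the compatibility of $e_i$ with induction (stated in Section \ref{s3}), so $e_iD^\la$ is in the image of $\ua^{\s_{n-1}}$ from $\A_{n-1}$; in particular $e_i D^\la \cong (e_i D^\la \da_{\A_{n-1}})\ua^{\s_{n-1}}$, and applying this with the previous bullet — noting $E^{\tilde e_i\la}\ua^{\s_{n-1}} \sim D^{\tilde e_i\la}|D^{\tilde e_i\la}$ must then appear inside $e_iD^\la$ because $e_iD^\la\da_{\A_{n-1}}$ contains $E^{\tilde e_i\la}$ in its socle (Lemma \ref{Lemma39}) and induction is left adjoint to restriction — yields $a = 2$. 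For the fourth bullet, the hypothesis $\tilde e_i^{\eps_i(\la)}\la \notin \Parinv_2(n - \eps_i(\la))$ (or the $f_i$ version) is exactly a JS-type condition ensuring $D^\mu \da_{\A_{n-r}}$ and the branching are "small"; here I would argue $E^\mu\ua^{\s_{n-r}} \not\subseteq e_i^{(r)}D^\la$ by a dimension or composition-factor count — if it were a submodule, then since $[e_i^{(r)}D^\la : D^\mu] = \binom{\eps_i(\la)}{r}$ by Lemma \ref{Lemma39}(iii), pushing down to $\tilde e_i^{\eps_i(\la)}\la$ via further $e_i^{(\cdot)}$ applications and using that the endomorphism algebra dimensions match the $\Parinv$ vs non-$\Parinv$ dichotomy would contradict $\tilde e_i^{\eps_i(\la)}\la \notin \Parinv_2$; so by the second bullet $a = 1$. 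I anticipate the main obstacle throughout is bullet two's "only if" direction and bullet four, where one must leverage the precise numerical data of Lemmas \ref{Lemma39} and \ref{Lemma40} (the binomial multiplicities and the $\eps_i$/$\phi_i$ monotonicity under composition factors) rather than soft adjunction arguments.
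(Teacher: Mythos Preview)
Your approach to the first two bullets is broadly right but has a gap: you repeatedly use that $E^\mu\ua^{\s_{n-r}}$ is uniserial $D^\mu|D^\mu$, yet you only write $\sim D^\mu|D^\mu$, which in the paper's convention means merely a filtration. The paper establishes uniseriality first by computing $\dim\Hom_{\s_{n-r}}(E^\mu\ua^{\s_{n-r}}, D^\mu) = \dim\Hom_{\A_{n-r}}(E^\mu, E^\mu) = 1$. Once this is in hand, since $\soc(D)\cong D^\mu$ is simple, any nonzero map $E^\mu\ua^{\s_{n-r}}\to D$ is either injective or factors through the head $D^\mu$, and a second independent map exists precisely when an embedding does; both bullets follow. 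Your detour through composition factors of $D$ via Lemma~\ref{Lemma39}(iv) is unnecessary and does not actually give the bound.

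For bullet three your route through $e_i(E^\la_\pm\ua^{\s_n}) \cong (e_iE^\la_\pm)\ua^{\s_{n-1}}$ is not what Section~\ref{s3} establishes (only the \emph{restriction} compatibility $e_i(M\da_{\A_n})\cong(e_iM)\da_{\A_{n-1}}$ is stated). The paper argues directly on the $\A$-side: $(e_iD^\la)\da_{\A_{n-1}} \cong (e_iE^\la_+)\oplus(e_iE^\la_-)$, and since $(e_iE^\la_+)^\sigma \cong e_iE^\la_-$ for $\sigma\in\s_{n-1}\setminus\A_{n-1}$, the socle cannot be simple.

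The genuine gap is bullet four. Your sketch (``push down via further $e_i^{(\cdot)}$ and use binomial multiplicities to derive a contradiction'') does not identify a mechanism, and none of the numerical data in Lemmas~\ref{Lemma39}--\ref{Lemma40} by itself distinguishes the $\Parinv$ and non-$\Parinv$ cases for the extremal partition. The paper's argument is quite different: it invokes the embedding from \cite[Lemmas 3.3, 3.4]{m1},
\[
e_i^{(r)}D^\la \subseteq e_i^{(\phi_i(\la)+r)}D^{\tilde f_i^{\phi_i(\la)}\la} \cong f_i^{(\eps_i(\la)-r)}D^{\tilde e_i^{\eps_i(\la)}\la} =: M,
\]
and then shows $\dim\Hom_{\A_{n-r}}(E^\mu, M\da_{\A_{n-r}}) = 1$. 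If, say, $\tilde e_i^{\eps_i(\la)}\la \notin \Parinv_2$, then by adjunction this Hom space has dimension
\[
\frac{1}{(\eps_i(\la)-r)!}\dim\Hom_{\A_{n-\eps_i(\la)}}\bigl((e_i^{\eps_i(\la)-r}D^\la)\da_{\A_{n-\eps_i(\la)}},\, E^{\tilde e_i^{\eps_i(\la)}\la}\bigr),
\]
and since $e_i^{\eps_i(\la)-r}D^\la$ has head $(D^{\tilde e_i^{\eps_i(\la)}\la})^{\oplus(\eps_i(\la)-r)!}$ with $D^{\tilde e_i^{\eps_i(\la)}\la}\da_{\A}\cong E^{\tilde e_i^{\eps_i(\la)}\la}$ irreducible, the answer is $1$. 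The non-$\Parinv$ hypothesis on the extremal partition enters only at this last step (making the restriction irreducible), and without the embedding into $M$ there is no evident way to convert that hypothesis into $a=1$.
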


\begin{proof}
We will use Lemmas \ref{Lemma39} and \ref{Lemma40} without further comment.

As in the previous lemma $E^\mu_\pm\ua^{\s_{n-r}}\cong D^\mu$ if $\mu\in\Parinv_2(n-r)$ while $E^\mu\ua^{\s_{n-r}}\sim D^\mu|D^\mu$ if $\mu\in\Par_2(n-r)\setminus\Parinv_2(n-r)$. In this last case from
\[\dim\Hom_{\s_{n-r}}(E^\mu\ua^{\s_{n-r}},D^\mu)=\dim\Hom_{\A_{n-r}}(E^\mu,D^\mu\da_{\A_{n-r}})=1\]
it follows that $E^\mu\ua^{\s_{n-r}}\cong D^\mu|D^\mu$.


For any $\mu\in\Par_2(n-r)$ we have that
\[\Hom_{\A_{n-r}}(E^\mu_{(\pm)},(e_i^{(r)}D^\la)\da_{\A_{n-r}})\cong\Hom_{\s_{n-r}}(E^\mu_{(\pm)}\ua^{\s_{n-r}},e_i^{(r)}D^\la).\]
So if $\mu\in\Parinv_2(n-r)$ then since $E^\mu_\pm\ua^{\s_{n-r}}\cong D^\mu$ we have that
\[\dim\Hom_{\s_{n-r}}(E^\mu_\pm\ua^{\s_{n-r}},e_i^{(r)}D^\la)=\dim\Hom_{\s_{n-r}}(D^\mu,e_i^{(r)}D^\la),\]
while if $\mu\in\Par_2(n-r)\setminus\Parinv_2(b-r)$ then, from $E^\mu\ua^{\s_{n-r}}\cong D^\mu|D^\mu$, we have that
\begin{align*}
\dim\Hom_{\s_{n-r}}(D^\mu,e_i^{(r)}D^\la)&=\dim\Hom_{\s_{n-r}}(\hd(E^\mu\ua^{\s_{n-r}}),e_i^{(r)}D^\la)\\
&\leq\dim\Hom_{\s_{n-r}}(E^\mu\ua^{\s_{n-r}},e_i^{(r)}D^\la)\\
&\leq 2\cdot\dim\Hom_{\s_{n-r}}(D^\mu,e_i^{(r)}D^\la).
\end{align*}

If $\mu\not=\tilde e_i^r\la$ then
\[\dim\Hom_{\A_{n-r}}(E^\mu_{(\pm)},(e_i^{(r)}D^\la)\da_{\A_{n-r}})=0,\]
since $D^\mu\not\subseteq e_i^{(r)}D^\la$. If on the other hand $\mu=\tilde e_i^r\la$ then
\[1\leq \dim\Hom_{\A_{n-r}}(E^{\tilde e_i^r\la},(e_i^{(r)}D^\la)\da_{\A_{n-r}})\leq 2.\]
The first statement then holds. Furthermore we have that
\[\dim\Hom_{\A_{n-r}}(E^{\tilde e_i^r\la},(e_i^{(r)}D^\la)\da_{\A_{n-r}})=2\]
if and only if
\[\dim\Hom_{\s_{n-r}}(E^{\tilde e_i^r\la}\ua^{\s_{n-r}},e_i^{(r)}D^\la)>\dim\Hom_{\s_{n-r}}(\hd(E^{\tilde e_i^r\la}\ua^{\s_{n-r}}),e_i^{(r)}D^\la).\]
As $E^{\tilde e_i^r\la}\ua^{\s_{n-r}}$ has only two composition factors, the second statement also holds.

If $\la\in\Parinv_2(n)$ and $r=1$ then $(e_iD^\la)\da_{\A_{n-1}}\cong (e_iE^\la_+)\oplus (e_iE^\la_-)$. Since $(e_iE^\la_+)^\si\cong e_iE^\la_-$ for $\si\in\s_{n-1}\setminus\A_{n-1}$, the socle of  $(e_iD^\la)\da_{\A_{n-1}}$ cannot be simple, so $a=2$ in this case.

We may now assume that $\tilde e_i^{\eps_i(\la)}\la\not\in\Parinv_2(n-\eps_i(\la))$ or that $\tilde f_i^{\phi_i(\la)}\la\not\in\Parinv_2(n+\phi_i(\la))$. From \cite[Lemmas 3.3, 3.4]{m1}
\[e_i^{(r)}D^\la\subseteq e_i^{(\phi_i(\la)+r)}D^{\tilde f_i^{\phi_i(\la)}\la}\cong f_i^{(\eps_i(\la)-r)}D^{\tilde e_i^{\eps_i(\la)}\la}.\]
Thus it is enough to check that $\dim\Hom_{\A_{n-r}}(E^\la,M\da_{\A_{n-r}})=1$ for $M=e_i^{(\phi_i(\la)+r)}D^{\tilde f_i^{\phi_i(\la)}\la}\cong f_i^{(\eps_i(\la)-r)}D^{\tilde e_i^{\eps_i(\la)}\la}$. If $\tilde e_i^{\eps_i(\la)}\la\not\in\Parinv_2(n-\eps_i(\la))$ then
\begin{align*}
&\dim\Hom_{\A_{n-r}}(E^\la,(f_i^{(\eps_i(\la)-r)}D^{\tilde e_i^{\eps_i(\la)}\la})\da_{\A_{n-r}})\\
&=1/(\eps_i(\la)-r)!\dim\Hom_{\A_{n-r}}(E^\la,(f_i^{\eps_i(\la)-r}D^{\tilde e_i^{\eps_i(\la)}\la})\da_{\A_{n-r}})\\
&=1/(\eps_i(\la)-r)!\dim\Hom_{\A_{n-r}}(E^\la,f_i^{\eps_i(\la)-r}(E^{\tilde e_i^{\eps_i(\la)}\la}))\\
&=1/(\eps_i(\la)-r)!\dim\Hom_{\A_{n-\eps_i(\la)}}(e_i^{\eps_i(\la)-r}E^\la,E^{\tilde e_i^{\eps_i(\la)}\la})\\
&=1/(\eps_i(\la)-r)!\dim\Hom_{\A_{n-\eps_i(\la)}}((e_i^{\eps_i(\la)-r}D^\la)\da_{\A_{n-\eps_i(\la)}},E^{\tilde e_i^{\eps_i(\la)}\la})\\
&=1/(\eps_i(\la)-r)!\dim\Hom_{\A_{n-\eps_i(\la)}}((E^{\tilde e_i^{\eps_i(\la)}\la})^{\oplus (\eps_i(\la)-r)!},E^{\tilde e_i^{\eps_i(\la)}\la})\\
&=1.
\end{align*}
The case $\tilde f_i^{\phi_i(\la)}\la\not\in\Parinv_2(n+\phi_i(\la))$ is similar.
\end{proof}

\begin{lemma}\label{L060820a}
Let $p=2$ and $\la\in\Par_2(n)$. If $\phi_i(\la)\geq r$ and $\tilde f_i^r\la\in\Parinv_2(n+r)$ then $\soc((f_i^{(r)}D^\la)\da_{\A_{n+r}})\cong E^{\tilde f_i^r\la}_+\oplus E^{\tilde f_i^r\la}_-$.
\end{lemma}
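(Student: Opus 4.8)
The plan is to run, essentially verbatim, the argument used for Lemma~\ref{L060820}, with $e_i^{(r)}$ replaced by $f_i^{(r)}$, with $\s_{n-r}$ and $\A_{n-r}$ replaced by $\s_{n+r}$ and $\A_{n+r}$, and with Lemma~\ref{Lemma39} replaced by its raising analogue Lemma~\ref{Lemma40}. Concretely, I would first record, exactly as in the proof of Lemma~\ref{L060820}, that for $\mu\in\Parinv_2(n+r)$ one has $E^\mu_\pm\ua^{\s_{n+r}}\cong D^\mu$, while for $\mu\in\Par_2(n+r)\setminus\Parinv_2(n+r)$ one has $E^\mu\ua^{\s_{n+r}}\cong D^\mu\da_{\A_{n+r}}\ua^{\s_{n+r}}\cong D^\mu\otimes(\1\ua_{\A_{n+r}}^{\s_{n+r}})\sim D^\mu|D^\mu$; in particular in this case every composition factor of $E^\mu\ua^{\s_{n+r}}$ is isomorphic to $D^\mu$.

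Next I would invoke Lemma~\ref{Lemma40}(ii): since $\phi_i(\la)\geq r$, the module $f_i^{(r)}D^\la$ is nonzero with simple socle $\soc(f_i^{(r)}D^\la)\cong D^{\tilde f_i^r\la}$. By Frobenius reciprocity for induction and restriction between $\A_{n+r}$ and $\s_{n+r}$, for every $\mu\in\Par_2(n+r)$ we get
\[
\dim\Hom_{\A_{n+r}}(E^\mu_{(\pm)},(f_i^{(r)}D^\la)\da_{\A_{n+r}})
=\dim\Hom_{\s_{n+r}}(E^\mu_{(\pm)}\ua^{\s_{n+r}},f_i^{(r)}D^\la).
\]
For $\mu\in\Par_2(n+r)\setminus\Parinv_2(n+r)$ this is $0$: the image of any such homomorphism is a submodule of $f_i^{(r)}D^\la$, hence has socle inside $D^{\tilde f_i^r\la}$, while being a quotient of $E^\mu\ua^{\s_{n+r}}$ it has all composition factors isomorphic to $D^\mu$, forcing $\mu=\tilde f_i^r\la\in\Parinv_2(n+r)$, a contradiction. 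For $\mu\in\Parinv_2(n+r)$ the right-hand side equals $\dim\Hom_{\s_{n+r}}(D^\mu,f_i^{(r)}D^\la)=\de_{\mu,\tilde f_i^r\la}$, since $\soc(f_i^{(r)}D^\la)\cong D^{\tilde f_i^r\la}$ is simple and $\tilde f_i^r\la\in\Parinv_2(n+r)$ by hypothesis.

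Finally I would conclude: each $E^\mu$ and each $E^\mu_\pm$ is absolutely irreducible, so the multiplicity of a given irreducible $\A_{n+r}$-module in $\soc((f_i^{(r)}D^\la)\da_{\A_{n+r}})$ equals the corresponding $\Hom$-dimension computed above; this gives $\soc((f_i^{(r)}D^\la)\da_{\A_{n+r}})\cong E^{\tilde f_i^r\la}_+\oplus E^{\tilde f_i^r\la}_-$. I do not expect any genuine obstacle here, the statement being the mirror image of Lemma~\ref{L060820}; the only point that needs a sentence of care is the vanishing of $\Hom_{\A_{n+r}}(E^\mu,(f_i^{(r)}D^\la)\da_{\A_{n+r}})$ for $\mu\notin\Parinv_2(n+r)$, where $E^\mu\ua^{\s_{n+r}}$ is no longer simple and one must argue via its composition factors rather than directly.
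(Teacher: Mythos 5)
Your proposal is correct and follows exactly the route the paper intends: the paper's own proof is the one-line remark ``Similar to Lemma~\ref{L060820},'' meaning the argument of Lemma~\ref{L060820} with $e_i^{(r)}$, $\s_{n-r}$, $\A_{n-r}$, and Lemma~\ref{Lemma39} replaced by $f_i^{(r)}$, $\s_{n+r}$, $\A_{n+r}$, and Lemma~\ref{Lemma40}. Your elaboration of the vanishing of $\Hom_{\A_{n+r}}(E^\mu,(f_i^{(r)}D^\la)\da_{\A_{n+r}})$ for $\mu\notin\Parinv_2(n+r)$ via composition factors of the image is precisely the point left implicit in the paper's computation of $\dim\Hom_{\s_{n+r}}(E^\mu_{(\pm)}\ua^{\s_{n+r}},f_i^{(r)}D^\la)=\de_{\mu,\tilde f_i^r\la}$, so no gap.
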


\begin{proof}
Similar to Lemma \ref{L060820}.
\end{proof}

\begin{lemma}\label{L060820a_2}
Let $p=2$ and $\la\in\Par_2(n)$. Assume that $\phi_i(\la)\geq r$ and $\tilde f_i^r\la\not\in\Parinv_2(n+r)$. Then:
\begin{itemize}
\item $\soc((f_i^{(r)}D^\la)\da_{\A_{n+r}})\cong (E^{\tilde f_i^r\la})^{\oplus a}$ with $1\leq a\leq 2$,

\item $\soc((f_i^{(r)}D^\la)\da_{\A_{n+r}})\cong (E^{\tilde f_i^r\la})^{\oplus 2}$ if and only if $E^{\tilde f_i^r\la}\ua^{\s_{n+r}}\subseteq f_i^{(r)}D^\la$,

\item if $\la\in\Parinv_2(n)$ and $r=1$ then $\soc((f_iD^\la)\da_{\A_{n+1}})\cong (E^{\tilde f_i\la})^{\oplus 2}$,

\item if $\tilde e_i^{\eps_i(\la)}\la\not\in\Parinv_2(n-\eps_i(\la))$ or $\tilde f_i^{\phi_i(\la)}\la\not\in\Parinv_2(n+\phi_i(\la))$, then $\soc((f_i^{(r)}D^\la)\da_{\A_{n+r}})\cong E^{\tilde f_i^r\la}$.
\end{itemize}
\end{lemma}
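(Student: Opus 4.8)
The plan is to imitate the proof of Lemma~\ref{L060820_2} essentially verbatim, interchanging throughout the roles of $e_i$ and $f_i$, of $\eps_i$ and $\phi_i$, of $\tilde e_i$ and $\tilde f_i$, and of the pair $\A_{n-r}\subseteq\s_{n-r}$ with the pair $\A_{n+r}\subseteq\s_{n+r}$. This substitution is harmless: the divided power functors are biadjoint and commute with duality by Lemma~\ref{lef}, Lemmas~\ref{Lemma39} and~\ref{Lemma40} are mirror images of one another, and the inclusions of \cite[Lemmas 3.3, 3.4]{m1} are stated (or follow) symmetrically in $e$ and $f$.

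Concretely, I would first record the dual of the opening remark of the proof of Lemma~\ref{L060820_2}: one has $E^\mu_\pm\ua^{\s_{n+r}}\cong D^\mu$ for $\mu\in\Parinv_2(n+r)$, whereas for $\mu\in\Par_2(n+r)\setminus\Parinv_2(n+r)$ the module $E^\mu\ua^{\s_{n+r}}\cong D^\mu\otimes(\1\ua_{\A_{n+r}}^{\s_{n+r}})\sim D^\mu|D^\mu$ is uniserial, since $\dim\Hom_{\s_{n+r}}(E^\mu\ua^{\s_{n+r}},D^\mu)=\dim\Hom_{\A_{n+r}}(E^\mu,D^\mu\da_{\A_{n+r}})=1$. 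Because $\soc(f_i^{(r)}D^\la)\cong D^{\tilde f_i^r\la}$ by Lemma~\ref{Lemma40}(ii), a submodule of $f_i^{(r)}D^\la$ isomorphic to $E^{\tilde f_i^r\la}\ua^{\s_{n+r}}$ is unique when it exists; evaluating
\[\Hom_{\A_{n+r}}(E^\mu_{(\pm)},(f_i^{(r)}D^\la)\da_{\A_{n+r}})\cong\Hom_{\s_{n+r}}(E^\mu_{(\pm)}\ua^{\s_{n+r}},f_i^{(r)}D^\la)\]
over all $\mu\in\Par_2(n+r)$ then yields the first two bullet points. For the third, when $\la\in\Parinv_2(n)$ and $r=1$ one has $(f_iD^\la)\da_{\A_{n+1}}\cong(f_iE^\la_+)\oplus(f_iE^\la_-)$ with the two summands conjugate under $\s_{n+1}\setminus\A_{n+1}$, so the socle cannot be simple and $a=2$. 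For the last bullet, \cite[Lemmas 3.3, 3.4]{m1} provide a module $M$ with $f_i^{(r)}D^\la\subseteq M\cong f_i^{(\eps_i(\la)+r)}D^{\tilde e_i^{\eps_i(\la)}\la}\cong e_i^{(\phi_i(\la)-r)}D^{\tilde f_i^{\phi_i(\la)}\la}$, and it then suffices to prove $\dim\Hom_{\A_{n+r}}(E^{\tilde f_i^r\la},M\da_{\A_{n+r}})=1$; under whichever of the two stated hypotheses holds one uses the correspondingly convenient of these two descriptions of $M$ and runs exactly the string of biadjunction isomorphisms, applications of Lemmas~\ref{Lemma39}(i)/\ref{Lemma40}(i), and the irreducibility of the resulting divided-power module of $D^{\tilde f_i^r\la}$ that appears at the end of the proof of Lemma~\ref{L060820_2}.

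I do not expect any genuine obstacle, since the statement is the exact mirror of Lemma~\ref{L060820_2} and every ingredient of that proof survives the exchange $e_i\leftrightarrow f_i$. The only point worth a second look is that \cite[Lemmas 3.3, 3.4]{m1} indeed supply both an $e$-flavoured and an $f$-flavoured description of the module $M$ above, so that the concluding Hom-computation can be carried out in the induction setting without any adjustment; this holds by the left--right symmetry of that reference, so no new argument is needed.
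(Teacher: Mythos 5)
Your proposal is correct and coincides with the paper's intent: the paper's own proof of Lemma \ref{L060820a_2} is the one-line ``Similar to Lemma \ref{L060820_2},'' and your write-up is precisely the dualisation (swapping $e_i\leftrightarrow f_i$, $\eps_i\leftrightarrow\phi_i$, $\tilde e_i\leftrightarrow\tilde f_i$, and $n-r\leftrightarrow n+r$) that the author is implicitly invoking. All the ingredients you name — biadjointness and self-duality from Lemma \ref{lef}, the mirror symmetry of Lemmas \ref{Lemma39} and \ref{Lemma40}, and the inclusion $f_i^{(r)}D^\la\subseteq f_i^{(\eps_i(\la)+r)}D^{\tilde e_i^{\eps_i(\la)}\la}\cong e_i^{(\phi_i(\la)-r)}D^{\tilde f_i^{\phi_i(\la)}\la}$ from [Lemmas 3.3, 3.4]{m1} — do indeed transfer, so the argument goes through exactly as you describe.
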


\begin{proof}
Similar to Lemma \ref{L060820_2}.
\end{proof}

We will now consider restrictions to $\s_{n-2,2}$. First we need the following result. For a proof see for example \cite[Lemma 1.2]{bk2}.

\begin{lemma}\label{simplesoc}
Let $A,B,C$ be modules of a group $G$. If $A$ has simple socle and $A\subseteq B\oplus C$ then $A$ is isomorphic to a submodule of $A$ or $B$.
\end{lemma}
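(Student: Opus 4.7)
The plan is to exploit the two projections $\pi_B\colon B\oplus C\to B$ and $\pi_C\colon B\oplus C\to C$, restricted to the submodule $A$. Each restriction is a $G$-module homomorphism, and together they recover the inclusion $A\hookrightarrow B\oplus C$, so the intersection of their kernels is zero:
\[
\ker(\pi_B|_A)\cap\ker(\pi_C|_A)=0.
\]

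The key observation is that if a module has simple socle, then any two nonzero submodules must intersect nontrivially (both contain the unique minimal submodule $\soc(A)$). Therefore at most one of $\ker(\pi_B|_A)$ and $\ker(\pi_C|_A)$ can be nonzero; equivalently, at least one of $\pi_B|_A$, $\pi_C|_A$ is injective. An injective $G$-homomorphism from $A$ into $B$ (resp.\ $C$) is precisely an isomorphic embedding, giving the conclusion (after correcting the evident typo ``$A$ or $B$'' to ``$B$ or $C$'').

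There is no real obstacle here: the whole argument rests on the trivial fact that the diagonal map $A\to B\oplus C$ has kernel equal to $\ker\pi_B|_A\cap\ker\pi_C|_A$, combined with the simple-socle dichotomy. Thus the proof reduces to two lines, and no further results from the paper are needed.
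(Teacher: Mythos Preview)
Your argument is correct and is the standard one: the paper does not give its own proof but simply refers to \cite[Lemma 1.2]{bk2}, and the projection-kernel argument you give is exactly the expected proof. You also correctly identify the typo in the statement (the conclusion should read ``$B$ or $C$'').
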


\begin{lemma}\label{L090620}
Let $\la\in\Par_p(n)$. If $A\subseteq D^\la\da_{\s_{n-2,2}}$ with $A\da_{\s_{n-2}}= e_i^2D^\la$, then $A\cong e_i^{(2)}D^\la\boxtimes M^{(1^2)}$.
\end{lemma}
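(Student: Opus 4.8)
The group $\s_{n-2,2}=\s_{n-2}\times\s_2$ and $\s_2$ has exactly two irreducible modules in characteristic $p$: the trivial module $\1$ and, if $p\neq 2$, the sign module $\sgn$; in either case $M^{(1^2)}=\1\ua_{\s_1\times\s_1}^{\s_2}$ is the regular module $F\s_2$, which is $\1$ (if $p=2$) or $\1\oplus\sgn$ (if $p\neq2$). The plan is to first pin down $A$ as a module over $\s_{n-2}\times\s_2$ using the hypothesis on its restriction to $\s_{n-2}$, and then identify the $\s_2$-action. Write $A\cong\bigoplus_{U}A_U\boxtimes U$ where $U$ runs over the irreducible $F\s_2$-modules and each $A_U$ is an $F\s_{n-2}$-module (this decomposition exists since $F\s_2$ is either a field or a product of two fields, so every $F\s_{n-2,2}$-module is a direct sum of outer tensor products). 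Restricting to $\s_{n-2}$ gives $A\da_{\s_{n-2}}\cong\bigoplus_U A_U^{\oplus\dim U}=\bigoplus_U A_U$ (each $\dim U=1$), and by hypothesis this equals $e_i^2D^\la$. By Lemma \ref{Lemma39}(i), $e_i^2D^\la\cong(e_i^{(2)}D^\la)^{\oplus 2}$, and $e_i^{(2)}D^\la$ is indecomposable by Lemma \ref{Lemma39}(ii); so the Krull--Schmidt decomposition of $A\da_{\s_{n-2}}$ has exactly two indecomposable summands, each isomorphic to $e_i^{(2)}D^\la$.

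\textbf{Case $p=2$.} Here $M^{(1^2)}=\1$ is one-dimensional, so $A\cong A'\boxtimes\1$ for a single $F\s_{n-2}$-module $A'$, and the computation above forces $A'\da_{\s_{n-2}}=A\da_{\s_{n-2}}=e_i^2D^\la$. But then $A'\cong e_i^2D^\la\cong(e_i^{(2)}D^\la)^{\oplus2}$ as $\s_{n-2}$-modules, and $e_i^2D^\la\boxtimes\1\not\cong e_i^{(2)}D^\la\boxtimes M^{(1^2)}$ in general — so the statement as literally written seems to need $M^{(1^2)}$ to be interpreted as a module for $\s_2$ giving the right multiplicity. I would reconcile this by recalling that $M^{(1^2)}\cong F\s_2$ has dimension $2$, so $e_i^{(2)}D^\la\boxtimes M^{(1^2)}$ has dimension $2\dim e_i^{(2)}D^\la=\dim e_i^2D^\la$, matching; and on restriction to $\s_{n-2}$ it gives $(e_i^{(2)}D^\la)^{\oplus2}$, as required. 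Thus the content of the lemma is: the $\s_2$-factor of $A$ is the regular module. To see this, one uses that $A$ is a submodule of $D^\la\da_{\s_{n-2,2}}$, hence of $(D^\la\da_{\s_{n-1}})\da_{\s_{n-2,1}}$-type restrictions, and one invokes the biadjointness/commutation of $e_i$ with restriction together with Lemma \ref{lef} (via the $\s_{n-2}\times\s_2 \to \s_{n-1}$ inductions) to compute $\Hom_{\s_{n-2,2}}(e_i^{(2)}D^\la\boxtimes U,D^\la\da_{\s_{n-2,2}})$ for each irreducible $U$, showing it is nonzero exactly for the summands appearing in $F\s_2$ and with the right multiplicity.

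\textbf{Main step and obstacle.} The heart of the argument is the Hom-computation $\dim\Hom_{\s_{n-2,2}}(e_i^{(2)}D^\la\boxtimes U, D^\la\da_{\s_{n-2,2}})$. I would proceed by Frobenius reciprocity through the chain $\s_{n-2,2}\le\s_{n-2,1,1}\le\s_{n-1,1}\le\s_n$ (using transitivity of restriction and the Mackey/branching description), rewriting the restriction of $D^\la$ to $\s_{n-2,2}$ in terms of $e_i^{(2)}$ and the idempotent functors, and using Lemma \ref{Lemma39}(iii) that $\dim\End_{\s_{n-2}}(e_i^{(2)}D^\la)=\binom{\eps_i(\la)}{2}$ to control the endomorphisms. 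The subtlety — and the main obstacle — is that $A$ is only assumed to be a \emph{submodule} with prescribed restriction, not a summand; so I would use Lemma \ref{simplesoc} (or rather the structure of $e_i^{(2)}D^\la$ having socle $D^{\tilde e_i^2\la}$ by Lemma \ref{Lemma39}(ii)) to show that any $\s_{n-2,2}$-submodule of $D^\la\da_{\s_{n-2,2}}$ whose restriction to $\s_{n-2}$ is \emph{all} of $e_i^2D^\la$ must have $\s_2$ acting as the full regular module, because the socle of such an $A$, as an $\s_{n-2,2}$-module, must surject onto enough copies of $D^{\tilde e_i^2\la}\boxtimes(\text{irreducibles of }\s_2)$ to account for $\soc(e_i^2D^\la)\cong(D^{\tilde e_i^2\la})^{\oplus2}$ on restriction, and the only way a two-dimensional socle over $\s_{n-2}$ lifts to $\s_{n-2,2}$ inside an indecomposable-controlled module is via $D^{\tilde e_i^2\la}\boxtimes F\s_2$. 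Finally I would identify $D^\la\da_{\s_{n-2,2}}$'s relevant block component, conclude $A\cong e_i^{(2)}D^\la\boxtimes M^{(1^2)}$, and note that when $p=2$ this reads $A\cong e_i^{(2)}D^\la\boxtimes(\1\,|\,\1)$ with $M^{(1^2)}$ the (uniserial, when $p=2$) two-dimensional module $\1|\1$.
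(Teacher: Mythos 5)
There are several concrete errors in your proposal. First, the claim that ``every $F\s_{n-2,2}$-module is a direct sum of outer tensor products'' because ``$F\s_2$ is either a field or a product of two fields'' is false when $p=2$: in that case $F\s_2\cong F[u]/(u^2)$ is a local ring with nonzero radical, and the very module $e_i^{(2)}D^\la\boxtimes M^{(1^2)}$ that the lemma is about is not a direct sum of pieces $A_U\boxtimes U$ with $U$ simple. Second, $M^{(1^2)}\cong F\s_2$ is $2$-dimensional in all characteristics, not $1$-dimensional; when $p=2$ it is the uniserial module $\1\,|\,\1$, not $\1$ (you notice the dimension mismatch later, but the initial ``$A\cong A'\boxtimes\1$'' step is based on the wrong decomposition). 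Third, the subgroup chain $\s_{n-2,2}\le\s_{n-2,1,1}\le\s_{n-1,1}\le\s_n$ is inconsistent, since $\s_{n-2,1,1}$ is a proper subgroup of $\s_{n-2,2}$.

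The more fundamental gap is that you invoke Lemma \ref{simplesoc} without establishing its hypothesis that $A$ has a simple socle, which is exactly the substance of the argument. The paper's proof pins this down by computing
\[
\dim\Hom_{\s_{n-2,2}}\bigl(D^{\tilde e_i^2\la}\boxtimes D^{(2)},\,D^\la\da_{\s_{n-2,2}}\bigr)
=\dim\Hom_{\s_n}\bigl((D^{\tilde e_i^2\la}\boxtimes D^{(2)})\ua^{\s_n},\,D^\la\bigr)=1,
\]
where the last equality uses block decomposition and the fact that the relevant block component of $(D^{\tilde e_i^2\la}\boxtimes D^{(2)})\ua^{\s_n}$ is $f_i^{(2)}D^{\tilde e_i^2\la}$ (see \cite[\S11.2]{KBook}), whose head is the simple module $D^\la$ by Lemma \ref{Lemma40}. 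When $p=2$ the only simple $F\s_2$-module is $D^{(2)}$, so $\soc_{\s_{n-2,2}}(A)$ is a sum of copies of $D^{\tilde e_i^2\la}\boxtimes D^{(2)}$ (its restriction sits inside $\soc(e_i^2 D^\la)\cong(D^{\tilde e_i^2\la})^{\oplus 2}$ by Lemma \ref{Lemma39}), and the Hom-dimension bound caps this at one copy, so $A$ has simple socle. Only then does the containment $A\subseteq A\da_{\s_{n-2}}\ua^{\s_{n-2,2}}\cong(e_i^{(2)}D^\la\boxtimes M^{(1^2)})^{\oplus 2}$ combined with Lemma \ref{simplesoc} and a dimension count give $A\cong e_i^{(2)}D^\la\boxtimes M^{(1^2)}$; for $p>2$ one does the analogous computation for $D^{(1^2)}$, splits $A$ by blocks, and argues the same way on each piece. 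Your proposal circles around these ingredients, but it never lands the crucial Hom-dimension bound, and without it the simple-socle step does not go through.
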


\begin{proof}
From Lemma \ref{Lemma39} we may assume that $\eps_i(\la)\geq 2$, since otherwise $e_i^2D^\la,e_i^{(2)}D^\la=0$ (and then also $A=0$). In addition $e_i^2D^\la\cong (e_i^{(2)}D^\la)^{\oplus 2}$ and $\soc(e_i^2D^\la)=(D^{\tilde e_i^2\la})^{\oplus 2}$. It follows that the only simple modules which may appear in the socle of $A$ are $D^{\tilde e_i^2\la}\boxtimes D^{(2)}$ and $D^{\tilde e_i^2\la}\boxtimes D^{(1^2)}$ (the second one only if $p>2$). From \cite[\S11.2]{KBook} and block decomposition
\begin{align*}
\dim\Hom_{\s_{n-2,2}}(D^{\tilde e_i^2\la}\boxtimes D^{(2)},D^\la\da_{\s_n})&=\dim\Hom_{\s_n}((D^{\tilde e_i^2\la}\boxtimes D^{(2)})\ua^{\s_n},D^\la)\\
&=\dim\Hom_{\s_n}(f_i^{(2)}D^{\tilde e_i^2\la},D^\la)\\
&=1.
\end{align*}
Furthermore,
\[A\subseteq A\da_{\s_{n-2}}\ua^{\s_{n-2,2}}\cong (e_i^{(2)}D^\la\boxtimes M^{(1^2)})^{\oplus 2}.\]

If $p=2$ then $A$ has simple socle. If instead $p>2$ then $M^{(1^2)}\cong D^{(2)}\oplus D^{(1^2)}$ (since in this case $FS_2$ is semisimple) and
\begin{align*}
&\dim\Hom_{\s_{n-2,2}}(D^{\tilde e_i^2\la}\boxtimes D^{(1^2)},D^\la\da_{\s_n})\\
&=\dim\Hom_{\s_{n-2,2}}((D^{\tilde e_i^2\la}\otimes\sgn)\boxtimes D^{(2)},(D^\la\otimes\sgn)\da_{\s_n})\\
&=1
\end{align*}
and so, by block decomposition, $A=A'\oplus A''$ with $A'$ and $A''$ having simple socle and $A'\subseteq (e_i^{(2)}D^\la\boxtimes D^{(2)})^{\oplus 2}$ and $A''\subseteq (e_i^{(2)}D^\la\boxtimes D^{(1^2)})^{\oplus 2}$. 

In either case by Lemma \ref{simplesoc} applied either to $A$ or to both $A'$ and $A''$ we have that $A$ is isomorphic to a submodule of $e_i^{(2)}D^\la\boxtimes M^{(1^2)}$. The lemma then follows by comparing dimensions.
\end{proof}

\section{Spin modules}\label{s4}

In this section we give some results connected to decomposition matrices of spin modules. We start by stating the formula for branching reduction modulo 2 of spin representations (we consider here only the reductions modulo 2 to obtain simpler formulas when partitions label two irreducible spin representations).

\begin{lemma}\label{brs2}
Let $\la\in\Par_2(n)$. Then in characteristic 2, in the Grothendieck group,
\[[S(\la)\da_{\s_{n-1}}]=\sum_A2^{x_A}[S(\la_A)],\]
where the sum is taken over all removable nodes $A$ such that $\la_A\in\Par_2(n-1)$, and $x_A=1$ if $S(\la)=S(\la,0)$ and $S(\la_A)=S(\la_A,\pm)$, while $x_A=0$ in all other cases.
\end{lemma}

\begin{proof}
This holds by \cite[Theorem 2]{mo} or \cite[Theorem 8.1]{s}.
\end{proof}

We will next prove a result limiting possible composition factors of $S(\la)$.

\begin{lemma}\label{L280520_2}\label{L080620}
Let $p=2$ and $\la\in\Par_2(n)$. Then $S(\la)$ and $S^{\overline{\dbl}(\la)}$ are in the same block. Furthermore the following hold:
\begin{enumerate}[{\rm (i)}]
\item if $[S(\la):D^\mu]>0$ then $\mu\unrhd (\overline{\dbl}(\la))^R$,

\item if $[S(\la):D^\mu]>0$ and $h(\mu)\geq 2h(\la)-1$ then $\mu=\dbl(\nu)$ with $\nu\unrhd\la$,

\item if $\dbl(\la)\in\Par_2(n)$ then $[S(\la):D^{\dbl(\la)}]=2^{\lfloor h_2(\la)/2\rfloor}>0$.
\end{enumerate}
\end{lemma}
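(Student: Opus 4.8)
The plan is to deduce everything from the structural results already recorded for $S(\la,\eps)$, namely the theorem of Benson--Bessenrodt/Brundan--Kleshchev (quoted in the excerpt after \cite[Theorem 5.1]{bo2}) saying that $S(\la,\eps)$ has exactly $2^{\lfloor h_2(\la)/2\rfloor}$ composition factors isomorphic to $D^{\dblb(\la)^R}$ and that all other composition factors are $D^\mu$ with $\mu\rhd\dblb(\la)^R$; together with the James result on $S^{\dblb(\la)}$ (the composition factor $D^{\dblb(\la)^R}$ appears once, and the rest are $D^\mu$ with $\mu\rhd\dblb(\la)^R$). First I would prove that $S(\la)$ and $S^{\dblb(\la)}$ lie in the same block: since $S^{\dblb(\la)^R}$ is a composition factor of $S^{\dblb(\la)}$ (hence in its block) and $D^{\dblb(\la)^R}$ is a composition factor of $S(\la)$ (hence in its block), and every irreducible module lies in a unique block, the two Specht/spin modules share a composition factor class and therefore the same block. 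Part (i) is then immediate: by the quoted theorem every composition factor $D^\mu$ of $S(\la)$ satisfies $\mu\unrhd\dblb(\la)^R$. Part (iii) is also immediate, being exactly the multiplicity statement of the quoted theorem in the special case $\dbl(\la)\in\Par_2(n)$, where $\dblb(\la)^R=\dbl(\la)$ (this identification is recorded in the excerpt: ``if $\dbl(\la)\in\Par_2(n)$ then $\dbl(\la)=(\dblb(\la))^R$''), so $[S(\la):D^{\dbl(\la)}]=2^{\lfloor h_2(\la)/2\rfloor}>0$.

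The real content is part (ii). Here I would argue by a counting/dominance argument on the number of parts. Suppose $[S(\la):D^\mu]>0$ and $h(\mu)\geq 2h(\la)-1$. The partition $\dblb(\la)$ is the concatenation of the pairs $\dblb(\la_j)=(\lceil\la_j/2\rceil,\lfloor\la_j/2\rfloor)$, so it has at most $2h(\la)$ nonzero parts, and exactly $2h(\la)-1$ or $2h(\la)$ of them according to whether the smallest part $\la_{h(\la)}$ equals $1$ (in which case the last pair contributes a single nonzero entry) or is $\geq 2$. By part (i), $\mu\unrhd\dblb(\la)^R$, and regularisation does not change the number of nodes nor decrease the number of parts beyond control; in fact $h(\dblb(\la)^R)\leq h(\dblb(\la))$ and a short argument shows $h(\dblb(\la)^R)\leq 2h(\la)-1$ unless... — more precisely I would show that any $2$-regular $\mu$ with $|\mu|=n$, $h(\mu)\geq 2h(\la)-1$ and $\mu\unrhd\dblb(\la)^R$ must in fact be of the form $\dbl(\nu)$ for some partition $\nu$ with $|\nu|=n$. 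The key observation is that $\dbl(m)=(\lceil(m+1)/2\rceil,\lfloor(m-1)/2\rfloor)$ is the $2$-regularisation of $\dblb(m)$, that a partition is of the form $\dbl(\nu)$ precisely when it decomposes into consecutive pairs $(a,a-1)$ reading from the top (equivalently, its $2$-regularisation structure forces the parts to come in such blocks), and that the dominance condition $\mu\unrhd\dblb(\la)^R$ together with the extremal number of parts pins down $\mu$ to have exactly this shape; writing $\mu=\dbl(\nu)$, the dominance $\dbl(\nu)\unrhd\dbl(\la)^{\text{or }\dblb}$ then translates (since doubling is order-preserving and the regularisation of $\dblb(\la)$ dominates $\dblb(\la)$) into $\nu\unrhd\la$.

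I expect the main obstacle to be exactly this last step of part (ii): verifying carefully that the conditions ``$2$-regular, $|\mu|=n$, $h(\mu)\geq 2h(\la)-1$, $\mu\unrhd\dblb(\la)^R$'' force $\mu$ to lie in the image of the doubling map, and then translating the dominance statement back down from doubled partitions to ordinary ones. This requires a clean combinatorial lemma relating $\dbl$, $\dblb$, regularisation and the dominance order — essentially that $\dbl$ is injective and order-reflecting in the relevant range, and that $\dblb(\la)^R$ is the dominance-smallest element of the fibre-relevant set with the right number of parts. I would either isolate this as a preliminary combinatorial lemma or cite the analogous statement from \cite{bo2} or \cite{Benson} if available; the arithmetic with ceilings and floors is routine once the shape is fixed, so I would not grind through it here. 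The block statement and parts (i) and (iii) are formal consequences of the quoted results and should take only a few lines.
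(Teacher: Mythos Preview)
Your treatment of the block statement and of parts (i) and (iii) is fine and matches the paper: these are immediate from \cite[Theorem 5.1]{bo2} (together with \cite{Benson} for (iii)), exactly as you say.

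The gap is in part (ii). You try to reduce the statement to pure combinatorics: you want to show that any $2$-regular $\mu$ of $n$ with $h(\mu)\geq 2h(\la)-1$ and $\mu\unrhd(\dblb(\la))^R$ (perhaps also in the right block) must be of the form $\dbl(\nu)$. This combinatorial claim is \emph{false}. Take $\la=(8,2)$, so $\dblb(\la)=(4,4,1,1)$ and $(\dblb(\la))^R=(5,3,2)=\dbl(8,2)$. The partition $\mu=(6,3,1)$ is $2$-regular, has $h(\mu)=3=2h(\la)-1$, satisfies $(6,3,1)\unrhd(5,3,2)$, and has the same $2$-content $(5,5)$ as $\dblb(\la)$, hence lies in the same block; yet $(6,3,1)$ is not $\dbl(\nu)$ for any $\nu$ (the first part of $\dbl(\nu)$ equal to $6$ would force $\nu_1\in\{10,11\}$). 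So the dominance bound from (i), even together with the block condition, does not pin $\mu$ down to a double. The hypothesis $[S(\la):D^\mu]>0$ carries strictly more information than its combinatorial consequences, and part (ii) really needs it.

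The paper's argument for (ii) is accordingly representation-theoretic rather than combinatorial. Assuming $2h(\la)-1\leq h(\mu)\leq 2h(\la)$ and that $\mu$ is \emph{not} a double (so $\mu_{2k-1}-\mu_{2k}>2$ for some $k$), one writes down an explicit staircase-type partition $\pi$ depending only on $h(\la)$ and $k$, shows by iterated good-node removal that $D^\pi$ occurs in $D^\mu{\downarrow}_{\s_{|\pi|}}$, and then shows via the spin branching rules (tracking spin residues of added nodes) that $D^\pi$ cannot occur in any $S(\zeta)$ with $h(\zeta)\leq h(\la)$. Since every composition factor of $S(\la){\downarrow}$ arises from some such $S(\zeta)$, this contradicts $[S(\la):D^\mu]>0$. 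In short, the missing idea is that (ii) is proved by a branching argument for spin modules, not by squeezing more out of the dominance inequality in (i).
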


\begin{proof}
The assertion on blocks holds by \cite[Theorem 5.1]{bo2}, as does (i). (iii) holds by \cite[Theorem 1.2]{Benson} (or \cite[Theorem 5.1]{bo2}). So we only have to prove (ii). In view of (i) we may assume that $h(\mu)\leq 2h(\la)$.

If $\mu=\dbl(\nu)$ for some $\nu\in\Par_2(n)$ and $[S(\la):D^\mu]>0$ then from $\dbl(\nu)=\mu\unrhd (\overline{\dbl}(\la))^R\unrhd\dbl(\la)$. To see that $(\overline{\dbl}(\la))^R\unrhd\dbl(\la)$ note first that $\overline{\dbl}(\la)$ and $\dbl(\la)$ have the same number of nodes on each ladder (even though $\dbl(\la)$ is in general not a partition but only a composition, we may still identify it with its Young diagram and thus count nodes on a given ladder), from which the statement follows as $(\overline{\dbl}(\la))^R$ is obtained from $\overline{\dbl}(\la)$ (and then also from $\dbl(\la)$) by moving nodes as high as possible on ladders. So $\dbl(\nu)\unrhd\dbl(\la)$ and then $\nu\unrhd\la$ (as $\sum_{j\leq i}\nu_j=\sum_{j\leq 2i}(\dbl(\nu))_j$ for each $i$ and similarly for $\la$).

So assume now that $\mu$ is not the double of any partition. We may assume that $2h(\la)-1\leq h(\mu)\leq2h(\la)$. As $\mu$ is not the double of a partition, there exists $1\leq k\leq h(\la)$ such that $\mu_{2k-1}-\mu_{2k}>2$. Fix such a $k$ and let
\[\pi:=(2h(\la)+1,2h(\la),\ldots,2h(\la)-2k+3,2h(\la)-2k,2h(\la)-2k-1,\ldots,1)\]
(that is, $\pi$ is obtained from the staircase partition $(2h(\la)+1,2h(\la),\ldots,1)$ by removing the parts equal to $2h(\la)-2k+2$ and $2h(\la)-2k+1$). In particular $h(\pi)=2h(\la)-1$.

Then $D^\pi$ is a composition factor of $D^\mu\da_{\s_{|\pi|}}$. This follows by Lemma \ref{Lemma39}(vi), by always removing the highest removable node which gives a 2-regular partition $\psi$ with $\psi_{2k-1}-\psi_{2k}>2$. We will now check that $D^\pi$ is not the composition factor of the reduction modulo 2 of a spin module indexed by a partition with at most $h(\la)$ part. As all composition factors of $S(\la)\da_{\widetilde{\s}_{|\pi|}}$ have at most $h(\la)$ rows by Lemma \ref{brs2}, we then have that $D^\mu$ cannot be a composition factor of the reduction modulo 2 of $S(\la)$, so that (ii) holds.

From Lemma \ref{Lemma39} it can be checked that
\[e_0e_1^{(2)}e_0^{(3)}\ldots e_1^{(2h(\la)-2)}e_0^{(2h(\la)-1)}e_1^{(2k-1)}e_0^{(2k-1)}D^\pi\not=0\]
by always removing the highest removable node which gives a 2-regular partition (such a node is always normal). So if $\zeta\in\Par_2(|\pi|)$ and $D^\pi$ is a composition factor of $S(\zeta)$ then
\[e_0e_1^{(2)}e_0^{(3)}\ldots e_1^{(2h(\la)-2)}e_0^{(2h(\la)-1)}e_1^{(2k-1)}e_0^{(2k-1)}S(\zeta)\not=0.\]

As in \cite{bo2}, we define spin residues of nodes as follows: $\overline{\res}(a,b)=0$ if $b\equiv 0,3\Md{4}$, while $\overline{\res}(a,b)=1$ if $b\equiv 1,2\Md{4}$. It follows from the definition that for any residue $i$ and any 2-regular partition $\pi$, the number of nodes of $\pi$ of spin residue $i$ is equal to the number of nodes of residue $i$ of $\overline{\dbl}(\pi)$ (see also \cite{bo2}).

By Lemma \ref{brs2}, there then exist partitions $\zeta^\ell\in\Par_2(|\zeta^\ell|)$ for $0\leq \ell\leq 2h(\la)$ such that the following hold:
\begin{itemize}
\item $\zeta^0=()$,

\item for $1\leq\ell\leq 2h(\la)-1$, $\zeta^\ell$ is obtained from $\zeta^{\ell-1}$ by adding $\ell$ nodes of spin residue 0 if $\ell$ is odd or 1 if $\ell$ is even,

\item $\zeta^{2h(\la)}$ is obtained from $\zeta^{2h(\la)-1}$ by adding $2k-1$ nodes of spin residue 1,

\item $\zeta$ is obtained from $\zeta^{2h(\la)}$ by adding $2k-1$ nodes of spin residue 0.
\end{itemize}
It can be checked that if $1\leq\ell\leq 2h(\la)-1$ is odd then $\zeta^\ell=(2\ell-1,2\ell-5,\ldots,1)$, while if $\ell$ is even then $\zeta^\ell=(2\ell-1,2\ell-5,\ldots,3)$. In particular $\zeta^{2h(\la)-1}=(4h(\la)-3,4h(\la)-7,\ldots,1)$. Note that $h(\zeta^{2h(\la)-1})=h(\la)$. Since $\zeta^{2h(\la)-1}_j=1$ for $1\leq j\leq h(\la)$, it follows by the definition of spin residues that if $\zeta^{2h(\la)}$ has some addable node of spin residue $0$ on row $j$ with $1\leq j\leq h(\la)$, then $\zeta^{2h(\la)}_j-\zeta^{2h(\la)-1}_j=2$. In this case $\zeta_j-\zeta^{2h(\la)}_j\leq 2$ (that is when going from $\zeta^{2h(\la)}$ to $\zeta$ we add at most 2 nodes on row $j$). Since $|\zeta|-|\zeta^{2h(\la)}|=|\zeta^{2h(\la)}|-|\zeta^{2h(\la)-1}|=2k-1<2k$ when going from $\zeta^{2h(\la)}$ to $\zeta$ we cannot add nodes only to the top $h(\la)$ rows. Thus $h(\zeta)\geq h(\la)+1$.
\end{proof}

\begin{lemma}\label{branchingbs}\label{L180121}
Let $p=2$ and $n\geq 2$. Then $D^{\be_n}$ is the reduction modulo 2 of $S((n))$. In particular, in the Grothendieck group, $[D^{\be_n}\da_{\s_{n-1}}]=(1+\de_{2\nmid n})[D^{\be_{n-1}}]$, where $\de_{2\nmid n}=1$ if $2\nmid n$ and $\de_{2\nmid n}=0$ otherwise.
\end{lemma}

\begin{proof}
This follows from \cite[Table III]{Wales}, Lemma \ref{L280520_2} and branching of basic spin modules in characteristic 0 using Lemma \ref{brs2} (for small cases we can use Lemma \ref{L080620}(iii) and compare dimensions instead of \cite{Wales}).
\end{proof}

\begin{lemma}\label{bsspecht}
Let $p=2$. Let $S(\nu):=0$ for $\nu\in\Par(n)\setminus\Par_2(n)$. If $\la\in\Par(n)$ then
\[[S^\la\otimes S((n))]=d[S(\la)]+\sum_{\mu\rhd\la}d_\mu[S(\mu)]\]
with $d>0$ and $d_\mu\geq 0$ for $\mu\in\Par(n)$ with $\mu\rhd\la$.
\end{lemma}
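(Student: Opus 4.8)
\textbf{Proof proposal for Lemma \ref{bsspecht}.}

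The plan is to work in the Grothendieck group of $2$-modular spin representations of $\s_n$ (equivalently, the span of the classes $[S(\mu)]$ for $\mu\in\Par_2(n)$, which is linearly independent by Lemma \ref{L280520_2}(iii) and the triangularity in Lemma \ref{L280520_2}(i)), and to compute $[S^\la\otimes S((n))]$ by lifting to characteristic $0$. Since $S((n))=S((n),\eps)$ is a basic spin module, its reduction modulo $2$ has the class $[D^{\be_n}]=[\dbl(n)]$, so it suffices to understand the characteristic-$0$ decomposition of $S^\la\otimes S((n),\eps)$ into spin representations $S(\mu,\eps')$, and then reduce modulo $2$. The key classical input is that tensoring a (characteristic $0$) ordinary representation indexed by $\la\in\Par(n)$ with a basic spin representation decomposes with a precise triangularity: every spin constituent $S(\mu,\eps')$ that occurs satisfies $\mu\unrhd\la^R$ (or $\mu\unrhd$ some regularisation-type bound associated to $\la$), with the ``leading'' term corresponding to $\la$ itself occurring with positive multiplicity. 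I would cite the relevant result on $S^\la\otimes(\text{basic spin})$ — this is classical, going back to the branching/tensor rules for spin characters; in the references of the paper it should be available via \cite{s5} or \cite{bo2} (the ``reduced Clifford'' / Stembridge-type product), or can be deduced from the fact that the basic spin character is, up to a power of $2$, the restriction of the product of the ``$q$''-functions.

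Concretely, the steps I would carry out are: (1) Recall that in characteristic $0$, $S^\la\otimes S((n),\eps)$ is a genuine spin module, hence $[S^\la\otimes S((n),\eps)]=\sum_{\mu\in\Par_2(n)} c_{\la\mu}[S(\mu,\eps'_\mu)]$ for non-negative integers $c_{\la\mu}$ (where I suppress the signs $\eps'_\mu$, which are irrelevant after reduction mod $2$ since $S(\mu,\eps')\cong S(\mu,-\eps')$ there). (2) Establish the triangularity $c_{\la\mu}=0$ unless $\mu\unrhd\la^R$ together with $c_{\la,\la^R}>0$; equivalently, in terms of the multiset $\{\mu : c_{\la\mu}>0\}$, the dominance-maximal contributions among these are controlled by $\la$. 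This is the content I would extract from the classical spin tensor-product rule. (3) Reduce modulo $2$: using Lemma \ref{L280520_2}(i), each $[S(\mu)]$ (mod $2$) is a non-negative combination of $[D^\nu]$ with $\nu\unrhd(\dblb(\mu))^R$, and by Lemma \ref{L280520_2}(iii) the term $[D^{(\dblb(\mu))^R}]$ genuinely occurs. Matching leading terms, $[S^\la\otimes S((n))]=\sum_\mu c_{\la\mu}[S(\mu)]$, and regrouping by dominance gives the claimed form, with $d=c_{\la,\la^R}>0$ identifiable as the multiplicity of the dominance-minimal spin constituent (after translating from $\la^R$ back to $\la$ via $\dblb(\la^R)^R = \dbl(\la)$-type identities already noted in Section \ref{s2}).

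The main obstacle will be step (2): pinning down exactly which classical statement gives the triangularity $\mu\unrhd\la$ (in the indexing used here, where spin modules are labelled by $2$-regular = distinct-part partitions, not by the strict partitions of the Schur/$Q$-function convention), and verifying that the ``diagonal'' coefficient $d$ is strictly positive rather than merely non-negative. I would handle positivity of $d$ either by a dimension/degree count (the product of the dimensions forces at least one spin constituent with the extremal label to appear), or by evaluating on a suitable split conjugacy class where the basic spin character is nonzero, using the character-theoretic tools set up later in Section \ref{s5}. A secondary technical point is bookkeeping between the two triangularity statements — the characteristic-$0$ one ($\mu\unrhd\la$ among spin labels) and the mod-$2$ one ($\nu\unrhd(\dblb(\mu))^R$ among $2$-regular labels) — but since both are expressed in dominance order and $\dblb$ together with regularisation is order-preserving in the relevant range, these compose cleanly to give $\mu\rhd\la$ for the error terms as stated.
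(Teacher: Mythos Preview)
Your overall strategy---lift to characteristic $0$, decompose $S^\la\otimes S((n),\eps)$ into spin modules, then pass to the mod-$2$ Grothendieck group---is exactly what the paper does. The paper's proof is a one-line citation to \cite[Theorem 9.3]{s} (Stembridge), which gives the characteristic-$0$ decomposition with the triangularity $\mu\unrhd\la$ and strictly positive leading coefficient; reduction modulo $2$ then yields the lemma immediately since $[S(\mu,+)]=[S(\mu,-)]$ there.

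Your outline, however, is muddled in two places. First, in step (2) the bound is $\mu\unrhd\la$, not $\mu\unrhd\la^R$. Regularisation is a mod-$p$ construction and plays no role in the characteristic-$0$ tensor decomposition; Stembridge's combinatorics lands directly on $\la$. The ``identity'' $(\dblb(\la^R))^R=\dbl(\la)$ you allude to is not stated in Section~\ref{s2} and is not needed. Second, step (3) is unnecessary and goes in the wrong direction: the lemma asks for an expression in the classes $[S(\mu)]$, not in the $[D^\nu]$, so once you have the characteristic-$0$ identity there is nothing left to do---no ``regrouping by dominance'' and no matching via $\dblb$. Both obstacles you flag (the exact triangularity and positivity of $d$) are resolved simply by citing \cite[Theorem 9.3]{s} rather than the vaguer \cite{s5} or \cite{bo2}.
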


\begin{proof}
This holds by \cite[Theorem 9.3]{s}.
\end{proof}

\begin{lemma}\label{bsspin}
Let $p=2$. If $\la\in\Par_2(n)$ then
\[[S(\la)\otimes S((n))]=\sum_{\mu\unrhd(\overline{\dbl}(\la))^R}d_\mu[D^\mu]\]
for some $d_\mu\geq 0$. If $\mu\in\Par_2(n)$ and $(\mu_1+\mu_2,\mu_3+\mu_4,\ldots)=\la$ then $d_\mu>0$. In particular if $\dbl(\la)\in\Par_2(n)$ then $d_{\dbl(\la)}>0$.
\end{lemma}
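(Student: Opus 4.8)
The plan is to combine the Specht-module version of the statement (Lemma \ref{bsspecht}) with the translation between Specht modules and spin modules already recorded in the excerpt. First I would use the fact that for $\la \in \Par_2(n)$, by \cite[Theorem 6.3.50]{JK} we have $[S^\la] = [D^{\la^R}] + \sum_{\mu \rhd \la^R} c_\mu [D^\mu]$ with $c_\mu \geq 0$, so in the Grothendieck group of $F\s_n$-modules $[S^\la]$ and $[D^{\la^R}]$ differ only by terms strictly above $\la^R$. Dually, writing $[D^\nu]$ as a $\Z$-combination of $[S^\xi]$ with $\xi \unrhd \nu$ (the triangularity of the decomposition matrix with respect to the dominance order), one can feed this into the identity $[S^\la \otimes S((n))]$. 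Since tensoring is exact and bilinear on the Grothendieck group, the content-block constraints from Section \ref{sbr} (two modules are in the same block iff they have the same content, hence $S((n))$ tensored with anything in the block of $S^\mu$ lands in a fixed block) together with the dominance triangularity should give that $[S(\la) \otimes S((n))]$ is supported on $D^\mu$ with $\mu \unrhd (\dblb(\la))^R$; this last bound is exactly the one in Lemma \ref{L280520_2}(i) applied to the composition factors of $S(\la) \otimes S((n))$, once one knows all composition factors of that tensor product are themselves composition factors of reductions of spin modules $S(\mu')$ with $\mu' \unrhd \la$ — which is precisely Lemma \ref{bsspecht} combined with the block/content argument. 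So the first half of the statement reduces to assembling Lemma \ref{bsspecht}, the decomposition-matrix triangularity for $S^\la$, and Lemma \ref{L280520_2}(i).

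For the positivity claims, I would argue as follows. By Lemma \ref{bsspecht}, $[S^\la \otimes S((n))] = d[S(\la)] + \sum_{\nu \rhd \la} d_\nu [S(\nu)]$ with $d > 0$. Apply to both sides the reduction to the homogeneous pieces indexed by partitions: expand each $[S(\nu)]$ in terms of the $[D^\mu]$ using Lemma \ref{L280520_2}(i), so that $[S(\nu)]$ contributes only to $D^\mu$ with $\mu \unrhd (\dblb(\nu))^R$. Now fix $\mu \in \Par_2(n)$ with $(\mu_1 + \mu_2, \mu_3 + \mu_4, \ldots) = \la$; I claim the coefficient $d_\mu$ of $[D^\mu]$ on the right is strictly positive. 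The key point is that among all $\nu \unrhd \la$ appearing, the term $\nu = \la$ is the unique one that can contribute $D^\mu$ with a positive coefficient at ``minimal level'': indeed if $\nu \rhd \la$ then $(\dblb(\nu))^R$ is strictly larger than $(\dblb(\la))^R$ in dominance, hence strictly larger than any $\mu$ with $\mu$ obtained from $\la$ by pairing-up of parts in the sense above — one checks that such $\mu$ satisfies $\mu \unrhd \dbl(\la)$ but $\mu$ cannot dominate $(\dblb(\nu))^R$ when $\nu \rhd \la$, because the $\dblb$ operation and regularisation are monotone for dominance. So the only contribution to $[D^\mu]$ comes from $d[S(\la)]$, and by Lemma \ref{L280520_2} applied with the branching/content bookkeeping in the proof of Lemma \ref{L080620}(ii) (more precisely, the statement that $S(\la)$ does have a composition factor $D^\mu$ whenever $\mu$ pairs up to $\la$ — this is implicit in that lemma and its proof), the coefficient $[S(\la):D^\mu]$ is positive; multiplying by $d > 0$ gives $d_\mu > 0$. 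The final ``in particular'' is the special case $\mu = \dbl(\la)$, using that $\dbl(\la)$ pairs up to $\la$ and, when $\dbl(\la) \in \Par_2(n)$, Lemma \ref{L280520_2}(iii) gives $[S(\la):D^{\dbl(\la)}] = 2^{\lfloor h_2(\la)/2 \rfloor} > 0$ directly.

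The main obstacle I expect is the positivity argument: one must show that the contributions of the strictly larger $S(\nu)$ ($\nu \rhd \la$) cannot cancel the contribution of $d[S(\la)]$ to the coefficient of $D^\mu$. Since all coefficients $d$, $d_\nu$ in Lemma \ref{bsspecht} are nonnegative and all $[S(\nu):D^\mu]$ are nonnegative, there is in fact no cancellation at all — every coefficient on the right of the displayed formula in the statement is a nonnegative combination of nonnegative things — so it suffices to exhibit one $\nu$ (namely $\nu = \la$) with $d_\nu > 0$ and $[S(\nu):D^\mu] > 0$. Thus the real content is the purely combinatorial fact that $\mu$, obtained from $\la$ by summing consecutive pairs of parts, occurs in $S(\la)$; this should be extracted from the branching computation in the proof of Lemma \ref{L080620}, possibly by an explicit sequence of $e_i^{(r)}$, $f_i^{(r)}$ functors witnessing $D^\mu$ as a composition factor of $S(\la)$. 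The block-equality input (same content for $S(\la)$ and $S^{\dblb(\la)}$, from Lemma \ref{L080620}) is what guarantees the tensor product stays inside a single block so that the dominance bound $\mu \unrhd (\dblb(\la))^R$ is the correct one.
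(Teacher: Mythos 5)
There is a genuine gap here, stemming from a conflation of the Specht module $S^\la$ with the reduced spin module $S(\la)$. Lemma \ref{bsspecht} expands $[S^\la\otimes S((n))]$ in terms of the $[S(\mu)]$, but the statement to be proved concerns $[S(\la)\otimes S((n))]$ --- a different object. Your plan expands $[S^\la\otimes S((n))]$ via Lemma \ref{bsspecht} and then reads off coefficients of $[D^\mu]$; even if every step of that calculation were carried out correctly, you would obtain a statement about $[S^\la\otimes S((n))]$ and not about $[S(\la)\otimes S((n))]$. Passing from one to the other by inverting Lemma \ref{bsspecht} would require, at the very least, an induction in dominance order and control of $[S(\nu)\otimes S((n))]$ for all $\nu\rhd\la$, none of which is spelled out. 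The paper avoids this entirely by invoking \cite[Theorem 9.3]{s} in the other direction: it writes $[S(\la)\otimes S((n),\eps)]$ directly as a nonnegative combination of ordinary Specht characters $[S^\pi]$, indexed by $\pi$ with $\diag(\pi)\unrhd\la$ (where $\diag(\pi)$ is the partition of diagonal hook lengths), with strictly positive coefficient on the terms with $\diag(\pi)=\la$. Lemma \ref{bsspecht} is the other special case of the same theorem of Stembridge; using one special case does not give the other for free.

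The positivity argument contains a second, independent error. You reduce it to the assertion that $[S(\la):D^\mu]>0$ whenever $(\mu_1+\mu_2,\mu_3+\mu_4,\ldots)=\la$, but this is false. Already for $\la=(n)$ the module $S((n))$ has $D^{\be_n}=D^{\dbl(n)}$ as its only composition factor, while there are many $2$-regular $\mu$ with $\mu_1+\mu_2=n$ (for instance $(n-1,1)$ or $(n-2,2)$). The statement that is actually true, and that the paper proves, is the weaker one that $D^\mu$ occurs in $S(\la)\otimes S((n))$: the paper exhibits an explicit partition $\pi$ with $\diag(\pi)=\la$ and $\pi^R=\mu$, so that $[S^\pi]$ appears with strictly positive coefficient in the Stembridge expansion and contributes $D^{\pi^R}=D^\mu$ through the decomposition matrix. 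Your outline does not contain this construction, and the combinatorial fact it relies on in its place does not hold.
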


\begin{proof}
For any partition $\pi$ and any node $(i,j)$ of $\pi$ let $\diag(\pi)$ be the partition consisting of the diagonal hook-lengths of $\pi$. By \cite[Theorem 9.3]{s} we have that
\[[S(\la)\otimes S((n))]=c\sum_{\pi:\diag(\pi)=\la}[S^\pi]+\sum_{\pi:\diag(\pi)\rhd\la}c_\pi[S^\pi]\]
with $c>0$ and $c_\pi\geq 0$ for $\pi\in\Par(n)$. For any partition $\pi$ let $\pi'$ be its transposed partition. For any $j\geq 1$ define $\pi^1_j:=\max\{\pi_j,\pi'_j\}$ and $\pi^2_j:=\min\{\pi_j,\pi'_j\}$. Let $k:=h(\diag(\pi))$, that is $k$ is maximal with $(k,k)\in\pi$. We have that
\[\pi^R\unrhd (\pi_1^1,\pi^2_1-1,\pi^1_2-1,\pi^2_2-2,\ldots,\pi^1_k-k+1,\pi^2_k-k)\unrhd\overline{\dbl}(\diag(\pi))\]
(though in the middle we in general only have a composition and not necessarily a partition). To see this note that the first inequality holds by comparing the number of nodes on each ladder The second from
\[\overline{\dbl}(\diag(\pi))_{2j-1}+\overline{\dbl}(\diag(\pi))_{2j}=\diag(\pi)_j=\pi_j+\pi_j'-2j+1=\pi^1_j-j+1+\pi^2_j-j\]
together with
\[\overline{\dbl}(\diag(\pi))_{2j-1}-\overline{\dbl}(\diag(\pi))_{2j}\leq 1\leq(\pi^1_j-j+1)-(\pi^2_j-j).\]
The first assertion follows due to the leading composition factor of $S^\pi$, see \cite[Theorem 6.3.50]{JK}.

Assume now that $\mu\in\Par_2(n)$ and $(\mu_1+\mu_2,\mu_3+\mu_4,\ldots)=\la$. Let $h:=h(\la)$ and let $\pi$ be the unique partition of $n$ with $(\pi_1,\ldots,\pi_h)=(\mu_1,\mu_3+1,\mu_{2h-1}+h-1)$ and $(\pi_1',\ldots,\pi_h')=(\mu_2+1,\mu_4+2,\ldots,\mu_{2h}+h)$ (the parts of $(\pi_1,\ldots,\pi_h)$ and $(\pi'_1,\ldots,\pi'_h)$ are decreasing as $\mu$ is 2-regular). Then $\diag(\pi)=\la$ and $\pi^R=\mu$. So $d_\mu>0$ by the previous part.
\end{proof}

\begin{lemma}\label{L280520}
Let $p=2$. If $\sum_{\la\in\Par_2(n)}a_\la\ze^\la=\sum_{\la\in\Par_2(n)}b_\la\psi^\la$ and $a_\mu\not=0$, then there exists $\nu$ with $D^\nu$ and $S(\mu)$ in the same block such that $b_\nu\not=0$.
\end{lemma}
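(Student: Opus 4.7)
The first move is to use linear independence of the irreducible Brauer characters. Writing $\ze^\la=\sum_{\nu\in\Par_2(n)}[S(\la):D^\nu]\,\psi^\nu$ and using that $\{\psi^\nu\}_{\nu\in\Par_2(n)}$ is a basis of Brauer characters, the assumed identity translates into
\[b_\nu=\sum_{\la\in\Par_2(n)}a_\la\,[S(\la):D^\nu]\qquad\text{for every }\nu\in\Par_2(n).\]
By Lemma \ref{L080620}, $[S(\la):D^\nu]=0$ unless $D^\nu$ and $S(\la)$ share the $\s_n$-block of $S^{\dblb(\la)}$. Let $B$ denote the $\s_n$-block of $S(\mu)$ and set $\Lambda_B:=\{\la\in\Par_2(n):S(\la)\in B\}$ and $N_B:=\{\nu\in\Par_2(n):D^\nu\in B\}$. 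Then only $\la\in\Lambda_B$ contribute to $b_\nu$ for $\nu\in N_B$, and in particular $\mu\in\Lambda_B$ with $a_\mu\neq 0$.

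\textbf{Dominance step.} I would now argue by contradiction: suppose $b_\nu=0$ for every $\nu\in N_B$, so that
\[\sum_{\la\in\Lambda_B}a_\la\,[S(\la):D^\nu]=0\qquad\text{for every }\nu\in N_B.\]
Pick $\la^*\in\Lambda_B$ with $a_{\la^*}\neq 0$ and choose $\nu^*:=(\dblb(\la^*))^R$ to be dominance-maximal in $\{(\dblb(\la))^R:\la\in\Lambda_B,\,a_\la\neq 0\}$. By Lemma \ref{L080620}(i), a non-zero contribution to $b_{\nu^*}$ from some $\la$ requires $\nu^*\unrhd(\dblb(\la))^R$; together with the maximality of $\nu^*$ this forces $(\dblb(\la))^R=\nu^*$, and at such $\la$'s the refinement of Lemma \ref{L080620}(iii) recalled in Section \ref{s2} gives $[S(\la):D^{\nu^*}]=2^{\lfloor h_2(\la)/2\rfloor}>0$. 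Thus the equation $b_{\nu^*}=0$ collapses to
\[\sum_{\substack{\la\in\Lambda_B\\(\dblb(\la))^R=\nu^*}}a_\la\cdot 2^{\lfloor h_2(\la)/2\rfloor}=0.\]

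\textbf{Main obstacle and resolution.} The subtlety is that $\la\mapsto(\dblb(\la))^R$ need not be injective on $\Lambda_B$: for $n=5$ the partitions $(5)$ and $(4,1)$ both regularise to $(3,2)$, so a single equation $b_{\nu^*}=0$ does not yet isolate $a_{\la^*}$. The idea is to exploit all the equations $b_\nu=0$ ($\nu\in N_B$) simultaneously, i.e.\ the vanishing of $(a_\la)_{\la\in\Lambda_B}$ under the left action of the block-restricted spin decomposition matrix $D_B:=([S(\la):D^\nu])_{\la\in\Lambda_B,\,\nu\in N_B}$. This $D_B$ is square ($|\Lambda_B|=|N_B|$) and invertible: the triangular information of Lemma \ref{L080620} combined with the Bessenrodt--Olsson description in \cite[Theorem 5.1]{bo2} allows one to peel off the relations layer by layer in dominance, starting from $\nu^*$ and then moving down, solving for the $a_\la$'s whose leading term is $\nu^*$ before passing to the next dominance-class. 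Injectivity of $D_B^T$ then yields $a_\la=0$ for every $\la\in\Lambda_B$, contradicting $a_\mu\neq 0$. The heart of the argument is thus the triangular-plus-squareness shape of $D_B$, which is exactly the content already extracted in Lemma \ref{L080620}.
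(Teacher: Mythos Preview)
Your argument is correct and ultimately rests on the same fact as the paper's: the characters $\ze^\la$ are linearly independent over $\Q$, equivalently the spin decomposition matrix $([S(\la):D^\nu])_{\la,\nu\in\Par_2(n)}$ is nonsingular. The paper's proof simply asserts this directly---the $\Q$-spans of $\{\ze^\la\}$ and $\{\psi^\la\}$ coincide, hence both are bases of the same space---and then lets block decomposition finish the job. You instead first attempt to deduce invertibility from the dominance bound of Lemma~\ref{L080620} alone, correctly notice that $\la\mapsto(\dblb(\la))^R$ is not injective so the matrix is not literally unitriangular, and then fall back on \cite[Theorem~5.1]{bo2} for the missing structure. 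That final appeal is exactly what the paper's ``it can be checked'' encodes; your dominance detour is unnecessary but not wrong.

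One small point: you assert $|\Lambda_B|=|N_B|$ without justification. This does follow once you know global linear independence of the $\ze^\la$ (the containment $\mathrm{span}\{\ze^\la:\la\in\Lambda_B\}\subseteq\mathrm{span}\{\psi^\nu:\nu\in N_B\}$ gives $|\Lambda_B|\le|N_B|$ for each block, and summing over blocks forces equality), or alternatively from the explicit block parametrisation in \cite{bo2}; but it is not automatic and deserves a sentence, since it is precisely the ``squareness'' you then invoke.
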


\begin{proof}
For any $\la\in\Par_2(n)$ let $\xi^\la$ be the character (not the 2-Brauer character) of the $\widetilde{\s}_n$-module $S(\la,0)$ or $S(\la,+)\oplus S(\la,-)$. Then the characters $\xi^\la$ are linearly independent. For any partition $\al\in\Par(n)$ let $\si_\al\in\widetilde{\s}_n$ be the lift of an element with cycle partition $\al$ with the assumption that $\si_\al$ has odd order if $\al\in\Paro(n)$. Let $z$ be the central element of $\widetilde{\s_n}$ with $\widetilde{\s}_n/\langle z\rangle\cong\s_n$. Then by \cite[Theorem 7.1]{s} or \cite[p. 235]{s5} we have that $\xi^\la(\si_\al)=0$ if $\al\not\in\Paro(n)$. If on the other hand $\al\in\Paro(n)$ then $\xi^\la(\si_\al)=2^{a_\la}\ze^\la_\al$ and $\xi^\la(z\si_\al)=-2^{a_\la}\ze^\la_\al$, where $a_\la=0$ if $S(\la)=S(\la,0)$ while $a_\la=1$ if $S(\la)=S(\la,\pm)$. It then follows that $\{\ze^\la:\la\in\Par_2(n)\}$ is linearly independent.

As $\ze^\la$ can be written as a linear combination of the characters $\psi^\mu$ with $D^\mu$ in the same block as $S(\la)$, the result follows.
\end{proof}

\begin{lemma}\label{L080620_2}
Let $p=2$. If $\la\in\Par_2(n)$ then $D^\la\otimes D^{\be_n}$ has a composition factor in the block of $S(\la)$.
\end{lemma}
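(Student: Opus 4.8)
The plan is to reduce the statement about the tensor product $D^\la\otimes D^{\be_n}$ to a statement about spin Brauer characters, exploiting the fact that $D^{\be_n}$ is the reduction modulo $2$ of the basic spin module $S((n),\eps)$. First I would pass to Brauer characters: since $D^{\be_n}$ is (the reduction of) a spin representation with Brauer character $\ze^{(n)}$, the Brauer character of $D^\la\otimes D^{\be_n}$ equals $\psi^\la\cdot\ze^{(n)}$ (a product of a linear and a spin Brauer character, hence a spin Brauer character), and this product can be rewritten using the tensor product decomposition of $S(\la,\eps)\otimes S((n),\eps)$ — wait, that is not quite the shape of Lemma~\ref{bsspin}, which concerns $S(\la)\otimes S((n))$. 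So the correct route is instead to observe that $\psi^\la$ is a $\Z$-combination of the characters $\ze^\mu$ (since by the proof of Lemma~\ref{L280520} the $\Q$-spans of $\{\ze^\mu\}$ and $\{\psi^\mu\}$ coincide), write $\psi^\la\cdot\ze^{(n)}$ accordingly, and then invoke Lemma~\ref{bsspecht} or rather the relation governing how $\ze^\mu\cdot\ze^{(n)}$ decomposes.

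Concretely, I would argue as follows. By Lemma~\ref{branchingbs}, $D^{\be_n}$ is the reduction modulo $2$ of $S((n),\eps)$, so $\psi^{\be_n}=\ze^{(n)}$ and hence the Brauer character of $D^\la\otimes D^{\be_n}$ is $\psi^\la\ze^{(n)}$. Now $S^\la\otimes S((n))$ reduces modulo $2$ to a module whose Brauer character is $\chi^\la\ze^{(n)}$ (ordinary character of $S^\la$ times $\ze^{(n)}$), and by Lemma~\ref{bsspecht} this equals $d\,\ze^\la+\sum_{\mu\rhd\la}d_\mu\ze^\mu$ with $d>0$. On the other hand $[S^\la]=[D^\la]+\sum_{\nu\rhd\la^R}[\ldots]$ with the correction terms supported on $D^\nu$ with $\nu\rhd\la^R\unrhd\la$; multiplying by $\ze^{(n)}$ and comparing with the expansion into $\psi$'s, one gets that $\psi^\la\ze^{(n)}$ is a $\Z$-combination $\sum_{\mu}c_\mu\ze^\mu$ in which the coefficient of $\ze^\la$ is nonzero (the only way to produce $\ze^\la$ is from the $[D^\la]$ part, since $S^\la$'s other factors $D^\nu$ have $\nu\rhd\la$ and $\ze^\nu\cdot\ze^{(n)}$ only involves $\ze^\mu$ with $\mu\unrhd(\dblb(\nu))^R$, which by the dominance bookkeeping cannot contribute a term in the block of $S(\la)$ with the right leading behaviour — this is the delicate point). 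Then Lemma~\ref{L280520}, applied to the equality $\psi^\la\ze^{(n)}=\sum_\mu c_\mu\ze^\mu$ rewritten as $\sum_\mu c_\mu\ze^\mu = \sum b_\pi\psi^\pi$ with $\{b_\pi\}$ the composition multiplicities of $D^\la\otimes D^{\be_n}$, forces some $\pi$ with $D^\pi$ and $S(\la)$ in the same block to have $b_\pi\neq0$, which is exactly the claim.

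The step I expect to be the main obstacle is controlling the block in which the leading term of $\psi^\la\ze^{(n)}$ lives: I must be sure that, among all the contributions to $\psi^\la\ze^{(n)}$, at least one composition factor of $D^\la\otimes D^{\be_n}$ lies in the block of $S(\la)$, and not merely in some block containing a $D^\mu$ with $\mu\rhd\la$. This requires pinning down which $\ze^\mu$ (equivalently which spin block) dominates the product, using that blocks are detected by $2$-cores / by $\cont$, together with Lemma~\ref{L080620}(i)–(ii) to see that the "$\unrhd\la$" factors of $S^\la$ cannot perturb the $S(\la)$-block component. A clean way to package this is: apply Lemma~\ref{L280520} directly to $\psi^\la\ze^{(n)}$ once one knows $a_\la\neq0$ in its $\ze$-expansion, so the crux is purely the nonvanishing of the $\ze^\la$-coefficient, which I would extract from Lemma~\ref{bsspecht} together with the unitriangularity of the decomposition matrix for $S^\la$ (Theorem 6.3.50 of \cite{JK}). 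If the bookkeeping with $\la^R$ versus $\la$ gets in the way, an alternative is to phrase everything in terms of $\ze^{\mu}$ indexed so that the block of $S(\la)$ appears, using that $S(\la)$ and $S^{\dblb(\la)}$ share a block and tracking contents through the tensor-product formula.
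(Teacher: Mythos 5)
Your proposal is essentially the same argument the paper gives, built on the same three ingredients: Lemma~\ref{branchingbs} to identify $\psi^{\be_n}=\ze^{(n)}$, Lemma~\ref{bsspecht} to convert products $[S^\mu\otimes S((n))]$ into nonnegative spin combinations, and Lemma~\ref{L280520} to pass from a nonzero $\ze^\la$-coefficient to a composition factor of $D^\la\otimes D^{\be_n}$ in the block of $S(\la)$. The one place where the paper's write-up is cleaner is the step you flag as ``the delicate point.'' The paper avoids any cancellation worry by \emph{inverting first}: since $[S^\mu]=[D^\mu]+\sum_{\nu\rhd\mu}(\cdots)$ is unitriangular in dominance for $2$-regular $\mu$, one has $[D^\la]=[S^\la]+\sum_{\mu\rhd\la}c_\mu[S^\mu]$ outright; tensoring with $[S((n))]$ and applying Lemma~\ref{bsspecht} term-by-term then yields $[D^\la\otimes D^{\be_n}]=e[S(\la)]+\sum_{\mu\rhd\la}e_\mu[S(\mu)]$ with $e>0$, because every contribution from $\mu\rhd\la$ only produces $[S(\nu)]$ with $\nu\unrhd\mu\rhd\la$. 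So there is no block-of-the-leading-term issue to worry about: Lemma~\ref{L280520}, applied blockwise, does the rest. A small slip in your parenthetical: the extra composition factors $D^\nu$ of $S^\la$ contribute $\psi^\nu\ze^{(n)}$, not $\ze^\nu\ze^{(n)}$ (and Lemma~\ref{bsspin} bounds composition factors $D^\mu$, not spin characters $\ze^\mu$), which is another reason the inversion-first ordering is the cleaner route.
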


\begin{proof}
From Lemma \ref{branchingbs} we have that $D^{\be_n}=S((n))$. Furthermore there exist $c_\mu\in\Z$ such that, in the Grothendieck group, $[D^\la]=[S^\la]+\sum_{\mu\rhd\la}c_\mu[S^\mu]$. So, by Lemma \ref{bsspecht},
\begin{align*}
[D^\la\otimes D^{\be_n}]&=[D^\la\otimes S((n))]\\
&=[S^\la\otimes S((n))]+\sum_{\mu\rhd\la}c_\mu[S^\mu\otimes S((n))]\\
&=d[S(\la)]+\sum_{\mu\rhd\la}d_\mu[S(\mu)]
\end{align*}
for some $d,d_\mu \in\Q$ with $d\not=0$. The lemma then follows from Lemma \ref{L280520}.
\end{proof}

\begin{lemma}\label{L280520_3}
Let $p=2$ and $\la,\nu\in\Par_2(n)$. If $\dbl(\la)\in\Par_2(n)$ and $D^\nu$ is a composition factor of $D^{\dbl(\la)}\otimes D^{\be_n}$ then $\nu\unrhd\dbl(\la)$.
\end{lemma}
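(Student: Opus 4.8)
The statement asserts that every composition factor $D^\nu$ of $D^{\dbl(\la)} \otimes D^{\be_n}$ satisfies $\nu \unrhd \dbl(\la)$. The natural route is to pass to the level of spin modules and use the dominance information already recorded in Section \ref{s4}. First I would observe that $\dbl(\la)$ is of the form $\dblb(\mu)^R$ for a suitable partition $\mu$: indeed, writing $\la = \dbl(\mu)$ would not quite work since $\dbl$ doubles each part, so instead I recall that the target partitions $\dbl(\la)$ appearing here are exactly the regularisations of doubles, i.e. $\dbl(\la) = \dblb(\la)^R$ when $\dbl(\la) \in \Par_2(n)$, as noted in Section \ref{s2}. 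Hence by Lemma \ref{L080620}(iii), $D^{\dbl(\la)}$ occurs as a composition factor of $S(\la)$ (the reduction mod $2$ of the spin module), with multiplicity $2^{\lfloor h_2(\la)/2 \rfloor} > 0$.

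Next I would combine this with the tensor product computation. Since $D^{\be_n} = S((n),\eps)$ by Lemma \ref{branchingbs}, and $[D^{\dbl(\la)}]$ is $[S(\la)]$ up to a sum of $[S(\mu)]$ with $\mu \rhd \dblb(\la)^R$ — wait, more precisely by Lemma \ref{L080620}(i) every composition factor $D^\mu$ of $S(\la)$ satisfies $\mu \unrhd \dblb(\la)^R = \dbl(\la)$, so in the Grothendieck group $[S(\la)] = 2^{\lfloor h_2(\la)/2\rfloor}[D^{\dbl(\la)}] + \sum_{\mu \rhd \dbl(\la)} a_\mu [D^\mu]$ with $a_\mu \in \Z$. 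Inverting this triangular (with respect to dominance) relation gives $[D^{\dbl(\la)}] = c[S(\la)] + \sum_{\mu \rhd \dbl(\la)} c_\mu [S(\mu)]$ for suitable rationals $c \neq 0$, $c_\mu$, where each such $S(\mu)$ is the reduction of a spin module whose composition factors are $\unrhd \mu \rhd \dbl(\la)$ by Lemma \ref{L080620}(i) again. Tensoring with $D^{\be_n} = S((n),\eps)$ and applying Lemma \ref{bsspin} to each $[S(\mu) \otimes S((n))]$: that lemma gives $[S(\mu)\otimes S((n))] = \sum_{\pi \unrhd \dblb(\mu)^R} d_\pi [D^\pi]$, and $\dblb(\mu)^R \unrhd \dblb(\dbl(\la))^R$... here I need to be careful that $\dblb(\mu)^R \unrhd \dbl(\la)$ for all $\mu \unrhd \dbl(\la)$ arising. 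Since $\mu \unrhd \dbl(\la)$ implies $\dblb(\mu) \unrhd \dblb(\dbl(\la))$ and regularisation is monotone for dominance (by \cite[6.3.48]{JK}-type properties), and $\dblb(\dbl(\la))^R \unrhd \dbl(\la)$ should hold since $\dbl(\la)$ is $2$-regular and $\dblb(\dbl(\la))$ "undoes" part of the doubling — this monotonicity is the one genuinely fiddly point.

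Assembling: $[D^{\dbl(\la)} \otimes D^{\be_n}] = \sum_\pi b_\pi [D^\pi]$ where every $\pi$ with $b_\pi \neq 0$ satisfies $\pi \unrhd \dbl(\la)$, because it lies above some $\dblb(\mu)^R$ with $\mu \unrhd \dbl(\la)$, and the chain of dominance inequalities closes up to $\pi \unrhd \dbl(\la)$. Therefore any composition factor $D^\nu$ of $D^{\dbl(\la)} \otimes D^{\be_n}$ has $\nu \unrhd \dbl(\la)$, as claimed. The main obstacle I anticipate is verifying the dominance monotonicity statements for the operations $\dblb(\cdot)$ and regularisation $(\cdot)^R$ and checking that $\dblb(\dbl(\la))^R \unrhd \dbl(\la)$; these are combinatorial facts about Young diagrams that are essentially implicit in Lemma \ref{bsspin}'s proof (the displayed inequalities $\pi^R \unrhd \ldots \unrhd \dblb(\diag(\pi))$), so most of the work is really assembling the triangularity bookkeeping from Lemmas \ref{L080620}, \ref{branchingbs}, \ref{bsspecht} and \ref{bsspin} in the right order rather than proving anything new.
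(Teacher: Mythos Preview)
Your approach takes a wrong turn at the inversion step. You correctly note that $D^{\dbl(\la)}$ is a composition factor of $S(\la)$, but then try to invert the spin decomposition matrix to express $[D^{\dbl(\la)}]$ as a $\Q$-combination of various $[S(\mu)]$. This inversion is not clean: the map $\mu\mapsto\dblb(\mu)^R$ is not a bijection on $\Par_2(n)$ (already for $n=3$, both $(3)$ and $(2,1)$ map to $(2,1)$), so the triangularity is not of the simple unitriangular kind you are assuming. Moreover, your claim that the resulting $S(\mu)$ have ``composition factors $\unrhd\mu$'' misreads Lemma~\ref{L080620}(i): that lemma gives composition factors $\unrhd\dblb(\mu)^R$, and $\dblb(\mu)^R\unlhd\mu$ in general (e.g.\ $\mu=(5)$ gives $\dblb(\mu)^R=(3,2)$). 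The dominance chain you need, namely $\dblb(\mu)^R\unrhd\dbl(\la)$ for every $\mu$ that actually appears in the inversion, is neither established nor obviously true, and you yourself flag it as the ``genuinely fiddly point''. With rational (possibly negative) coefficients $c_\mu$ in play, you have also lost the ability to argue via ``composition factor of'', so the gap is real.

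The paper avoids all of this by not inverting. Since $D^{\dbl(\la)}$ is a composition factor of $S(\la)$ (Lemma~\ref{L080620}(iii)) and $D^{\be_n}=S((n))$, every composition factor $D^\nu$ of $D^{\dbl(\la)}\otimes D^{\be_n}$ is automatically a composition factor of $S(\la)\otimes S((n))$. Lemma~\ref{bsspin} then gives $\nu\unrhd\dblb(\la)^R=\dbl(\la)$ directly. Two lines, no inversion, no monotonicity of $\dblb$ or regularisation under dominance required.
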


\begin{proof}
By Lemma \ref{L280520_2} we have that $D^\nu$ is a composition factor of $S(\la)\otimes S((n))$. The result then follows from Lemma \ref{bsspin} as $\overline{\dbl(\la)}^R=\dbl(\la)$ because $\dbl(\la)\in\Par_2(n)$.
\end{proof}

\section{Characters}\label{s5}

In this section we will study values of certain 2-Brauer characters and their divisibility by 2 or 4.

\begin{lemma}\label{cbs}
Let $p=2$, $n\geq 2$ and $\al\in\Paro(n)$. Then:
\begin{enumerate}
\item $\psi^{\be_n}_\al=\pm 2^{\lfloor (h(\al)-1)/2\rfloor}$,

\item if $n$ is odd and $\al\in\Parod(n)$ then $\psi^{\be_n,+}_{\al,+}\not=\psi^{\be_n,-}_{\al,+}$ if and only if $\al=(n)$,

\item if $n\equiv 0\Md{4}$ and $\al\in\Parod(n)$ then $\psi^{\be_n,+}_{\al,+}\not=\psi^{\be_n,-}_{\al,+}$ if and only if $h(\al)=2$,

\item if $n\not\equiv 2\Md{4}$ then $\psi^{\be_n,\pm}$ is non-zero on every 2-regular conjugacy class.
\end{enumerate}
\end{lemma}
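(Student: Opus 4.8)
The plan is to use the classical formula for the spin character of the basic spin representation $S((n),\eps)$ of $\widetilde{\s}_n$ in characteristic $0$, reduce it modulo $2$, and then read off the four assertions. Recall (Schur, see \cite{s5,s}) that the basic spin character of $\widetilde{\s}_n$ evaluated on a conjugacy class lying over $C_\al$ with $\al\in\Paro(n)$ is, up to sign, $2^{\lfloor(h(\al)-1)/2\rfloor}$, and it is nonzero exactly when $\al\in\Paro(n)$; on classes with some even cycle it vanishes (but those are not $2$-regular, so irrelevant here). When $n-h((n))=n-1$ is even (i.e.\ $n$ odd), there is a single spin character $S((n),0)$; when $n$ is odd — sorry, when $n-1$ is odd, i.e.\ $n$ even — there are two associate characters $S((n),\pm)$ which agree on all classes except those over $\Parod(n)$, where they differ by a term involving $\sqrt{\pm n}$ (again Schur's formula). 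Passing to the $2$-modular Brauer character, $\psi^{\be_n}$ (resp.\ $\psi^{\be_n,\pm}$) is the restriction of $\ze^{(n)}$ (resp.\ its splitting) to $2$-regular classes, by Lemma \ref{branchingbs}. So the first bullet is immediate from Schur's value: $\psi^{\be_n}_\al=\pm 2^{\lfloor(h(\al)-1)/2\rfloor}$.

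For the remaining bullets I would argue on the difference $\psi^{\be_n,+}_{\al,\pm}-\psi^{\be_n,-}_{\al,\pm}$. First note this difference can only be nonzero when $\al\in\Parod(n)$ (on non-split classes both values equal $\psi^{\be_n}_\al$), and one must be in a case where $\be_n\in\Parinv_2(n)$, i.e.\ the restriction of $D^{\be_n}$ to $\A_n$ actually splits; by \cite[Theorem 1.1]{Benson} (the explicit description of $\Parinv_2(n)$ quoted in Section \ref{s2}) applied to $\be_n=\dbl(n)$ one checks $\be_n\in\Parinv_2(n)$ precisely when $n$ is odd or $n\equiv 0\pmod 4$ (and $\be_n\notin\Parinv_2(n)$ when $n\equiv 2\pmod 4$, consistent with the scope of bullets two and three). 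For $n$ odd: here $\be_n=\dbl(n)$ splits, and the only $\al\in\Parod(n)$ for which Schur's difference term survives reduction mod $2$ is $\al=(n)$ — the difference on $C_{(n),+}$ is (up to a unit) a power of $2$ times $\sqrt{\pm n}$, which is a $2$-adic unit exactly when $h(\al)=1$, i.e.\ $\al=(n)$, and is divisible by $2$ otherwise since any proper odd-distinct partition of odd $n$ has at least $3$ parts, contributing an extra factor of $2$. For $n\equiv 0\pmod 4$: the associate basic spin characters $S((n),\pm)$ have $n-h((n))=n-1$ odd, so they restrict irreducibly to $\widetilde{\A}_n$ — wait, that is the case I must handle carefully: one instead uses the pair $S((n),0)$ structure appropriately; the upshot via Schur's formula is that the difference of Brauer character values on $C_{\al,+}$ is a $2$-adic unit multiple of $2^{?}\sqrt{\pm n}$ and one computes that the $2$-adic valuation of this expression equals $0$ exactly when $h(\al)=2$, using that $n\equiv 0\pmod 4$ forces $v_2(n)\geq 2$ to offset the $\sqrt{n}$.

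For the fourth bullet (non-vanishing of $\psi^{\be_n,\pm}$ on every $2$-regular class of $\A_n$ when $n\not\equiv 2\pmod 4$): a $2$-regular class of $\A_n$ lies over some $C_\al$ with $\al\in\Paro(n)$; on a non-split class $\psi^{\be_n,\pm}_\al=\psi^{\be_n}_\al=\pm 2^{\lfloor(h(\al)-1)/2\rfloor}\neq 0$ by the first bullet, and on a split class $C_{\al,\pm}$ with $\al\in\Parod(n)$ the two values $\psi^{\be_n,+}_{\al,+}$, $\psi^{\be_n,-}_{\al,+}$ sum to $2\psi^{\be_n}_\al\neq 0$ (since $\psi^{\be_n,+}_{\al,+}+\psi^{\be_n,-}_{\al,+}=\psi^{\be_n}_\al+\psi^{\be_n}_\al$ once one accounts for the two split classes), so they cannot both be $0$; combined with the $\s_n$-conjugation symmetry $\psi^{\be_n,+}_{\al,+}=\psi^{\be_n,-}_{\al,-}$ this gives non-vanishing on both split classes. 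The main obstacle I anticipate is bullet three: getting the $2$-adic valuation bookkeeping right for the associate-pair case $n\equiv 0\pmod 4$, i.e.\ correctly tracking the factor $\sqrt{\pm n}$ against the power of $2$ in Schur's difference formula and confirming the cutoff is exactly $h(\al)=2$; the other three bullets are essentially direct consequences of Schur's character values together with the Benson description of $\Parinv_2(n)$.
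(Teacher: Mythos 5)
Your first bullet and the $n$ odd case are essentially correct in spirit: Schur's character values give the first bullet directly, and for $n$ odd the split already exists in characteristic $0$ (there $E^{\be_n}_\pm$ are the reductions of the two basic spin modules of $\widetilde{\A}_n$, by Wales' tables), and Schur's classical formula says the two associate characters differ only on the class $(n)$. Your phrasing there is a little confused — the characteristic-$0$ difference is actually zero for $\al\not=(n)$, not merely divisible by $2$ — but the conclusion is right.

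The real gap is exactly where you signal discomfort, at $n\equiv 0\Md{4}$, and the approach there cannot be salvaged as proposed. For $n$ even, $n-h((n))=n-1$ is odd, so $S((n),+)\da_{\widetilde{\A}_n}\cong S((n),-)\da_{\widetilde{\A}_n}$ is \emph{irreducible} in characteristic $0$; there is no associate pair over $\widetilde{\A}_n$, hence no characteristic-$0$ difference formula whose $2$-adic valuation you could analyse. The split of $D^{\be_n}$ over $\A_n$ for $n\equiv 0\Md{4}$ is a purely modular phenomenon and does not lift. The paper instead proceeds by restriction: for $\al\in\Parod(n)$ one has $3\le\al_1<n$ odd, and $D^{\be_n}\da_{\s_{\al_1,n-\al_1}}\cong D^{\be_{\al_1}}\boxtimes D^{\be_{n-\al_1}}$; tracking the $(1,2)$- and $(n-1,n)$-conjugation actions shows (when $h(\al)>2$, so both $\al_1,n-\al_1>1$)
\[E^{\be_n}_\pm\da_{\A_{\al_1}\times\A_{n-\al_1}}\cong(E^{\be_{\al_1}}_+\boxtimes E^{\be_{n-\al_1}}_\pm)\oplus(E^{\be_{\al_1}}_-\boxtimes E^{\be_{n-\al_1}}_\mp),\]
and the $n$ odd case applied to the factors then forces $\psi^{\be_n,+}_{\al,+}=\psi^{\be_n,-}_{\al,+}$. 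The case $h(\al)=2$ is handled by the elementary observation that $\psi^{\be_n,+}_{\al,+}=\psi^{\be_n,-}_{\al,+}$ would give the impossible value $\psi^{\be_n}_\al/2=\pm 1/2$. None of this resembles the $2$-adic valuation bookkeeping you sketch, and no amount of bookkeeping on characteristic-$0$ values can work since the relevant objects do not exist there.

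Your argument for the fourth bullet also has a genuine gap. Since $\psi^{\be_n,+}+\psi^{\be_n,-}$ is the Brauer character of $D^{\be_n}\da_{\A_n}$, the two values at $C_{\al,+}$ sum to $\psi^{\be_n}_\al$, not $2\psi^{\be_n}_\al$; and more importantly, showing that $\psi^{\be_n,+}_{\al,+}$ and $\psi^{\be_n,-}_{\al,+}$ are not both zero is \emph{not} the same as showing neither is zero. Indeed the conjugation relation $\psi^{\be_n,+}_{\al,-}=\psi^{\be_n,-}_{\al,+}$ means that if one of the two were zero, $\psi^{\be_n,+}$ would vanish on one of the split classes, which is exactly what you must exclude. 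The paper closes this by a quantitative estimate: since $\psi^{\be_n,+}_{\al,\pm}=(s\pm d)/2$ with $s=\psi^{\be_n}_\al$ and $d$ the difference, it suffices to show $|d|^2>|s|^2$, which is done explicitly (e.g.\ $|d|^2=n>1=|s|^2$ for $\al=(n)$, and $|d|^2=m(n-m)>1$ for $\al=(m,n-m)$ via the same restriction technique). You need some such quantitative input; counting alone will not do.
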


\begin{proof}
It is well known that $\ze^{(n)}_\al=\pm 2^{\lfloor (h(\al)-1)/2\rfloor}$, see \cite[VII, p.203]{s5}. So (i) follows by $\psi^{\be_n}=\ze^{(n)}$, see \cite[Table III]{Wales}.

Consider now the case where $n$ is odd. Then by \cite[Table III]{Wales} we also have that the modules $E^{\be_n}_\pm$ are the reductions modulo 2 of the basic spin modules of $\widetilde\A_n$. So it is easy to recover from \cite[VII*, p.205]{s5} that $\psi^{\be_n,+}_{\al,+}\not=\psi^{\be_n,-}_{\al,+}$ if and only if $\al=(n)$ and that 
\[|\psi^{\be_n,+}_{(n),+}-\psi^{\be_n,-}_{(n),+}|^2=n>1=|\psi^{\be_n}_{(n)}|^2.\]
It then follows that $\psi^{\be_n,\pm}$ is non-zero on any 2-regular conjugacy class.

Now suppose $n\equiv 0\Md{4}$ and fix $\al\in\Parod(n)$. All composition factors of $D^{\be_n}\da_{\s_{\al_1,n-\al_1}}$ are of the form $D^{\be_{\al_1}}\boxtimes D^{\be_{n-\al_1}}$ since any composition factor of $D^{\be_n}\da_{\s_m}$ is of the form $D^{\be_m}$ by Lemma \ref{branchingbs} (for any $m\leq n$). By Lemma \ref{branchingbs} we also have that $\dim D^{\be_m}=2^{\lfloor (m-1)/2\rfloor}$ for any $m\geq 1$. As $n$ is even and $\al_1$ is odd it follows that $D^{\be_n}$ and $D^{\be_{\al_1}}\boxtimes D^{\be_{n-\al_1}}$ have the same dimension. So $D^{\be_n}\da_{\s_{\al_1,n-\al_1}}\cong D^{\be_{\al_1}}\boxtimes D^{\be_{n-\al_1}}$.

Note that $\al$ consists of an even number of distinct odd parts, as $\al\in\Parod(n)$ and $n$ is even. In particular $3\leq \al_1<n$ is odd. Since $\al_1>1$ is odd $D^{\be_{\al_1}}\da_{\A_{\al_1}}$ splits, as does $D^{\be_{n-\al_1}}\da_{\A_{n-\al_1}}$ if $n-\al_1>1$.

Assume first that $h(\al)>2$. Then $n-\al_1>1$. So, conjugating representations with the permutations $(1,2)$ and $(n-1,n)$,
\begin{align*}
(E^{\be_n}_\pm)^{(1,2)}&\cong E^{\be_n}_\mp\cong(E^{\be_n}_\pm)^{(n-1,n)},\\
(E^{\be_{\al_1}}_\eps\boxtimes E^{\be_{n-\al_1}}_\de)^{(1,2)}&\cong E^{\be_{\al_1}}_{-\eps}\boxtimes E^{\be_{n-\al_1}}_\de,\\
(E^{\be_{\al_1}}_\eps\boxtimes E^{\be_{n-\al_1}}_\de)^{(n-1,n)}&\cong E^{\be_{\al_1}}_\eps\boxtimes E^{\be_{n-\al_1}}_{-\de}
\end{align*}
for any $\eps,\de\in\{\pm\}$. It follows that
\[E^{\be_n}_\pm\da_{\A_{\al_1}\times \A_{n-\al_1}}\cong(E^{\be_{\al_1}}_+\boxtimes E^{\be_{n-\al_1}}_\pm)\oplus (E^{\be_{\al_1}}_-\boxtimes E^{\be_{n-\al_1}}_\mp).\]
As $h((\al_2,\al_3,\ldots))>1$ by the case where $n$ is odd we have that
\begin{align*}
\psi^{\be_n,+}_{\al,+}&=\psi^{\be_{\al_1},+}_{(\al_1),+}\psi^{\be_{n-\al_1},\pm}_{(\al_2,\al_3,\ldots),+}+\psi^{\be_{\al_1},-}_{(\al_1),+}\psi^{\be_{n-\al_1},\mp}_{(\al_2,\al_3,\ldots),+}\\
&=\psi^{\be_{\al_1},+}_{(\al_1),+}\psi^{\be_{n-\al_1},\mp}_{(\al_2,\al_3,\ldots),+}+\psi^{\be_{\al_1},-}_{(\al_1),+}\psi^{\be_{n-\al_1},\pm}_{(\al_2,\al_3,\ldots),+}\\
&=\psi^{\be_n,-}_{\al,+}
\end{align*}
(identifying conjugacy classes of $\A_{\al_1}\times \A_{n-\al_1}$ so that the conjugacy class labeled by $(((\al_1),+),((\al_2,\al_3,\ldots),+))$ is contained in the $\A_n$ conjugacy class labeled by $\al,+$).

On the other hand if $h(\al)=2$ then $\psi^{\be_n,+}_{\al,+}\not=\psi^{\be_n,-}_{\al,+}$, since otherwise $\psi^{\be_n,\pm}_{\al,+}=\psi^{\be_n}_\al/2=\pm 1/2$. 

Since $\psi^{\be_n}_\al=\pm 2^{\lfloor (h(\al)-1)/2\rfloor}$, in order to prove the lemma it is enough to check that
\[|\psi^{\be_n,+}_{(m,n-m),+}-\psi^{\be_n,-}_{(m,n-m),+}|^2>1=|\psi^{\be_n}_{(m,n-m)}|^2\]
for $n/2<m<n$ odd.

If $m=n-1$ then $D^{\be_n}\da_{\A_{n-1}}\cong D^{\be_{n-1}}$ and so $E^{\be_n}_\pm\da_{\A_{n-1}}\cong E^{\be_{n-1}}_\pm$. From the $n$ odd case we obtain that
\[|\psi^{\be_n,+}_{(n-1,1),+}-\psi^{\be_n,-}_{(n-1,1),+}|^2=|\psi^{\be_{n-1},+}_{(n-1),+}-\psi^{\be_{n-1},-}_{(n-1),+}|^2=n-1>1.\]
So assume now that $m<n-1$. Using the case where $n$ is odd and the fact that
\[E^{\be_n}_\pm\da_{\A_{m}\times \A_{n-m}}\cong(E^{\be_{m}}_+\boxtimes E^{\be_{n-m}}_\pm)\oplus (E^{\be_{m}}_-\boxtimes E^{\be_{n-m}}_\mp)\]
we obtain
\begin{align*}
|\psi^{\be_n,+}_{(m,n-m),+}-\psi^{\be_n,-}_{(n-m,m),+}|^2&=|\psi^{\be_{m},+}_{(m),+}-\psi^{\be_{m},-}_{(m),+}|^2\cdot|\psi^{\be_{n-m},+}_{(n-m),+}-\psi^{\be_{n-m},-}_{(n-m),+}|^2\\
&=(n-m)m\\
&>1.
\end{align*}
\end{proof}

We will now consider characters of spin representations and study their divisibility by 2 or 4. Before doing this we though have to introduce some combinatorial notation.

Let $P=(\la^0,\ldots,\la^k)$ with $\la^0=()$, $\la^j\in\Par_2(|\la^j|)$ for $1\leq j\leq k$ and suppose $\al\in\Paro(n)$. We say that $P$ is an $\overline{\al}$-path if $k=h(\al)$ and, for each $1\leq j\leq k$, $\la^j$ is obtained from $\la^{j-1}$ in one of the following ways:
\begin{itemize}
\item adding a part equal to $\al_j$,

\item adding two parts $x$ and $y$ with $x+y=\al_j$,

\item adding $\al_j$ to one of the parts
\end{itemize}
and then reordering the parts to form a partition. If $P$ is an $\overline{\al}$-path let $a(P)$ be the number of steps $j$ where $\la^j$ is not obtained from $\la^{j-1}$ by adding a part $\al_j$. Also let $P(\la,\al)$ be the set of $\overline{\al}$-paths $P=(\la^0,\ldots,\la^k)$ with $\la^k=\la$.

\begin{lemma}{\cite[Theorem 2]{mo}}\label{brs}.
Let $\la\in\Par_2(n)$ and $\al\in\Paro(n)$. Then
\[\ze^{\la}_\al=\sum_{P\in P(\la,\al)} (-1)^{x(P)}2^{\lfloor a(P)/2\rfloor}\]
for some $x(P)\in\{0,1\}$.
\end{lemma}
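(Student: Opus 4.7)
The plan is to derive the formula as an iterated application of Morris's recursion for ordinary spin characters of $\widetilde{\s}_n$, which is precisely the classical result \cite[Theorem 2]{mo}.

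First, I would reduce the statement about Brauer characters to one about ordinary characters. Since $\al\in\Paro(n)$ consists of odd parts, every element of $C_\al$ has odd order and lifts uniquely to an odd-order element $\tilde\sigma$ of $\widetilde{\s}_n$. The general compatibility of Brauer characters with ordinary characters on $p$-regular classes then gives $\ze^\la_\al=\chi^{S(\la,\eps)}(\tilde\sigma)$, so it suffices to prove the formula for the ordinary spin character $\chi^{S(\la,\eps)}$.

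Next, I would invoke Morris's recursion in the following form: for any chosen part $m$ of $\al$, the character $\chi^{S(\la)}(\sigma_\al)$ equals a signed sum (with explicit factors of $\sqrt{2}$) over strict partitions $\tilde\la$ obtained by removing a bar of length $m$ from $\la$, weighted by $\chi^{S(\tilde\la)}(\sigma_{\hat\al})$. Bars of length $m$ in a strict partition come in exactly three flavors---stripping off an entire part equal to $m$, shortening an existing part by $m$, or forming a composite bar from two parts $x+y=m$---and these correspond bijectively, when read in reverse, to the three moves defining an $\overline{\al}$-path. Iterating the recursion $h(\al)$ times peels off the parts of $\al$ one at a time; each resulting branch is an $\overline{\al}$-path $P\in P(\la,\al)$ built, in reverse, from $\la$ down to the empty partition.

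Finally, I would accumulate the signs and powers of $2$ along each branch. The leg-length signs combine into a single sign $(-1)^{x(P)}$ with $x(P)\in\{0,1\}$, and the $\sqrt{2}$-factors produced at the $a(P)$ non-trivial steps collect into a total factor $2^{\lfloor a(P)/2\rfloor}$. The main obstacle---and the technical heart of the argument, which is essentially the content of Morris's theorem---is verifying that the half-integer exponents of $2$ always pair up correctly along each $\overline{\al}$-path to yield the integer exponent $\lfloor a(P)/2\rfloor$ in the statement. This pairing is forced by the parity constraints that $\al$ has only odd parts and $\la$ is strict, and at this point the lemma follows.
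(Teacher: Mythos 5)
Your proposal matches the paper's treatment: Lemma \ref{brs} is stated there purely as a citation of Morris's Theorem 2, with no proof supplied, and your route---reducing the Brauer character values to ordinary spin character values on odd-order lifts and then iterating the bar-removal recursion, with the three bar types corresponding in reverse to the three moves of an $\overline{\al}$-path---is exactly the intended derivation. The one caveat is that the collapse of the accumulated $\sqrt{2}$-factors into the integer exponent $2^{\lfloor a(P)/2\rfloor}$, which you rightly identify as the technical heart, is precisely the content being outsourced to Morris rather than reproved.
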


\begin{lemma}\label{L290520}
Let $\la\in\Parod(n)$. Then the following statements hold.
\begin{enumerate}[(i)]
\item For $\al\in\Paro(n)$, $2\nmid \ze^{\la}_\al$ if and only if $\al=\la$.

\item If $c\geq 1$ is odd, and $\al\in\Paro(n+2c)$ with $2\nmid\ze^{p(\la,2c)}_\al$ then $\al=p(\hat\la_j,\la_j+2c)$ for some $1\leq j\leq h(\la)$ or $\al=p(\la,2c-e,e)$ for some $1\leq e\leq c$ odd, and in particular 
$\al\unrhd p(\la,c^2)$. In addition, $2\nmid\ze^{p(\la,2c)}_{p(\la,c^2)}$.

\item If $c>d\geq 1$ are both odd and $\al\in\Paro(2c+2d)$ then $2\mid \ze^{(2c,2d)}_\al$. If $4\nmid\ze^{(2c,2d)}_\al$ then $\al=(2c+2d-e,e)$ or $p(2c-e,e,2d-f,f)$ with $e,f$ odd, and in particular $\al\unrhd(c^2,d^2)$.  In addition, $4\nmid\ze^{(2c,2d)}_{(c^2,d^2)}$.
\end{enumerate}
\end{lemma}

\begin{proof}
We will use Lemma \ref{brs} without further comment.

Let $P=(\la^0,\ldots,\la^k)$ be any $\overline{\alpha}$-path (for any $\alpha\in\Paro(n)$). Let $1\leq j\leq k$. If $\la^j$ is obtained from $\la^{j-1}$ by adding a part $\alpha_j$ then the numbers of even parts of $\la^j$ and of $\la^{j-1}$ are equal. If on the other hand $\la^j$ is obtained from $\la^{j-1}$ by adding two parts $x$ and $y$ with $x+y=\al_j$ or by adding $\al_j$ to a non-zero part, then the numbers of even parts of $\la^j$ and of $\la^{j-1}$ differ by 1. So $a(P)\equiv h_2(\la^k)\Md{2}$ and $a(P)\geq h_2(\la^k)$.

Furthermore $2^{\lfloor a(P)/2\rfloor}=1$ if $a(P)=0$ or $1$, while $2^{\lfloor a(P)/2\rfloor}=2$ if $a(P)=2$ or $3$.

To see (i) one then has to observe that if $\la^k=\la$ and $a(P)\leq 1$ then $a(P)=0$. This is only possible if the parts of $\la$ are added one at the time. So there exists a unique path with $a(P)\leq 1$ (as we are assuming that $|\la^j|-|\la^{j-1}|$ is non-increasing) and this path is a $\overline{\la}$-path. We then have that $2\nmid \ze^{\la}_\al$ if and only if $\al=\la$.

Next consider (ii). In this case if $a(P)\leq 1$ then $a(P)=1$. In addition, $a(P)=1$ if and only if either the part $2c$ is added together with one of the parts of $\la$ and all other parts of $\la$ are added separately (each one in only one step) or if the part $2c$ is obtained by first adding a part $e$ and at a later step adding $2c-e$ to $e$ and adding the parts of $\la$ separately as parts at all other steps. In particular if $2\nmid\ze^{p(\la,2c)}_\al$ then $\al=p(\hat\la_j,\la_j+2c)$ for some $1\leq j\leq h(\la)$ or $\al=p(\la,2c-e,e)$ for some $1\leq e\leq c$ odd. Since $(2c-f,f)\unrhd(c^2)$ for any $0\leq f\leq c$, it can then be checked that if $2\nmid\ze^{p(\la,2c)}_\al$ then $\al\unrhd p(\la,c^2)$. To see that $2\nmid\ze^{p(\la,2c)}_{p(\la,c^2)}$ one can check that there is a unique $\overline{p(\la,c^2)}$-path with $\la^k=p(\la,2c)$ and $a(P)=1$ as follows. If $c$ is not a part of $\la$ then $p(\la,c^2)=(\la_1,\ldots,\la_j,c^2,\la_{j+1},\ldots)$ for some $0\leq j\leq h(\la)$ and the unique path with $\la^k=p(\la,2c)$ and $a(P)=1$ is the path obtained by adding the parts $\la_1,\ldots,\la_j$ one at a time as new parts, then adding a part equal to $c$, adding $c$ to the part $c$ and then adding the parts $\la_{j+1},\ldots$ again one a time as new parts. If $c=\la_j$ then $p(\la,c^2)=(\la_1,\ldots,\la_{j-1},c^3,\la_{j+1},\ldots)$. In this case the unique path with $\la^k=p(\la,2c)$ and $a(P)=1$ is the path obtained by adding the parts $\la_1,\ldots,\la_{j-1}$ one at a time as new parts, then adding a part equal to $c$, adding $c$ to the part $c$, the adding again $c$ as a part and then adding the parts $\la_{j+1},\ldots$ again one a time as new parts.

Finally consider (iii). In this case $a(P)\geq 2$ is even as $(2c,2d)$ has two even parts. It then follows that $2\mid \ze^{(2c,2d)}_\al$ for every $\al$. Furthermore if $a(P)=2$ then $P$ is of one of the following forms: $((),p(2x,2y-e),(2c,2d))$ for $\{x,y\}=\{c,d\}$ and some $e$ odd, or of one of the forms $((),(e),(2x),p(2x,f),(2c,2d))$ or $((),(e),p(e,f),p(2x,e),(2c,2d))$ or $((),(e),p(e,f),p(2x,f),(2c,2d))$ for $\{x,y\}=\{c,d\}$ and some $e$ and $f$ odd. In particular if $4\nmid\ze^{(2c,2d)}_\al$ then $\al=(2c+2d-e,e)$ or $p(2c-e,e,2d-f,f)$ with $e,f$ odd and then $\al\unrhd(c^2,d^2)$. To see that $4\nmid\ze^{(2c,2d)}_{(c^2,d^2)}$, note that the unique $\overline{(c^2,d^2)}$-path with $\la^k=(2c,2d)$ is the path $((),(c),(2c),(2c,d),(2c,2d))$.
\end{proof}

\section{Reduction results}\label{s6}

In this section we will prove some results which give restrictions on partitions $\la$ and $\nu$ for which $E^\la_{(\pm)}\otimes E^{\be_n}_{(\pm)}\cong E^\nu_{(\pm)}$.

\begin{theor}\label{T280520}
Let $p=2$, $n\geq 5$ and $\la,\nu\in\Par_2(n)$ be such that $E^\la_{(\pm)}$ is not 1-dimensional and $E^\la_{(\pm)}\otimes E^{\be_n}_{(\pm)}\cong E^\nu_{(\pm)}$. Then $\nu\not\in\Parinv_2(n)$. Furthermore, if $\la$ is the double of a partition then $\nu$ is not the double of any partition.
\end{theor}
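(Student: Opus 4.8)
## Proof proposal

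The plan is to use the Brauer character and block-theoretic machinery assembled in Sections \ref{s4} and \ref{s5} to rule out the two forbidden conclusions. Suppose for contradiction that $E^\la_{(\pm)}\otimes E^{\be_n}_{(\pm)}\cong E^\nu_{(\pm)}$ with $E^\la_{(\pm)}$ not one-dimensional. Restricting everything to $\A_n$ and inducing back up to $\s_n$ (or, more directly, comparing $\s_n$-Brauer characters after noting that $\psi^\nu = \psi^\la \cdot \psi^{\be_n}$ holds on $2$-regular classes when $\nu\notin\Parinv_2(n)$, and that the analogous relation holds between the appropriate $\psi^{\la,\pm}$, $\psi^{\be_n,\pm}$ values otherwise), the isomorphism forces a multiplicative relation between Brauer character values. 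The first step is to translate this into information about $[D^\la\otimes D^{\be_n}]$ in the Grothendieck group of $\s_n$: by Lemma \ref{L080620_2} this tensor product has a composition factor $D^\mu$ lying in the block of $S(\la)$, i.e.\ (by Lemma \ref{L080620}) with $\mu\unrhd(\overline{\dbl}(\la))^R$.

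For the first assertion ($\nu\notin\Parinv_2(n)$): if $\nu\in\Parinv_2(n)$ then $E^\nu_+$ and $E^\nu_-$ are swapped by conjugation by an odd permutation, so on any split class $C_{\al,\pm}$ with $\al\in\Parod(n)$ the two values $\psi^{\nu,+}_{\al,+}$ and $\psi^{\nu,-}_{\al,+}$ differ. On the other hand the tensor product $E^\la_{(\pm)}\otimes E^{\be_n}_{(\pm)}$ must have the same property, which constrains $\al$ via Lemma \ref{cbs}: a difference in the $E^{\be_n}_{(\pm)}$-values can only occur for $\al=(n)$ (when $n$ odd), or for $h(\al)=2$ (when $n\equiv 0\bmod 4$), and never when $n\equiv 2\bmod 4$. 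So the split behaviour of $E^\nu_{(\pm)}$ on split classes is forced into a very narrow shape, which I expect to contradict the combinatorial description of $\Parinv_2(n)$ from \cite[Theorem 1.1]{Benson} together with a dimension/character-norm count as in the proof of Lemma \ref{cbs} (the inequalities $|\psi^{\be_m,+}_{(m),+}-\psi^{\be_m,-}_{(m),+}|^2 = m > 1$ being the key input). In the remaining edge cases one uses the explicit small-$n$ data and the fact that $E^\la_{(\pm)}$ is not one-dimensional.

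For the second assertion (if $\la=\dbl(\kappa)$ then $\nu$ is not a double): assume $\la = \dbl(\kappa)$ for some $2$-regular $\kappa$, and suppose toward a contradiction that $\nu = \dbl(\rho)$. By Lemma \ref{branchingbs}, $D^{\be_n}=S((n),\eps)$, and by Lemma \ref{L280520_2} (which identifies $\ze^\kappa$ with the Brauer character of the reduction of $S(\kappa,\eps)$), combined with Lemma \ref{L280520_3}, every composition factor $D^\pi$ of $D^{\dbl(\kappa)}\otimes D^{\be_n}$ satisfies $\pi\unrhd\dbl(\kappa)$; in particular $\nu=\dbl(\rho)\unrhd\dbl(\kappa)$, so $\rho\unrhd\kappa$. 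Now I would compare spin-character values on split classes: $\ze^\rho$ and $\ze^\kappa$ are (up to the identification) the Brauer characters of reductions of spin modules, and the product $\psi^\nu\cdot(\text{something})$ relation coming from the tensor isomorphism, together with the divisibility statements in Lemma \ref{L290520}, will force $\rho$ and $\kappa$ to have a special form — roughly, $\kappa$ and $\rho$ must both lie in $\Parod$ and be nearly equal — which then contradicts the non-triviality of $E^\la_{(\pm)}$ (the only doubles giving one-dimensional or otherwise degenerate modules are tiny). The main obstacle I anticipate is precisely this last step: pinning down, from the divisibility-by-$2$-or-$4$ data of Lemma \ref{L290520} and the dominance bound $\rho\unrhd\kappa$, that no genuine non-trivial example survives, since one has to handle the interplay between $h_2$ (controlling the power of $2$ in $[S(\la):D^{\dbl(\la)}]$ via Lemma \ref{L080620}(iii)) and the character norms simultaneously. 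I would expect to need a comparison of $\sum_\al |\psi^\nu_\al|^2$-type quantities (a $\langle\cdot,\cdot\rangle$ computation over $2$-regular classes) to close the argument.
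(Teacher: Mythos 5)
Your proposal correctly locates the key lemmas but misses the clean finishing move in both halves, and the piece you yourself flag as ``the main obstacle'' is indeed where the gap lies.

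For the second assertion, you correctly invoke Lemma~\ref{L280520_3} to get $\dbl(\overline\nu)\unrhd\dbl(\overline\la)$. What you are missing is that $D^{\be_n}$ is self-dual, so
\[\Hom_{\s_n}\bigl(D^{\dbl(\overline\nu)},\,D^{\dbl(\overline\la)}\otimes D^{\be_n}\bigr)\cong\Hom_{\s_n}\bigl(D^{\dbl(\overline\la)},\,D^{\dbl(\overline\nu)}\otimes D^{\be_n}\bigr),\]
and applying Lemma~\ref{L280520_3} to the right-hand side gives the reverse dominance $\dbl(\overline\la)\unrhd\dbl(\overline\nu)$, hence $\dbl(\overline\la)=\dbl(\overline\nu)$. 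The isomorphism then collapses to $E^\la_{(\pm)}\otimes E^{\be_n}_{(\pm)}\cong E^\la_{(\pm)}$, which forces $E^{\be_n}_{(\pm)}$ to be one-dimensional and is impossible for $n\geq5$. This one-line symmetry replaces the entire spin-character-value and divisibility analysis (Lemma~\ref{L290520}) and the $\langle\cdot,\cdot\rangle$-type count you propose; there is no need to ``pin down $\rho$ and $\kappa$'' at all.

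For the first assertion, the paper does not run a character-value argument of the kind you sketch. It splits on whether $\la$ is a double. If $\la$ is a double, then since every member of $\Parinv_2(n)$ is a double (immediate from Benson's description of $\Parinv_2(n)$), the claim $\nu\notin\Parinv_2(n)$ reduces to the second assertion. If $\la$ is not a double, then $E^\la$ extends to $\s_n$: when $n\equiv2\Md4$ so does $E^{\be_n}$, hence the tensor product extends and cannot be an $E^\nu_\pm$ (this much your character argument would recover, since it amounts to $\si$-invariance); when $n\not\equiv2\Md4$ the paper simply cites \cite[Theorem 13.2]{m4}. Your suggestion that combining the constraint from Lemma~\ref{cbs} with Benson's description of $\Parinv_2(n)$ and a character-norm count will produce a contradiction in the remaining cases is not worked out, and I do not see how to make it close without essentially reproving the cited result; as written there is a genuine gap here.
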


\begin{proof}
First assume that $\la$ is not the double of a partition. Then $\la\not\in\Parinv_2(n)$, so $E^\la=D^\la\da^{\s_n}_{\A_n}$ extends to $\s_n$. If $n\equiv 2\Md{4}$ then $E^{\be_n}$ also extends to $\s_n$. So the same holds for $E^\la\otimes E^{\be_n}$ and so $\nu\not\in\Parinv_2(n)$ (actually in this case $E^\la\otimes E^{\be_n}$ is never irreducible, thanks to \cite[Theorem 1.1]{m1}). If on the other hand $n\not\equiv 2\Md{4}$ then $\nu\not\in\Parinv_2(n)$ by \cite[Theorem 13.2]{m4}.

Assume now that $\la=\dbl(\overline{\la})$ for a partition $\overline{\la}$. In this case it is enough to check that $\nu$ is not the double of a partition (since in this case $\nu\not\in\Parinv_2(n)$). Assume instead that $\nu=\dbl(\overline{\nu})$ for some partition $\overline{\nu}$. Because $E^{\dbl(\overline{\la})}_{(\pm)}\otimes E^{\be_n}_{(\pm)}\cong E^{\dbl(\overline{\nu})}_{(\pm)}$ we then have that $D^{\dbl(\overline{\nu})}\subseteq D^{\dbl(\overline{\la})}\otimes D^{\be_n}$ and then also that $D^{\dbl(\overline{\la})}\subseteq D^{\dbl(\overline{\nu})}\otimes D^\be_n$, since
\[\Hom_{\s_n}(D^{\dbl(\overline{\nu})},D^{\dbl(\overline{\la})}\otimes D^{\be_n})\cong\Hom_{\s_n}(D^{\dbl(\overline{\la})},D^{\dbl(\overline{\nu})}\otimes D^{\be_n}).\]
From Lemma \ref{L280520_3} we then have that $\dbl(\overline{\la})=\dbl(\overline{\nu})$, leading to a contradiction since $E^{\be_n}_{(\pm)}$ is not 1-dimensional by the assumption $n\geq 5$.
\end{proof}

\begin{theor}\label{T080620_2}
Let $p=2$ and $\la\in\Par_2(n)$, where $n$ is odd. If $E^\la_{(\pm)}\otimes E^{\be_n}_\pm\cong E^\nu$ with $\nu\in\Par_2(n)\setminus\Parinv_2(n)$ then $D^\nu$ is a composition factor of some $S((n-a,a))$ with $1\leq a\leq (n-1)/2$. In particular $\nu=(n-b,b)$ or $\dbl(n-b,b)$ and $h(\la)\leq 4$.
\end{theor}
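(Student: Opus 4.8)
\textbf{Proof strategy for Theorem \ref{T080620_2}.}

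The plan is to exploit the fact that $n$ is odd, so that $D^{\be_n}$ restricts irreducibly from $\s_n$ to $\A_n$ on the ordinary side but carries a genuine $\pm$-splitting on the $\widetilde{\A}_n$-side, and to combine this with the character-theoretic information collected in Lemma \ref{cbs} together with the decomposition results of Section \ref{s4}. First I would lift the hypothesis $E^\la_{(\pm)}\otimes E^{\be_n}_\pm\cong E^\nu$ to a statement about $\s_n$-modules: since $\nu\in\Par_2(n)\setminus\Parinv_2(n)$, the module $D^\nu$ restricts irreducibly and appears in $(E^\la_{(\pm)}\otimes E^{\be_n}_\pm)\ua^{\s_n}$, hence $D^\nu$ is a composition factor of $D^\la\otimes D^{\be_n}$ (using $D^{\be_n}=S((n),0)$ from Lemma \ref{branchingbs}). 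By Lemma \ref{L080620_2}, $D^\la\otimes D^{\be_n}$ then has a composition factor in the block of $S(\la)$, but more to the point I want to locate $D^\nu$ itself.

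The key step is a comparison of Brauer characters on split conjugacy classes. Because $E^\nu$ is $\s_n$-invariant, its Brauer character $\psi^\nu$ takes equal values on the two halves $C_{\al,\pm}$ of every split class $C_\al$ with $\al\in\Parod(n)$; on the other hand, writing $\psi^{\la,\eps}$ for the relevant character of $E^\la_{(\pm)}$ and $\psi^{\be_n,\pm}$ for that of $E^{\be_n}_\pm$, the isomorphism forces $\psi^\nu_{\al,+}=\psi^{\la,\eps}_{\al,+}\cdot\psi^{\be_n,+}_{\al,+}$ and similarly with a sign flip on the second factor. Subtracting, $\psi^{\la,\eps}_{\al,+}\,(\psi^{\be_n,+}_{\al,+}-\psi^{\be_n,-}_{\al,+})=0$ for all $\al\in\Parod(n)$. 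By the $n$-odd part of Lemma \ref{cbs}, the difference $\psi^{\be_n,+}_{\al,+}-\psi^{\be_n,-}_{\al,+}$ is nonzero precisely when $\al=(n)$, so we deduce $\psi^{\la,\eps}_{(n),+}=0$: the character of $E^\la_{(\pm)}$ vanishes on both halves of the class of $n$-cycles. Lifting back, $\psi^\la_{(n)}=0$, i.e. the $2$-Brauer character of $D^\la$ vanishes on $n$-cycles. Translating via Lemma \ref{L280520_2} and the equality of $\Q$-spans of $\{\ze^\mu\}$ and $\{\psi^\mu\}$ (Lemma \ref{L280520}), this vanishing, combined with the explicit formula $\ze^{(n)}_{(n)}=\pm1$ and the branching $[D^{\be_n}\da_{\s_{n-1}}]=2[D^{\be_{n-1}}]$, should pin down which spin blocks can contribute, and in particular force $D^\nu$ to lie in a block whose core is small. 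Concretely I expect to show that $\psi^\nu$ — being a constituent of $\psi^\la\cdot\psi^{\be_n}$ and vanishing on $n$-cycles in the right way — must be the character of a composition factor of $S(\la)\otimes S((n))$ whose labelling partition, by Lemma \ref{bsspin} and Lemma \ref{L280520_3}, is bounded below by $\dbl$ of a partition of height $\le 2$; since $\nu\notin\Parinv_2(n)$ is forced to be a two-row-type partition this yields $\nu=(n-b,b)$ or $\nu=\dbl(n-b,b)$, and correspondingly $D^\nu$ is a composition factor of some $S((n-a,a))$ with $1\le a\le(n-1)/2$ (the Specht modules $S((n-a,a))$ are exactly the ones whose composition factors are the $D^{(n-b,b)}$ and $D^{\dbl(n-b,b)}$). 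Finally, $h(\la)\le 4$ follows from $h(\nu)\le 2$ (or $\le 4$ in the doubled case) together with the earlier observation, via Lemma \ref{L080620_2} and Lemma \ref{L280520_2}(ii), that a composition factor of $D^\la\otimes D^{\be_n}$ of bounded height controls $h(\la)$: if $h(\la)$ were too large then $S(\la)$ would lie in a block forcing all of its modular constituents to have large height, contradicting $h(\nu)$ small.

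The main obstacle I anticipate is the last implication — extracting a bound on $h(\la)$ and the precise two-row shape of $\nu$ from the vanishing of $\psi^\la$ on $n$-cycles. The vanishing statement alone is soft; turning it into "$D^\nu$ is a composition factor of $S((n-a,a))$" requires carefully matching the combinatorics of spin decomposition numbers (which partitions $\mu$ have $\dbl(\mu)$ or regularisations thereof of height $\le 2$, and when $\ze^\mu$ restricted to split classes distinguishes the $\pm$ parts) against the constraint $\nu\notin\Parinv_2(n)$. I would handle this by running the character argument on all split classes simultaneously: vanishing of $\psi^{\la,\eps}$ on $n$-cycles is the only consequence in the $h(\al)=1$ case, but pushing to $h(\al)=2$ classes and using the explicit values from Lemma \ref{cbs} and Lemma \ref{L290520} should cut down $\la$ enough that a direct check, plus the dimension/branching bookkeeping of Lemma \ref{branchingbs}, closes the argument. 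A secondary technical point is ensuring the lift from $\A_n$-isomorphisms to $\s_n$-composition-factor statements is reversible enough to feed into Lemma \ref{L280520_3}; this is routine given that $\nu\notin\Parinv_2(n)$ makes $D^\nu\da_{\A_n}$ irreducible.
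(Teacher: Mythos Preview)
Your approach diverges from the paper's at the key step, and the divergence creates a genuine gap.

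First a small error: your subtraction yielding $\psi^{\la,\eps}_{\al,+}(\psi^{\be_n,+}_{\al,+}-\psi^{\be_n,-}_{\al,+})=0$ is only valid when $\la\notin\Parinv_2(n)$. If $\la$ splits, conjugating $E^\la_\eps\otimes E^{\be_n}_+$ gives $E^\la_{-\eps}\otimes E^{\be_n}_-$, so what you actually get is $\psi^{\la,\eps}_{\al,+}\psi^{\be_n,+}_{\al,+}=\psi^{\la,-\eps}_{\al,+}\psi^{\be_n,-}_{\al,+}$, from which you cannot factor out $\psi^{\la,\eps}$.

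More seriously, even granting your conclusion $\psi^{\la,\eps}_{(n),+}=0$, this is a constraint on $\la$, not on $\nu$, and you never convert it into the stated conclusion that $D^\nu$ is a composition factor of some $S((n-a,a))$. Your proposed remedy of ``pushing to $h(\al)=2$ classes'' cannot work: since $n$ is odd, every $\al\in\Paro(n)$ has an odd number of parts, so there are no $h(\al)=2$ classes at all. The only split class where $\psi^{\be_n,+}-\psi^{\be_n,-}$ is nonzero is $(n)$ itself, so your difference argument extracts exactly one equation and then stops.

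The paper's argument is entirely different and works directly with $\psi^\nu$ via $2$-divisibility rather than vanishing. From $\psi^\nu=\psi^{\la(,\pm)}\psi^{\be_n,\pm}$ and Lemma~\ref{cbs}, one sees that $2\mid\psi^\nu_\al$ unless $h(\al)\in\{1,3\}$. Lemma~\ref{L290520} then lets one choose nonnegative integers so that $\psi^\nu$ plus an explicit combination of $\ze^{(n)}$, $\ze^{p(x,2y)}$, $\ze^{(x,y,z)}$ is divisible by $2$ on every class. Hence $D^\nu$ occurs with odd multiplicity in the reduction mod $2$ of one of $S((n),0)$, $S(p(x,2y),\pm)$, $S((x,y,z),0)$. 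The crucial observation you are missing is that for $\mu$ with $h(\mu)$ odd the spin module $S(\mu,0)$ splits over $\widetilde{\A}_n$, so since $\nu\notin\Parinv_2(n)$ its multiplicity there is automatically even; this forces $D^\nu$ into some $S(p(x,2y))=S((n-a,a))$, and Lemma~\ref{L280520_2} gives the shape of $\nu$. The bound $h(\la)\le 4$ then comes cleanly from the adjoint embedding $D^\la\subseteq D^\nu\otimes D^{\be_n}$, so $D^\la$ is a composition factor of $S((n-a,a))\otimes S((n))$, and Lemma~\ref{bsspin} bounds the height by $2h((n-a,a))=4$.
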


\begin{proof}
Recall from Section \ref{s2} that if $\ga$ is a composition then $p(\ga)$ is the partition obtained by reordering the parts of $\ga$.

By assumption $\psi^{\la(,\pm)}\psi^{\be_n,\pm}=\psi^\nu$. So if $\al\in\Paro(n)$ and $2\nmid\psi^\nu_{\al}=\psi^\nu_{\al(,+)}$ then $2\nmid\psi^{\be_n,\pm}_{\al(,+)}$ and so $\al=(n)$ or $h(\al)=3$ by Lemma \ref{cbs}. So by Lemma \ref{L290520} there exist $d,d_{x,y},d_{x,y,z}\in\N$ such that if
\[\psi=\psi^\nu+d\ze^{(n)}+\sum_{{x,y\text{ odd}:}\atop{x+2y=n}}d_{x,y}\ze^{p(x,2y)}+\sum_{{x>y>z\text{ odd}:}\atop{x+y+z=n}}d_{x,y,z}\ze^{(x,y,z)}\]
then $2\mid\psi_{\al}$ for every $\al\in\Paro(n)$.

To see this note first that by the above remark on $\psi^\nu$ as well as by Lemma \ref{L290520}, if any of $\psi^\nu_\al$, $\ze^{(n)}_\al$, $\ze^{p(x,2y)}_\al$ or $\ze^{(x,y,z)}_\al$ is odd then $h(\al)\leq 3$ and then $h(\al)=1$ or $3$ as $n$ is odd as are the parts of $\al$. Next note that if $\al=(x,y,z)$ with $x>y>z$ all odd then $\ze^{(x,y,z)}_\al$ is odd and if $\ze^{(x,y,z)}_\be$ is odd then $\be\unrhd\al$. Similarly if $\al=(x,y,y)$ with $x\geq y$ both odd or if $\al=(y,y,x)$ with $y\geq x$ both odd then $\ze^{p(x,2y)}_\al$ is odd and if $\ze^{p(x,2y)}_\be$ is odd then $\be\unrhd\al$. Furthermore, $\ze^{(n)}_\al$ is odd if and only if $\al=(n)$ (each of these facts holds by Lemma \ref{L290520}).

It is thus possible to choose coefficients $d,d_{x,y},d_{x,y,z}\in\N$ such that $2\mid\psi_{\al}$ for every $\al\in\Paro(n)$ (this can be done recursively considering partitions $(x,y,z)$, $(x,y,y)$, $(y,y,x)$ and $(n)$ in any order which is a refinement of the reverse dominance order and choosing the corresponding coefficient $d_{x,y,z}$, $d_{x,y}$ or $d$ accordingly).

So $\psi=2\overline{\psi}$ for some Brauer character $\overline{\psi}$, as the 2-Brauer characters of the modules $D^\psi$ are linearly independent modulo 2. In particular $D^\nu$ is a composition factor with odd multiplicity of the reduction modulo 2 of one of the modules $S((n),0)$, $S(p(x,2y),\pm)$ or $S((x,y,z),0)$ (for some $x,y,z$).

If $\mu\in\Par_2(n)$ and $h(\mu)$ is odd, then $S(\mu,0)$ splits when restricted to $\A_n$. Since $\nu\not\in\Parinv_2(n)$, it follows that $[S(\mu,0):D^\nu]$ is even in this case. So $D^\nu$ must be a composition factor of some $S(p(x,2y))$. The theorem then follows from Lemma \ref{L280520_2}, up to the bound on $h(\la)$. To obtain this bound, note that $D^\la\subseteq D^\nu\otimes D^\be_n$, since  $D^\nu\subseteq D^\la\otimes D^{\be_n}$ (arguing similarly to the previous lemma). Since $D^\nu$ is a composition factor of $S(p(x,2y))$ for some $x,y$, we then have that $D^\la$ is a composition factor of $S(p(x,2y))\otimes S((n))$. In particular $h(\la)\leq 2 h(p(x,2y))=4$ by Lemma \ref{bsspin}.
\end{proof}

\begin{theor}\label{T050620_2}
Let $p=2$, $n\equiv 0\Md{4}$ and $\la\in\Par_2(n)$ with $E^\la_{(\pm)}$ not 1-dimensional. If $E^\la_{(\pm)}\otimes E^{\be_n}_\pm\cong E^\nu$ with $\nu\in\Par_2(n)\setminus\Parinv_2(n)$ then one of the following holds:
\begin{itemize}
\item $D^\nu$ is a composition factor of some $S((n-a,a))$ with $1\leq a<n/2$, in particular $\nu=(n-b,b)$ or $\dbl(n-b,b)$ with $1\leq b\leq n/2-2$,

\item $D^\nu$ is a composition factor of some $S(p(n-a-b,a,b))$ with $a\equiv b\equiv \pm 1\Md{4}$, in particular $h(\nu)\leq 4$ or $\nu=\dbl(p(n-c-d,c,d))$ with $c\equiv d\equiv \pm 1\Md 4$.
\end{itemize}
Furthermore, $h(\la)\leq 4$ in the first case and $h(\la)\leq 6$ in the second case.
\end{theor}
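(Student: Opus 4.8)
The plan is to follow the proof of Theorem \ref{T080620_2} almost verbatim, feeding in the $n\equiv 0\Md4$ part of Lemma \ref{cbs} in place of the $n$ odd part; the new features are that $\psi^{\be_n}$ is now forced to be odd only on classes of length $2$ or $4$ (rather than $1$ or $3$) and that two-part spin modules restrict reducibly to $\A_n$, which is exactly why the conclusion splits into two alternatives. First I would record that $E^\la_{(\pm)}\otimes E^{\be_n}_\pm\cong E^\nu$ gives the Brauer character identity $\psi^{\la(,\pm)}\psi^{\be_n,\pm}=\psi^\nu$ on the $2$-regular classes of $\A_n$, and observe that, $n$ being even, every $\al\in\Paro(n)$ has $h(\al)$ even. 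Combining $\psi^{\be_n}_\al=\pm2^{\lfloor(h(\al)-1)/2\rfloor}$ with Lemma \ref{cbs} (so that $\psi^{\be_n,+}_{\al,+}=\psi^{\be_n,-}_{\al,+}$ unless $h(\al)=2$, while for $h(\al)=2$ the two values are distinct and hence not both even, their sum being $\pm1$) one gets $2\mid\psi^{\be_n,\pm}_{\al(,+)}$, and therefore $2\mid\psi^\nu_\al$, unless $h(\al)\in\{2,4\}$.

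Next I would carry out the clearing step, the analogue of the ``in reverse lexicographic order'' passage of Theorem \ref{T080620_2}. Using Lemma \ref{L290520} one subtracts from $\psi^\nu$ a non-negative integer combination of the characters $\ze^\rho$ with $\rho$ among the partitions $(n-a,a)$ ($a$ odd), $p((x,y),2c)$ ($x>y$ odd, $x+y+2c=n$) and $(2c,2d)$ ($c>d$ odd, $2c+2d=n$), the always-even characters $\ze^{(2c,2d)}$ being tracked modulo $4$ through the ``$4\nmid$'' part of Lemma \ref{L290520}(iii). The anchoring statements in Lemma \ref{L290520}(i)--(iii) — each $\ze^\rho$ is odd (resp.\ $\equiv 2\Md4$) at an explicit minimal class and at nothing dominated by it — make the resulting linear system triangular, provided one clears the length-$4$ classes first, in decreasing dominance order, and then the length-$2$ classes. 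After doubling this produces non-negative integers $b_\rho$ with $2\psi^\nu+\sum_\rho b_\rho\ze^\rho\equiv 0\Md4$; reading $\ze^\rho=[S(\rho)]$ in the Grothendieck group of $2$-modular $\s_n$-modules gives $2[D^\nu]+\sum_\rho b_\rho[S(\rho)]\equiv 0\Md4$, so some $\rho$ in one of the three families has $[S(\rho):D^\nu]\not\equiv 0\Md4$; in particular $D^\nu$ is a composition factor of that $S(\rho)$.

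It then remains to sort the families and extract the stated refinements. Since $n$ is even, $S((n-a,a),0)$ and $S((2c,2d),0)$ have two parts and so split on restriction to $\widetilde\A_n$; as $\nu\notin\Parinv_2(n)$, the module $E^\nu$ is $\A_n$-self-conjugate, so $[S(\rho):D^\nu]$ is even for these two-part $\rho$ (consistent with being $\equiv 2\Md4$), whereas $S(p((x,y),2c),\pm)$ has three parts and restricts irreducibly. Hence $D^\nu$ is a composition factor either of some $S((n-a,a))$ with $1\le a<n/2$, or of some $S(p(n-a-b,a,b))$ with $a,b$ odd; in the latter case $2c=n-a-b\equiv 2\Md4$ (because $c$ is odd and $n\equiv 0\Md4$), which forces $a\equiv b\Md4$, i.e.\ $a\equiv b\equiv\pm1\Md4$. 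The refinements are now as in Theorem \ref{T080620_2}: Lemma \ref{L280520_2}(i),(ii) gives $\nu\unrhd(\dblb(\rho))^R$ and, once $h(\nu)$ is large enough, $\nu=\dbl(\mu)$ with $\mu\unrhd\rho$; discarding $\mu=(n)$ (since $\dbl(n)=\be_n\in\Parinv_2(n)$) leaves $\nu\in\{(n-b,b),\dbl(n-b,b)\}$ in the first case, with $b\le n/2-2$ because $E^\la_{(\pm)}$ is not $1$-dimensional and $\nu\notin\Parinv_2(n)$, and leaves either $h(\nu)\le4$ or $\nu=\dbl(p(n-c-d,c,d))$ with $c\equiv d\equiv\pm1\Md4$ in the second. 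Finally, $\Hom_{\s_n}(D^\nu,D^\la\otimes D^{\be_n})\cong\Hom_{\s_n}(D^\la,D^\nu\otimes D^{\be_n})$ together with $D^\nu\subseteq D^\la\otimes D^{\be_n}$ gives $D^\la\subseteq D^\nu\otimes D^{\be_n}$; as $D^{\be_n}=S((n),\eps)$ by Lemma \ref{branchingbs}, $D^\la$ is a composition factor of $S(\rho)\otimes S((n))$, so by Lemma \ref{bsspin} every composition factor $D^\mu$ of it has $\mu\unrhd(\dblb(\rho))^R$, whence $h(\la)\le h(\dblb(\rho))=2h(\rho)$, i.e.\ $4$ in the first case and $6$ in the second.

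I expect the clearing step to be the main obstacle: one must be sure that the three families of $\ze^\rho$ reach every class on which $\psi^\nu$ can be odd. Because $n$ is even, classes with repeated parts — of shapes like $(a,a,b,b)$, or $(a,a,a,a)$ when $n\equiv 4\Md8$ — now fall in the range $h(\al)\in\{2,4\}$, and one has to check either that these are still covered by the anchoring of Lemma \ref{L290520}, or else that $\psi^\nu$ (equivalently $\psi^{\la(,\pm)}$, since $\psi^{\be_n,\pm}=\pm1$ on such a class) simply cannot be odd there. This bookkeeping is the genuinely new content compared with the $n$ odd argument.
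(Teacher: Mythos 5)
Your overall strategy matches the paper: use the Brauer character identity $\psi^{\la(,\pm)}\psi^{\be_n,\pm}=\psi^\nu$ together with Lemma \ref{cbs} to restrict odd values of $\psi^\nu$ to classes $\al$ with $h(\al)\in\{2,4\}$, then clear those classes by subtracting non-negative integer combinations of spin characters anchored via Lemma \ref{L290520}, pass to the Grothendieck group to locate $D^\nu$ inside some $S(\rho)$, and finally use the splitting argument and Lemma \ref{bsspin} to sort the cases and bound $h(\la)$. Your sorting step (two-part spin modules and four-part all-odd spin modules split on $\widetilde\A_n$, so contribute evenly to $[S(\rho):D^\nu]$; three-part modules with one even part do not) and the derivation of $h(\la)\le 2h(\rho)$ are essentially what the paper does.

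The genuine gap is in the clearing step: your three families of anchors do not reach every class of length $4$. Listing the multiplicity patterns of $\al=(a,b,c,d)\in\Paro(n)$, $h(\al)=4$: the pattern $(2,2)$ ($(a,a,c,c)$) is anchored by $\ze^{(2a,2c)}$, and the patterns $(3,1)$ and $(2,1,1)$ are anchored by $\ze^{p(x,y,2z)}$; but the pattern $(1,1,1,1)$ ($(a,b,c,d)$ all distinct) and the pattern $(4)$ ($(n/4)^4$, when $n\equiv 4\Md8$) are reached by none of your three families, as one can check directly from Lemma \ref{L290520}. You flag $(n/4)^4$ as a concern but not $(a,b,c,d)$ all distinct, and you resolve neither. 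The paper closes both holes by introducing two further families: the characters $\ze^{(x,y,z,w)}$ with $x>y>z>w$ odd (anchored at $(x,y,z,w)$ itself by Lemma \ref{L290520}(i)), and when $n\equiv 4\Md 8$ the halved combination $\overline\ze=\tfrac12\bigl(\ze^{(n)}+\sum_{x<n/2\text{ odd}}\ze^{(n-x,x)}\bigr)$ (whose values are even, $\not\equiv 0\Md 4$ at $(n/4)^4$, and $\equiv 0\Md 4$ for $h(\al)\ge 6$). The four-part all-odd family is then discarded from the final conclusion by the same splitting argument you already use, and the $\overline\ze$ family, after removing $\ze^{(n)}=\psi^{\be_n}$ (excluded since $\nu\ne\be_n$), feeds the first bullet. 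A minor further point: your passage from a congruence of character values to a congruence of multiplicities in the Grothendieck group is asserted, not argued; the paper deals with the analogous step by exhibiting $\overline\ze$, $\overline\ze^{(2x,2y)}$ and $\overline\psi$ explicitly as Brauer characters, which is what is needed.
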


\begin{proof}
We will use Lemmas \ref{cbs} and \ref{L290520} without further comment.
 
 By assumption $\psi^{\la(,\pm)}\psi^{\be_n,\pm}=\psi^\nu$. So if $\al\in\Paro(n)$ and $2\nmid\psi^\nu_\al=\psi^\nu_{\al(,+)}$ then $2\nmid\psi^{\be_n,\pm}_{\al(,+)}$ and so $h(\al)=2$ or 4.

If $n/4$ is odd, then $4\mid\ze^{(n-x,x)}_{((n/4)^4)}$ if $x<n/2$ is odd by Lemma \ref{brs}. In addition, if $h(\al)\geq 6$ then $4\mid\ze^{(n-x,x)}_\al$, again by Lemma \ref{brs}. So if
\[\ze=\ze^{(n)}+\sum_{x<n/2\text{ odd}}\ze^{(n-x,x)}\]
then $2\mid\ze_{\al(,+)}$ for every $\al\in\Paro(n)$, and $4\mid\ze_{\al(,+)}$ if $h(\al)\geq 6$ and $4\nmid\ze_{((n/4)^4)}$. Note that then $\ze=2\overline{\ze}$ for some Brauer character $\overline{\ze}$ with $2\nmid\overline{\ze}_{((n/4)^4)}$.

In addition, if $x>y$ are odd then $\ze^{(2x,2y)}=2\overline{\ze}^{(2x,2y)}$ where $\overline{\ze}^{(2x,2y)}$ is a Brauer character with $2\nmid\overline{\ze}^{(2x,2y)}_{(x^2,y^2)}$.

Similarly to the previous theorem one can check that there exist $d,d_{x,y},d_{x,y,z},d_{x,y,z,w}\in\N$ such that if
\begin{align*}
\psi=&\psi^\nu+\de_{8\nmid n}d\overline{\ze}+\sum_{{x>y\text{ odd}:}\atop{x+y=n/2}}d_{x,y}\overline{\ze}^{(2x,2y)}+\sum_{{x>y\text{ odd}:}\atop{x+y=n}}d_{x,y}\ze^{(x,y)}\\
&+\sum_{{x,y,z\text{ odd}:}\atop{x+y+2z=n}}d_{x,y,z}\ze^{p(x,y,2z)}+\sum_{{x>y>z>w\text{ odd}:}\atop{x+y+z+w=n}}d_{x,y,z,w}\ze^{(x,y,z,w)}
\end{align*}
then $\psi=2\overline{\psi}$ for some Brauer character $\overline{\psi}$. We can then conclude similarly to the previous theorem, using the fact that $\ze^{(n)}=\psi^{\be_n}$ by Lemma \ref{bsspin} and $\nu\not=\be_n$ (by dimension).
\end{proof}

\section{Permutation modules}\label{s7}

We start this section with the following results on the structure of some permutation modules when $n$ is odd.

\begin{lemma}\label{L080620_3}
Let $p=2$ and suppose $n\geq 5$ is odd. Then there exist modules $\overline{Y}_2$ and $Y_{1^2}$ such that
\[M_1\cong D_1\oplus D_0,\quad M_2\cong D_1\oplus \overline{Y}_2,\quad M_{1^2}\cong D_1^{\oplus 2}\oplus Y_{1^2}\]
with $Y_{1^2}\sim \overline{Y}_2|\overline{Y}_2$. Furthermore:
\begin{itemize}
\item
if $n\equiv 1\Md{4}$ then $\overline{Y}_2\cong D_0|D_2|D_0$ and $Y_{1^2}\cong D_0|D_2|D_0|D_0|D_2|D_0$,

\item
if $n\equiv 3\Md{4}$ then $\overline{Y}_2\cong D_0\oplus D_2$, $\soc(Y_{1^2})\cong\hd(Y_{1^2})\cong \overline{Y}_2$ and both $Y_{1^2}/D_0$ and $Y_{1^2}/D_2$ have simple socle.
\end{itemize}
\end{lemma}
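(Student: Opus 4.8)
The module $M_1 = M^{(n-1,1)}$ is the natural permutation module, and in characteristic $2$ its structure is classical: it splits as $M_1 \cong D_1 \oplus D_0$ exactly when $p \nmid n$, which holds here since $n$ is odd; $D_1 = D^{(n-1,1)}$ and $D_0 = D^{(n)}$ is trivial. This is the base case. To analyse $M_2 = M^{(n-2,2)}$, the first step is to decompose it using the two-row permutation module structure: $M_2$ has a Specht filtration with factors $S^{(n)}, S^{(n-1,1)}, S^{(n-2,2)}$, and $M_2 \cong M_1 \oplus$ (Young module $Y^{(n-2,2)}$-type piece). Since $M_1 \cong D_1 \oplus D_0$ is injective relative to the appropriate block picture here (being a sum of simple projectives in the relevant sense for $n$ odd), the summand $D_1$ splits off cleanly, and one sets $\overline{Y}_2$ to be the complement. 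The composition factors of $\overline{Y}_2$ are read off from the decomposition numbers of $S^{(n)}$ and $S^{(n-2,2)}$ in characteristic $2$: one gets $D_0$ (with multiplicity $2$) and $D_2 = D^{(n-2,2)}$ (with multiplicity $1$), so $\overline{Y}_2$ has composition factors $D_0, D_2, D_0$. The self-duality of $M_2$ forces $\overline{Y}_2$ to be self-dual, which already pins down the structure: either $D_0 | D_2 | D_0$ (uniserial) or $D_0 \oplus D_2$ (according to whether $\Ext^1(D_0, D_2) = \Ext^1(D^{(n)}, D^{(n-2,2)})$ is nonzero).

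The dichotomy between $n \equiv 1$ and $n \equiv 3 \pmod 4$ is exactly the dichotomy for whether this Ext group vanishes; concretely, $S^{(n-2,2)}$ has head $D^{(n-2,2)}$ and one must determine whether $\rad(S^{(n-2,2)})/\rad^2$ contains the trivial module, or equivalently whether $D^{(n-2,2)}$ is a "JS"-type submodule of $M_2$. The cleanest way to settle this is to use the $e_i$, $f_i$ functors: apply a suitable sequence of $e_i$'s to $M_2$ (or to $S^{(n-2,2)}$) and track the socle via Lemmas~\ref{Lemma39} and \ref{lspechtfiltration}, or alternatively compute $\dim\Hom_{\s_n}(M_2, M^{(n)})$ versus $\dim\Hom_{\s_n}(S^{(n-2,2)}, M^{(n)})$. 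For $n \equiv 1 \pmod 4$ the computation shows the extension is non-split, giving the uniserial structure; for $n \equiv 3 \pmod 4$ it splits. One can also cite the known submodule lattice of $M^{(n-2,2)}$ in characteristic $2$ (e.g. from James's work, or \cite{JamesBook}), where these two cases are standard.

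For $M_{1^2} = M^{(n-2,1,1)}$, the approach is to use the transitivity/Mackey relation $M_{1^2} \cong M_1 \ua$ appropriately, or more directly $M_{1^2} \cong M^{(n-2,2)} \oplus (\text{sign-twisted piece})$ — in fact $M_{1^2} \cong M_2 \oplus (M_2 \otimes \text{something})$ is not quite right; instead one uses that $\Hom_{\s_{n-2,1,1}}$-adjunction gives $M_{1^2} \cong D_1^{\oplus 2} \oplus Y_{1^2}$ where $Y_{1^2}$ is the non-$D_1$ part, with $[Y_{1^2}] = 2[\overline{Y}_2]$ in the Grothendieck group (each composition factor of $\overline{Y}_2$ appears twice). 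The filtration $Y_{1^2} \sim \overline{Y}_2 | \overline{Y}_2$ comes from the natural short exact sequence relating $M^{(n-2,1,1)}$ to two copies of $M^{(n-2,2)}$-data via the $\A_{n}$-fixed-point / transfer map. Self-duality of $M_{1^2}$ then constrains $Y_{1^2}$: in the $n \equiv 1$ case, combining the uniserial structure of $\overline{Y}_2$ with self-duality and the multiplicity-one statements (Lemma~\ref{lspechtfiltration} applied to the relevant $e_i D^\la$) forces the uniserial shape $D_0|D_2|D_0|D_0|D_2|D_0$; in the $n \equiv 3$ case one argues that $\soc(Y_{1^2}) \cong \overline{Y}_2 \cong \hd(Y_{1^2})$ by self-duality plus the fact that $\overline{Y}_2$ is semisimple there, and then rules out a further splitting by checking $\dim\Hom(\overline{Y}_2, Y_{1^2}) = 1$ for each simple summand separately (so $Y_{1^2}/D_0$ and $Y_{1^2}/D_2$ have simple socle).

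\textbf{Main obstacle.} The genuinely delicate point is pinning down the uniserial (versus decomposable) structure in the $n \equiv 1 \pmod 4$ case and, relatedly, showing $Y_{1^2}$ is uniserial of length $6$ rather than some self-dual module of the same composition factors with a larger socle. The composition factors and self-duality alone do not determine the module — one must actually compute an Ext-group (equivalently, a $\Hom$ between the relevant quotients of Specht modules, or a socle of $e_i^{(2)}D^{(n-2,1,1)}$-type module). I would do this via the divided-power functors, identifying $\overline{Y}_2$ and $Y_{1^2}$ with explicit $e_i^{(r)}$ or $f_i^{(r)}$ images of small modules and invoking the indecomposability-with-simple-socle statements of Lemmas~\ref{Lemma39}, \ref{Lemma40} and the multiplicity-one filtration of Lemma~\ref{lspechtfiltration}; the residue bookkeeping (which depends on $n \bmod 4$, since the residue of the relevant nodes flips) is exactly what produces the two cases.
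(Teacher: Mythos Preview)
The paper's own proof is almost entirely a citation to \cite[Tables 1 and 2]{MO}; the only thing argued directly is the filtration $Y_{1^2}\sim\overline{Y}_2|\overline{Y}_2$, obtained via transitivity of induction, $M_{1^2}\cong\1\ua_{\s_{n-2}}^{\s_{n-2,2}}\ua^{\s_n}\sim(\1|\1)\ua^{\s_n}\sim M_2|M_2$, after which block decomposition strips off the $D_1$ summands. Your from-scratch plan is a different route, but it contains a genuine error. You assert that $\overline{Y}_2$ always has composition factors $D_0,D_2,D_0$ and that the $n\bmod 4$ dichotomy is purely about whether $\Ext^1(D_0,D_2)$ vanishes. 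In fact the decomposition number $[S^{(n-2,2)}:D_0]$ itself depends on $n\bmod 4$: it equals $1$ for $n\equiv 1\pmod 4$ and $0$ for $n\equiv 3\pmod 4$ (so in the latter case $S^{(n-2,2)}\cong D_2$ is simple and $\overline{Y}_2$ has only two composition factors). The two cases differ already at the Grothendieck-group level, not merely at the Ext level; following your plan as written would produce the wrong module for $n\equiv 3$.

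A second, smaller gap: your sketch for why $Y_{1^2}$ is uniserial of length six when $n\equiv 1\pmod 4$ (``self-duality plus multiplicity-one forces it'') is not an argument. There are self-dual modules with those six composition factors that are not uniserial, and Lemmas~\ref{Lemma39}--\ref{lspechtfiltration} do not apply directly since $Y_{1^2}$ is not of the form $e_iD^\lambda$ or $f_iD^\lambda$. This is exactly the kind of delicate structure result the paper outsources to \cite{MO} rather than attempting by hand.
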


\begin{proof}
This follows from \cite[Tables 1 and 2]{MO}, except for $Y_{1^2}\sim \overline{Y}_2|\overline{Y}_2$ for $n\equiv 1\Md{4}$, which holds by block decomposition since
\[M_{1^2}\cong \1\ua_{\s_{n-2}}^{\s_{n-2,2}}\ua_{\s_{n-2,2}}^{\s_n}\sim (\1|\1)\ua_{\s_{n-2,2}}^{\s_n}\sim M_2|M_2.\]
\end{proof}

Note that $Y_{1^2}$ is indecomposable in either case. Since $S_{1^2}\subseteq Y_{1^2}$, this module is the unique indecomposable direct summand of $M_{1^2}$ which is not isomorphic to a direct summand of $M_0$, $M_1$ or $M_2$ and is therefore a Young module. The module $\overline{Y}_2$ however is not always indecomposable.

We will now use the above lemma to study the submodule structure of the endomorphism rings of some irreducible modules.

\begin{lemma}\label{L080620_4}
Let $p=2$ and suppose $n\geq 5$ is odd, and let $Y_{1^2}$ be as in Lemma \ref{L080620_3}. Then $D_0\oplus D_1\subseteq\End_F(D^{\be_n})$. In addition, if $n\equiv 1\Md{4}$ then $Y_{1^2}/(D_0|D_2)\subseteq\End_F(D^{\be_n})$, while if $n\equiv 3\Md{4}$ then $Y_{1^2}/D_2\subseteq\End_F(D^{\be_n})$.
\end{lemma}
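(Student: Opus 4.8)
The key observation is that $\End_F(D^{\be_n}) \cong D^{\be_n} \otimes (D^{\be_n})^* \cong D^{\be_n} \otimes D^{\be_n}$, since $D^{\be_n}$ is self-dual (being irreducible and orthogonal as a $2$-modular reduction of a spin module). The plan is to exploit the branching rule $[D^{\be_n}\da_{\s_{n-1}}] = 2[D^{\be_{n-1}}]$ for $n$ odd (Lemma \ref{branchingbs}) together with Frobenius reciprocity and the decompositions of the small permutation modules $M_1$, $M_2$, $M_{1^2}$ from Lemma \ref{L080620_3}. First I would show $D_0 \oplus D_1 \subseteq \End_F(D^{\be_n})$: the trivial summand $D_0$ is always in an endomorphism ring (it corresponds to the identity map), and for $D_1 = D^{(n-1,1)}$ one computes $\dim\Hom_{\s_n}(D_1, D^{\be_n}\otimes D^{\be_n}) = \dim\Hom_{\s_n}(D_1 \otimes D^{\be_n}, D^{\be_n})$. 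Using $D_1 \subseteq M_1 = \1\ua_{\s_{n-1}}^{\s_n}$, one reduces via adjunction to $\Hom_{\s_{n-1}}(D^{\be_n}\da_{\s_{n-1}}, D^{\be_n}\da_{\s_{n-1}})$, which by the branching rule is $(2D^{\be_{n-1}}, 2D^{\be_{n-1}})$, giving dimension $4$; subtracting the contribution of the $D_0$ part of $M_1$ leaves room for at least one copy of $D_1$, and self-duality of $\End_F(D^{\be_n})$ forces $D_1$ to actually embed.

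Next I would handle the module coming from $M_{1^2} \cong D_1^{\oplus 2} \oplus Y_{1^2}$. The point is to compute $\Hom_{\s_n}(M_{1^2}, \End_F(D^{\be_n})) = \Hom_{\s_n}(M_{1^2} \otimes D^{\be_n}, D^{\be_n})$ and identify it via the double adjunction through $\s_{n-2}$: since $M_{1^2} = \1\ua_{\s_{n-2}}^{\s_n}$, this is $\Hom_{\s_{n-2}}(D^{\be_n}\da_{\s_{n-2}}, D^{\be_n}\da_{\s_{n-2}})$. By the branching rule applied twice, $D^{\be_n}\da_{\s_{n-2}}$ has $[D^{\be_{n-2}}]$-multiplicity equal to $2 \cdot (1 + \de_{2\nmid (n-1)}) = 2$ (as $n-1$ is even), so $D^{\be_n}\da_{\s_{n-2}} \sim D^{\be_{n-2}}|D^{\be_{n-2}}$ and this Hom space has dimension determined by $\End_{\s_{n-2}}$ of a length-$2$ module with equal composition factors — at most $4$, and in fact one should pin down that it is exactly large enough to see the relevant sections of $Y_{1^2}$ but not the quotients by $D_2$ claimed to be absent.

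The crux is then a careful bookkeeping argument: knowing the multiplicities $\dim\Hom_{\s_n}(D_\mu, \End_F(D^{\be_n}))$ for $\mu \in \{(n), (n-1,1), (n-2,2), (n-2,1^2)\}$ (equivalently $D_0, D_1, D_2$) from the above Hom computations, and knowing the uniserial/multiplicity-one structure of $Y_{1^2}$ and $\overline{Y}_2$ from Lemma \ref{L080620_3}, I would argue that the image of any nonzero map $M_{1^2} \to \End_F(D^{\be_n})$ is forced to be $Y_{1^2}/(D_0|D_2)$ when $n\equiv 1\Md 4$ and $Y_{1^2}/D_2$ when $n\equiv 3\Md 4$. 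This uses Lemma \ref{simplesoc} to control which submodules of the (self-dual) endomorphism module the image can land in, together with the fact that $\End_F(D^{\be_n})$ has no more copies of $D_2$ than the Hom-dimension count allows, so the $D_2$ at the top of $Y_{1^2}$ (or the $D_0|D_2$ top section) cannot survive in the image. The main obstacle I anticipate is precisely this last step — showing that $D_2$ (equivalently $D^{(n-2,1^2)}$, and for $n\equiv 1\Md 4$ also the distinguished $D_0$ sitting above it) does \emph{not} occur as a section of $\End_F(D^{\be_n})$ in the relevant position — which requires the exact value of $\dim\Hom_{\s_n}(D_2, \End_F(D^{\be_n}))$, obtainable by pushing the $\Hom_{\s_{n-2}}$ computation through the explicit structure of $\overline{Y}_2$ (which is $D_0|D_2|D_0$ or $D_0 \oplus D_2$ depending on $n \bmod 4$) and the branching of $D^{\be_n}$ to $\s_{n-2}$ and further to $\s_{n-2,1^2}$ versus $\s_{n-1}$.
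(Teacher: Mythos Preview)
Your approach is the same as the paper's --- compute $\dim\Hom_{\s_n}(M_\mu,\End_F(D^{\be_n}))$ via Frobenius reciprocity for $\mu\in\{(1),(2),(1^2)\}$, subtract off the known pieces of $M_\mu$, and then use the structure of $Y_{1^2}$ to identify which quotient embeds. However, two of your three Hom computations are wrong or incomplete, and the missing ingredient is exactly the one needed for the ``crux'' step you flag at the end.

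First, $\dim\End_{\s_{n-1}}(D^{\be_n}\da_{\s_{n-1}})=2$, not $4$. Since $n$ is odd, both removable nodes of $\be_n$ have the same residue $i$, so $D^{\be_n}\da_{\s_{n-1}}=e_iD^{\be_n}$ is \emph{indecomposable} (Lemma~\ref{Lemma39}(ii)), a uniserial module $D^{\be_{n-1}}|D^{\be_{n-1}}$, and its endomorphism ring is $2$-dimensional by Lemma~\ref{Lemma39}(iii). This gives $\dim\Hom(D_1,\End_F(D^{\be_n}))=1$, not $3$.

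Second, and more seriously, you do not determine $\dim\End_{\s_{n-2}}(D^{\be_n}\da_{\s_{n-2}})$ (you only say ``at most $4$'') and you do not compute $\dim\Hom_{\s_n}(\overline{Y}_2,\End_F(D^{\be_n}))$. The paper obtains both from Lemma~\ref{L090620}, which shows
\[
D^{\be_n}\da_{\s_{n-2,2}}\cong e_i^{(2)}D^{\be_n}\boxtimes M^{(1^2)}\cong D^{\be_{n-2}}\boxtimes M^{(1^2)}.
\]
This single fact yields $D^{\be_n}\da_{\s_{n-2}}\cong (D^{\be_{n-2}})^{\oplus 2}$ (so the $\s_{n-2}$-endomorphism dimension is exactly $4$, hence $\dim\Hom(Y_{1^2},\End_F(D^{\be_n}))=2$) and $\dim\End_{\s_{n-2,2}}(D^{\be_n}\da_{\s_{n-2,2}})=\dim\End_{\s_2}(M^{(1^2)})=2$ (hence $\dim\Hom(\overline{Y}_2,\End_F(D^{\be_n}))=1$). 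The constraint $\dim\Hom(\overline{Y}_2,\End_F(D^{\be_n}))=1$ is precisely what rules out the larger quotients $Y_{1^2}$ and $Y_{1^2}/D_0$ (for $n\equiv 1\Md 4$) or $Y_{1^2}$ and $Y_{1^2}/D_0$ (for $n\equiv 3\Md 4$): any of these would force two independent maps from $\overline{Y}_2$. Without this constraint your final step cannot be completed; note also that you want $\s_{n-2,2}$, not $\s_{n-2,1^2}$, since $M_2=\1\ua_{\s_{n-2,2}}^{\s_n}$.
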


\begin{proof}
Since $D_0\cong\1$
\[\dim\Hom_{\s_n}(D_0,\End_F(D^{\be_n}))=1\]
and so $D_0\subseteq\End_F(D^{\be_n})$.

Using Lemma \ref{L180121} to study composition factors together with \ref{Lemma39} to study the submodule structure, we have that
\begin{align*}
D^{\be_n}\da_{\s_{n-1}}&\cong D^{\be_{n-1}}|D^{\be_{n-1}},\\
D^{\be_n}\da_{\s_{n-2}}&\cong (D^{\be_{n-2}})^{\oplus 2}.
\end{align*}
Considering the only non-zero block component of $D^{\be_n}\da_{\s_{n-2}}$, by Lemma \ref{L090620} we then also have that
\[D^{\be_n}\da_{\s_{n-2,2}}\cong D^{\be_{n-2}}\boxtimes M^{(1^2)}.\]

From Lemma \ref{L080620_3} we then have that
\begin{align*}
\dim\Hom_{\s_n}(D_1,\End_F(D^{\be_n}))&=1,\\
\dim\Hom_{\s_n}(\overline{Y}_2,\End_F(D^{\be_n}))&=1,\\
\dim\Hom_{\s_n}(Y_{1^2},\End_F(D^{\be_n}))&=2.
\end{align*}
So $D_1\subseteq\End_F(D^{\be_n})$. Furthermore, there exists a quotient $M$ of $Y_{1^2}\sim\overline{Y}_2|\overline{Y}_2$ which is not also a quotient of $\overline{Y}_2$ and which is isomorphic to a submodule of $\End_F(D^{\be_n})$.

If $n\equiv 1\Md{4}$ then $M\in\{Y_{1^2},Y_{1^2}/D_0,Y_{1^2}/(D_0|D_2)\}$. In the first two cases $\overline{Y}_2\subseteq \End_F(D^{\be_n})$ or $D_0\oplus (\overline{Y}_2/D_0)\subseteq \End_F(D^{\be_n})$ (as $D_0\cong \1$). Since $\overline{Y}_2\cong D_0|D_2|D_0$, in either case we get a contradiction to $\dim\Hom_{\s_n}(\overline{Y}_2,\End_F(D^{\be_n}))=1$. So $M=Y_{1^2}/(D_0|D_2)$.

If $n\equiv 3\Md{4}$ then $M\in\{Y_{1^2},Y^{1^2}/D_0,Y_{1^2}/D_2\}$. In the first two cases $D_2\subseteq\End_F(D^{\be_n})$ and then $\overline{Y}_2\cong D_0\oplus D_2\subseteq\End_F(D^{\be_n})$, again contradicting the fact that $\dim\Hom_{\s_n}(\overline{Y}_2,\End_F(D^{\be_n}))=1$. So $M=Y_{1^2}/D_2$.
\end{proof}

\begin{lemma}\label{L080620_5}
Let $p=2$ and suppose $n\geq 5$ is odd, and let $Y_{1^2}$ be as in Lemma \ref{L080620_3}. If $\la\in\Par_2(n)\setminus\Parinv_2(n)$ has exactly two normal nodes, then $D_0\oplus D_1\subseteq\End_F(D^\la)$, and either $Y_{1^2}$ or $Y_{1^2}/D_0$ is contained in $\End_F(D^\la)$.
\end{lemma}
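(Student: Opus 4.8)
The plan is to mimic the argument of Lemma \ref{L080620_4}, replacing the basic spin module $D^{\be_n}$ by $D^\la$, and to exploit that $\la$ has exactly two normal nodes together with the structure of $Y_{1^2}$ from Lemma \ref{L080620_3}. First I would record, exactly as in Lemma \ref{L080620_4}, that $D_0\cong\1$ contributes $\dim\Hom_{\s_n}(D_0,\End_F(D^\la))=1$, so $D_0\subseteq\End_F(D^\la)$. Next I would compute the relevant Hom-dimensions using restriction: since $\la$ has exactly two normal nodes, Lemma \ref{Lemma39}(v) gives $\dim\End_{\s_{n-1}}(D^\la\da_{\s_{n-1}})=2$, hence
\[\dim\Hom_{\s_n}(M_1,\End_F(D^\la))=\dim\Hom_{\s_{n-1}}(F,D^\la\da_{\s_{n-1}}\otimes (D^\la\da_{\s_{n-1}})^*)=2,\]
and by Lemma \ref{L080620_3}, $M_1\cong D_1\oplus D_0$, so both $D_0$ and $D_1$ occur with multiplicity one in $\End_F(D^\la)$, giving $D_0\oplus D_1\subseteq\End_F(D^\la)$.

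The crux is then the $M_{1^2}$-part. By adjunction,
\[\dim\Hom_{\s_n}(M_{1^2},\End_F(D^\la))=\dim\End_{\s_{n-2}}(D^\la\da_{\s_{n-2}}),\]
which I would need to compute; here the hypothesis that $\la$ has exactly two normal nodes and $\la\notin\Parinv_2(n)$ should force this dimension to be small (I expect the value $6$, matching $\dim\End_{\s_n}(M_{1^2})$ restricted appropriately via Lemma \ref{L080620_3}, so that $Y_{1^2}$ occurs with multiplicity $2$ in $\End_F(D^\la)$ in the $n\equiv1\Md 4$ case and with the analogous multiplicity in the $n\equiv 3\Md 4$ case). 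Granting that, since $\overline{Y}_2$ occurs in $\End_F(D^\la)$ with multiplicity one (computed the same way from $\dim\End_{\s_{n-2,2}}$, or from the $M_2$-summand), there must be a quotient $M$ of $Y_{1^2}\sim\overline{Y}_2|\overline{Y}_2$ that is not a quotient of $\overline{Y}_2$ and embeds, up to isomorphism, in $\End_F(D^\la)$. Running through the possible such quotients as in Lemma \ref{L080620_4} and eliminating those that would force $\overline{Y}_2$ to appear with multiplicity $\geq 2$ leaves, in the $n\equiv 1\Md 4$ case, $M\in\{Y_{1^2},Y_{1^2}/D_0\}$ — and one checks $Y_{1^2}/D_0$ already has $\overline{Y}_2$ in its head plus $D_0$, so actually one must be slightly more careful here than in the basic-spin case, where a sharper socle restriction was available; for a general $\la$ with two normal nodes we only claim $Y_{1^2}$ or $Y_{1^2}/D_0$ embeds. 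In the $n\equiv 3\Md 4$ case the same bookkeeping with $\overline{Y}_2\cong D_0\oplus D_2$ leaves $M\in\{Y_{1^2},Y_{1^2}/D_0\}$ as well.

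The main obstacle I anticipate is the precise computation of $\dim\End_{\s_{n-2}}(D^\la\da_{\s_{n-2}})$ (equivalently $\dim\Hom_{\s_n}(M_{1^2},\End_F(D^\la))$): unlike $D^{\be_n}$, for which branching is completely explicit via Lemma \ref{L180121}, a general $\la$ with two normal nodes can have a more complicated double restriction. One approach is to use $D^\la\da_{\s_{n-1}}\cong\bigoplus_i e_iD^\la$ and Lemma \ref{Lemma39}, noting that the two normal nodes lie in at most two residue classes, then apply Lemma \ref{Lemma39}(v) again at the level of $\s_{n-2}$ together with Lemma \ref{L090620} to control the $\s_{n-2,2}$-restriction; the self-duality and simple-socle bookkeeping from Lemmas \ref{l8}, \ref{l9}, \ref{L110820_2} should pin down the composition factors of $D^\la\da_{\s_{n-2}}$ finely enough to bound the endomorphism dimension. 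Once that dimension is shown to be $6$ (or whatever exact small value the two-normal-node condition yields), the rest is the same head/socle elimination argument as in Lemma \ref{L080620_4}, and the weaker conclusion ``$Y_{1^2}$ or $Y_{1^2}/D_0$'' is exactly what survives without the extra socle information that was special to the basic spin module.
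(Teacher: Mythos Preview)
Your overall shape is right --- compare Hom-dimensions coming from $M_1$, $M_2$, $M_{1^2}$ and then find a large quotient of $Y_{1^2}$ inside $\End_F(D^\la)$ --- but two concrete ingredients are missing, and one of them is essential.

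First, you never use the hypothesis $\la\notin\Parinv_2(n)$. In the paper this is exactly what replaces the ``sharper socle restriction'' you allude to: since $\1|\1\cong \1\ua_{\A_n}^{\s_n}$, one has $\dim\Hom_{\s_n}(\1|\1,\End_F(D^\la))=\dim\End_{\A_n}(E^\la)=1$, so $D_0|D_0\not\subseteq\End_F(D^\la)$. This is the elimination tool: any quotient of $Y_{1^2}$ (not already a quotient of $\overline{Y}_2$) that would force $D_0|D_0\subseteq\End_F(D^\la)$ is ruled out, leaving only $Y_{1^2}$ or $Y_{1^2}/D_0$. Your proposed elimination via ``$\overline{Y}_2$ has multiplicity one'' is neither established nor correct in general; you would need the exact value of $\dim\End_{\s_{n-2,2}}(D^\la\da_{\s_{n-2,2}})$, which you do not compute and which varies with $\la$.

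Second, the paper does \emph{not} compute $\dim\End_{\s_{n-2}}(D^\la\da_{\s_{n-2}})$ exactly (your guess of $6$ fails for many $\la$ with two normal nodes). What is actually needed, and proved, is only the inequality
\[
\dim\End_{\s_{n-2}}(D^\la\da_{\s_{n-2}})\ \geq\ \dim\End_{\s_{n-2,2}}(D^\la\da_{\s_{n-2,2}})+2,
\]
which via $M_{1^2}\cong D_1^{\oplus 2}\oplus Y_{1^2}$ and $M_2\cong D_1\oplus\overline{Y}_2$ gives $\dim\Hom(Y_{1^2},\End_F(D^\la))>\dim\Hom(\overline{Y}_2,\End_F(D^\la))$ and hence the desired quotient. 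When $\eps_i(\la)=2$, $\eps_{1-i}(\la)=0$ this inequality comes quickly from Lemma~\ref{L090620}. But when $\eps_0(\la)=\eps_1(\la)=1$ it is genuinely delicate: one needs \cite[Lemmas 4.4, 4.8, 6.1]{m1} to bound $\dim\End_{\s_{n-2}}$ from below and then a submodule argument (tracking a maximal $W\subseteq e_ie_{1-i}D^\la$ with no $D^{\tilde e_{1-i}\tilde e_i\la}$ and showing it is $\s_{n-2,2}$-stable) to bound $\dim\End_{\s_{n-2,2}}$ from above. Your sketch does not separate these two cases and does not supply the argument for the second; Lemma~\ref{L090620} alone is not enough there.
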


\begin{proof}
We will use Lemma \ref{Lemma39} without further comment.

If $M\cong \1|\1$ then $M\cong \1\ua_{\A_n}^{\s_n}$ (for example from \cite[Lemma 3.24]{m1}). So, since $\la\not\in\Parinv_2(n)$,
\begin{align*}
\dim\Hom_{\s_n}(M,\End_F(D^\la))&=\dim\Hom_{\s_n}(\1\ua_{\A_n}^{\s_n},D^\la\otimes D^\la)\\
&=\dim\Hom_{\s_n}((\1\ua_{\A_n}^{\s_n})\otimes D^\la,D^\la)\\
&=\dim\Hom_{\s_n}(D^\la\da_{\A_n}\ua_{\A_n}^{\s_n},D^\la)\\
&=\dim\Hom_{\s_n}(D^\la\da_{\A_n}\ua_{\A_n}^{\s_n},D^\la)\\
&=\dim\End_{\A_n}(D^\la\da_{\A_n})\\
&=\dim\End_{\A_n}(E^\la)\\
&=1.
\end{align*}
In particular $M\not\subseteq\End_F(D^\la)$.

We have that $\dim\End_{\s_{n-1}}(D^\la\da_{\s_{n-1}})=2$. As $D_0\cong\1$, it follows by Lemma \ref{L080620_3} that
\[\dim\Hom_{\s_n}(D_1,\End_F(D^\la))=1\]
and then $D_0\oplus D_1\subseteq\End_F(D^\la)$. We will prove that
\[\dim\End_{\s_{n-2}}(D^\la\da_{\s_{n-2}})\geq \dim\End_{\s_{n-2,2}}(D^\la\da_{\s_{n-2,2}})+2.\]
By Lemma \ref{L080620_3} it will then follow that there exists a quotient of $Y_{1^2}\sim\overline{Y}_2|\overline{Y}_2$ which is not also a quotient of $\overline{Y}_2$ and which is isomorphic to a submodule of $\End_F(D^\la)$. Since $\1|\1\not\subseteq\End_F(D^\la)$, the lemma then follows (using Lemma \ref{L080620_3} to study quotients of $Y_{1^2}$).

{\bf Case 1:} assume that $\eps_i(\la)=2$ and $\eps_{1-i}(\la)=0$. Then $D^\la\da_{\s_{n-2,2}}\cong A\oplus B$ with $A\da_{\s_{n-2}}\cong (D^{\tilde e_i^2\la})^{\oplus 2}$ and $B\da_{\s_{n-2}}\cong e_{1-i}e_iD^\la$. Furthermore, $A\cong D^{\tilde e_i^2\la}\boxtimes M^{(1^2)}$ from Lemma \ref{L090620}. It follows from $M^{(1^2)}\cong \1|\1$ that
\begin{align*}
&\dim\End_{\s_{n-2}}(D^\la\da_{\s_{n-2}})-\dim\End_{\s_{n-2,2}}(D^\la\da_{\s_{n-2,2}})\\
&\geq\End_{\s_{n-2}}(A\da_{\s_{n-2}})-\dim\End_{\s_{n-2,2}}(A)\\
&=4-2=2.
\end{align*}

{\bf Case 2:} assume that $\eps_0(\la),\eps_1(\la)=1$. Note that in this case
\[D^\la\cong e_{1-i}e_iD^\la+e_ie_{1-i}D^\la\cong e_{1-i}D^{\tilde e_i\la}+e_iD^{\tilde e_{1-i}\la}.\]
Let $A$ be the top normal node of $\la$, $B$ the bottom normal node of $\la$ and $i$ be the residue of $A$. By assumption $B$ then has residue $1-i$. From \cite[Lemma 4.4]{m1} it can be checked that $\eps_{1-i}(\tilde e_i\la)=3$ and $\eps_i(\tilde e_{1-i}\la)=x$ with $x\geq 1$. By \cite[Lemma 4.8]{m1} it then follows that
\begin{align*}
&\dim\End_{\s_{n-2}}(D^\la)\\
&=\dim\End_{\s_{n-2}}(e_{1-i}e_iD^\la)+\dim\Hom_{\s_{n-2}}(e_{1-i}e_iD^\la,e_ie_{1-i}D^\la)\\
&\hspace{11pt}+\dim\Hom_{\s_{n-2}}(e_ie_{1-i}D^\la,e_{1-i}e_iD^\la)+\dim\End_{\s_{n-2}}(e_ie_{1-i}D^\la)\\
&=\dim\End_{\s_{n-2}}(e_{1-i}D^{\tilde e_i\la})+\dim\End_{\s_{n-2}}(e_iD^{\tilde e_{1-i}\la})\\
&\hspace{11pt}+2\dim\Hom_{\s_{n-2}}(e_{1-i}e_iD^\la,e_ie_{1-i}D^\la)\\
&\geq 3+x+2\cdot 1=5+x.
\end{align*}
Let $\la^1:=\tilde e_{1-i}\tilde e_i\la$ and $\la^2:=\tilde e_i\tilde e_{1-i}\la$. Note that $A$ is on the first row, while $B$ is below the third row. By the proof of \cite[Lemma 4.4]{m1}, $\la^1=(\la_A)_B$. Since $A$ is on the first row it is also the top normal node in $\la_B$. Thus
\begin{align*}
[e_{1-i}e_iD^\la:D^{\la^1}]&=[e_{1-i}D^{\tilde e_i\la}:D^{\tilde e_{1-i}\tilde e_i\la}]=\eps_{1-i}(\tilde e_i\la)=3,\\
[e_ie_{1-i}D^\la:D^{\la^1}]&=[e_iD^{\tilde e_{1-i}\la}:D^{(\tilde e_{1-i}\la)_A}]=1.
\end{align*}
Let $W\subseteq e_ie_{i-1}D^\la$ be maximal  with $[W:D^{\la^1}]=0$. Since $\soc(e_{1-i}e_iD^\la)\cong D^{\la^1}$, $W$ is the maximal submodule of $D^\la\da_{\s_{n-2}}$ with no composition factor $D^{\la^1}$. Note that if $N$ is a $F\s_{n-2,2}$-module and $\mu\in\Par_p(n-2)$, then $D^\mu\boxtimes D^{(2)}\subseteq \soc(N)$ if and only if $D^\mu\subseteq \soc(N\da_{\s_{n-2}})$. By maximality, it follows that $W$ is stable under $\s_{n-2,2}$, so from now on we will consider $W$ also as $F\s_{n-2,2}$-module. Let $Y:=D^\la\da_{\s_{n-2,2}}/W$. Since $[e_ie_{1-i}D^\la:D^{\la^1}]=1$, we have that $\soc((e_ie_{1-i}D^\la)/W)\cong D^{\la^1}$. In particular $\soc(Y\da_{\s_{n-2}})\cong (D^{\la^1})^{\oplus 2}$ and then $\soc(Y)\cong(D^{\la^1}\boxtimes D^{(2)})^{\oplus a}$ with $1\leq a\leq 2$.

{\bf Case 2.1:} assume that $\soc(Y)\cong(D^{\la^1}\boxtimes D^{(2)})^{\oplus 2}$. Let $A$ be the $\s_{n-2}$-module with $W\subseteq A\subseteq e_ie_{1-i}D^\la$ and $A/W=\soc((e_ie_{1-i}D^\la)/W)$. Since $\soc(Y)\cong(D^{\la^1}\boxtimes D^{(2)})^{\oplus 2}$, $A$ is stable under $\s_{n-2,2}$. Let $B\subseteq D^\la\da_{\s_{n-2,2}}$ maximal with $A\subseteq B$ and $[B:D^{\la^1}\boxtimes D^{(2)}]=[A:D^{\la^1}\boxtimes D^{(2)}]$.

Note that $B\subseteq e_ie_{1-i}D^\la$ (as $\s_{n-2}$-module), since $\soc(e_{1-i}e_iD^\la)\cong D^{\la^1}$ and $A\subseteq e_ie_{1-i}D^\la$. In addition, since $[A:D^{\la^1}]=[e_ie_{1-i}D^\la:D^{\la^1}]=1$ we have by maximality of $B$ that $B\supseteq e_ie_{1-i}D^\la$.

So $B=e_ie_{1-i}D^\la=e_{1-i}D^\la$ (as vector space) and then $B$ is stable under both $\s_{n-1}$ and $\s_{n-2,2}$, leading to a contradiction since $D^\la$ is irreducible as an $F\s_n$-module.

{\bf Case 2.2:} assume that $\soc(Y)\cong D^{\la^1}\boxtimes D^{(2)}$. Note that $[W:D^{\la^2}\boxtimes D^{(2)}]\leq x-1$, since $\hd((e_ie_{1-i}D^\la)/W)\cong D^{\la^2}$ and $[e_ie_{1-i}D^\la:D^{\la^1}]=1$ (so that $e_ie_{1-i}D^\la\not\subseteq W$). We have that $D^\la\da_{\s_{n-2,2}}\sim W|Y$ with $W$ and $Y$ having simple socles isomorphic to $D^{\la^2}\boxtimes D^{(2)}$ and $D^{\la^1}\boxtimes D^{(2)}$ respectively. By definition of $W$ we then have that
\begin{align*}
&\dim\End_{\s_{n-2,2}}(D^\la\da_{\s_{n-2,2}})\\
&\leq\dim\Hom_{\s_{n-2,2}}(D^\la\da_{\s_{n-2,2}},W)+\dim\Hom_{\s_{n-2,2}}(D^\la\da_{\s_{n-2,2}},Y)\\
&=\dim\Hom_{\s_{n-2,2}}(W^*,W)+\dim\Hom_{\s_{n-2,2}}(D^\la\da_{\s_{n-2,2}},Y)\\
&\leq[W:D^{\la^2}\boxtimes D^{(2)}]+[D^\la\da_{\s_{n-2,2}}:D^{\la^1}\boxtimes D^{(2)}]\\
&\leq(x-1)+4=3+x.
\end{align*}
Thus 
\[\dim\End_{\s_{n-2}}(D^\la\da_{\s_{n-2}})\geq \dim\End_{\s_{n-2,2}}(D^\la\da_{\s_{n-2,2}})+2.\]
\end{proof}

\begin{lemma}\label{L090720_2}
Let $p=2$ and suppose $n$ is odd. Assume that $\la\in\Par_2(n)\setminus\Parinv_2(n)$ has exactly two normal nodes. Then
\[\dim\Hom_{\s_n}(\End_F(D^\la),\End_F(D^{\be_n}))\geq 3.\]
\end{lemma}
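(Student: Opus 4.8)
The plan is to exhibit three linearly independent homomorphisms $\End_F(D^\la)\to\End_F(D^{\be_n})$. Since $p=2$, every simple $F\s_n$-module is self-dual, hence $\End_F(D^\la)\cong D^\la\otimes D^\la$ and $\End_F(D^{\be_n})\cong D^{\be_n}\otimes D^{\be_n}$ are self-dual; in particular a submodule of either isomorphic to $U$ gives, by dualising, a quotient isomorphic to $U^*$. Recall $n\geq5$ here, so Lemmas \ref{L080620_4} and \ref{L080620_5} apply: $D_0\oplus D_1$ is a submodule of each of $\End_F(D^\la)$ and $\End_F(D^{\be_n})$, there is a submodule $V_\la\in\{Y_{1^2},\,Y_{1^2}/D_0\}$ of $\End_F(D^\la)$, and there is a submodule $V_\be$ of $\End_F(D^{\be_n})$ equal to $Y_{1^2}/(D_0|D_2)$ if $n\equiv1\Md 4$ and to $Y_{1^2}/D_2$ if $n\equiv3\Md 4$.

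The two easy maps: the submodules $D_0\oplus D_1$ are semisimple, hence lie in the socles of the respective modules, and by self-duality $D_0$ and $D_1$ are also quotients of $\End_F(D^\la)$. For $j=0,1$ let $f_j$ be the composite $\End_F(D^\la)\twoheadrightarrow D_j\hookrightarrow\End_F(D^{\be_n})$, where the second arrow is the inclusion into the fixed copy of $D_0\oplus D_1$ inside $\soc(\End_F(D^{\be_n}))$. Then $f_0,f_1$ are nonzero with $\Im f_0\cong D_0$ and $\Im f_1\cong D_1$ lying in distinct direct summands of that copy of $D_0\oplus D_1$.

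The third map $f_3$ is the crux. By self-duality $\End_F(D^\la)$ has $V_\la^*$ as a quotient, so precomposing with $\End_F(D^\la)\twoheadrightarrow V_\la^*$ and postcomposing with $V_\be\hookrightarrow\End_F(D^{\be_n})$ turns any nonzero $\phi\in\Hom_{\s_n}(V_\la^*,V_\be)$ into a nonzero map $f_3\colon\End_F(D^\la)\to\End_F(D^{\be_n})$. I would then check, in each of the (at most four) cases for the pair $(V_\la,V_\be)$ determined by $n\bmod 4$, that $\phi$ can be chosen so that $\Im f_3$ has $D_2=D_{(n-2,2)}$ among its composition factors. This uses the explicit structure of $Y_{1^2}$ from Lemma \ref{L080620_3}: for $n\equiv1\Md 4$, $Y_{1^2}$ is uniserial with composition factors $D_0,D_2,D_0,D_0,D_2,D_0$ from socle to head, so $V_\la^*$ and $V_\be$ are uniserial and one matches a ``top'' (quotient) segment of $V_\la^*$ with a ``bottom'' (submodule) segment of $V_\be$, the common segment containing $D_2$; for $n\equiv3\Md 4$, $Y_{1^2}$ has Loewy length $2$ with $\soc\cong\hd\cong D_0\oplus D_2$, and the fact that $Y_{1^2}/D_0$ and $Y_{1^2}/D_2$ have simple socle pins down the submodule lattices of $V_\la^*$ and $V_\be$ enough to run the same comparison (for instance, when $V_\la=Y_{1^2}$ one may simply take $\phi$ to be the projection $Y_{1^2}\twoheadrightarrow Y_{1^2}/D_2=V_\be$). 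In all cases the composition factors of $\Im f_3$ lie among those of $V_\be$, which are all $D_0$ or $D_2$, so $[\Im f_3:D_1]=0$ and $[\Im f_3:D_2]\geq1$.

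Finally, $f_0,f_1,f_3$ are linearly independent: if $af_0+bf_1+cf_3=0$, then since $\Im f_0+\Im f_1$ has all composition factors in $\{D_0,D_1\}$ while $\Im(cf_3)$ is either $0$ or has a composition factor $D_2$, we must have $c=0$; then $af_0+bf_1=0$ with $\Im f_0$, $\Im f_1$ in distinct summands of the semisimple module $D_0\oplus D_1$ forces $a=b=0$. Hence $\dim\Hom_{\s_n}(\End_F(D^\la),\End_F(D^{\be_n}))\geq3$. The main obstacle is the construction of $f_3$: producing, case by case, a nonzero homomorphism $V_\la^*\to V_\be$ whose image retains a copy of $D_2$, which is a finite but somewhat fiddly inspection of the submodule structure of $Y_{1^2}$ and of its relevant quotients as given by Lemma \ref{L080620_3}.
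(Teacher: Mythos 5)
Your proof is correct and is essentially the argument the paper has in mind: the paper's own proof simply records the submodule facts from Lemmas \ref{L080620_3}--\ref{L080620_5} (there is in fact a small typo there, $D_0|D_1$ and $D_0$ should read $D_0|D_2$ and $D_2$) and concludes ``the lemma follows,'' leaving implicit exactly the construction you spell out: two homomorphisms through the summands $D_0$ and $D_1$, and a third through a quotient of $Y_{1^2}$ carrying a $D_2$ in its image, with linear independence following from comparing which of $D_0,D_1,D_2$ occur in the images. Your case-by-case check that a nonzero $\phi\colon V_\la^*\to V_\be$ with $[\Im\phi:D_2]\geq 1$ exists is the only genuinely fiddly step, and it does go through in all four cases (for $n\equiv1\pmod 4$ the modules involved are uniserial and one matches a quotient segment of $V_\la^*$ with a submodule segment of $V_\be$ of the same isomorphism type; for $n\equiv3\pmod 4$ the Loewy-length-two structure plus the simple-socle statements in Lemma \ref{L080620_3} pin down the submodule lattices sufficiently). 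So the proposal is a faithful, fleshed-out version of the paper's proof.
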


\begin{proof}
From Lemmas \ref{L080620_4} and \ref{L080620_5} we have that
\[D_0\oplus D_1\subseteq\End_F(D^\la),\End_F(D^{\be_n}).\]
Further if $Y_{1^2}$ is as in Lemma \ref{L080620_3}, then $Y_{1^2}/M\subseteq\End_F(D^\la)$ with $M=0$ or $D_0$ and $Y_{1^2}/N\subseteq\End_F(D^{\be_n})$ with $N=D_0|D_1$ if $n\equiv 1\Md 4$ or $N=D_0$ if $n\equiv 3\Md 4$. The lemma follows.
\end{proof}

\begin{lemma}\label{L240720}
Let $p=2$, $n$ be odd and $Y_{1^2}$ be as in Lemma \ref{L080620_3}. If $n\equiv 1\Md{4}$ let $c:=2$, while if $n\equiv 3\Md{4}$ let $c:=3$. If $\la\in\Par_2(n)$ with $\eps_i(\la)=3$ and $\eps_{1-i}(\la)=0$ for some residue $i$, then $D_0\oplus D_1^{\oplus 2}\oplus(Y_{1^2}/D_0)^{\oplus c}\subseteq\End_F(D^\la)$. 
\end{lemma}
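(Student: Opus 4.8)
\textbf{Proof plan for Lemma \ref{L240720}.}

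The plan is to mimic the strategy of Lemmas \ref{L080620_4} and \ref{L080620_5}: decompose the relevant permutation modules, compute the multiplicities of $D_0$, $D_1$ and the "exotic" summand $\overline{Y}_2$ in $\End_F(D^\la)$ via Frobenius reciprocity, and then use the submodule structure of $Y_{1^2}$ from Lemma \ref{L080620_3} to pin down which quotients of $Y_{1^2}$ actually occur. First I would record the branching data. Since $\eps_i(\la)=3$ and $\eps_{1-i}(\la)=0$, Lemma \ref{Lemma39} gives $D^\la\da_{\s_{n-1}}\cong e_iD^\la$, which is indecomposable with socle and head $D^{\tilde e_i\la}$ and $[e_iD^\la:D^{\tilde e_i\la}]=\binom{3}{1}=3$; similarly $D^\la\da_{\s_{n-2}}\cong e_i^2D^\la\cong (e_i^{(2)}D^\la)^{\oplus 2}$ with $[e_i^{(2)}D^\la:D^{\tilde e_i^2\la}]=\binom{3}{2}=3$, so $\dim\End_{\s_{n-2}}(D^\la\da_{\s_{n-2}})\geq 2\cdot 3^2=18$ coming just from the $D^{\tilde e_i^2\la}$-part (more carefully, $\End_{\s_{n-2}}(e_i^{(2)}D^\la)$ has dimension $3$ and $\End_{\s_{n-2}}((e_i^{(2)}D^\la)^{\oplus2})$ has dimension $12$). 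By Lemma \ref{L090620}, the $\s_{n-2,2}$-summand $A\subseteq D^\la\da_{\s_{n-2,2}}$ with $A\da_{\s_{n-2}}=e_i^2D^\la$ is $A\cong e_i^{(2)}D^\la\boxtimes M^{(1^2)}$, so $\dim\End_{\s_{n-2,2}}(A)=\dim\End_{\s_{n-2}}(e_i^{(2)}D^\la)\cdot\dim\End_{\s_2}(M^{(1^2)})=3\cdot 2=6$. The gap $\dim\End_{\s_{n-2}}(D^\la\da_{\s_{n-2}})-\dim\End_{\s_{n-2,2}}(D^\la\da_{\s_{n-2,2}})$ restricted to this summand is $12-6=6$.

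Next I would translate these into $\Hom$-multiplicities from $M_0,M_1,M_2,M_{1^2}$ into $\End_F(D^\la)$. Using $M_1\cong D_1\oplus D_0$, $M_2\cong D_1\oplus\overline{Y}_2$, $M_{1^2}\cong D_1^{\oplus2}\oplus Y_{1^2}$ (Lemma \ref{L080620_3}) together with $\dim\Hom_{\s_n}(M_a,\End_F(D^\la))=\dim\End_{\s_{n-a}}(\,\cdot\,)$-type identities (via Frobenius reciprocity and $M_a=\1\ua_{\s_{n-a,a}}^{\s_n}$, $M_{1^2}=\1\ua_{\s_{n-2,1^2}}^{\s_n}$), one extracts: $\dim\Hom_{\s_n}(D_1,\End_F(D^\la))=2$ (since $\dim\End_{\s_{n-1}}(D^\la\da_{\s_{n-1}})=3=\dim\Hom(M_1,-)$ and $D_0$ contributes $1$), hence $D_0\oplus D_1^{\oplus2}\subseteq\End_F(D^\la)$; and that the multiplicity of $\overline{Y}_2$-type composition data forces a copy of $Y_{1^2}$-quotient of the right size. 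The key numerical input is that the "new" contribution of $M_{1^2}$ beyond $M_2$ (i.e. $\dim\Hom(M_{1^2},\End_F D^\la)-2\dim\Hom(M_2,\End_F D^\la)$, which measures copies of $Y_{1^2}$-quotients not already quotients of $\overline{Y}_2$) equals $c$ in each congruence class: for $n\equiv1\Md4$ this should be $c=2$ and for $n\equiv3\Md4$ this should be $c=3$, and in each case the relevant quotient must be $Y_{1^2}/D_0$ rather than $Y_{1^2}$ itself or a smaller quotient, because (as in the proof of Lemma \ref{L080620_5}) having $Y_{1^2}$ or $Y_{1^2}/\soc$ inside $\End_F(D^\la)$ as a genuine submodule would force $D_2\subseteq\End_F(D^\la)$ or give an extra copy of $D_0$, contradicting the computed multiplicity $\dim\Hom_{\s_n}(\overline{Y}_2,\End_F(D^\la))$ (one must also rule out the case that $\1|\1\cong\1\ua_{\A_n}^{\s_n}$ sits inside, using $\la\notin\Parinv_2(n)$, exactly as in Lemma \ref{L080620_5} — here $\eps_{1-i}(\la)=0$ and $\eps_i(\la)=3$ means $\la$ is not a double and $D^\la\da_{\A_n}$ is irreducible so this is automatic, or one argues via $D_0$-multiplicity).

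I would then assemble the pieces: $D_0$ with multiplicity $1$, $D_1$ with multiplicity $2$, and $c$ copies of $Y_{1^2}/D_0$, all embedding simultaneously into $\End_F(D^\la)$ because they live in distinct isotypic pieces of the relevant permutation modules or because their socles are pairwise non-isomorphic; a short block-decomposition / socle argument (as in the final lines of Lemmas \ref{L080620_4}, \ref{L080620_5}, \ref{L090720_2}) shows the sum is direct, giving $D_0\oplus D_1^{\oplus2}\oplus(Y_{1^2}/D_0)^{\oplus c}\subseteq\End_F(D^\la)$. The main obstacle I anticipate is the bookkeeping needed to show the multiplicity of $Y_{1^2}/D_0$ is exactly $c$ and not larger, and that it is this specific quotient: one needs a careful comparison of $\dim\End_{\s_{n-2}}(D^\la\da_{\s_{n-2}})$ against $\dim\End_{\s_{n-2,2}}(D^\la\da_{\s_{n-2,2}})$ using the full decomposition of $D^\la\da_{\s_{n-2,2}}$ (not just the summand $A$), controlling the contribution of the other summand whose restriction to $\s_{n-2}$ is $e_{1-i}e_iD^\la=0$ here — so in fact $D^\la\da_{\s_{n-2,2}}\cong A$ is forced, which should simplify matters considerably compared to Lemma \ref{L080620_5}, Case 2. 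The congruence-dependent value of $c$ will come out of the structure of $Y_{1^2}$: uniserial of length $6$ when $n\equiv1\Md4$ versus the more degenerate structure when $n\equiv3\Md4$, exactly paralleling the $n\equiv1$ vs $n\equiv3$ dichotomy in Lemma \ref{L080620_4}.
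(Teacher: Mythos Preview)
Your overall strategy matches the paper's: compute $\dim\Hom_{\s_n}(D_1,\End_F D^\la)=2$ from $\dim\End_{\s_{n-1}}(D^\la\da_{\s_{n-1}})=3$, then use Lemma~\ref{L090620} on the summand $A\cong e_i^{(2)}D^\la\boxtimes M^{(1^2)}$ to get the gap
\[
\dim\End_{\s_{n-2}}(A\da_{\s_{n-2}})-\dim\End_{\s_{n-2,2}}(A)=12-6=6.
\]
However there are two problems.

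First, a minor error: you claim $e_{1-i}e_iD^\la=0$, i.e.\ the other summand $B$ vanishes. The hypothesis $\eps_{1-i}(\la)=0$ only gives $e_{1-i}D^\la=0$; after removing an $i$-node, $\tilde e_i\la$ may well acquire $(1{-}i)$-normal nodes, so $B\neq 0$ in general. The paper does not need $B=0$: it simply uses the inequality on the $A$-part, obtaining $\dim\Hom(Y_{1^2},M)\ge\dim\Hom(\overline Y_2,M)+4$ and hence $\dim\Hom(Y_{1^2}/D_0,M)\ge\dim\Hom(\overline Y_2,M)+3$ (subtracting $\dim\Hom(D_0,M)=1$). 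Your computation survives, but your justification does not.

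Second, and more seriously, you have the final step backwards and incomplete. The lemma asserts only an inclusion, so there is nothing to ``rule out'' (if $Y_{1^2}$ itself embedded that would be irrelevant, and in any case $Y_{1^2}/D_0$ is a quotient, not a submodule, of $Y_{1^2}$), and there is no need to show the multiplicity is \emph{exactly} $c$. What you are missing is the passage from the Hom-dimension bound to an actual embedding of $(Y_{1^2}/D_0)^{\oplus c}$. The paper does this via the filtration $Y_{1^2}/D_0\sim X|\overline Y_2$ with $X\cong D_2|D_0$ (for $n\equiv 1$) or $X\cong D_2$ (for $n\equiv 3$). A hom $\phi:Y_{1^2}/D_0\to M$ is injective iff it is non-zero on the simple socle $D_2$; those vanishing on $X$ factor through $\overline Y_2$ (at most $y:=\dim\Hom(\overline Y_2,M)$ of them), leaving $\ge 3$ that are non-zero on $X$. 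When $X=D_2$ these are all injective, giving $c=3$. When $X=D_2|D_0$, at most $\dim\Hom(D_0,M)=1$ further hom can vanish on $D_2$ while being non-zero on $X$, leaving $\ge 2$ injective homs, hence $c=2$. Linearly independent injective homs with simple socle $D_2$ give an embedding of $(Y_{1^2}/D_0)^{\oplus c}$, and since $\soc(Y_{1^2}/D_0)\cong D_2$ is disjoint from $D_0,D_1$, the sum with $D_0\oplus D_1^{\oplus 2}$ is automatically direct. This filtration argument is the source of the congruence-dependent $c$, not the structural dichotomy of $Y_{1^2}$ you point to.
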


\begin{proof}
We will use Lemma \ref{Lemma39} and \ref{L080620_3} without further comment. Again from $D_0\cong \1$ we have that
\[D_0\subseteq\End_F(D^\la),\End_F(D^{\be_n}).\]
Note that $n\geq 6$, since $\la$ has three normal nodes. Since $\eps_i(\la)=3$ and $\eps_{1-i}(\la)=0$ we have that $\dim\End_{\s_{n-1}}(D^\la)=3$. So $\dim\Hom_{\s_n}(D_1,\End_F(D^\la))=2$ and it is then enough to prove that $(Y_{1^2}/D_0)^{\oplus c}\subseteq\End_F(D^\la)$, as $\soc(Y_{1^2}/D_0)\cong D_2$.

From Lemma \ref{L090620} we have that $D^\la\da_{\s_{n-2,2}}\cong A\oplus B$ with $A\cong e_i^{(2)}D^\la\boxtimes M^{(1^2)}$ and $B\da_{\s_{n-2}}\cong e_{1-i}e_iD^\la$. In particular
\begin{align*}
&\dim\End_{\s_{n-2}}(D^\la)-\dim\End_{\s_{n-2,2}}(D^\la\da_{\s_{n-2,2}})\\
&\geq\dim\End_{\s_{n-2}}(A\da_{\s_{n-2}})-\dim\End_{\s_{n-2,2}}(A)\\
&=12-6
\end{align*}
and then
\begin{align*}
\dim\Hom_{\s_n}(Y_{1^2},\End_F(D^\la))&\geq\dim\Hom_{\s_n}(\overline{Y}_2,\End_F(D^\la))+4\\
\dim\Hom_{\s_n}(Y_{1^2}/D_0,\End_F(D^\la))&\geq\dim\Hom_{\s_n}(\overline{Y}_2,\End_F(D^\la))+3.
\end{align*}
The lemma then follows from $Y_{1^2}/D_0\sim X|\overline{Y}_2$ with $X\cong D_2|D_0$ if $n\equiv 1\Md{4}$ or $X\cong D_2$ if $n\equiv 3\Md{4}$.
\end{proof}

\begin{lemma}\label{L240720_2}
Let $p=2$ and $n$ be odd. If $n\equiv 1\Md{4}$ let $c:=2$, while if $n\equiv 3\Md{4}$ let $c:=3$. If $\la\in\Par_2(n)$ with $\eps_i(\la)=3$ and $\eps_{1-i}(\la)=0$ for some residue $i$, then
\[\dim\Hom_{\s_n}(\End_F(D^\la),\End_F(D^{\be_n}))\geq 3+c.\]
\end{lemma}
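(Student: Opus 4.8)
The plan is to combine the two structural inputs we already have. By Lemma~\ref{L080620_4}, since $n$ is odd, $D_0\oplus D_1\subseteq\End_F(D^{\be_n})$, and moreover $Y_{1^2}/N\subseteq\End_F(D^{\be_n})$ where $N=D_0|D_2$ if $n\equiv 1\Md 4$ and $N=D_2$ if $n\equiv 3\Md 4$. On the other side, the hypothesis $\eps_i(\la)=3$, $\eps_{1-i}(\la)=0$ is exactly the situation of Lemma~\ref{L240720}, which gives $D_0\oplus D_1^{\oplus 2}\oplus (Y_{1^2}/D_0)^{\oplus c}\subseteq\End_F(D^\la)$, with $c=2$ for $n\equiv 1\Md 4$ and $c=3$ for $n\equiv 3\Md 4$. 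So both endomorphism modules are direct sums whose summands are among $D_0$, $D_1$, and quotients of $Y_{1^2}\sim\overline{Y}_2|\overline{Y}_2$, and I want to count $\dim\Hom_{\s_n}(\End_F(D^\la),\End_F(D^{\be_n}))$ from below.

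First I would record the decomposition of $\End_F(D^\la)$ explicitly: it has a summand $D_0$, two copies of $D_1$, and $c$ copies of $Y_{1^2}/D_0$. Then I would compute the contribution of each to $\Hom_{\s_n}(-,\End_F(D^{\be_n}))$. The summand $D_0\cong\1$ contributes at least $1$, since $D_0\subseteq\End_F(D^{\be_n})$ and $\dim\Hom_{\s_n}(\1,\End_F(D^{\be_n}))=\dim\End_{\s_n}(D^{\be_n})=1$. Each copy of $D_1$ contributes at least $1$: indeed $D_1$ is a direct summand of $\End_F(D^{\be_n})$ (Lemma~\ref{L080620_4}), so $\dim\Hom_{\s_n}(D_1,\End_F(D^{\be_n}))\geq 1$; this gives at least $2$ more. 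Finally $Y_{1^2}/D_0$ maps onto $\End_F(D^{\be_n})$: since $Y_{1^2}/N\subseteq\End_F(D^{\be_n})$ and $N$ is a submodule of the maximal submodule of $Y_{1^2}$ with no $D_1$-composition factor while $D_0$ is in the socle, one checks using the uniserial/socle structure of $Y_{1^2}$ from Lemma~\ref{L080620_3} that there is a nonzero $\s_n$-homomorphism $Y_{1^2}/D_0\to Y_{1^2}/N\hookrightarrow\End_F(D^{\be_n})$ (the head of $Y_{1^2}/D_0$, namely $\overline{Y}_2$ or a quotient thereof, matches the head of $Y_{1^2}/N$). So each of the $c$ copies of $Y_{1^2}/D_0$ contributes at least $1$, giving at least $c$ more.

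Adding up, $\dim\Hom_{\s_n}(\End_F(D^\la),\End_F(D^{\be_n}))\geq 1+2+c=3+c$, which is the claim. The step I expect to require the most care is verifying that $Y_{1^2}/D_0$ genuinely admits a nonzero map into $\End_F(D^{\be_n})$, i.e.\ that dividing $Y_{1^2}$ by $N$ (which includes the submodule $D_2$, and in the $n\equiv 1\Md 4$ case also a $D_0$) still leaves a common quotient with $Y_{1^2}/D_0$; this is a matter of comparing the two explicit descriptions of the submodule lattice of $Y_{1^2}$ in the two congruence cases from Lemma~\ref{L080620_3}, together with the fact (also from Lemma~\ref{L080620_4}) that $Y_{1^2}/(D_0|D_2)$ resp.\ $Y_{1^2}/D_2$ embeds in $\End_F(D^{\be_n})$. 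One should also take a moment to confirm that the block decompositions make the summands of $\End_F(D^\la)$ and $\End_F(D^{\be_n})$ line up so that the counted homomorphisms are linearly independent, but since the relevant summands $D_0$, $D_1$, and $Y_{1^2}/D_0$ are pairwise non-isomorphic with distinct heads, a standard argument on a direct sum decomposition of the source handles this.
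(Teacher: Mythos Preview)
Your approach is exactly the one the paper uses (it cites Lemmas \ref{L080620_4} and \ref{L240720} and says ``as in the proof of Lemma \ref{L090720_2}''), and your identification of the relevant submodules on both sides is correct. However, there is a genuine gap in the execution.

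You write that $\End_F(D^\la)$ ``has a summand $D_0$, two copies of $D_1$, and $c$ copies of $Y_{1^2}/D_0$'', and then compute $\dim\Hom_{\s_n}(A,\End_F(D^{\be_n}))$ for each piece $A$ of this ``decomposition''. But Lemma \ref{L240720} only gives you a \emph{submodule}
\[
X:=D_0\oplus D_1^{\oplus 2}\oplus (Y_{1^2}/D_0)^{\oplus c}\subseteq\End_F(D^\la),
\]
not a direct summand, and an inclusion $X\hookrightarrow\End_F(D^\la)$ does \emph{not} yield an injection $\Hom_{\s_n}(X,\End_F(D^{\be_n}))\hookrightarrow\Hom_{\s_n}(\End_F(D^\la),\End_F(D^{\be_n}))$. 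So the inequality you want does not follow from the quantities you compute.

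The fix is a one-line use of self-duality, which is implicit in the paper's terse ``the lemma follows''. Since $\End_F(D^\la)\cong\End_F(D^\la)^*$, the inclusion $X\subseteq\End_F(D^\la)$ dualises to a surjection $\End_F(D^\la)\twoheadrightarrow X^*$, and hence
\[
\dim\Hom_{\s_n}(\End_F(D^\la),\End_F(D^{\be_n}))\geq\dim\Hom_{\s_n}(X^*,\End_F(D^{\be_n})).
\]
Now $X^*\cong D_0\oplus D_1^{\oplus 2}\oplus((Y_{1^2}/D_0)^*)^{\oplus c}$, and your counts for $D_0$ and $D_1$ go through unchanged since these are self-dual. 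For the remaining summands you must check $\Hom_{\s_n}((Y_{1^2}/D_0)^*,\End_F(D^{\be_n}))\neq 0$ rather than $\Hom_{\s_n}(Y_{1^2}/D_0,\End_F(D^{\be_n}))\neq 0$; this still holds by the explicit structure in Lemma \ref{L080620_3} (for instance, in both congruence cases $(Y_{1^2}/D_0)^*$ has a quotient isomorphic to a non-trivial submodule of $Y_{1^2}/N\subseteq\End_F(D^{\be_n})$). With that adjustment the count $1+2+c=3+c$ is valid.
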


\begin{proof}
This follows from Lemmas \ref{L080620_4} and \ref{L240720} (as in the proof of Lemma \ref{L090720_2}).
\end{proof}

\section{Double split case}\label{s8}

We will now consider tensor products of two splitting modules and show that such products are almost never irreducible.

\begin{theor}\label{T050620}
Suppose $p=2$ and $\la\in\Parinv_2(n)$, with $n\not\equiv 2\Md{4}$. If $E^\la_\pm\otimes E^{\be_n}_\pm$ is irreducible, then
\begin{itemize}
\item
if $\alpha\in\Parod(n)$ and $\psi^{\la,+}_{\al,+}\not=\psi^{\la,-}_{\al,+}$ then $h(\al)\leq 2$,

\item
one of the following holds:
\begin{itemize}
\item
$\la=\be_n$,

\item
$n\equiv 0\Md{4}$ and $\la=\dbl(n-a,a)$ with $a\leq n/2-3$ odd.
\end{itemize}
\end{itemize}
\end{theor}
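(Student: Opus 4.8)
The plan is to analyse the hypothesis $E^\la_\pm\otimes E^{\be_n}_\pm\cong E^\nu_{(\pm)}$ via Brauer characters, so the first step is to note that irreducibility forces $\nu\notin\Parinv_2(n)$ by Theorem \ref{T280520} (using $n\not\equiv 2\Md4$), so $E^\nu=D^\nu\da_{\A_n}$ and the character equation reads $\psi^{\la,\pm}\cdot\psi^{\be_n,\pm}=\psi^\nu\da_{\A_n}$ on $\A_n$-classes. To prove the first bullet I would argue by contradiction: suppose $\al\in\Parod(n)$ with $\psi^{\la,+}_{\al,+}\neq\psi^{\la,-}_{\al,+}$ and $h(\al)\geq 3$. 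Since the two values differ but both restrict from $\psi^\la_\al/?$, their difference is supported only on the split classes $C_{\al,\pm}$; on the $\s_n$-side $\psi^{\be_n}_\al=\pm 2^{\lfloor(h(\al)-1)/2\rfloor}$ is nonzero by Lemma \ref{cbs}, and for $n\not\equiv 2\Md4$, $\psi^{\be_n,\pm}$ is nonzero on \emph{every} $2$-regular class by the last bullet of Lemma \ref{cbs}. Multiplying the difference $\psi^{\la,+}-\psi^{\la,-}$ (which vanishes off split classes) by $\psi^{\be_n,\pm}$ and comparing the two products $\psi^{\la,+}\psi^{\be_n,+}$ and $\psi^{\la,-}\psi^{\be_n,-}$, both of which must equal the single character $\psi^\nu\da_{\A_n}$ coming from an $\s_n$-module and hence take equal values on $C_{\al,+}$ and $C_{\al,-}$, yields $(\psi^{\la,+}_{\al,+}-\psi^{\la,-}_{\al,+})\cdot(\psi^{\be_n,+}_{\al,+}-\psi^{\be_n,-}_{\al,+})=0$ (after subtracting); combined with the "if and only if" statements of Lemma \ref{cbs} describing exactly when $\psi^{\be_n,+}_{\al,+}\neq\psi^{\be_n,-}_{\al,+}$ (namely $\al=(n)$, or $h(\al)=2$ when $n\equiv0\Md4$), this forces $h(\al)\leq 2$ unless $\al=(n)$, and the $\al=(n)$ case is handled separately since then $h(\al)=1\leq2$ anyway.

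For the second bullet, I would split on $n$ odd versus $n\equiv0\Md4$ and feed the first bullet into the divisibility machinery of Section \ref{s6}. When $n$ is odd, the first bullet says $\psi^{\la,\pm}$ can differ on split classes only for $h(\al)\leq 2$; this is exactly the input needed to run the argument of Theorem \ref{T080620_2}, so $D^\nu$ is a composition factor of some $S((n-a,a))$, whence $\nu=(n-b,b)$ or $\dbl(n-b,b)$ and $h(\la)\leq 4$. When $n\equiv0\Md4$ the first bullet restricts to $h(\al)\leq 2$ on split classes where the characters differ, which together with Lemma \ref{cbs} (the $h(\al)=2$ criterion) lets me invoke Theorem \ref{T050620_2}, landing $D^\nu$ as a composition factor of some $S((n-a,a))$ or $S(p(n-a-b,a,b))$ with the stated congruences, and $h(\la)\leq 4$ or $6$ respectively. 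In both cases I then want to run an $\End_F$-dimension count: using Lemmas \ref{L090720_2} and \ref{L240720_2}, if $\la$ had exactly two normal nodes (or three normal nodes all of one residue) then $\dim\Hom_{\s_n}(\End_F(D^\la),\End_F(D^{\be_n}))\geq 3$ (resp.\ $\geq 3+c$), but irreducibility of $E^\la_\pm\otimes E^{\be_n}_\pm$ forces this Hom-space to be small (of dimension $2$, coming just from $D_0\oplus D_1$, since $E^\nu$ is irreducible so $\End_F(E^\nu)$ contributes only the $\A_n$-trivial and one further constituent); contradiction. So $\la$ must be a JS-partition (one normal node), and among JS-partitions in $\Par_2(n)$ with $h(\la)\leq 4$ (resp.\ $\leq 6$) and $\nu$ of the restricted form above, one checks using Lemma \ref{L280520_2}(ii) and Lemma \ref{bsspin} that the only survivors are $\la=\be_n$ and, when $n\equiv 0\Md4$, $\la=\dbl(n-a,a)$ with $a\equiv n\Md2$ odd; the bound $a\leq n/2-3$ comes from requiring $\nu\ne\be_n$ and $E^\la_\pm$ not $1$-dimensional, i.e.\ $\dbl(n-a,a)\ne\be_n$, together with the shape constraint on composition factors of $S((n-a,a))$.

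The main obstacle will be the $\End_F$ bookkeeping in the $n\equiv0\Md4$ case: there $E^{\be_n}$ itself splits on restriction to $\A_n$ (unlike the $n$ odd basic-spin situation handled by Lemma \ref{L080620_4}), so the clean identifications $D^{\be_n}\da_{\s_{n-1}}\cong D^{\be_{n-1}}|D^{\be_{n-1}}$, $D^{\be_n}\da_{\s_{n-2,2}}\cong D^{\be_{n-2}}\boxtimes M^{(1^2)}$ used in Lemma \ref{L080620_4} need to be re-derived or replaced, and the permutation-module structure results of Lemma \ref{L080620_3} are stated only for $n$ odd. I expect that for $n\equiv 0\Md4$ the argument instead proceeds purely through the divisibility/Brauer-character reductions of Theorem \ref{T050620_2} plus a direct check that the partitions $\la$ surviving those constraints with more than one normal node lead to $V\otimes W$ decomposable by dimension count on $\End$ over $\A_n$ rather than $\s_n$ — but making that count uniform is the delicate point, and it is likely where most of the real work in the proof sits. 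The odd case, by contrast, should follow fairly mechanically by assembling Theorems \ref{T080620_2} and the Hom-bounds of Lemmas \ref{L090720_2} and \ref{L240720_2}.
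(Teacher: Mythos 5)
Your argument for the first bullet is essentially the paper's: both use that $\psi^{\be_n,\pm}$ is nonzero on every $2$-regular class, together with the explicit characterisation in Lemma \ref{cbs} of exactly when $\psi^{\be_n,+}_{\al,+}\not=\psi^{\be_n,-}_{\al,+}$, to conclude $h(\al)\leq 2$.

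For the second bullet your plan has a real gap. You want to bound the number of normal nodes of $\la$ via Lemmas \ref{L090720_2} and \ref{L240720_2}, but both of those are stated for $\la\in\Par_2(n)\setminus\Parinv_2(n)$ (and Lemma \ref{L240720_2} only for $n$ odd); here $\la\in\Parinv_2(n)$ by hypothesis, so neither applies, and there is no reason to expect the same lower bounds on $\dim\Hom_{\s_n}(\End_F(D^\la),\End_F(D^{\be_n}))$ when $E^\la_\pm$ splits. Moreover, even setting that aside, invoking Theorems \ref{T080620_2}/\ref{T050620_2} constrains $\nu$, not $\la$, and you never supply the step that converts the list of possible $\nu$'s into the stated short list of $\la$'s; your final ``one checks'' is precisely the part that needs an argument.

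The paper sidesteps all of this with a short linear-algebra argument that feeds the first bullet directly back in. Since the first bullet forces $\psi^{\la,+}-\psi^{\la,-}$ to vanish off the split classes $C_{\al,\pm}$ with $h(\al)\leq 2$: when $n$ is odd the only such $\al$ is $(n)$, so $\psi^{\la,+}-\psi^{\la,-}$ is proportional to $\psi^{\be_n,+}-\psi^{\be_n,-}$, forcing $\la=\be_n$; when $n\equiv 0\Md 4$, one writes $\psi^{\la,+}-\psi^{\la,-}$ as a $\Q$-linear combination of $\psi^{\be_n,+}-\psi^{\be_n,-}$ and the differences $\ze^{(n-b,b),+}-\ze^{(n-b,b),-}$ with $b\leq n/2-3$ odd, which span the class functions supported on the relevant split classes. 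If all the coefficients of the $\ze$-differences vanish then $\la=\be_n$; otherwise, taking $a$ maximal with nonzero coefficient, Lemma \ref{L280520_2} shows $D^{\dbl(n-a,a)}$ occurs in $S((n-a,a))$ but in no $S((n-b,b))$ with $b<a$, forcing $\la=\dbl(n-a,a)$. No determination of $\nu$, no bound on normal nodes, and no $\End_F$ dimension counts are involved; the argument lives entirely on the Brauer-character side and works uniformly in both congruence classes of $n$, which is exactly why the $n\equiv 0\Md 4$ case does not present the obstacle you anticipate.
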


\begin{proof}
We will use Lemma \ref{cbs} without further comment. From Theorem \ref{T280520} we have that $E^\la_\pm\otimes E^{\be_n}_\pm\cong E^\nu$ with $\nu\not\in\Parinv_2(n)$. In particular $\psi^{\la,\pm}\psi^{\be_n,\pm}=\psi^\nu=\psi^{\la,\mp}\psi^{\be_n,\mp}$. Since $\psi^{\be_n,\pm}_{\al,+}\not=0$ for any $\al\in\Parod(n)$, it follows that if $\psi^{\la,+}_{\al,+}\not=\psi^{\la,-}_{\al,+}$ then $\psi^{\be_n,+}_{\al,+}\not=\psi^{\be_n,-}_{\al,+}$ and so $h(\al)\leq 2$.

If $n$ is odd then $\al=(n)$ and so $\psi^{\la,+}-\psi^{\la,-}=c(\psi^{\be_n,+}-\psi^{\be_n,-})$ for some $c\in\Q$. So $\la=\be_n$.

So we may assume that $n\equiv 0\Md{4}$. Let $\ze^{(n-b,b),\pm}$ be the Brauer characters of the reductions modulo 2 of the two composition factors $T((n-b,b,\pm)$ of $S((n-b,b),0)\da_{\widetilde{\A}_n}$. By \cite[Theorem 7.1]{s} or \cite[p. 235]{s5} (to study $\ze^{(n-b,b),+}-\ze^{(n-b,b),-}$) there exist $c,c_b\in\Q$ with
\begin{align*}
\psi^{\la,+}-\psi^{\la,-}=c(\psi^{\be_n,+}-\psi^{\be_n,-})+\sum_{b\leq n/2-3\text{ odd}}c_b(\ze^{(n-b,b),+}-\ze^{(n-b,b),-})
\end{align*}
(first find $c$ based on $\psi^{\la,+}_{(n/2+1,n/2-1),+}-\psi^{\la,-}_{(n/2+1,n/2-1),+}$ and then find the $c_b$).

If $c_b=0$ for all $b$ then $\la=\be_n$, so we may assume that there exists $b$ with $c_b\not=0$. Let $a$ be maximal with $c_a\not=0$. By Lemma \ref{L280520_2} $D^{\dbl(n-a,a)}$ is a composition factor of $S((n-a,a))$ with multiplicity 1. It follows that $E^{\dbl(n-a,a)}_+$ is a composition factor of the reduction modulo 2 of exactly one $T((n-a,a),\pm)$ and $E^{\dbl(n-a,a)}_-$ of the other one (as $(E^{\dbl(n-a,a)}_\pm)^\si\cong E^{\dbl(n-a,a)}_\mp$ and $T((n-a,a),\pm)^\si\cong T((n-a,a),\mp)$ for any $\si\in\widetilde\s_n\setminus\widetilde\A_n$). Further, again by Lemma \ref{L280520_2}, $D^{\dbl(n-a,a)}$ is not a composition factor of $S((n-b,b))$ with $b<a$. Since $\dbl(n-a,a)\not=\be_n$, it follows that
\[c(\psi^{\be_n,+}-\psi^{\be_n,-})+\sum_{b\leq n/2-3\text{ odd}}c_b(\ze^{(n-b,b),+}-\ze^{(n-b,b),-})=\sum_{\mu\in\Parinv_2(n)}d_\mu(\psi^{\mu,+}-\psi^{\mu,-})\]
with $d_{\dbl(n-a,a)}\not=0$ and then that $\la=\dbl(n-a,a)$.
\end{proof}

\begin{theor}\label{T050620_3}
Let $p=2$, $n\equiv 0\Md{4}$ and $3\leq a\leq n/2-3$ be odd. Then $E^{\dbl(n-a,a)}_\pm\otimes E^{\be_n}_\pm$ is not irreducible.
\end{theor}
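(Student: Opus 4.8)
The plan is to argue by contradiction, assuming that $E^{\dbl(n-a,a)}_\pm\otimes E^{\be_n}_\pm\cong E^\nu_{(\pm)}$ for some $\nu\in\Par_2(n)$. By Theorem \ref{T280520} (applied with $\la=\dbl(n-a,a)$, which is the double of a partition), we get $\nu\notin\Parinv_2(n)$ and moreover $\nu$ is not the double of any partition; in particular $\nu\neq\dbl(n-b,b)$ and $\nu\neq\dbl(p(n-c-d,c,d))$. I would then invoke Theorem \ref{T050620_2}: since $E^{\dbl(n-a,a)}_\pm$ is not $1$-dimensional (because $3\le a$ forces the module to be large) and $E^{\dbl(n-a,a)}_\pm\otimes E^{\be_n}_\pm\cong E^\nu$ with $\nu\notin\Parinv_2(n)$, either $D^\nu$ is a composition factor of some $S((n-b,b))$ with $\nu=(n-b,b)$ or $\dbl(n-b,b)$, or $D^\nu$ is a composition factor of some $S(p(n-c-d,c,d))$ with $h(\nu)\le 4$ or $\nu=\dbl(p(n-c-d,c,d))$. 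The "double" alternatives are already excluded, so $\nu=(n-b,b)$ with $b\le n/2-2$, or $h(\nu)\le 4$ and $D^\nu\mid S(p(n-c-d,c,d))$; in both sub-cases $h(\la)=h(\dbl(n-a,a))\le 4$, but $\dbl(n-a,a)$ has $h=4$ exactly (since $a\ge 3$ and $n-a\ge 3$, each of $\dbl(n-a)$ and $\dbl(a)$ has two parts), so we land in the boundary of the allowed range and $h(\la)\le 4$ is automatically satisfied. So far this only narrows $\nu$; the real work is a Brauer-character divisibility argument to rule out the surviving $\nu$.

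Next I would use the fact that $\nu\notin\Parinv_2(n)$ to extract a contradiction via the splitting behaviour of $\psi^{\la,+}-\psi^{\la,-}$ on $\Parod(n)$-classes, exactly parallel to the argument in Theorem \ref{T050620}. Writing $\psi^{\la,\pm}$ for the two Brauer characters of $E^{\dbl(n-a,a)}_\pm$, we have $\psi^{\la,+}\psi^{\be_n,+}=\psi^\nu=\psi^{\la,-}\psi^{\be_n,-}$ on classes $C_{\al,+}$, hence $(\psi^{\la,+}-\psi^{\la,-})\psi^{\be_n,+}=0$ there — no, more precisely since $\psi^\nu$ is symmetric under the outer class-swap, $\psi^{\la,+}\psi^{\be_n,+}=\psi^{\la,-}\psi^{\be_n,-}$, and by Lemma \ref{cbs} the difference $\psi^{\be_n,+}-\psi^{\be_n,-}$ is supported exactly on classes with $h(\al)=2$. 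This forces $\psi^{\la,+}-\psi^{\la,-}$ to vanish off $h(\al)=2$ as well, so $\psi^{\la,+}-\psi^{\la,-}$ lies in the span of $\ze^{(2x,2y)}$-type spin-character differences. Combining with Lemma \ref{L290520}(iii), which controls $2$- and $4$-divisibility of $\ze^{(2c,2d)}_\al$ and pins down $4\nmid\ze^{(2c,2d)}_{(c^2,d^2)}$, I would look at the specific class $\al=((n-a)/2,(n-a)/2,\dots)$ corresponding to $\dbl(n-a,a)$ — concretely the class $\al=(\dblb(n-a),\dblb(a))$ reordered, i.e. the "diagonal" class for $\la$ — and compare the value there, deriving that $\psi^{\la,+}-\psi^{\la,-}$ must have a specific nonzero $2$-adic valuation inconsistent with it being a $\Q$-combination of the $\ze^{(2x,2y),\pm}$ differences unless $\la=\be_n$, which it is not since $a\ge 3$.

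The main obstacle I anticipate is the last step: making the Brauer-character / $4$-divisibility bookkeeping precise enough to eliminate $\nu$. Unlike the odd-$n$ case (Theorem \ref{T080620_2}), where $h(\al)=3$ forced everything through a single family $S(p(x,2y))$, here the $n\equiv 0\bmod 4$ case has two competing families ($S((n-b,b))$ and $S(p(n-c-d,c,d))$) and the character $\psi^{\be_n,\pm}$ is supported on the two-part classes, so one must track $4$-divisibility, not just $2$-divisibility, and the candidate $\nu$ of shape $(n-b,b)$ with small $b$ is genuinely in the image of $D^\la\otimes D^{\be_n}$ at the level of composition factors (by Lemma \ref{L280520_3} type constraints $\nu\unrhd\dbl(n-a,a)$, which for $b$ small is false, so actually $\nu=(n-b,b)$ may be excluded on dominance grounds alone once one checks $(n-b,b)\ntrianglerighteq\dbl(n-a,a)$ — this is the clean kill I would try first). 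If the dominance bound $\nu\unrhd\dbl(\la)$ from Lemma \ref{L280520_3} applies (note $\dbl(n-a,a)=\dbl(\nu')$ for the relevant $\nu'$, so one needs the analogue for general doubles, which Lemma \ref{L280520_3} provides since $\dbl(n-a,a)$ is a double), then $\nu\unrhd\dbl(n-a,a)$, and since $\dbl(n-a,a)$ has four parts with $\dbl(n-a,a)_1=\lceil(n-a+1)/2\rceil<n-1$, no two-row $\nu=(n-b,b)$ with $b\le n/2-2$ dominates it unless... — here a short explicit comparison finishes the two-row case, and the remaining $h(\nu)\le 4$ case indexed by $S(p(n-c-d,c,d))$ is killed by the same dominance bound combined with the explicit form of partitions $\nu$ with $D^\nu\mid S(p(n-c-d,c,d))$ from Lemma \ref{L080620}(ii). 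So the honest plan is: (1) reduce $\nu$ via Theorems \ref{T280520} and \ref{T050620_2}; (2) apply the dominance bound $\nu\unrhd\dbl(n-a,a)$ from Lemma \ref{L280520_3}; (3) check by hand that none of the surviving shapes for $\nu$ dominates $\dbl(n-a,a)$, using $a\ge 3$ and $a\le n/2-3$ to control the part sizes. The dominance comparison in step (3) is the crux, but it is elementary once the candidate list for $\nu$ is in hand.
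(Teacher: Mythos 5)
The central step of your plan—the ``clean kill'' via dominance—does not work. Lemma \ref{L280520_3} gives $\nu\unrhd\dbl(n-a,a)$, but this is \emph{satisfied} by all of the two-row candidates $\nu=(n-b,b)$ with $b\le n/2-2$, not violated by them. Concretely, with $n\equiv 0\Md 4$ and $a$ odd (so $n-a$ is odd), $\dbl(n-a,a)=\bigl((n-a+1)/2,(n-a-1)/2,(a+1)/2,(a-1)/2\bigr)$, and for $(n-b,b)\unrhd\dbl(n-a,a)$ one only needs $n-b\ge (n-a+1)/2$ (the later partial-sum inequalities are trivial because $(n-b,b)$ has only two parts). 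That first inequality reads $2b\le n+a-1$, which holds whenever $b\le n/2-2$ since $a\ge 3$. A two-row partition dominates any four-row partition of the same size as long as its first row is long enough, and here it is; so no contradiction appears. The same problem recurs for the surviving candidates from the $S(p(n-c-d,c,d))$ family: those $\nu$ with $h(\nu)\le 4$ are again flatter than $\dbl(n-a,a)$ is forced to be, and dominance again gives no obstruction. Without the dominance kill, your sketch has no mechanism to exclude the remaining $\nu$; the preceding Brauer-character/$4$-divisibility idea is left as an anticipated obstacle rather than an executed argument.

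The paper takes a genuinely different route at exactly this point. Instead of trying to rule $\nu$ out by a single numerical invariant, it produces two explicit four-row partitions $\nu^1$ and $\nu^2$ (of the form $(\ldots,(a\pm3)/2,(a\mp3)/2)$ and $(\ldots,(a\pm1)/2,(a\mp1)/2)$) which, by Lemmas \ref{L280520_2}, \ref{bsspecht}, \ref{bsspin}, must occur as composition factors of $D^\la\otimes D^{\be_n}$ but cannot occur in any $S((n-b,b))$. Next, using the fact from Theorem \ref{T050620} that $\psi^{\la,+}-\psi^{\la,-}$ is supported on the single class $(n-1,1)$ after restriction to $\A_{n-1}$, it shows that the difference of the two restrictions $(E^\la_\pm\otimes E^{\be_n}_\pm)\da_{\A_{n-1}}\ua^{\A_n}$ only involves composition factors $D^\mu$ with $\mu_3\le 1$; in particular $E^{\nu^1},E^{\nu^2}$ must both come from $E^\nu\da_{\A_{n-1}}\ua^{\A_n}$. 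Comparing the $2$-cores of $\nu^1,\nu^2$ (one has core $(3,2,1)$, the other $(4,3,2,1)$) forces $D^\nu$ into the principal block, whence $\nu=(n-d,d)$ by Theorem \ref{T050620_2} and Lemma \ref{L280520_2}. But then every composition factor of $D^\nu\da_{\s_{n-1}}\ua^{\s_n}$ has at most one node in row~3, while $\nu^1,\nu^2$ have genuine fourth rows—contradiction. So the paper's proof leverages the failure of a restriction--induction constraint on row lengths, not dominance; your proposal would need to be rebuilt around an argument of that kind (or a full $4$-adic divisibility analysis, which is considerably more delicate than the odd-$n$ case).
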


\begin{proof}
Assume for a contradiction that $E^{\dbl(n-a,a)}_\pm\otimes E^{\be_n}_\pm$ is irreducible. Let $\la=\dbl(n-a,a)$ and $\nu\in\Par_2(n)\setminus\Parinv_2(n)$ with $E^\la_\pm\otimes E^{\be_n}_\pm\cong E^\nu$ (by Theorem \ref{T280520}). By Lemma \ref{L280520_2} for any $1\leq b<n/2$ with $\dbl(n-b,b)\in\Par_2(n)$ we have that
\[[S((n-b,b))]=\sum_{1\leq x\leq b}e_x[D^{\dbl(n-x,x)}]+\sum_{x<n/2}f_x[D^{(n-x,x)}]\]
with $e_x,f_x\geq 0$ and $e_b>0$. So there exist $c_b,d_b,d'_B\in\Q$ with $c_a>0$ such that
\begin{align*}
[D^\la]&=\sum_{b\leq a}c_b[S((n-b,b))]+\sum_{b<n/2}d'_b[D^{(n-b,b)}]\\
&=\sum_{b\leq a}c_b[S((n-b,b))]+\sum_{b<n/2}d_b[S^{(n-b,b)}].
\end{align*}
Further by Lemma \ref{branchingbs}, $[S((n))]=[D^{\be_n}]$. So 
\[[D^\la\otimes D^{\be_n}]=\sum_{b\leq a}c_b[S((n-b,b))\otimes S((n))]+\sum_{b<n/2}d_b[S^{(n-b,b)}\otimes S((n))].\]
By Lemma \ref{bsspin}
\[[S((n-b,b))\otimes S((n))]=\sum_\mu c_{b,\mu}[D^\mu]\]
for some $c_{b,\mu}$ with $c_{b,\mu}>0$ if $h(\mu)\leq 4$ and $\mu_3+\mu_4=b$ while $c_{b,\mu}=0$ if $\mu_3+\mu_4>b$ or if $h(\mu)\geq 5$. By Lemma \ref{bsspecht}
\[[S^{(n-b,b)}\otimes S((n))]=\sum_{c\leq b}d_{b,c}[S((n-c,c))]\]
for some $d_{b,c}$. So
\[[D^\la\otimes D^{\be_n}]=\sum_\mu c_\mu[D^\mu]+\sum_{b<n/2}d_{(n-b,b)}[S((n-b,b))]\]
for some $c_\mu$ and $d_{(n-b,b)}$ with $c_\mu>0$ if $h(\mu)\leq 4$ and $\mu_3+\mu_4=a$, while $c_\mu=0$ if $h(\mu)>5$ or $\mu_3+\mu_4>a$. Let
\begin{align*}
\nu^1&=((n-a+1)/2,(n-a-1)/2,(a+3)/2,(a-3)/2),\\
\nu^2&=((n-a+3)/2,(n-a-3)/2,(a+1)/2,(a-1)/2).
\end{align*}
By Lemma \ref{L280520_2}, $D^{\nu^1}$ and $D^{\nu^2}$ are not composition factors of $S((n-b,b))$ for any $b<n/2$ and so they are composition factors of $D^\la\otimes D^{\be_n}$.

From Theorem \ref{T050620} we have that if $\al\in\Parod(n)$ and $\psi^{\la,+}_{\al,-}\not=\psi^{\la,-}_{\al,+}$ then $h(\al)=2$. So $\psi^{\la,+}\da_{\A_{n-1}}$ and $\psi^{\la,+}\da_{\A_{n-1}}$ can only differ on the conjugacy classes labeled by $(n-1)$ and then by Lemma \ref{cbs}
\[\psi^{\la,+}\da_{\A_{n-1}}-\psi^{\la,+}\da_{\A_{n-1}}=c(\psi^{\be_{n-1},+}-\psi^{\be_{n-1},-})\]
with $c\in\Q$. By Lemma \ref{L280520_3} any composition factor of $D^{\be_{n-1}}\otimes D^{\be_{n-1}}$ is of the form $D^{(n-x-1,x)}$. In particular any composition factor of $(D^{\be_{n-1}}\otimes D^{\be_{n-1}})\ua^{\s_n}$ is of the form $D^{(n-x,x)}$ or $D^{(n-x-1,x,1)}$ (since they are composition factors of a Specht module of the form $S^{(n-x,x)}$ or $S^{(n-x-1,x,1)}$). Since $D^{\be_n}\da_{\s_{n-1}}\cong D^{\be_{n-1}}$ by Lemma \ref{Lemma39} and so $E^{\be_n}_\pm\da_{\A_{n-1}}\cong E^{\be_{n-1}}_\pm$ (up to exchange of $E^{\be_{n-1}}_\pm$), we have that 
\begin{align*}
&[(E^\la_+\otimes E^{\be_n}_\pm)\da_{\A_{n-1}}\ua^{\A_n}]-[(E^\la_-\otimes E^{\be_n}_\pm)\da_{\A_{n-1}}\ua^{\A_n}]\\
&=c([(E^{\be_{n-1}}_+\otimes E^{\be_{n-1}}_\pm)\ua^{\A_n}]-[(E^{\be_{n-1}}_-\otimes E^{\be_{n-1}}_\pm)\ua^{\A_n}])\\
&=c'[E^{\be_n}_+]+c''[E^{\be_n}_-]+d'[E^{\dbl(n-1,1)}_+]+d''[E^{\dbl(n-1,1)}_-]\\
&+\sum_{x<n/2-1}c_x[E^{(n-x,x)}]+d_x[E^{(n-x-1,x,1)}]
\end{align*}
with $c',c'',d',d'',c_x,d_x\in\Z$.

Since $E^\la_+\otimes E^{\be_n}_\pm\cong (E^\la_-\otimes E^{\be_n}_\mp)^\si$ with $\si\in\s_n\setminus\A_n$, so that $E^{\nu^1}$ and $E^{\nu^2}$ are composition factors of $E^\la_+\oplus E^{\be_n}_\pm$ or $E^\la_-\otimes E^{\be_n}_\pm$, it follows that both $E^{\nu^1}$ and $E^{\nu^2}$ are composition factors of $(E^\la_\pm\otimes E^{\be_n}_\pm)\da_{\A_{n-1}}\ua^{\A_n}\cong E^\nu\da_{\A_{n-1}}\ua^{\A_n}$. It can be checked that one of $\nu^1$ and $\nu^2$ has core $(3,2,1)$, while the other has core $(4,3,2,1)$. So the contents of $\nu^1$ and $\nu^2$ are (in some order):
\begin{align*}
\cont((3,2,1)+(n-6)/2\cdot(1,1)&=(n/2+1,n/2-1),\\
\cont((4,3,2,1)+(n-10)/2\cdot(1,1)&=(n/2-1,n/2+1).
\end{align*}
Let $\cont(\nu)=(c_0,c_1)$. As $E^{\nu^1}$ and $E^{\nu^2}$ are composition factors of $E^\nu\da_{\A_{n-1}}\ua^{\A_n}$ we have that
\[\cont(\nu^1),\cont(\nu^2)\in\{(c_0+1,c_1-1),(c_0,c_1),(c_0-1,c_1+1)\}\]
and then $\cont(\nu)=(n/2,n/2)$, which is the content of modules in the principal block (which has core the empty partition). From Theorem \ref{T050620_2}, it then follows that $D^\nu$ is the composition factor of some $S((n-b,b))$. Since $\nu$ is not the double of a partition by Theorem \ref{T280520}, we have that $\nu=(n-d,d)$ by Lemma \ref{L280520_2}. So any composition factor of $D^\nu\da_{\s_{n-1}}\ua^{\s_n}$ is of the form $D^\pi$ with $\pi_3\leq 1$. In particular $D^{\nu^1}$ and $D^{\nu^2}$ are not composition factors of $D^\nu\da_{\s_{n-1}}\ua^{\s_n}$, leading to a contradiction.
\end{proof}

We will now consider tensor products of basic spin modules with basic or second basic spin modules, and prove that, except possibly for small $n$, such products are not irreducible. The following proof does not require the modules to be splitting modules.

\begin{theor}\label{T080620}
Suppose $p=2$ and $n\geq 6$, and let $V$ be basic spin or second basic spin and $W$ first basic spin. Then $V\otimes W$ is not irreducible.
\end{theor}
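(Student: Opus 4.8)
The plan is to show that neither $D^{\be_n}\otimes D^{\be_n}$ nor $D^{\dbl(n-1,1)}\otimes D^{\be_n}$ can be irreducible by exhibiting at least two distinct composition factors in each case, via a dimension/branching argument together with the content (block) restrictions already developed in Sections \ref{s4} and \ref{s5}. The cleanest route is to pass through the $\widetilde{\s}_n$ picture: since $D^{\be_n}=S((n),\eps)$ by Lemma \ref{branchingbs}, the tensor product $D^{\be_n}\otimes D^{\be_n}$ is (a reduction of) $S((n))\otimes S((n))$, and similarly $D^{\dbl(n-1,1)}\otimes D^{\be_n}$ appears inside $S((n-1,1))\otimes S((n))$ up to the known composition factors of $S((n-1,1))$ from Wales' tables. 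By Lemma \ref{bsspin} the spin-times-basic-spin product decomposes into $D^\mu$'s with $\mu\unrhd\overline{\dbl}(\la)^R$ and with $d_{\dbl(\la)}>0$; so in the $\be_n\otimes\be_n$ case the smallest possible label is $\dbl(n)=\be_n$ itself, and we need a second composition factor forced in.

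First I would handle $V=W=D^{\be_n}$. Note $\be_n\otimes\be_n$ is self-dual and contains the trivial module $\1=D_0$ (since $D^{\be_n}$ is self-dual), so if it were irreducible it would equal $D_0$, forcing $\dim D^{\be_n}=1$, contradicting $n\geq 6$ (the basic spin dimension is $2^{\lfloor(n-1)/2\rfloor}$ or a relative of it, in any case $>1$). Actually this already kills the $\be_n\otimes\be_n$ case outright for $n\ge 6$, so the real content of the theorem is the second basic spin case. For $V=D^{\dbl(n-1,1)}$, $W=D^{\be_n}$, I would use the branching/endomorphism machinery: if $E^{\dbl(n-1,1)}\otimes E^{\be_n}\cong E^\nu$ were irreducible then $\dim\Hom_{\s_n}(\End_F(D^{\dbl(n-1,1)}),\End_F(D^{\be_n}))=\dim\Hom(\End_F(E^\nu),\End_F(D^{\be_n}))\le$ something controllable, while on the other hand $\End_F(D^{\dbl(n-1,1)})$ and $\End_F(D^{\be_n})$ must share enough common composition factors to violate irreducibility. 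Concretely, $\End_F(D^\nu)\otimes D^{\be_n}\cong D^{\be_n}\otimes D^\nu{}^*\otimes D^\nu\supseteq$ copies of $D^{\be_n}$ and of $\1$, and comparing with $\dim\End_{\s_{n-1}}$ via Lemma \ref{branchingbs} (which gives $D^{\be_n}\da_{\s_{n-1}}\cong D^{\be_{n-1}}$ or $D^{\be_{n-1}}|D^{\be_{n-1}}$) pins down $\dim\End_{\s_{n-1}}(D^\nu\da_{\s_{n-1}})$ and hence the number of normal nodes of $\nu$.

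The cleaner finish for the second basic spin case is to use Lemma \ref{L280520_3}: if $D^\nu$ is a composition factor of $D^{\dbl(n-1,1)}\otimes D^{\be_n}$ then $\nu\unrhd\dbl(n-1,1)$, and the multiplicity-one statement of Lemma \ref{bsspin} plus \cite[Theorem 5.1]{bo2} forces $d_{\dbl(n-1,1)}>0$, i.e.\ $D^{\dbl(n-1,1)}$ itself occurs. So if the product were the single irreducible $E^\nu$ then $\nu=\dbl(n-1,1)$ (as a partition of $\s_n$), i.e.\ $E^{\dbl(n-1,1)}\otimes E^{\be_n}\cong E^{\dbl(n-1,1)}$; tensoring again by the self-dual $E^{\be_n}$ gives $E^{\dbl(n-1,1)}\otimes (E^{\be_n})^{\otimes 2}\cong E^{\dbl(n-1,1)}$, so $(E^{\be_n})^{\otimes 2}$ acts as the identity on a faithful-enough module, forcing $\dim E^{\be_n}=1$ — contradiction for $n\geq 6$. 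One must be slightly careful about the split/non-split dichotomy on $\A_n$ (the $E^\la_\pm$ versus $E^\la$ distinction and whether $\be_n\in\Parinv_2(n)$), but this is handled exactly as in the proof of Theorem \ref{T280520}, passing to $\s_n$-modules where $E^\la_\pm\ua^{\s_n}\cong D^\la$, and by Theorem \ref{T280520} itself $\nu\notin\Parinv_2(n)$ and $\nu$ is not a double, which already contradicts $\nu=\dbl(n-1,1)$ — giving the result immediately once one knows $D^{\dbl(n-1,1)}$ is a genuine composition factor of $D^{\dbl(n-1,1)}\otimes D^{\be_n}$.

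\textbf{The main obstacle} I anticipate is making the "$D^{\dbl(n-1,1)}$ occurs in $D^{\dbl(n-1,1)}\otimes D^{\be_n}$'' step airtight: Lemma \ref{bsspin} gives occurrence of $D^{\dbl(\la)}$ in $S(\la)\otimes S((n))$, but here $\dbl(n-1,1)$ is the \emph{regularisation} $\overline{\dbl}(n-1,1)^R$ of the double-row label, and one needs $\dbl(n-1,1)=\dbl(\la)$ for $\la=(n-1,1)$, which holds iff $\dbl(n-1,1)\in\Par_2(n)$ — true for $n\ge 6$ by the explicit description of $\dbl$. The secondary subtlety is converting between $D^{\dbl(n-1,1)}$ as an $\s_n$-module and its $\A_n$-constituents, and ensuring the "tensor by $E^{\be_n}$ twice'' trick does not get derailed when $E^{\be_n}$ splits on $\A_n$; there one argues on $\s_n$ using $D^{\be_n}\otimes D^{\be_n}\supseteq \1$ and compares total dimensions, which is immediate.
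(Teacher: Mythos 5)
Your proposal takes a genuinely different route from the paper's proof. The paper disposes of the second basic spin case by citing the argument of \cite[Theorem 6.9]{m3}, disposes of basic spin for $n\not\equiv 2\Md{4}$, $n$ odd by citing \cite[Theorem 1.2]{m3}, and for $n\equiv 0\Md{4}$ applies Lemma \ref{L280520_3} to see that any composition factor of $V\otimes W$ has a two-row label, then branches down to $\A_{n-1}$ (where two-row labels restrict irreducibly while the restricted product of two basic spins is already known not to be simple). Your proposal instead tries a self-contained argument, but it has two genuine gaps, both from conflating the $\s_n$-picture with the $\A_n$-picture.

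For the basic spin case, the trivial module does occur in $D^{\be_n}\otimes D^{\be_n}$ as an $\s_n$-module, but that is not the object the theorem is about. When $\be_n\in\Parinv_2(n)$ one must rule out irreducibility of all products $E^{\be_n}_\eps\otimes E^{\be_n}_\de$, and $\Hom_{\A_n}(E^{\be_n}_\eps\otimes E^{\be_n}_\de,\1)\neq 0$ only when $(E^{\be_n}_\eps)^*\cong E^{\be_n}_\de$, which holds for exactly one of the two choices of $\de$ given $\eps$. The exceptional $n=5$ case $E^{(3,2)}_+\otimes E^{(3,2)}_-\cong E^{(4,1)}$ shows concretely that the ``wrong'' pair can be irreducible and non-trivial, and your argument never explains why the hypothesis $n\geq 6$ rules this out for the pairs that do not contain $\1$. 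So the claim that ``this already kills the $\be_n\otimes\be_n$ case outright'' does not hold.

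For the second basic spin case, the step from $[D^{\dbl(n-1,1)}\otimes D^{\be_n}:D^{\dbl(n-1,1)}]>0$ to $\nu=\dbl(n-1,1)$ is not justified. The module $V\otimes W$ is just one of up to four direct summands of $(D^{\dbl(n-1,1)}\otimes D^{\be_n})\da_{\A_n}$ when both $\dbl(n-1,1)$ and $\be_n$ lie in $\Parinv_2(n)$, and the copies of $E^{\dbl(n-1,1)}_{(\pm)}$ could all lie in the other summands. What Lemma \ref{L280520_3} gives is only $\nu\unrhd\dbl(n-1,1)$, and Theorem \ref{T280520} (ruling out $\nu$ being a double) leaves open $\nu\rhd\dbl(n-1,1)$ strictly, so no contradiction follows without further work pinning down which $\nu$ can actually occur.
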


\begin{proof}
If $V$ is second basic spin then the proof goes as the proof of \cite[Theorem 6.9]{m3}. So we may assume that $V$ is basic spin. If $n\not\equiv 0\Md{4}$ the result holds by \cite[Theorem 1.2]{m3} (since then basic spin modules in characteristic 2 are the reduction modulo 2 of basic spin modules in characteristic 0 by \cite[Table III]{Wales}). Consider now $n\equiv 0\Md{4}$. By Lemma \ref{L280520_3} any composition factor of $V\otimes W$ is of the form $E^{(n-a,a)}_{(\pm)}$. From Lemma \ref{Lemma39} we have that $D^{(n-a,a)}\da_{\s_{n-1}}\cong D^{(n-a-1,a)}$. Further $(n-a,a)\in\Parinv_2(n)$ if and only if $(n-a-a,a)\in\Parinv_2(n-1)$ (as $n\equiv 0\Md{4}$). So  $E^{(n-a,a)}_{(\pm)}\da_{\A_{n-1}}\cong E^{(n-a-1,a)}_{(\pm)}$ restricts irreducibly to $\A_{n-1}$. Since $(V\otimes W)\da_{\A_{n-1}}$ is not irreducible by the previous part (it is the product of two basic spin modules), it follows that also $V\otimes W$ is not irreducible.
\end{proof}

\section{Split-non-split case}\label{s9}

We will now consider irreducible tensor products of the form $E^\la\otimes E^{\be_n}_\pm$ for $n\not\equiv 2\Md{4}$. We start with the case where $n$ is odd.

\begin{theor}\label{T080620_3}
Let $p=2$, $n$ be odd and $(n)\not=\la\in\Par_2(n)\setminus\Parinv_2(n)$. Then $E^\la\otimes E^{\be_n}_\pm$ is not irreducible.
\end{theor}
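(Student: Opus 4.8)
The plan is to derive a contradiction from the assumption that $E^\la\otimes E^{\be_n}_\pm$ is irreducible by combining the reduction results of Section~\ref{s6} with the dimension estimate on endomorphism algebras coming from Section~\ref{s7}. Suppose $E^\la\otimes E^{\be_n}_\pm\cong E^\nu$. By Theorem~\ref{T280520} we have $\nu\in\Par_2(n)\setminus\Parinv_2(n)$, and since $n$ is odd, Theorem~\ref{T080620_2} tells us that $D^\nu$ is a composition factor of some $S((n-a,a))$ with $1\le a\le(n-1)/2$, so $\nu=(n-b,b)$ or $\nu=\dbl(n-b,b)$, and moreover $h(\la)\le 4$. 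First I would note that $\la$ is not basic spin (that case being Theorem~\ref{T080620}) and not $1$-dimensional, and then I would try to pin down how many normal nodes $\la$ has, since the structure of $\End_F(D^\la)$ computed in Lemmas~\ref{L080620_5} and~\ref{L240720} depends on $\eps_i(\la)$.

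The core of the argument should be a comparison of $\dim\End_{\A_n}(E^\la\otimes E^{\be_n}_\pm)$ with the same quantity computed via $E^\nu$. Since $E^\nu$ is irreducible, $\dim\End_{\A_n}(E^\nu)=1$; on the other hand,
\[
\dim\End_{\A_n}(E^\la\otimes E^{\be_n}_\pm)=\dim\Hom_{\A_n}(E^\la\otimes (E^{\be_n}_\pm)^*, (E^\la)^*\otimes E^{\be_n}_\pm)
\]
and this can be bounded below using $\dim\Hom_{\s_n}(\End_F(D^\la),\End_F(D^{\be_n}))$, which Lemma~\ref{L090720_2} shows is $\ge 3$ when $\la$ has exactly two normal nodes, and Lemma~\ref{L240720_2} shows is $\ge 3+c\ge 5$ when $\la$ has a single residue with $\eps_i(\la)=3$ and $\eps_{1-i}(\la)=0$. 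Passing between $\s_n$- and $\A_n$-endomorphism algebras needs care: $\End_F(D^\la)\da_{\A_n}$ and $\End_F(E^\la)$ (for $\la\notin\Parinv_2$) agree since $E^\la=D^\la\da_{\A_n}$, and $\End_F(D^{\be_n})\da_{\A_n}\cong\End_F(E^{\be_n}_+)\oplus\End_F(E^{\be_n}_-)\oplus(\text{off-diagonal terms})$ depending on whether $\be_n\in\Parinv_2(n)$. So I would show that a $\Hom_{\s_n}$ of dimension $\ge 3$ forces $\dim\End_{\A_n}(E^\la\otimes E^{\be_n}_\pm)\ge 2$, contradicting irreducibility.

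The cases to dispose of are: $\la$ with one normal node (i.e.\ JS-partitions) — but a JS-partition $\la\neq(n)$ with $h(\la)\le 4$ that is not basic spin; $\la$ with two normal nodes, covered directly by Lemma~\ref{L090720_2}; and $\la$ with three or more normal nodes, where I would use $h(\la)\le 4$ together with the classification of partitions with $\eps_i(\la)+\eps_{1-i}(\la)\ge 3$ to reduce (likely via branching down to $\s_{n-1}$ and an induction, or via the classification of JS-partitions and near-JS-partitions from \cite{m1}) to the situation handled by Lemma~\ref{L240720_2}. The JS case is the one needing a separate idea: here $D^\la\da_{\s_{n-1}}$ is irreducible, so $E^\la\da_{\A_{n-1}}$ is (a sum of two conjugate) irreducibles, and I would restrict $E^\nu\cong E^\la\otimes E^{\be_n}_\pm$ to $\A_{n-1}$, using $E^{\be_n}_\pm\da_{\A_{n-1}}\cong E^{\be_{n-1}}_\pm$ from Lemma~\ref{branchingbs}, and show the result cannot be irreducible over $\A_{n-1}$ (reducing to a product of a smaller basic spin module with something, and invoking induction on $n$ or the earlier theorems). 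The main obstacle I expect is the bookkeeping in translating the $\s_n$-level $\Hom$-dimension bounds into a genuine lower bound on $\dim\End_{\A_n}$ of the tensor product while correctly accounting for whether $\la$ and $\be_n$ lie in $\Parinv_2(n)$ — the split/non-split distinctions must be tracked carefully, and the $n\equiv 1$ versus $n\equiv 3\pmod 4$ split in Lemmas~\ref{L080620_4}, \ref{L240720} has to be carried through.
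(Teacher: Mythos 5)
Your overall case division by $\eps_0(\la)+\eps_1(\la)$ matches the paper, and the two-normal-nodes case is essentially correct: the paper invokes Lemma~\ref{L090720_2} together with \cite[Lemma 10.1]{m4} to pass from the bound $\dim\Hom_{\s_n}(\End_F(D^\la),\End_F(D^{\be_n}))\geq 3$ to non-irreducibility of $E^\la\otimes E^{\be_n}_\pm$. However, the other two cases diverge from the paper and your plans for them have gaps.

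For $\geq 3$ normal nodes, the paper simply cites \cite[Theorem 13.2]{m4}, which already covers the whole case. Your plan to reduce to Lemma~\ref{L240720_2} is incomplete: that lemma only applies when $\eps_i(\la)=3$ and $\eps_{1-i}(\la)=0$ for some residue $i$, and it says nothing about, e.g., $\eps_0(\la)=2$, $\eps_1(\la)=1$. You would need a separate argument for mixed-residue configurations, or you would need to show they cannot occur with $h(\la)\leq 4$ in this setting; as written, there is a hole.

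The JS case is where your route is genuinely different and where I see the most serious difficulty. You propose to restrict $E^\nu\cong E^\la\otimes E^{\be_n}_\pm$ to $\A_{n-1}$ and conclude by induction that a smaller tensor product is irreducible. For this you need $E^\nu\da_{\A_{n-1}}$ to be irreducible, i.e.\ you need $\nu$ to be JS (with the relevant $\tilde e_i\nu$ not splitting), but there is no reason for that. Indeed, with $\nu=(n-a,a)$ and $n$ odd, both removable nodes of $\nu$ typically have the same residue, and $\nu$ can have two normal nodes, in which case $E^\nu\da_{\A_{n-1}}$ is decomposable and the induction does not apply. The paper instead handles the JS case directly by a block-theoretic argument: since $n$ is odd and $\la$ is JS with $h(\la)\leq 4$, it forces $h(\la)=3$ with all parts of $\la$ odd, and then compares $2$-cores. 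If $\la_1\equiv\la_2\equiv\la_3\pmod 4$ the $2$-core of $\overline{\dbl}(\la)$ has $5$ or $6$ rows, so $S(\la)$ and $D^\nu$ lie in different blocks, contradicting Lemma~\ref{L080620_2}. If exactly two of the parts are congruent mod~$4$, the paper rules out $\nu=\dbl(n-a,a)$ again by comparing $2$-cores, and rules out $\nu=(n-a,a)$ by applying Lemmas~\ref{L280520_2} and~\ref{bsspecht} to $D^\la\subseteq D^\nu\otimes D^{\be_n}$, which would force $h(\la)=2$ or $\la$ to be a double of a partition, contradicting that $\la$ has only odd parts and $\la\notin\Parinv_2(n)$. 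You would need an argument of this kind (or otherwise close the induction); the restriction-to-$\A_{n-1}$ idea alone does not suffice here.
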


\begin{proof}
If $\la$ has at least three normal nodes this holds by \cite[Theorem 13.2]{m4}. If $\la$ has two normal nodes then
\[\dim\Hom_{\s_n}(\End_F(D^\la),\End_F(D^\mu))\geq 3\]
from Lemma \ref{L090720_2} and so $E^\la\otimes E^{\be_n}_\pm$ is not irreducible by \cite[Lemma 10.1]{m4}.

So we may assume that $\la$ is JS. From Theorem \ref{T080620_2} we may assume that $h(\la)\leq 4$ and that $E^\la\otimes E^{\be_n}_\pm\cong E^\nu$ with $\nu=(n-a,a)$ or $\dbl(n-a,a)$.

Since $n$ is odd and $\la$ is JS, we then have that $h(\la)=3$ and the parts of $\la$ are odd (as by assumption $\la\not=(n)$). If $\la_1\equiv\la_2\equiv\la_3\Md{4}$, then
\[\overline{\dbl}(\la)_1\not\equiv\overline{\dbl}(\la)_2\not\equiv\overline{\dbl}(\la)_3\not\equiv\overline{\dbl}(\la)_4\not\equiv\overline{\dbl}(\la)_5\not\equiv\overline{\dbl}(\la)_6\Md{2}\]
and $\overline{\dbl}(\la)_7=0$. So the 2-core of $\overline{\dbl}(\la)$ has five or six rows. By Lemma \ref{L080620},  $S(\la)$ and $D^{\overline{\dbl}(\la)}$ are in the same block. It follows that $S(\la)$ and $D^\nu$ are in different blocks, leading to a contradiction thanks to Lemma \ref{L080620_2}.

If for some $\{j,k,\ell\}=\{1,2,3\}$ we have $\la_j\equiv\la_k\not\equiv\la_\ell\Md{4}$ then $\la_j\equiv n\Md{4}$, so $\overline{\dbl}(\la)$ and $\overline{\dbl}(n)$ have the same 2-core. If $\nu=\dbl(n-a,a)$, then either $n-a$ or $a$ is congruent to $2\Md{4}$ (since $D^\nu$ does not split). So the other part is not congruent to $n\Md{4}$, and then $\nu$ and $\overline{\dbl}(n)$ have different 2-cores and we can then conclude as in the previous case.

Thus we may now assume that $\nu=(n-a,a)$ for some $a$. Since $E^\la\otimes E^{\be_n}_\pm\cong E^\nu$ we have that $D^\nu\subseteq D^\la\otimes D^{\be_n}$ and so $D^\la\subseteq D^\nu\otimes D^{\be_n}$. From Lemmas \ref{L280520_2} and \ref{bsspecht}, we have that $h(\la)\leq 2$ or $\la=\dbl(n-b,b)$. In either case this leads to a contradiction to $\la$ having only odd parts and $(n)\not=\la\not\in\Parinv_2(n)$.
\end{proof}

The case where $n\equiv 0\Md{4}$ will turn out to be more complicated. The first of the two cases appearing in Theorem \ref{T050620_2} will be handled using results for $n$ odd, so we will only consider the second of the two cases. Note that we may assume that $\la$ has at most three normal nodes from \cite[Theorem 13.2]{m4}, and that $h(\la)\leq 6$ by Theorem \ref{T050620_2}. JS-partitions will be ruled out considering block decomposition, so we will only consider partitions $\la$ with two or three normal nodes.

\begin{lemma}\label{L090720}
Suppose $p=2$, $n\equiv 0\Md{4}$ and $\la,\nu\in\Par_2(n)\setminus\Parinv_2(n)$ with $h(\la)\leq 6$, $2\leq \eps_0(\la)+\eps_1(\la)\leq 3$, and that $D^\nu$ is a composition factor of some $S(p(n-a-b,a,b))$ with $a\equiv b\equiv\pm 1\Md{4}$. If $E^\la\otimes E^{\be_n}_\pm\cong E^\nu$ then one of the following holds for some residue $i$:
\begin{enumerate}
\item $\eps_i(\la)>0$ and $E^{\tilde e_i\la}_{(\pm)}\otimes E^{\be_{n-1}}_\pm$ is irreducible,

\item $\eps_i(\la)=2$, $\eps_{1-i}(\la)=0$, $\tilde e_i\la\not\in\Parinv_2(n)$, $E^{\tilde e_i\la}\otimes E^{\be_{n-1}}_\pm$ has simple head and socle isomorphic to $E^{\tilde e_j\nu}$ and $[E^{\tilde e_i\la}\otimes E^{\be_{n-1}}_\pm:E^{\tilde e_j\nu}]=2$ with $\nu$ and $j$ as in one of the following:
\begin{itemize}
\item $h(\nu)=4$ and all removable nodes of $\nu$ have residue $j$,

\item $\nu=\dbl(c,d,n-c-d)$ with $c\equiv d\equiv 1\Md 4$ and $j=0$,

\item $\nu=\dbl(c,n-c-d,d)$ with $c\equiv d\equiv \pm 1\Md 4$, $d\geq 3$ and $j=\res(1,(c+1)/2)$,

\item $\nu=\dbl(n-c-d,c,d)$ with $c\equiv d\equiv \pm 1\Md 4$, $d\geq 3$ and $j=\res(3,(c+1)/2)$,
\end{itemize}

\item $\eps_i(\la)=2$, $\eps_{1-i}(\la)=0$, $\nu=\dbl(c,d,n-c-d)$ with $c\equiv d\equiv 3\Md 4$, the head and socle of $(D^{\tilde e_i\la}\da_{\A_{n-1}})\otimes E^{\be_{n-1}}_\pm$ are contained in $E^{\tilde e_1\nu}_+\oplus E^{\tilde e_1\nu}_-$ and $[(D^{\tilde e_i\la}\da_{\A_{n-1}})\otimes E^{\be_{n-1}}_\pm:E^{\tilde e_1\nu}_\pm]\leq 2$,

\item $\eps_0(\la),\eps_1(\la)=1$, $\tilde e_0\la,\tilde e_1\la\not\in\Parinv_2(n)$, $\nu=\dbl(n-c-d,c,d)$ with $c\equiv d\equiv 1\Md 4$, $E^{\tilde e_i\la}\otimes E^{\be_{n-1}}_\pm\cong E^{\tilde e_1\nu}_+\oplus E^{\tilde e_1\nu}_-$ and $E^{\tilde e_i\la}\otimes E^{\be_{n-1}}_\pm\cong e_0 E^\nu$,

\item $\eps_i(\la)=3$, $\eps_{1-i}(\la)=0$, $\tilde e_i\la\not\in\Parinv_2(n)$, $\nu=\dbl(c,d,n-c-d)$ with $c\equiv d\equiv 3\Md 4$ and one of the following holds:
\begin{itemize}
\item $E^{\tilde e_i\la}\otimes E^{\be_{n-1}}_\pm$ has socle $E^{\tilde e_1\nu}_\pm$, head $E^{\tilde e_1\nu}_\mp$ and
\[[E^{\tilde e_i\la}\otimes E^{\be_{n-1}}_\pm:E^{\tilde e_1\nu}_+]=[E^{\tilde e_i\la}\otimes E^{\be_{n-1}}_\pm:E^{\tilde e_1\nu}_-]=1,\]

\item $E^{\tilde e_i\la}\otimes E^{\be_{n-1}}_\pm\cong E^{\tilde e_1\nu}_+\oplus E^{\tilde e_1\nu}_-$,
\end{itemize}

\item $\eps_i(\la)=1$, $\eps_{1-i}(\la)=2$, $\tilde e_0\la,\tilde e_1\la\not\in\Parinv_2(n)$, $\nu=\dbl(n-c-d,c,d)$ with $c\equiv d\equiv 1\Md 4$ and $d\geq 5$, $E^{\tilde e_i\la}\otimes E^{\be_{n-1}}_\pm\cong E^{\tilde e_1\nu}_+\oplus E^{\tilde e_1\nu}_-$ and $E^{\tilde e_{1-i}\la}\otimes E^{\be_{n-1}}_\pm$ has simple head and socle isomorphic to $E^{\tilde e_0\nu}$ and $[E^{\tilde e_{1-i}\la}\otimes E^{\be_{n-1}}_\pm:E^{\tilde e_0\nu}]=2$.
\end{enumerate}
\end{lemma}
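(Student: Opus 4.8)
The strategy is to restrict the isomorphism $E^\la\otimes E^{\be_n}_\pm\cong E^\nu$ to $\A_{n-1}$ and compare the two sides through the branching functors $e_0,e_1$. Since $n\equiv 0\pmod 4$ is even, Lemma \ref{branchingbs} gives $D^{\be_n}\da_{\s_{n-1}}\cong D^{\be_{n-1}}$; as $D^{\be_n}\da_{\A_{n-1}}=D^{\be_{n-1}}\da_{\A_{n-1}}\cong E^{\be_{n-1}}_+\oplus E^{\be_{n-1}}_-$ and $\dim E^{\be_n}_\pm=\tfrac12\dim D^{\be_n}=\dim E^{\be_{n-1}}_\pm$, we get $E^{\be_n}_\pm\da_{\A_{n-1}}\cong E^{\be_{n-1}}_\pm$ for a suitable labelling. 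Using $E^\la\da_{\A_{n-1}}\cong e_0E^\la\oplus e_1E^\la$ and the analogous decomposition for $\nu$ we obtain
\[
\bigoplus_{i=0,1}\bigl(e_iE^\la\otimes E^{\be_{n-1}}_\pm\bigr)\;\cong\;\bigoplus_{j=0,1}e_jE^\nu
\]
as $\A_{n-1}$-modules. Every assertion in (1)--(7) is a statement about the indecomposable summands on the two sides of this isomorphism, so the proof reduces to identifying those summands and matching them up.

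First I would pin down $\nu$. By hypothesis $D^\nu$ is a composition factor of some $S(p(n-a-b,a,b))$ with $a\equiv b\equiv\pm1\pmod 4$, and by the symmetry $\Hom_{\s_n}(D^\nu,D^\la\otimes D^{\be_n})\cong\Hom_{\s_n}(D^\la,D^\nu\otimes D^{\be_n})$ together with Lemma \ref{bsspin} this forces $D^\la$ to be a composition factor of $S(p(n-a-b,a,b))\otimes S((n))$, so that $h(\la)\le 6$ is consistent. Combining Theorem \ref{T050620_2}, Lemma \ref{L280520_2}, Lemma \ref{bsspin} and the fact that $\nu\notin\Parinv_2(n)$ (Theorem \ref{T280520}), I would narrow $\nu$ to the short list occurring in the statement: either $h(\nu)=4$, or $\nu=\dbl(\rho)$ where $\rho$ is one of the three orderings $(c,d,n-c-d)$, $(c,n-c-d,d)$, $(n-c-d,c,d)$ with $c\equiv d\equiv\pm1\pmod4$. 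The residue conditions on $c,d$ and the constraints $d\ge 3$, $d\ge 5$ are exactly what keeps $\dbl(\rho)\notin\Parinv_2(n)$ and guarantees that the relevant composition factor of the spin Specht module actually occurs.

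Next, for each admissible $\nu$ I would carry out the combinatorics: compute the residues of the removable nodes of $\nu$, hence $\eps_0(\nu)$ and $\eps_1(\nu)$, determine $\tilde e_0\nu$ and $\tilde e_1\nu$, and — via Lemmas \ref{L060820} and \ref{L060820_2} — determine the socle of each non-zero $e_jE^\nu$, distinguishing according to whether $\tilde e_j\nu\in\Parinv_2(n-1)$ (this is precisely what decides whether the socle is $E^{\tilde e_j\nu}_+\oplus E^{\tilde e_j\nu}_-$ or $(E^{\tilde e_j\nu})^{\oplus a}$, and hence whether $\pm$-labels appear). On the $\la$-side, since $2\le\eps_0(\la)+\eps_1(\la)\le 3$, I would split into the sub-cases $(\eps_0(\la),\eps_1(\la))\in\{(2,0),(0,2),(1,1),(3,0),(0,3),(2,1),(1,2)\}$ and describe $e_iE^\la$ in each using Lemmas \ref{Lemma39} and \ref{l8}: irreducible (or a sum of two conjugates) when $\eps_i(\la)=1$, and in general indecomposable with $[e_iE^\la:E^{\tilde e_i\la}_{(\pm)}]=\eps_i(\la)$ and head and socle built from $E^{\tilde e_i\la}_{(\pm)}$. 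Finally I would match the summands in the displayed isomorphism: a non-zero $e_iE^\la\otimes E^{\be_{n-1}}_\pm$ must, by Krull--Schmidt, be a direct sum of some $e_jE^\nu$; comparing socles and heads, comparing composition factors (using Lemma \ref{L280520_3} to control those of $E^{\tilde e_i\la}_{(\pm)}\otimes E^{\be_{n-1}}_\pm$ when $\tilde e_i\la$ is a double) and comparing dimensions forces the matching, and reading off the structure of the matched $e_jE^\nu$ then yields exactly one of the seven cases according to the sub-case for $\la$ and the family of $\nu$.

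The main obstacle is the bookkeeping in the last two steps: there are many orderings and congruence sub-cases for the doubled three-part partitions $\nu$, and in each one must compute the residues of the removable nodes, test membership of the various $\tilde e_j\nu$ in $\Parinv_2(n-1)$, and carry the $\pm$-labels consistently through the tensor-and-restriction comparison; arranging the argument so that in each sub-case the composition factors can balance only in the way recorded in (1)--(7) is where the real work lies, whereas the branching machinery itself is routine given Sections \ref{s3}--\ref{s5}.
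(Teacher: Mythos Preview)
Your strategy is the paper's: restrict the isomorphism to $\A_{n-1}$ via $E^{\be_n}_\pm\da_{\A_{n-1}}\cong E^{\be_{n-1}}_\pm$, decompose both sides by $e_0,e_1$, list the possible shapes of $\nu$ from Lemma~\ref{L280520_2}, and match summands case by case. Two points, however, are not just bookkeeping and are missing from your sketch.

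First, Krull--Schmidt alone does not produce the precise conclusions in (ii)--(vi). The engine in the paper is the multiplicity bound
\[
[\,E^{\tilde e_i\la}\otimes E^{\be_{n-1}}_\pm\,:\,E^{\tilde e_j\nu}_{(\pm)}\,]\ \le\ \eps_j(\nu)/\eps_i(\la),
\]
obtained from $[e_iD^\la:D^{\tilde e_i\la}]=\eps_i(\la)$ and $[e_jD^\nu:D^{\tilde e_j\nu}]=\eps_j(\nu)$. It is this inequality, combined with the socle descriptions from Lemmas~\ref{L060820} and~\ref{L060820_2} and Lemma~\ref{simplesoc}, that forces $\eps_i(\la)=2$ and $\eps_j(\nu)=4$ in Case~1, pins down the multiplicity $2$ in (ii), and gives the bounds in (iii)--(vi). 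Without it you cannot rule out, e.g., $\eps_i(\la)=3$ in the situations leading to (ii), nor obtain the exact composition-factor counts.

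Second, in the family $\nu=\dbl(c,d,n-c-d)$ with $c\equiv d\equiv 3\pmod 4$ (Case~3 in the paper), $e_1E^\nu$ is not indecomposable: it splits as $A\oplus B$ with $A^\sigma\cong B$ for $\sigma\in\s_{n-1}\setminus\A_{n-1}$. When $\eps_0(\la),\eps_1(\la)>0$ one matches $(e_0E^\la)\otimes E^{\be_{n-1}}_\pm\cong A$ and $(e_1E^\la)\otimes E^{\be_{n-1}}_\pm\cong B$; the paper then uses that these are $\sigma$-conjugate together with non-vanishing of $\psi^{\be_{n-1}}$ (Lemma~\ref{cbs}) to deduce that $e_0D^\la$ and $e_1D^\la$ have the same Brauer character, contradicting that they lie in different blocks. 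This is how the mixed sub-case is eliminated here (so that only (iii) and (v), with $\eps_{1-i}(\la)=0$, survive), and it is a genuine extra argument beyond matching and counting.

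Minor remarks: there are six cases, not seven; and Lemma~\ref{L280520_3} plays no role in this proof (it controls composition factors of $D^{\dbl(\la)}\otimes D^{\be_n}$, not of $e_jE^\nu$).
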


\begin{proof}
In view of case (i) we will assume that $E^{\tilde e_i\la}_{(\pm)}\otimes E^{\be_{n-1}}_\pm$ is reducible for all $i$ such that $\eps_i(\la)>0$. We will use Lemma \ref{Lemma39} without further comment. Since $n\equiv 0\Md 4$, Lemma~\ref{branchingbs} gives $E^{\be_n}_\pm\da_{\A_{n-1}}\cong E^{\be_{n-1}}_\pm$.

Note that since $D^\nu$ is in the block of $S(p(n-a-b,a,b))$ the 2-core of $\nu$ is $(4^x,3,2,1)$ with $0\leq x\leq 1$. In particular, by Lemma \ref{L280520_2}, either $3\leq h(\nu)\leq 4$ and $\nu_1\not\equiv\nu_2\not\equiv\nu_3\not\equiv\nu_4\Md{2}$ or $\nu=\dbl(p(n-c-d,c,d))$ with $c\equiv d\equiv \pm 1\Md{4}$. From Lemmas \ref{L060820} and \ref{L060820_2} it can be checked that $\soc(e_jE^\nu)\cong E^{\tilde e_j\nu}$ or $E^{\tilde e_j\nu}_+\oplus E^{\tilde e_j\nu}_-$ for all $j$ such that $\eps_j(\nu)>0$.

{\bf Case 1:} assume that $\nu\not=\dbl(n-c-d,c,d)$ with $c\equiv d\equiv 1\Md{4}$ and $\nu\not=\dbl(c,d,n-c-d)$ with $c\equiv d\equiv 3\Md{4}$.

If $h(\nu)\leq 4$ then all the removable nodes of $\nu$ have the same residue $j$, since $\nu_1\not\equiv\nu_2\not\equiv\nu_3\not\equiv\nu_4\Md{2}$. Furthermore, $\nu_1-\nu_2\geq 3$ or $\nu_3-\nu_4\geq 3$ (since $\nu\not\in\Parinv_2(n)$). If $h(\nu)=4$ then $\tilde e_j\nu=(\nu_1,\nu_2,\nu_3,\nu_4-1)\not\in\Parinv_2(n-1)$. If $h(\nu)=3$ then $\tilde e_j(\nu)=(\nu_1,\nu_2,\nu_3-1)\not\in\Parinv_2(n-1)$ (since $\nu_1-\nu_2\geq 3$ or $\nu_3-1\geq 2$).

If $\nu=\dbl(n-c-d,c,d)$ with $c\equiv d\equiv 3\Md{4}$, $\nu=\dbl(c,n-c-d,d)$ with $c\equiv d\equiv \pm 1\Md{4}$ or $\nu=\dbl(c,d,n-c-d)$ with $c\equiv d\equiv 1\Md{4}$, then $\eps_j(\nu)=3+\de_{d\geq 3}$, $\eps_{1-j}(\nu)=0$ and $\tilde e_j(\nu)\not\in\Parinv_2(n-1)$, where $j$ is the residue of $(1,(c+1)/2)$ and $(3,(c+1)/2)$.

Since $E^\la\otimes E^{\be_n}_\pm\cong E^\nu$, it follows that $E^\la\da_{\A_{n-1}}\otimes E^{\be_{n-1}}_\pm\cong e_jE^\nu$. In particular $\soc(E^\la\da_{\A_{n-1}})\otimes E^{\be_{n-1}}_\pm$ is both a submodule and a quotient of $e_jE^\la$. Since $\soc(e_jE^\nu)\cong E^{\tilde e_j\nu}$ is simple, it follows that $\soc(E^\la\da_{\A_{n-1}})$ is simple, so $\eps_{1-i}(\la)=0$ for some residue $i$ and $\tilde e_i\la\not\in\Parinv_2(n)$. Furthermore, the head and socle of $E^{\tilde e_i\la}\otimes E^{\be_{n-1}}_\pm$ are isomorphic to $E^{\tilde e_j\nu}$. Since by assumption $E^{\tilde e_i\la}\otimes E^{\be_{n-1}}_\pm$ is not simple, we have
\[2\leq[E^{\tilde e_i\la}\otimes E^{\be_{n-1}}_\pm:E^{\tilde e_j\nu}]\leq[(e_iE^\la)\otimes E^{\be_{n-1}}_\pm:E^{\tilde e_j\nu}]/\eps_i(\la)=\eps_j(\nu)/\eps_i(\la).\]
Because $\eps_i(\la)\geq 2$ and $\eps_j(\nu)\leq 4$ it follows that $\eps_i(\la)=2$ and $\eps_j(\nu)=4$ (and so $h(\nu)=4$ if $h(\nu)\leq 4$ while $d\geq 3$ if $\nu=\dbl(n-c-d,c,d)$ or $\dbl(c,n-c-d,d)$).

{\bf Case 2:} assume that $\nu=\dbl(n-c-d,c,d)$ with $c\equiv d\equiv 1\Md{4}$. Then $\eps_1(\nu)=1$, $\eps_0(\nu)=3+\de_{d\geq 5}$, $\tilde e_1\nu\in\Parinv_2(n-1)$ and $\tilde e_0\nu\not\in\Parinv_2(n-1)$.

{\bf Case 2.1:} assume that $\eps_0(\la),\eps_1(\la)>0$. Then
\[((e_0 E^\la)\otimes E^{\be_{n-1}}_\pm)\oplus ((e_1 E^\la)\otimes E^{\be_{n-1}}_\pm)\cong e_0E^\nu\oplus E^{\tilde e_1\nu}_+\oplus E^{\tilde e_1\nu}_-.\]
Note that $\soc(e_0E^\nu)$ is simple and that $e_1E^\nu\cong E^{\tilde e_1\nu}_+\oplus E^{\tilde e_1\nu}_-$. Since we are excluding the case where some $E^{\tilde e_i\la}_\pm\otimes E^{\be_{n-1}}_\pm$ is simple, we may then assume that, for some residue $i$,
\[(e_iE^\la)\otimes E^{\be_{n-1}}_\pm\cong E^{\tilde e_1\nu}_+\oplus E^{\tilde e_1\nu}_-\hspace{11pt}\text{and}\hspace{11pt}(e_{1-i}E^\la)\otimes E^{\be_{n-1}}_\pm\cong e_0E^\nu.\]
Then $\tilde e_i\la\not\in\Parinv_2(n-1)$ since otherwise $E^{\tilde e_i\la}_\pm\otimes E^{\be_{n-1}}_\pm$ would be simple. Furthermore, $\tilde e_{1-i}\la\not\in\Parinv_2(n-1)$ since $\soc(e_0E^\la)$ is simple. 

We can then conclude similarly to case 1, using the fact that
\[[E^{\tilde e_x\la}\otimes E^{\be_{n-1}}_\pm:E^{\tilde e_y\nu}_{(\pm)}]\leq\eps_y(\nu)/\eps_x(\la)\]
for $(x,y)\in\{(i,1),(1-i,0)\}$.

{\bf Case 2.2:} assume that $\eps_{1-i}(\la)=0$ for some $i$. In this case
\[\eps_i(\la)\geq 2>1=\eps_1(\nu),\]
so $(D^{\tilde e_i\la}\da_{\A_{n-1}})\otimes E^{\be_{n-1}}_\pm\subseteq e_0E^\nu$ (and then $(D^{\tilde e_i\la}\da_{\A_{n-1}})\otimes E^{\be_{n-1}}_\pm$ is also a quotient of $e_0E^\nu$). We can then conclude similarly to the previous cases.

{\bf Case 3:} assume that $\nu=\dbl(c,d,n-c-d)$ with $c\equiv d\equiv 3\Md{4}$. Then $\eps_0(\nu)=0$, $\eps_1(\nu)=5$ and $\tilde e_1\nu\in\Parinv_2(n-1)$.

{\bf Case 3.1:} assume that $\eps_0(\la),\eps_1(\la)>0$. Then
\[((e_0 E^\la)\otimes E^{\be_{n-1}}_\pm)\oplus ((e_1 E^\la)\otimes E^{\be_{n-1}}_\pm)\cong e_1E^\nu.\]

Since $\soc(e_1E^\nu)\cong E^{\tilde e_1\nu}_+\oplus E^{\tilde e_1\nu}_-$, we have $e_1E^\nu\cong A\oplus B$ with $\soc(A)\cong E^{\tilde e_1\nu}_+$ and $\soc(B)\cong E^{\tilde e_1\nu}_-$. Note that if $\si\in\s_{n-1}\setminus\A_{n-1}$, then $A^\si\subseteq (e_1E^\nu)^\si\cong e_1E^\nu$. Since $\soc(A^\si)=E^{\tilde e_1\nu}_-$ it follows from Lemma \ref{simplesoc} that $A^\si$ is isomorphic to a submodule of $B$. Similarly $B^\si$ is isomorphic to a submodule of $A$. By dimensions we then have that $A^\si\cong B$. 

Since
\[((e_0 E^\la)\otimes E^{\be_{n-1}}_\pm)\oplus ((e_1 E^\la)\otimes E^{\be_{n-1}}_\pm)\cong A\oplus B,\]
$(e_i E^\la)\otimes E^{\be_{n-1}}_\pm\not=0$ for any residue $i$ and $A$ and $B$ have simple socle, it follows that $(e_i E^\la)\otimes E^{\be_{n-1}}_\pm$ also has simple socle. Thus, up to exchange of $E^{\tilde e_1\nu}_\pm$ (and of $A$ and $B$), we may then assume that
\[\soc((e_0 E^\la)\otimes E^{\be_{n-1}}_\pm)\cong E^{\tilde e_1\nu}_+\hspace{11pt}\text{and}\hspace{11pt}\soc((e_1 E^\la)\otimes E^{\be_{n-1}}_\pm)\cong E^{\tilde e_1\nu}_-.\]
From Lemma \ref{simplesoc} it follows that (up to isomorphism), 
\[(e_0 E^\la)\otimes E^{\be_{n-1}}_\pm\subseteq A\hspace{11pt}\text{and}\hspace{11pt}(e_1 E^\la)\otimes E^{\be_{n-1}}_\pm\subseteq B\]
and then comparing dimensions that
\[(e_0 E^\la)\otimes E^{\be_{n-1}}_\pm\cong A\hspace{11pt}\text{and}\hspace{11pt}(e_1 E^\la)\otimes E^{\be_{n-1}}_\pm\cong B.\]
In particular $((e_0 E^\la)\otimes E^{\be_{n-1}}_\pm)^\si\cong (e_1 E^\la)\otimes E^{\be_{n-1}}_\pm$ for $\si\in\s_{n-1}\setminus\A_{n-1}$ and then $(e_0 D^\la)\otimes D^{\be_{n-1}}$ and $(e_1 D^\la)\otimes D^{\be_{n-1}}$ have the same Brauer character. In view of Lemma \ref{cbs} the same holds also for $e_0 D^\la$ and $e_1D^\la$, leading to a contradiction since these modules are non-zero and in different blocks.

{\bf Case 3.2:} assume that $\eps_{1-i}(\la)=0$ for some $i$. In this case the head and socle of $(D^{\tilde e_i\la}\da_{\A_{n-1}})\otimes E^{\be_{n-1}}_\pm$ are contained in the head and socle of $e_1E^\nu$ which are isomorphic to $E^{\tilde e_1\nu}_+\oplus E^{\tilde e_1\nu}_-$. The lemma then follows from the fact that
\[[(D^{\tilde e_i\la}\da_{\A_{n-1}})\otimes E^{\be_{n-1}}_\pm:E^{\tilde e_1\nu}_\pm]\leq \eps_1(\nu)/\eps_i(\la),\]
using arguments similar to those in cases 1 and 2.
\end{proof}

Case (i) of Lemma \ref{L090720} can be handled using induction. In order to show that $E^\la\otimes E^{\be_n}_\pm$ is never irreducible, we will thus now show that cases (ii) to (vi) of Lemma \ref{L090720} never arise.

\begin{theor}\label{T100720}
Suppose $p=2$, $n\equiv 0\Md{4}$, $\la,\nu\in\Par_2(n)\setminus\Parinv_2(n)$ with $h(\la)\leq 6$, $\eps_0(\la)+\eps_1(\la)=3$ and $D^\nu$ is a composition factor of some $S(p(n-a-b,a,b))$ with $a\equiv b\equiv\pm 1\Md{4}$. If $E^\la\otimes E^{\be_n}_\pm\cong E^\nu$ then, for some residue $i$, $\eps_i(\la)>0$ and $E^{\tilde e_i\la}_{(\pm)}\otimes E^{\be_{n-1}}_\pm$ is irreducible.
\end{theor}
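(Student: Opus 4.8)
The plan is to feed the hypotheses into Lemma~\ref{L090720} and to show that the only surviving outcome is the one in the conclusion. Since $\eps_0(\la)+\eps_1(\la)=3$, among the seven possibilities listed in Lemma~\ref{L090720} the only ones compatible with the values of the $\eps_i(\la)$ are case~(i) (which imposes no condition on them), case~(v) (where $\{\eps_i(\la),\eps_{1-i}(\la)\}=\{3,0\}$) and case~(vi) (where $\{\eps_i(\la),\eps_{1-i}(\la)\}=\{1,2\}$). Case~(i) is precisely the assertion to be proved, so everything reduces to showing that cases~(v) and~(vi) cannot occur under the present hypotheses.

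I would argue one floor down. As $n\equiv 0\pmod 4$, the integer $n-1$ is odd and $E^{\be_n}_\pm\da_{\A_{n-1}}\cong E^{\be_{n-1}}_\pm$ by Lemma~\ref{branchingbs}; restricting the assumed isomorphism $E^\la\otimes E^{\be_n}_\pm\cong E^\nu$ to $\A_{n-1}$ therefore gives $(E^\la\da_{\A_{n-1}})\otimes E^{\be_{n-1}}_\pm\cong E^\nu\da_{\A_{n-1}}$. In cases~(v) and~(vi) the description in Lemma~\ref{L090720} determines, for each residue $j$ with $\eps_j(\la)>0$, the $\A_{n-1}$-module $(D^{\tilde e_j\la}\da_{\A_{n-1}})\otimes E^{\be_{n-1}}_\pm$ up to a two-dimensional ambiguity, hence up to the dimension of its endomorphism ring. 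Using the self-duality of $D^{\be_{n-1}}$, the identity
\[\dim\Hom_{\s_{n-1}}(\End_F(D^{\tilde e_j\la}),\End_F(D^{\be_{n-1}}))=\dim\End_{\s_{n-1}}(D^{\tilde e_j\la}\otimes D^{\be_{n-1}}),\]
and Clifford theory for the pair $\A_{n-1}\subseteq\s_{n-1}$, these structures translate into the bound
\[\dim\Hom_{\s_{n-1}}(\End_F(D^{\tilde e_j\la}),\End_F(D^{\be_{n-1}}))\le 2\]
in all sub-cases where $(D^{\tilde e_j\la}\da_{\A_{n-1}})\otimes E^{\be_{n-1}}_\pm$ has simple socle and its two twists under $\s_{n-1}\setminus\A_{n-1}$ are non-isomorphic.

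Against this, when $\eps_j(\la)\ge 2$ the partition $\tilde e_j\la$ has at least two normal nodes and $\tilde e_j\la\in\Par_2(n-1)\setminus\Parinv_2(n-1)$ by the case hypotheses, so Lemma~\ref{L090720_2} (or Lemma~\ref{L240720_2}, in the sub-case in which $\tilde e_j\la$ has three normal nodes all of one residue) forces $\dim\Hom_{\s_{n-1}}(\End_F(D^{\tilde e_j\la}),\End_F(D^{\be_{n-1}}))\ge 3$. Comparing the two estimates eliminates cases~(v) and~(vi) except in the exceptional configurations, namely those where $(D^{\tilde e_j\la}\da_{\A_{n-1}})\otimes E^{\be_{n-1}}_\pm$ is a direct sum $E^{\tilde e_1\nu}_+\oplus E^{\tilde e_1\nu}_-$ (so the endomorphism dimension is $4$ and the comparison is inconclusive) and those where $\tilde e_j\la$ acquires a normal node of residue $1-j$ (so neither of the two lemmas is available). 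For these I would switch to block theory: in cases~(v) and~(vi) the partition $\nu$ is the double of a partition and its $2$-core is $(4^x,3,2,1)$ with $0\le x\le 1$; combining Lemma~\ref{L080620_2} (a composition factor of $D^{\tilde e_j\la}\otimes D^{\be_{n-1}}$ lies in the block of $S(\tilde e_j\la)$) with the dominance statement of Lemma~\ref{L280520_3} and the block and $2$-core statement of Lemma~\ref{L080620}, applied to the explicit partitions $\dbl(c,d,n-c-d)$ and $\dbl(n-c-d,c,d)$ occurring in~(v) and~(vi) and to their branch-downs, together with a dimension count involving $\dim D^{\be_{n-1}}$, should yield the remaining contradictions.

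The step I expect to be hardest is exactly the treatment of these exceptional configurations: the endomorphism-ring inequalities of Lemmas~\ref{L090720_2} and~\ref{L240720_2} are on their own too weak to exclude the direct-sum configurations, so one is forced to carry out the block and dominance bookkeeping for the specific double partitions appearing in~(v) and~(vi), keeping careful track of $2$-cores, of which branch-down $\tilde e_j\la$ can reach which $\tilde e_1\nu$, and of the precise composition multiplicities involved.
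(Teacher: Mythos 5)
Your reduction via Lemma~\ref{L090720} to cases~(i), (v) and~(vi) is correct, and your idea of pitting the endomorphism-ring upper bound coming from the restricted tensor product against the lower bounds of Lemmas~\ref{L090720_2} and~\ref{L240720_2} is exactly what the paper does to dispose of case~(vi) and of the non-split sub-case of~(v): the paper also eliminates case~(vi) by observing (via \cite[Lemma 4.4]{m1}) that when $\eps_0(\la),\eps_1(\la)>0$ some $\tilde e_i\la$ has $\geq 4$ normal nodes, which forces $D_1^{\oplus 3}\subseteq \End_F(D^{\tilde e_i\la})$ and contradicts the bound $\dim\Hom_{\s_{n-1}}(\End_F(D^{\tilde e_i\la}),\End_F(D^{\be_{n-1}}))\leq 4$; and you correctly identify the direct-sum configuration $E^{\tilde e_i\la}\otimes E^{\be_{n-1}}_\pm\cong E^{\tilde e_1\nu}_+\oplus E^{\tilde e_1\nu}_-$ as the residual hard case.

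Where the proposal has a genuine gap is precisely in that hard case. You suggest handling it by ``block and dominance bookkeeping'' using Lemma~\ref{L080620_2}, Lemma~\ref{L280520_3}, Lemma~\ref{L080620} and a dimension count. That is not what the paper does, and I do not see it succeeding: in this configuration $[D^{\tilde e_i\la}\otimes D^{\be_{n-1}}]=2[D^{\tilde e_1\nu}]$, and both sides of the Brauer-character equality sit in the correct block by construction, so block/core comparisons at level $n-1$ give no contradiction. The paper instead descends two and three more levels. It first deduces $D^{\tilde e_i\la}\otimes D^{\be_{n-1}}\cong(D^{\tilde e_1\nu})^{\oplus 2}$, then analyses $(D^{\tilde e_i^2\la})\otimes D^{\be_{n-2}}$ inside $e_1D^{\tilde e_1\nu}$ (using Lemma~\ref{l8}, Lemma~\ref{L060820_2} and \cite[Theorem 1.1]{m1}), producing equalities in the Grothendieck group at levels $n$, $n-1$ and $n-2$ simultaneously; block considerations (Lemma~\ref{L080620_2} and the shape of $\overline{\dbl}(\la)$, $\overline{\dbl}(\tilde e_i\la)$, $\overline{\dbl}(\tilde e_i^2\la)$) are then used to pin down residue counts on $\la$. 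Finally it branches once more to $\s_{n-3}$, splits into sub-cases according to $\eps_{1-i}(\tilde e_i^2\la)$, and closes them using \cite[Main Theorem]{bk} (the symmetric-group classification of irreducible tensor products), \cite[Lemma 4.4]{m1}, and the endomorphism lower bounds from Lemmas~\ref{L080620_3}, \ref{L080620_4} and~\ref{L240720_2}. None of this is recoverable from the block/dominance argument you sketch, so as written the proposal does not close the case it itself flags as decisive.

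A smaller inaccuracy: the clean bound coming from Lemma~\ref{L090720} in the non-split sub-cases is $\dim\Hom_{\A_{n-1}}(\End_F(E^{\tilde e_i\la}),\Hom_F(E^{\be_{n-1}}_\pm,E^{\be_{n-1}}_\de))\leq 2$ for each fixed $\de$, hence $\dim\Hom_{\s_{n-1}}(\End_F(D^{\tilde e_i\la}),\End_F(D^{\be_{n-1}}))\leq 4$, not $\leq 2$; the contradiction against Lemma~\ref{L090720_2} in those sub-cases is obtained by showing the relevant sum of the two $\A_{n-1}$-$\Hom$-dimensions actually equals $2$, not by the uniform $\leq 2$ estimate you state.
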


\begin{proof}
We will use Lemma \ref{Lemma39} without further comment. Since $\la$ has three normal nodes, $h(\la)\geq 3$, so that $n\geq 8$ (as $n\equiv 0\Md{4}$). By Lemma \ref{branchingbs}, $E^{\be_n}_\pm\da_{\A_{n-1}}\cong E^{\be_{n-1}}_\pm$, $D^{\be_{n-1}}\da_{\s_{n-2}}\cong D^{\be_{n-2}}|D^{\be_{n-2}}$ and $D^{\be_{n-2}}\da_{\s_{n-3}}\cong D^{\be_{n-3}}$.

We may assume that $E^{\tilde e_i\la}_{(\pm)}\otimes E^{\be_{n-1}}_\pm$ is not irreducible whenever $\eps_i(\la)>0$. Then, by Lemma \ref{L090720}, whenever $\eps_i(\la)>0$, $\tilde e_i\la\not\in\Parinv_2(n-1)$ and for every $\de\in\{\pm\}$
\begin{align*}
&\dim\Hom_{\A_{n-1}}(\End_F(E^{\tilde e_i\la}),\Hom_F(E^{\be_{n-1}}_\pm,E^{\be_{n-1}}_\de))\\
&=\dim\Hom_{\A_{n-1}}((E^{\tilde e_i\la})^*\otimes E^{\tilde e_i\la},(E^{\be_{n-1}}_\pm)^*\otimes E^{\be_{n-1}}_\de))\\
&=\dim\Hom_{\A_{n-1}}(E^{\tilde e_i\la}\otimes E^{\be_{n-1}}_\pm,E^{\tilde e_i\la}\otimes E^{\be_{n-1}}_\de)\\
&\leq 2
\end{align*}
(note that for any $\mu\in\Par_2(m)\setminus\Parinv_2(m)$, $\pi\in\Parinv_2(m)$ and $\si\in\s_m\setminus\A_m$, $E^\mu\otimes E^\pi_\mp\cong (E^\mu\otimes E^\pi_\pm)^\si$). It also follows that
\begin{align*}
&\dim\Hom_{\s_{n-1}}(\End_F(D^{\tilde e_i\la}),\End_F(D^{\be_{n-1}}))\\
&=\dim\Hom_{\s_{n-1}}(D^{\tilde e_i\la}\otimes D^{\tilde e_i\la},D^{\be_{n-1}}\otimes D^{\be_{n-1}})\\
&=\dim\Hom_{\s_{n-1}}(D^{\tilde e_i\la}\otimes D^{\tilde e_i\la},((E^{\be_{n-1}}_\pm)^*\ua^{\s_{n-1}})\otimes D^{\be_{n-1}})\\
&=\dim\Hom_{\s_{n-1}}(D^{\tilde e_i\la}\otimes D^{\tilde e_i\la},((E^{\be_{n-1}}_\pm)^*\otimes (D^{\be_{n-1}})\da_{\A_{n-1}})\ua^{\s_{n-1}}))\\
&=\dim\Hom_{\s_{n-1}}(D^{\tilde e_i\la}\otimes D^{\tilde e_i\la},((E^{\be_{n-1}}_\pm)^*\otimes (E^{\be_{n-1}}_+\oplus E^{\be_{n-1}}_-)\ua^{\s_{n-1}}))\\
&=\dim\Hom_{\A_{n-1}}((D^{\tilde e_i\la}\otimes D^{\tilde e_i\la})\da_{\A_{n-1}},((E^{\be_{n-1}}_\pm)^*\otimes (E^{\be_{n-1}}_+\oplus E^{\be_{n-1}}_-)\ua^{\s_{n-1}}))\\
&=\dim\Hom_{\A_{n-1}}(E^{\tilde e_i\la}\otimes E^{\tilde e_i\la},((E^{\be_{n-1}}_\pm)^*\otimes (E^{\be_{n-1}}_+\oplus E^{\be_{n-1}}_-)\ua^{\s_{n-1}}))\\
&=\dim\Hom_{\A_{n-1}}(\End_F(E^{\tilde e_i\la}),\Hom_F(E^{\be_{n-1}}_\pm,E^{\be_{n-1}}_+\oplus E^{\be_{n-1}}_-)))\\
&\leq 4.
\end{align*}

Assume first that $\eps_0(\tilde e_i\la)+\eps_1(\tilde e_i\la)\geq 4$ for some $i$ with $\eps_i(\la)>0$. Then $\dim\End_{\s_{n-2}}(D^{\tilde e_i\la})\geq 4$ and so $D_1^{\oplus 3}\subseteq \End_F(D^{\tilde e_i\la})$ by Lemma \ref{L080620_3}. Furthermore, $D_1\subseteq\End_F(D^{\be_{n-1}})$ by Lemma  \ref{L080620_4}. So $E_1^{\oplus 3}\subseteq \End_F(E^{\tilde e_i\la})$ and $E_1\subseteq\Hom_F(E^{\be_{n-1}}_\pm,E^{\be_{n-1}}_\de)$ for some $\de$, in particular
\[\dim\Hom_{\A_{n-1}}(\End_F(E^{\tilde e_i\la}),\Hom_F(E^{\be_{n-1}}_\pm,E^{\be_{n-1}}_\de))\geq 3\]
leading to a contradiction.

If $\eps_0(\la)>0$ and $\eps_1(\la)>0$, then $\eps_0(\tilde e_i\la)+\eps_1(\tilde e_i\la)\geq 4$ for some $i$ by \cite[Lemma 4.4]{m1}. So we may now assume that $\eps_i(\la)=3$, $\eps_{1-i}(\la)=0$ and $\eps_{1-i}(\tilde e_i\la)\leq 1$ for some $i$. Hence we are in case (v) of Lemma \ref{L090720}. In particular, $\nu=\dbl(c,d,n-c-d)$ with $c,d\equiv 3\Md{4}$ and there are two cases to be considered.

{\bf Case 1:} assume that $E^{\tilde e_i\la}\otimes E^{\be_{n-1}}_\pm$ has socle $E^{\tilde e_1\nu}_\pm$, head $E^{\tilde e_1\nu}_\mp$ and
\[[E^{\tilde e_i\la}\otimes E^{\be_{n-1}}_\pm:E^{\tilde e_1\nu}_+]=[E^{\tilde e_i\la}\otimes E^{\be_{n-1}}_\pm:E^{\tilde e_1\nu}_-]=1.\]
Then 
\begin{align*}
&\dim\Hom_{\s_{n-1}}(\End_F(D^{\tilde e_i\la}),\End_F(D^{\be_{n-1}}))\\
&=\dim\Hom_{\A_{n-1}}(\End_F(E^{\tilde e_i\la}),\Hom_F(E^{\be_{n-1}}_\pm,E^{\be_{n-1}}_+\oplus E^{\be_{n-1}}_-)))\\
&=\dim\Hom_{\A_{n-1}}(E^{\tilde e_i\la}\otimes E^{\be_{n-1}}_\pm,E^{\tilde e_i\la}\otimes (E^{\be_{n-1}}_+\oplus E^{\be_{n-1}}_-))\\
&=2
\end{align*}
(with the first two equalities holding as above). If $\eps_{1-i}(\tilde e_i\la)=0$ this leads to a contradiction, thanks to Lemma \ref{L090720_2}. If instead $\eps_{1-i}(\tilde e_i\la)=1$, then $D_0\oplus D_1^{\oplus 2}\subseteq \End_F(D^{\tilde e_i\la})$ by Lemma \ref{L080620_3}, leading to a contradiction thanks to Lemma \ref{L080620_4}.

{\bf Case 2:} assume that $E^{\tilde e_i\la}\otimes E^{\be_{n-1}}_\pm\cong E^{\tilde e_1\nu}_+\oplus E^{\tilde e_1\nu}_-$. Since $\nu=\dbl(c,d,n-c-d)$ with $c,d\equiv 3\Md{4}$, we have that $\tilde e_1\nu=\dbl(c,d,n-c-d-1)$, $\eps_1(\tilde e_1\nu)=4$, $\eps_0(\tilde e_0\nu)=0$ and $\tilde e_1^2\nu\not\in\Parinv_2(n-2)$.

Furthermore, from $E^{\tilde e_i\la}\otimes E^{\be_{n-1}}_\pm\cong E^{\tilde e_1\nu}_+\oplus E^{\tilde e_1\nu}_-$ we have that
\[D^{\tilde e_i\la}\otimes D^{\be_{n-1}}\cong (E^{\tilde e_i\la}\otimes E^{\be_{n-1}}_\pm)\ua^{\s_{n-1}}\cong (D^{\tilde e_1\nu})^{\oplus 2}\]
and so
\[(D^{\tilde e_i\la}\otimes D^{\be_{n-1}})\da_{\s_{n-2}}\cong (e_1D^{\tilde e_1\nu})^{\oplus 2}.\]

{\bf Case 2.1:} assume that $\eps_{1-i}(\tilde e_i\la)=1$. In this case
\[(D^{\tilde e_i\la}\otimes D^{\be_{n-1}})\da_{\s_{n-2}}\cong (\overbrace{(e_0D^{\tilde e_i\la})\otimes (D^{\be_{n-1}}\da_{\s_{n-2}})}^A)\oplus(\overbrace{(e_1D^{\tilde e_i\la})\otimes (D^{\be_{n-1}}\da_{\s_{n-2}})}^B)\]
with $A,B\not=0$. Since $A\oplus B\cong(e_1D^{\tilde e_1\nu})^{\oplus 2}$ and so $\soc(A\oplus B)\cong (D^{\tilde e_1^2\nu})^{\oplus 2}$, we have that $A$ and $B$ both have simple socle. Thus, $A,B$ are both isomorphic to submodules of $e_1D^{\tilde e_1\nu}$ by Lemma \ref{simplesoc}, so $A,B\cong e_1D^{\tilde e_1\nu}$ by dimension. In particular $A\cong B$. From Lemma \ref{cbs} it follows that $e_0D^{\tilde e_i\la}$ and $e_1D^{\tilde e_i\la}$ have the same Brauer character. 
This leads to a contradiction, as $e_0D^{\tilde e_i\la}$ and $e_1D^{\tilde e_i\la}$ are non-zero and lie in distinct blocks.

{\bf Case 2.2:} assume that $\eps_{1-i}(\tilde e_i\la)=0$. Since $(D^{\tilde e_i\la}\otimes D^{\be_{n-1}})\da_{\s_{n-2}}\cong (e_1D^{\tilde e_1\nu})^{\oplus 2}$, $\eps_i(\tilde e_i\la)=\eps_i(\la)-1=2$, $\eps_1(\nu)=4$ and $D^{\be_{n-1}}\da_{\s_{n-2}}\cong D^{\be_{n-2}}|D^{\be_{n-2}}$ we have that
\[[D^{\tilde e_i^2\la}\otimes D^{\be_{n-2}}:D^{\tilde e_1^2\nu}]\leq 2\eps_1(\tilde e_1\nu)/(2\eps_i(\tilde e_i\la))=2.\]
Furthermore, since $\tilde e_1^2\nu=\dbl(c,d-1,n-c-d-1)$ and $n-c-d-1\geq 1$, we have by \cite[Theorem 1.1]{m1} that $D^{\tilde e_i^2\la}\otimes D^{\be_{n-2}}\not\cong D^{\tilde e_1^2\nu}$. So from
\[D^{\tilde e_i^2\la}\otimes D^{\be_{n-2}}\subseteq (e_1D^{\tilde e_1\nu})^{\oplus 2}\]
we either have that either $[D^{\tilde e_i^2\la}\otimes D^{\be_{n-2}}:D^{\tilde e_1^2\nu}]=2$ and the head and socle of $D^{\tilde e_i^2\la}\otimes D^{\be_{n-2}}$ are isomorphic to $D^{\tilde e_1^2\nu}$, or that $D^{\tilde e_i^2\la}\otimes D^{\be_{n-2}}\cong(D^{\tilde e_1^2\nu})^{\oplus 2}$.

Assume that we are in the first case. Then $D^{\tilde e_i^2\la}\otimes D^{\be_{n-2}}\subseteq e_1D^{\tilde e_1\nu}$ by Lemma \ref{simplesoc}. By Lemma \ref{l8} we then have that $D^{\tilde e_i^2\la}\otimes D^{\be_{n-2}}\cong V_2$ where $V_2$ is the unique submodule of $e_1D^{\tilde e_1\nu}$ with $[V_2:D^{\tilde e_1^2\nu}]=2$ and $\hd(V_2)\cong D^{\tilde e_1^2\nu}$. Since $\tilde e_1^2\nu\not\in\Parinv_2(n-2)$ we have that $E^{\tilde e_1^2\nu}\ua^{\s_{n-2}}\cong D^{\tilde e_1^2\nu}|D^{\tilde e_1^2\nu}$. Since $\tilde e_1\nu\in\Parinv_2(n-1)$ we have from Lemma \ref{L060820_2} that $E^{\tilde e_1^2\nu}\ua^{\s_{n-2}}\subseteq e_1D^{\tilde e_1\nu}$ and so $V_2\cong E^{\tilde e_1^2\nu}\ua^{\s_{n-2}}$.

It follows that in either case
\begin{align*}
[D^\la\otimes D^{\be_n}]&=2[D^\nu],\\
[D^{\tilde e_i\la}\otimes D^{\be_{n-1}}]&=2[D^{\tilde e_1\nu}],\\
[D^{\tilde e_i^2\la}\otimes D^{\be_{n-2}}]&=2[D^{\tilde e_1^2\nu}]
\end{align*}
(the first equality holding by $E^\la\otimes E^{\be_n}_\pm\cong E^\nu$).

By Lemma \ref{L080620_2}, $S^{\overline{\dbl}(\la)}$ is in the same block as $D^\nu$, $S^{\overline{\dbl}(\tilde e_i\la)}$ in the same block as $D^{\tilde e_1\nu}$ and $S^{\overline{\dbl}(\tilde e_i^2\la)}$ in the same block as $D^{\tilde e_1^2\nu}$. Since $\nu=\dbl(c,d,n-c-d)$, $\tilde e_1^2\nu=\dbl(c,d,n-c-d-1)$ and $\tilde e_1\nu=\dbl(c,d-1,n-c-d-1)$ with $c\equiv d\equiv 3\Md{4}$ and $n-c-d\equiv 2\Md{4}$,
\[|\{k:\la_k\equiv 3\Md{4}\}|=|\{k:\la_k\equiv 1\Md{4}\}|+2\]
and if $(k,\la_k)$ is either of the bottom two $i$-normal nodes of $\la$, then $\la_k$ is congruent to 2 or 3 modulo 4.

Further note that $\eps_i(\tilde e_i^2\la)=1$, $\eps_1(\tilde e_1^2\nu)=3$ and $\eps_0(\tilde e_1^2\nu)=0$.

{\bf Case 2.2.1:} assume that $\eps_{1-i}(\tilde e_i^2\la)=0$. Then $\tilde e_i^2\la$ is JS. Since the two top $i$-conormal nodes of $\tilde e_i^2\la$ are the bottom two $i$-normal nodes of $\la$, it follows that $(h(\la),1)$ is the $i$-good node of $\la$, leading to a contradiction. 

{\bf Case 2.2.2:} assume that $\eps_{1-i}(\tilde e_i^2\la)>0$. Since $D^{\tilde e_i^2\la}\otimes D^{\be_{n-2}}\sim D^{\tilde e_1^2\nu}|D^{\tilde e_1^2\nu}$,
\[[D^{\tilde e_i^3\la}\otimes D^{\be_{n-3}}:D^{\tilde e_1^3\nu}]+\eps_{1-i}(\tilde e_i^2\la)[D^{\tilde e_{1-i}\tilde e_i^2\la}\otimes D^{\be_{n-3}}:D^{\tilde e_1^3\nu}]\leq 2\eps_1(\tilde e_1^2\nu)=6.\]
From the fact that
\[\soc(D^{\tilde e_i^2\la}\da_{\s_{n-3}}\otimes D^{\be_{n-3}})\subseteq \soc(D^{\tilde e_1^\nu}\da_{\s_{n-3}})^{\oplus 2}\cong (D^{\tilde e_1^3\nu})^{\oplus 2}\]
(and similarly for the head) we have that, for every $j$, $D^{\tilde e_j\tilde e_i^2\la}\otimes D^{\be_{n-3}}$ has simple head and socle isomorphic to $D^{\tilde e_1^3\nu}$. In particular
\begin{align*}
\dim\Hom_{\s_{n-3}}(\End_F(D^{\tilde e_j\tilde e_i^2\la}),\End_F(D^{\be_{n-3}}))&=\dim\End_{\s_{n-3}}(D^{\tilde e_j\tilde e_i^2\la}\otimes D^{\be_{n-3}})\\
&\leq [D^{\tilde e_j\tilde e_i^2\la}\otimes D^{\be_{n-3}}:D^{\tilde e_1^3\nu}].
\end{align*}

{\bf Case 2.2.2.1:} assume that $D^{\tilde e_j\tilde e_i^2\la}\otimes D^{\be_{n-3}}$ is simple for some $j$. Then by \cite[Main Theorem]{bk}, $\tilde e_j\tilde e_i^2\la=(n-3)$. Since $\eps_i(\la)=3$ it follows that $\la=(n-3,2,1)$ and $(3,1)$ is the $i$-good node of $\la$, again giving a contradiction.

{\bf Case 2.2.2.2:} assume that $\eps_{1-i}(\tilde e_i^2\la)=2$ and $[D^{\tilde e_j\tilde e_i^2\la}\otimes D^{\be_{n-3}}:D^{\tilde e_1^3\nu}]=2$ for every $j$. 

By \cite[Lemma 4.4]{m1} there exists $j$ such that $\tilde e_j\tilde e_i^2\la$ has four normal nodes. From Lemmas \ref{L080620_3} and \ref{L080620_4} we have that $D_0\oplus D_1^{\oplus 3}\subseteq \End_F(D^{\tilde e_j\tilde e_i^2\la})$ and $D_0\oplus D_1\subseteq\End_F(D^{\be_{n-3}})$, so that
\[\dim\Hom_{\s_{n-3}}(\End_F(D^{\tilde e_j\tilde e_i^2\la}),\End_F(D^{\be_{n-3}}))\geq 4,\]
leading to a contradiction.

{\bf Case 2.2.2.3:} assume that $\eps_{1-i}(\tilde e_i^2\la)=1$ and $[D^{\tilde e_j\tilde e_i^2\la}\otimes D^{\be_{n-3}}:D^{\tilde e_1^3\nu}]\leq 4$ for every $j$. By \cite[Lemma 4.4]{m1} there exists $j$ such that $\tilde e_{1-j}\tilde e_i^2\la$ has three normal nodes, all of residue $j$, so by Lemma \ref{L240720_2} we have that
\[\dim\Hom_{\s_{n-3}}(\End_F(D^{\tilde e_{1-j}\tilde e_i^2\la}),\End_F(D^{\be_{n-3}}))\geq 5,\]
again giving a contradiction.
\end{proof}

\begin{theor}\label{T240720}
Suppose $n\equiv 0\Md{4}$, $\la,\nu\in\Par_2(n)\setminus\Parinv_2(n)$ with $h(\la)\leq 6$, $\eps_0(\la),\eps_1(\la)=1$ and $\nu=\dbl(n-c-d,c,d)$ with $c\equiv d\equiv 1\Md{4}$. If $E^\la\otimes E^{\be_n}_\pm\cong E^\nu$ then $E^{\tilde e_i\la}_{(\pm)}\otimes E^{\be_{n-1}}_\pm$ is irreducible for some residue $i$.
\end{theor}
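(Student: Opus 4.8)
The plan is to argue by contradiction: assume that $E^{\tilde e_i\la}_{(\pm)}\otimes E^{\be_{n-1}}_\pm$ fails to be irreducible for both residues $i$. Since $\eps_0(\la)=\eps_1(\la)=1$, cases (ii),(iii) of Lemma \ref{L090720} (which need $\eps_{1-i}(\la)=0$) and cases (v),(vi) (which need $\eps_0(\la)+\eps_1(\la)=3$) are excluded, while case (i) would contradict the assumption; moreover $E^\la_{(\pm)}$ is not $1$-dimensional (as $\la\neq(n)$, since $(n)$ has a single normal node), so by Theorem \ref{T050620_2} the partition $D^\nu$ is a composition factor of some $S((n-e,e))$ or of some $S(p(n-a-b,a,b))$ with $a\equiv b\equiv\pm1\Md4$, and the first possibility is impossible for a $\nu$ of the stated form (it has more than two parts and is not a double $\dbl(n-e,e)$). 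Hence Lemma \ref{L090720} forces case (iv): $\tilde e_0\la,\tilde e_1\la\notin\Parinv_2(n-1)$, and for some residue $i$
\[
E^{\tilde e_i\la}\otimes E^{\be_{n-1}}_\pm\cong E^{\tilde e_1\nu}_+\oplus E^{\tilde e_1\nu}_-,\qquad E^{\tilde e_{1-i}\la}\otimes E^{\be_{n-1}}_\pm\cong e_0E^\nu,
\]
where, as recorded in Case 2 of the proof of Lemma \ref{L090720}, $\eps_1(\nu)=1$, $\eps_0(\nu)\in\{3,4\}$, $\tilde e_1\nu\in\Parinv_2(n-1)$ and $\tilde e_0\nu\notin\Parinv_2(n-1)$.

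Next I would lift to the symmetric group. Since $n\equiv0\Md4$, Lemma \ref{branchingbs} gives $D^{\be_n}\da_{\s_{n-1}}\cong D^{\be_{n-1}}$, and $\be_{n-1}\in\Parinv_2(n-1)$ because $n-1$ is odd, so $D^{\be_{n-1}}\da_{\A_{n-1}}\cong E^{\be_{n-1}}_+\oplus E^{\be_{n-1}}_-$. Using $\tilde e_i\la\notin\Parinv_2(n-1)$ (so $E^{\tilde e_i\la}$ is $\si$-stable) together with $\tilde e_1\nu\in\Parinv_2(n-1)$ (so $E^{\tilde e_1\nu}_\pm\ua^{\s_{n-1}}\cong D^{\tilde e_1\nu}$), a comparison of $\A_{n-1}$-restrictions and of heads via Frobenius reciprocity upgrades the first isomorphism to $D^{\tilde e_i\la}\otimes D^{\be_{n-1}}\cong(D^{\tilde e_1\nu})^{\oplus2}$; by self-duality of both tensor factors this yields
\[
\dim\Hom_{\s_{n-1}}\!\bigl(\End_F(D^{\tilde e_i\la}),\End_F(D^{\be_{n-1}})\bigr)=\dim\End_{\s_{n-1}}(D^{\tilde e_i\la}\otimes D^{\be_{n-1}})=4.
\]
Similarly $D^{\tilde e_{1-i}\la}\otimes D^{\be_{n-1}}$ restricts to $\A_{n-1}$ as $(e_0E^\nu)^{\oplus2}$, with composition factors $2[e_0D^\nu]$ and Loewy structure controlled via Lemma \ref{l8} (using $\eps_0(\nu)\in\{3,4\}$ and $\tilde e_0\nu\notin\Parinv_2(n-1)$). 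Finally, by \cite[Lemma 4.4]{m1}, since $\eps_0(\la),\eps_1(\la)>0$ there is a residue $j$ with $\eps_0(\tilde e_j\la)+\eps_1(\tilde e_j\la)\geq3$, and $\tilde e_j\la\notin\Parinv_2(n-1)$ by case (iv).

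To reach the contradiction I would exploit that $4$ is too small, using the endomorphism estimates of Section \ref{s7} and that $n-1\equiv3\Md4$, so $D_0\oplus D_1$ and a copy of $Y_{1^2}/D_2$ both lie in $\End_F(D^{\be_{n-1}})$ by Lemma \ref{L080620_4}. If $j=i$: when $\eps_i(\tilde e_i\la)=3$ and $\eps_{1-i}(\tilde e_i\la)=0$, Lemma \ref{L240720_2} gives $\dim\Hom_{\s_{n-1}}(\End_F(D^{\tilde e_i\la}),\End_F(D^{\be_{n-1}}))\geq3+3=6>4$; when $\eps(\tilde e_i\la)=2$ one uses Lemma \ref{L080620_5} (so $\End_F(D^{\tilde e_i\la})$ contains $D_0\oplus D_1$ together with $Y_{1^2}$ or $Y_{1^2}/D_0$) and then descends one or two further levels, to $\s_{n-2}$ and $\s_{n-3}$, exactly as in Case 2.2 of the proof of Theorem \ref{T100720}: invoking Lemma \ref{L080620_2} (so $S^{\dblb(\la)}$ and $S^{\dblb(\tilde e_i\la)}$ lie in the blocks of $D^\nu$ and $D^{\tilde e_1\nu}$) together with the explicit shape $\nu=\dbl(n-c-d,c,d)$, $c\equiv d\equiv1\Md4$, forces a residue pattern on the parts of $\la$ that is incompatible with $\eps_0(\la)=\eps_1(\la)=1$, while the intermediate split case $\{\eps_i,\eps_{1-i}\}=\{2,1\}$ for $\tilde e_i\la$ is dispatched by a further application of \cite[Lemma 4.4]{m1} and Lemma \ref{L240720_2}. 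If $j=1-i$, the same analysis is run on $D^{\tilde e_{1-i}\la}\otimes D^{\be_{n-1}}$, with the clean identity $(D^{\tilde e_1\nu})^{\oplus2}$ replaced by $e_0D^\nu\otimes(\1|\1)$ and its Loewy structure pinned down by Lemma \ref{l8}. The main obstacle is precisely this bookkeeping: tracking which of the two residues carries the extra normal nodes, the $\Parinv$ versus non-$\Parinv$ dichotomy at each level, and the residue combinatorics dictated by the explicit $\nu$, and — as in Theorem \ref{T100720} — having to descend two or three levels before block separation yields the contradiction.
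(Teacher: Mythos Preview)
Your reduction to case (iv) of Lemma \ref{L090720} and the computation
\[
\dim\Hom_{\s_{n-1}}(\End_F(D^{\tilde e_i\la}),\End_F(D^{\be_{n-1}}))=4
\]
are correct and match the paper. The paper likewise uses Lemma \ref{L240720_2} to conclude $\eps_{1-i}(\tilde e_i\la)\leq 2$ (note that $\eps_i(\tilde e_i\la)=\eps_i(\la)-1=0$, so your line ``when $\eps_i(\tilde e_i\la)=3$ and $\eps_{1-i}(\tilde e_i\la)=0$'' has the residues swapped, and your later ``intermediate split case $\{\eps_i,\eps_{1-i}\}=\{2,1\}$ for $\tilde e_i\la$'' cannot occur). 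Up to this point you agree with the paper.

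After this, however, your argument is not a proof. For $\eps_{1-i}(\tilde e_i\la)\leq 2$, Lemma \ref{L080620_5} via Lemma \ref{L090720_2} only gives $\dim\Hom\geq 3$, which does not contradict $4$; the suggestion to ``descend one or two further levels, exactly as in Case~2.2 of Theorem \ref{T100720}'' and to read off a residue pattern on $\la$ from Lemma \ref{L080620_2} is a gesture rather than an argument, and Theorem \ref{T100720} has genuinely different hypotheses ($\eps_0+\eps_1=3$ there). Your treatment of $j=1-i$ is similarly schematic.

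The paper proceeds quite differently once $\eps_{1-i}(\tilde e_i\la)\leq 2$ is established. From $D^{\tilde e_i\la}\otimes D^{\be_{n-1}}\cong (D^{\tilde e_1\nu})^{\oplus 2}$ and the block containing $S((n-c-d-1,c,d))$ one sees $e_1D^{\tilde e_1\nu}=0$, so restricting to $\s_{n-3,2}$ and applying Lemma \ref{L090620} gives $(D^{\tilde e_i\la}\otimes D^{\be_{n-1}})\da_{\s_{n-3,2}}\cong (e_0^{(2)}D^{\tilde e_1\nu}\boxtimes M^{(1^2)})^{\oplus 2}$. If $D^{\tilde e_i\la}\da_{\s_{n-3,2}}$ has two block components, a Brauer-character argument (Lemma \ref{cbs}) forces them to have equal characters, a contradiction. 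If it has one block component, one deduces $\eps_{1-i}(\tilde e_i\la)=1$, hence $\tilde e_i\la$ is JS; via \cite[Lemma 6.1]{m1} this pins down the shape of $\la$ and shows $i=1$, $\tilde e_0\tilde e_1\la=\tilde e_1\tilde e_0\la$, $\eps_1(\tilde e_0\la)=3$. The contradiction is then obtained by computing $[e_1e_0D^\nu:D^{\tilde e_0\tilde e_1\nu}]$ in two ways: once through the tensor identity, giving $\geq 3(6-\delta_{d=1})$, and once directly from the branching of $e_0D^\nu$, giving $4-\delta_{d=1}$. None of this apparatus---the restriction to $\s_{n-3,2}$, the block-component dichotomy, the JS identification, or the multiplicity count---appears in your outline.
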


\begin{proof}
We will use Lemma \ref{Lemma39} without further comment. By Lemma \ref{branchingbs} $E^{\be_n}_\pm\da_{\A_{n-1}}\cong E^{\be_{n-1}}_\pm$. Furthermore, since
\[[E^{\be_n}_\pm\da_{\A_{n-2}}]=[D^{\be_n}\da_{\A_{n-2}}]/2=[D^{\be_n}\da_{\s_{n-2}}\da_{\A_{n-2}}]/2=[D^{\be_{n-2}}\da_{\A_{n-2}}]=[E^{\be_{n-2}}],\]
we also have that $E^{\be_{n-1}}_\pm\da_{\A_{n-2}}\cong E^{\be_{n-2}}$.

In view of Lemma \ref{L090720} we may assume that $\tilde e_0\la\not\in\Parinv_2(n-1)$, that $\tilde e_1\la\not\in\Parinv_2(n-1)$ and that $E^{\tilde e_i\la}\otimes E^{\be_{n-1}}_\pm\cong E^{\tilde e_1\nu}_+\oplus E^{\tilde e_1\nu}_-$ and $E^{\tilde e_{1-i}\la}\otimes E^{\be_{n-1}}_\pm\cong e_0E^\nu$  for some $i$.

In particular
\begin{align*}
&\dim\Hom_{\s_{n-1}}(\End_F(D^{\tilde e_i\la}),\End_F(D^{\be_{n-1}}))\\
&=\dim\Hom_{\s_{n-1}}(D^{\tilde e_i\la}\otimes D^{\tilde e_i\la},D^{\be_{n-1}}\otimes D^{\be_{n-1}})\\
&=\dim\End_{\s_{n-1}}(D^{\tilde e_i\la}\otimes D^{\be_{n-1}})\\
&=\dim\Hom_{\s_{n-1}}(D^{\tilde e_i\la}\otimes (E^{\be_{n-1}}_\pm)\ua^{\s_{n-1}},D^{\tilde e_i\la}\otimes D^{\be_{n-1}})\\
&=\dim\Hom_{\s_{n-1}}((E^{\tilde e_i\la}\otimes E^{\be_{n-1}}_\pm)\ua^{\s_{n-1}},D^{\tilde e_i\la}\otimes D^{\be_{n-1}})\\
&=\dim\Hom_{\A_{n-1}}(E^{\tilde e_i\la}\otimes E^{\be_{n-1}}_\pm,(D^{\tilde e_i\la}\otimes D^{\be_{n-1}})\da_{\A_{n-1}})\\
&=\dim\Hom_{\A_{n-1}}(E^{\tilde e_i\la}\otimes E^{\be_{n-1}}_\pm,E^{\tilde e_i\la}\otimes (E^{\be_{n-1}}_+\oplus E^{\be_{n-1}}_-))\\
&=4
\end{align*}
and so $D^{\tilde e_i\la}\otimes D^{\be_{n-1}}\cong (D^{\tilde e_1\nu})^{\oplus 2}$ since
\[[D^{\tilde e_i\la}\otimes D^{\be_{n-1}}]=[(E^{\tilde e_i\la}\otimes E^{\be_{n-1}}_\pm)\ua^{\s_{n-1}}]=2[E^{\tilde e_1\nu}\ua^{\s_{n-1}}]=2[D^{\tilde e_1\nu}].\]
Further $\eps_{1-i}(\tilde e_i\la)\leq 2$ by Lemma \ref{L240720_2} and then by \cite[Lemma 4.4]{m1} the $(1-i)$-normal node of $\la$ is above the $i$-normal node of $\la$.

Since $D^{\tilde e_1\nu}$ is a composition factor of $S(n-c-d-1,c,d)$ by Lemma \ref{L280520_2} and $e_1e_0S(n-c-d-1,c,d)=0$, we also have that $D^{\tilde e_1\nu}\da_{\s_{n-3}}\cong e_0^2D^{\tilde e_1\nu}$ (as $e_1D^{\tilde e_1\nu}=0$). In view of Lemma \ref{L090620} it follows that
\[(D^{\tilde e_i\la}\otimes D^{\be_{n-1}})\da_{\s_{n-3,2}}\cong(e_0^{(2)}D^{\tilde e_1\nu}\boxtimes M^{(1^2)})^{\oplus 2}.\]

Since $\eps_i(\la)=1$, $D^{\tilde e_i\la}\da_{\s_{n-3}}$ has at most two block components (as then $e_iD^{\tilde e_i\la}=0$), and so the same holds also for $D^{\tilde e_i\la}\da_{\s_{n-3,2}}$.

{\bf Case 1:} assume that $D^{\tilde e_i\la}\da_{\s_{n-3,2}}$ has two block components, $A$ and $B$. Let $C:=D^{\be_{n-1}}\da_{\s_{n-3,2}}$. Note that
\[(A\otimes C)\oplus (B\otimes C)\cong (D^{\tilde e_i\la}\otimes D^{\be_{n-1}})\da_{\s_{n-3,2}}\cong(e_0^{(2)}D^{\tilde e_1\nu}\boxtimes M^{(1^2)})^{\oplus 2}.\]
So
\[\soc(A\otimes C)\oplus\soc(B\otimes C)\cong \soc(e_0^{(2)}D^{\tilde e_1\nu}\boxtimes M^{(1^2)})^{\oplus 2}\cong (D^{\tilde e_0^2\tilde e_1\nu}\boxtimes D^{(2)})^{\oplus 2}.\]
In particular $A\otimes C$ and $B\otimes C$ have simple socles. So, from Lemma \ref{simplesoc}, $A\otimes C,B\otimes C\subseteq e_0^{(2)}D^{\tilde e_1\nu}\boxtimes M^{(1^2)}$ and then $A\otimes C\cong B\otimes C$ by dimension. Let $\psi^A$, $\psi^B$ and $\psi^C$ be the Brauer characters of $A$, $B$ and $C$. Since $\psi^A\psi^C=\psi^B\psi^C$ and $\psi^C$ is non-zero on any 2-regular element by Lemma \ref{cbs}, we have that $\psi^A=\psi^B$, contradicting $A$ and $B$ being in different blocks.

{\bf Case 2:} assume that $D^{\tilde e_i\la}\da_{\s_{n-3,2}}$ has only one block component. Since $\eps_0(\la)=\eps_1(\la)=1$ we have that $\la_1-\la_2$ is even and so $\la_1-\la_2\geq 2$. As the $i$-normal node of $\la$ is not on the first row, $(1,\la_1)$ has residue $i-1$ and $D^{(\tilde e_i\la)_{(1,\la_1)}}$ is a composition factor of $e_{1-i}D^{\tilde e_i\la}$ and $\eps_i(\tilde e_i\la_{(1,\la_1)})>0$. In particular $e_ie_{1-i}D^{\tilde e_i\la}\not=0$ and so $e_{1-i}^2D^{\tilde e_i\la}=0$, that is $\eps_{1-i}(\tilde e_i\la)=1$. As further $\eps_i(\tilde e_i\la)=0$, $\tilde e_i\la$ is JS. So by \cite[Lemma 6.1]{m1}, since $n$ is even, $\la=(\mu,n-m)$ with $\mu\in\Par_2(m)$ having an even number of odd parts and no even part ($\mu$ has an even number of parts since $\eps_0(\la)=\eps_1(\la)=1$) and $i=1$. In particular $\tilde e_0\tilde e_1\la=\tilde e_1\tilde e_0\la$ and $\eps_1(\tilde e_0\la)=3$.

Because $\eps_1(\tilde e_1\la)=1$ and $\eps_0(\tilde e_1\la)=1$ we have that $D^{\tilde e_1\la}\da_{\s_{n-2}}\cong D^{\tilde e_0\tilde e_1\la}$ is simple. So
\begin{align*}
D^{\tilde e_0\tilde e_1\la}\da_{\A_{n-2}}\otimes E^{\be_{n-2}}&\cong (E^{\tilde e_1\la}\otimes E^{\be_{n-1}}_\pm)\da_{\A_{n-2}}\\
&\cong (E^{\tilde e_1\nu}_+\oplus E^{\tilde e_1\nu}_-)\da_{\A_{n-2}}\\
&\cong e_0(E^{\tilde e_1\nu}_+\oplus E^{\tilde e_1\nu}_-).
\end{align*}
Furthermore,
\[(e_1E^{\tilde e_0\la})\otimes E^{\be_{n-2}}\cong (E^{\tilde e_1\la}\otimes E^{\be_{n-1}}_\pm)\da_{\A_{n-2}}\cong (e_0E^\nu)\da_{\A_{n-2}}.\]
So
\[[D^{\tilde e_0\tilde e_1\la}\otimes D^{\be_{n-2}}]=[e_0D^{\tilde e_1\nu}]\hspace{11pt}\text{and}\hspace{11pt}[(e_1D^{\tilde e_0\la})\otimes D^{\be_{n-2}}]=[e_1e_0D^{\nu}]+[e_0^2D^\nu].\]
In particular
\begin{align*}
[e_1e_0D^\nu:D^{\tilde e_0\tilde e_1\nu}]&=[(e_1D^{\tilde e_0\la})\otimes D^{\be_{n-2}}:D^{\tilde e_0\tilde e_1\nu}]\\
&\geq[e_1D^{\tilde e_0\la}:D^{\tilde e_0\tilde e_1\la}]\cdot[D^{\tilde e_0\tilde e_1\la}\otimes D^{\be_{n-2}}:D^{\tilde e_0\tilde e_1\nu}]\\
&=[e_1D^{\tilde e_0\la}:D^{\tilde e_1\tilde e_0\la}]\cdot[e_0D^{\tilde e_1\nu}:D^{\tilde e_0\tilde e_1\nu}]\\
&=\eps_1(\tilde e_0\la)\eps_0(\tilde e_1\nu)\\
&=3(6-\de_{d,1}).
\end{align*}

Since $D^\nu$ is a composition factor of $S(n-c-d,c,d)$, any composition factor of $e_0D^\nu$ is a composition factor of $S(n-c-d,c-1,d)$ or $S(n-c-d,c,d-1)$. Because
\[e_1^2S(n-c-d,c-1,d)=0\hspace{11pt}\text{and}\hspace{11pt}e_1^2S(n-c-d,c,d-1)=0,\]
it follows that if $D^\mu$ is a composition factor of $e_0D^\mu$ then $\eps_1(\mu)\leq 1$. In particular if $[e_1D^\mu:D^{\tilde e_0\tilde e_1\nu}]>0$, then $\mu=\tilde f_1\tilde e_0\tilde e_1\nu=\tilde e_0\nu$. So
\[[e_1e_0D^\nu:D^{\tilde e_0\tilde e_1\nu}]=[e_0D^\nu:D^{\tilde e_0\nu}]\cdot[e_1D^{\tilde e_0\nu}:D^{\tilde e_1\tilde e_0\nu}]=\eps_0(\nu)\eps_1(\tilde e_0\nu)=4-\de_{d,1},\]
contradicting $[e_1e_0D^\nu:D^{\tilde e_0\tilde e_1\nu}]\geq 3(6-\de_{d,1})$.
\end{proof}

\begin{lemma}\label{L121120}
Suppose $p=2$, $n\equiv 0\Md{4}$, $\la,\nu\in\Par_2(n)\setminus\Parinv_2(n)$ and $E^\la\otimes E^{\be_n}_\pm\cong E^\nu$. Assume that for some residues $i$ and $j$:
\begin{itemize}
\item $\eps_i(\la)=2$, $\eps_{1-i}(\la)=0$,

\item $\tilde e_i\la\not\in\Parinv_2(n-1)$,

\item $\eps_j(\nu)\geq 4$, $\eps_{1-j}(\nu)\leq 1$,

\item $\tilde e_j\nu\not\in\Parinv_2(n-1)$,

\item $\tilde e_j^2\nu\not\in\Parinv_2(n-2)$,

\item $\tilde e_j^{\eps_j(\nu)}\nu\not\in\Parinv_2(n-\eps_j(\nu))$ or $\tilde f_j^{\phi_j(\nu)}\nu\not\in\Parinv_2(n+\phi_j(\nu))$,

\item $E^{\tilde e_i\la}\otimes E^{\be_{n-1}}_\pm$ is not irreducible.
%
\end{itemize}
Let $M\subseteq e_jD^\nu$ with $[M:D^{\tilde e_j\nu}]=2$ and $\hd(M),\soc(M)\cong D^{\tilde e_j\nu}$. Then the following hold:
\begin{itemize}
\item $E^{\tilde e_i\la}\otimes E^{\be_{n-1}}_\pm\cong M\da_{\A_{n-1}}$,

\item $(D^{\tilde e^2_i\la}\da_{\A_{n-2}})\otimes E^{\be_{n-2}}\subseteq (e_{1-j}M)\da_{\A_{n-2}}$,

\item there exists $\mu\in\Par_2(n-2)$ such that $E^\mu_{(\pm)}\subseteq (e_{1-j}M)\da_{\A_{n-2}}$ and $(E^\mu_{(\pm)})^{\oplus 2}\subseteq E^\nu\da_{\A_{n-2}}$.
%
\end{itemize}
\end{lemma}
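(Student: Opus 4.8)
The plan is to restrict the isomorphism $E^\la\otimes E^{\be_n}_\pm\cong E^\nu$ down the chain $\A_n\supseteq\A_{n-1}\supseteq\A_{n-2}$. Since $n\equiv 0\Md 4$ one has $\be_n\in\Parinv_2(n)$, $\be_{n-1}\in\Parinv_2(n-1)$, $\be_{n-2}\notin\Parinv_2(n-2)$, so Lemma \ref{branchingbs} gives $E^{\be_n}_\pm\da_{\A_{n-1}}\cong E^{\be_{n-1}}_\pm$, $E^{\be_{n-1}}_\pm\da_{\A_{n-2}}\cong E^{\be_{n-2}}$, hence also $E^{\be_n}_\pm\da_{\A_{n-2}}\cong E^{\be_{n-2}}$. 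As $\eps_{1-i}(\la)=0$ we get $D^\la\da_{\s_{n-1}}=e_iD^\la$, $E^\la\da_{\A_{n-1}}=e_iE^\la=(e_iD^\la)\da_{\A_{n-1}}$, and $D^\la\da_{\s_{n-2}}=(D^{\tilde e_i^2\la})^{\oplus 2}\oplus e_{1-i}e_iD^\la$ (the first summand because $e_i^{(2)}D^\la=D^{\tilde e_i^2\la}$ by Lemma \ref{Lemma39}(iii)--(iv) and $\eps_i(\la)=2$). Restricting the given isomorphism to $\A_{n-1}$ yields $(e_iD^\la)\da_{\A_{n-1}}\otimes E^{\be_{n-1}}_\pm\cong E^\nu\da_{\A_{n-1}}=(e_jD^\nu)\da_{\A_{n-1}}\oplus(e_{1-j}D^\nu)\da_{\A_{n-1}}$, a block decomposition. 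By Lemma \ref{Lemma39}, $e_iD^\la$ and $e_jD^\nu$ are self-dual with simple socle and head $D^{\tilde e_i\la}$, $D^{\tilde e_j\nu}$; $\eps_{1-j}(\nu)\le 1$ makes $e_{1-j}D^\nu$ equal to $0$ or the simple module $D^{\tilde e_{1-j}\nu}$, so $(e_{1-j}D^\nu)\da_{\A_{n-1}}$ is semisimple; and the hypotheses $\tilde e_j\nu\notin\Parinv_2(n-1)$ together with ``$\tilde e_j^{\eps_j(\nu)}\nu\notin\Parinv_2$ or $\tilde f_j^{\phi_j(\nu)}\nu\notin\Parinv_2$'' give $\soc((e_jD^\nu)\da_{\A_{n-1}})\cong E^{\tilde e_j\nu}$ (simple) by Lemma \ref{L060820_2}.

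For the first bullet: $E^{\tilde e_i\la}=\soc(e_iE^\la)$ embeds in $e_iE^\la$ and, by self-duality, also appears as its head, so tensoring with the exact functor $-\otimes E^{\be_{n-1}}_\pm$ realises $E^{\tilde e_i\la}\otimes E^{\be_{n-1}}_\pm$ as both a submodule and a quotient of $E^\nu\da_{\A_{n-1}}$. First I would show $\soc(e_iE^\la)$ is simple: otherwise Lemmas \ref{L060820_2} and \ref{l8}(vi) force $e_iD^\la\cong E^{\tilde e_i\la}\ua^{\s_{n-1}}$, so $e_iE^\la\cong(E^{\tilde e_i\la})^{\oplus 2}$ and $E^\nu\da_{\A_{n-1}}\cong X^{\oplus 2}$ with $X=E^{\tilde e_i\la}\otimes E^{\be_{n-1}}_\pm$; but then the block summand $(e_jD^\nu)\da_{\A_{n-1}}$ would be $Y^{\oplus 2}$ for the nonzero part $Y$ of $X$ in its block, contradicting that its socle is simple. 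Composing the quotient and sub maps above gives an endomorphism of $E^\nu\da_{\A_{n-1}}$ with image $\cong E^{\tilde e_i\la}\otimes E^{\be_{n-1}}_\pm$; as $\End_{\A_{n-1}}(E^\nu\da_{\A_{n-1}})$ respects the block decomposition (the two summands share no composition factors) and the $e_{1-j}$-summand is semisimple whereas $E^{\tilde e_i\la}\otimes E^{\be_{n-1}}_\pm$ is not irreducible, this image lies inside $(e_jD^\nu)\da_{\A_{n-1}}$, hence has simple socle and head $\cong E^{\tilde e_j\nu}$ and is self-dual. Its multiplicity at $E^{\tilde e_j\nu}$ is $\ge 2$ (not irreducible, simple socle $=$ head) and $\le[(e_iE^\la)\otimes E^{\be_{n-1}}_\pm:E^{\tilde e_j\nu}]/\eps_i(\la)=\eps_j(\nu)/2$ (since $e_iE^\la$ has $E^{\tilde e_i\la}$ with multiplicity $\eps_i(\la)=2$), so it is exactly $2$ and $\eps_j(\nu)=4$. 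It remains to identify this submodule with $M\da_{\A_{n-1}}$: both are $\A_{n-1}$-submodules of $(e_jD^\nu)\da_{\A_{n-1}}$ with simple head $E^{\tilde e_j\nu}$, self-dual, and $E^{\tilde e_j\nu}$-multiplicity $2$; applying $\ua^{\s_{n-1}}$ (so that $(E^{\tilde e_i\la}\otimes E^{\be_{n-1}}_\pm)\ua^{\s_{n-1}}\cong D^{\tilde e_i\la}\otimes D^{\be_{n-1}}$ since $E^{\tilde e_i\la}=D^{\tilde e_i\la}\da_{\A_{n-1}}$, while $(M\da_{\A_{n-1}})\ua^{\s_{n-1}}\sim M\,|\,M$) and invoking the uniqueness part of Lemma \ref{l8} for $\s_{n-1}$-submodules of $e_jD^\nu$ identifies them.

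Granting the first bullet I would restrict $E^{\tilde e_i\la}\otimes E^{\be_{n-1}}_\pm\cong M\da_{\A_{n-1}}$ to $\A_{n-2}$: using $E^{\be_{n-1}}_\pm\da_{\A_{n-2}}\cong E^{\be_{n-2}}$ and $e_iD^{\tilde e_i\la}=D^{\tilde e_i^2\la}$ (as $\eps_i(\tilde e_i\la)=1$) this gives $(D^{\tilde e_i^2\la}\da_{\A_{n-2}}\otimes E^{\be_{n-2}})\oplus((e_{1-i}D^{\tilde e_i\la})\da_{\A_{n-2}}\otimes E^{\be_{n-2}})\cong(e_jM)\da_{\A_{n-2}}\oplus(e_{1-j}M)\da_{\A_{n-2}}$, all block decompositions. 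Matching blocks -- via Lemma \ref{L080620_2} (a composition factor of $D^{\tilde e_i^2\la}\otimes D^{\be_{n-2}}$ lies in the block of $S(\tilde e_i^2\la)$, i.e.\ of $S^{\dblb(\tilde e_i^2\la)}$ by Lemma \ref{L080620}) against the composition factors of $e_jM$ and of $e_{1-j}M\subseteq e_{1-j}e_jD^\nu$ -- forces $D^{\tilde e_i^2\la}\da_{\A_{n-2}}\otimes E^{\be_{n-2}}\subseteq(e_{1-j}M)\da_{\A_{n-2}}$, the second bullet. For the third bullet, restrict the original isomorphism directly to $\A_{n-2}$: from $D^\la\da_{\s_{n-2}}=(D^{\tilde e_i^2\la})^{\oplus 2}\oplus e_{1-i}e_iD^\la$ and $E^{\be_n}_\pm\da_{\A_{n-2}}\cong E^{\be_{n-2}}$ we get $E^\nu\da_{\A_{n-2}}\cong(D^{\tilde e_i^2\la}\da_{\A_{n-2}}\otimes E^{\be_{n-2}})^{\oplus 2}\oplus\big((e_{1-i}e_iD^\la)\da_{\A_{n-2}}\otimes E^{\be_{n-2}}\big)$, so any simple submodule $E^\mu_{(\pm)}$ of the nonzero module $D^{\tilde e_i^2\la}\da_{\A_{n-2}}\otimes E^{\be_{n-2}}$ embeds with multiplicity $\ge2$ into $E^\nu\da_{\A_{n-2}}$, and by the second bullet also into $(e_{1-j}M)\da_{\A_{n-2}}$.

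The main obstacle is the first bullet -- pinning down $E^{\tilde e_i\la}\otimes E^{\be_{n-1}}_\pm$ as exactly $M\da_{\A_{n-1}}$. Forcing the $E^{\tilde e_j\nu}$-multiplicity to equal $2$ (equivalently $\eps_j(\nu)=4$) and then transferring the uniqueness statement of Lemma \ref{l8}, which is about $\s_{n-1}$-submodules of $e_jD^\nu$, to the $\A_{n-1}$-submodule cut out by the tensor product is delicate; this is where the technical hypotheses ($\tilde e_j^2\nu\notin\Parinv_2(n-2)$ and the condition on $\tilde e_j^{\eps_j(\nu)}\nu$ or $\tilde f_j^{\phi_j(\nu)}\nu$) are needed, both to control socles and heads through Lemma \ref{L060820_2} at each step and to make the $\ua^{\s_{m}}/\da_{\A_m}$ bookkeeping close up.
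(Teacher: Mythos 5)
Your overall plan -- restrict $E^\la\otimes E^{\be_n}_\pm\cong E^\nu$ down to $\A_{n-1}$, identify $E^{\tilde e_i\la}\otimes E^{\be_{n-1}}_\pm$ with $M\da_{\A_{n-1}}$ via its head, socle and multiplicity at $E^{\tilde e_j\nu}$, then restrict to $\A_{n-2}$ -- matches the paper's. But the execution has two problems, one fixable and one that is a genuine gap.

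For the first bullet, your argument that $E^{\tilde e_i\la}\otimes E^{\be_{n-1}}_\pm$ lands inside $(e_jD^\nu)\da_{\A_{n-1}}$ is not valid as stated. You argue ``the $e_{1-j}$-summand is semisimple whereas $E^{\tilde e_i\la}\otimes E^{\be_{n-1}}_\pm$ is not irreducible, so the image lies inside the $e_j$-block.'' A submodule of $e_jE^\nu\oplus e_{1-j}E^\nu$ decomposes blockwise as $A\oplus B$, and being non-irreducible does not force $B=0$: one could have a non-semisimple $A$ together with a non-zero semisimple $B$. What you actually need is the multiplicity estimate: if $\eps_{1-j}(\nu)=1$ then $[E^{\tilde e_i\la}\otimes E^{\be_{n-1}}_\pm:E^{\tilde e_{1-j}\nu}_{(\pm)}]\leq\eps_{1-j}(\nu)/\eps_i(\la)=1/2$, hence $=0$; that is how the paper kills the $e_{1-j}$-component. (The detour through showing $\soc(e_iE^\la)$ simple, and the composition of quotient/sub maps, are both unnecessary once one observes $E^{\tilde e_i\la}$ is both a sub and a quotient of $e_iE^\la$ by self-duality.) This is repairable without changing the framework.

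The second bullet is where the proposal really breaks. You write the decomposition $(D^{\tilde e_i^2\la}\da_{\A_{n-2}}\otimes E^{\be_{n-2}})\oplus((e_{1-i}D^{\tilde e_i\la})\da_{\A_{n-2}}\otimes E^{\be_{n-2}})\cong(e_jM)\da_{\A_{n-2}}\oplus(e_{1-j}M)\da_{\A_{n-2}}$ and call these ``all block decompositions,'' then try to match blocks using Lemma~\ref{L080620_2}. But the left-hand decomposition is not a block decomposition: tensoring with $E^{\be_{n-2}}$ does not respect blocks, so $D^{\tilde e_i^2\la}\da_{\A_{n-2}}\otimes E^{\be_{n-2}}$ spreads over many $\A_{n-2}$-blocks and in particular over both the $e_j$- and $e_{1-j}$-block of $M\da_{\A_{n-2}}$. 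Moreover Lemma~\ref{L080620_2} only asserts that $D^\mu\otimes D^{\be_m}$ has \emph{some} composition factor in the block of $S(\mu)$, not that all of them lie there, so it cannot be used to confine the tensor product to one block. As a submodule of $M\da_{\A_{n-2}}$ the module $D^{\tilde e_i^2\la}\da_{\A_{n-2}}\otimes E^{\be_{n-2}}$ decomposes as $A\oplus B$ with $A\subseteq e_jM$ and $B\subseteq e_{1-j}M$, and you need $A=0$; there is no block obstruction to $A\neq 0$. The paper's proof of this point is a counting argument that your proposal does not touch: show $V_2$ is a quotient of $f_jD^{\tilde e_j^2\nu}$ (using self-duality of the $V_k$ filtration and $\dim\Hom_{\s_{n-1}}(f_jD^{\tilde e_j^2\nu},e_jD^\nu)=2$), deduce via adjunction and block decomposition that $\dim\Hom_{\A_{n-2}}(E^{\tilde e_j^2\nu},M\da_{\A_{n-2}})\geq 2$; then, writing $x$ and $y$ for the $E^{\tilde e_j^2\nu}$-multiplicities in the socles of $D^{\tilde e_i^2\la}\da_{\A_{n-2}}\otimes E^{\be_{n-2}}$ and of $(e_{1-i}E^{\tilde e_i\la})\otimes E^{\be_{n-2}}$, one has $x+y\geq 2$; on the other hand, using $(D^{\tilde e_i^2\la}\da_{\A_{n-2}})^{\oplus 2}\oplus(e_{1-i}E^{\tilde e_i\la})\subseteq E^\la\da_{\A_{n-2}}$ together with $\soc(e_j^2E^\nu)\cong(E^{\tilde e_j^2\nu})^{\oplus 2}$ (Lemma~\ref{L060820_2}, which is where the hypotheses $\tilde e_j^2\nu\notin\Parinv_2(n-2)$ and the $\tilde e_j^{\eps_j(\nu)}\nu$ or $\tilde f_j^{\phi_j(\nu)}\nu$ condition enter) one gets $2x+y\leq 2$, forcing $x=0$, which is exactly the second bullet. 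Your third bullet, restricting $E^\la\otimes E^{\be_n}_\pm\cong E^\nu$ directly to $\A_{n-2}$ and using the summand $(D^{\tilde e_i^2\la}\da_{\A_{n-2}}\otimes E^{\be_{n-2}})^{\oplus 2}$, is fine once bullet two is established.
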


\begin{proof}
We will use Lemma \ref{Lemma39} without further reference. From Lemma \ref{branchingbs} we have that $E^{\be_n}_\pm\da_{\A_{n-1}}\cong E^{\be_{n-1}}_\pm$ and $E^{\be_{n-1}}_\pm\da_{\A_{n-2}}\cong E^{\be_{n-2}}$ (the last isomorphism holding as in the previous lemma).

Let $D^{\tilde e_j\nu}\cong V_1\subseteq V_2\subseteq \ldots\subseteq V_{\eps_j(\nu)}=e_jD^\nu$ be as in Lemma \ref{l8}. Then $M=V_2$ by Lemma \ref{l8}. By Lemma \ref{L060820_2} we have that $\soc((e_jD^\nu)\da_{\A_{n-1}})=V_1\da_{\A_{n-1}}$ is simple. So $\soc(V_k\da_{\A_{n-1}})=V_1\da_{\A_{n-1}}$ is simple for each $k$. In the following we will simply write $V_k$ also for the corresponding submodules of $e_jE^\nu\cong (e_jD^\nu)\da_{\A_{n-1}}$. As $e_jE^\nu\cong (e_jD^\nu)\da_{\A_{n-1}}$ has simple socle isomorphic to $E^{\tilde e_j\nu}$,
\begin{align*}
[e_jE^\nu:E^{\tilde e_j\nu}]&=[e_jD^\nu:D^{\tilde e_j\nu}]=\dim\End_{\s_n}(e_jD^\nu)\leq\dim\End_{\A_n}(e_jE^\nu)\\
&\leq[e_jE^\nu:E^{\tilde e_j\nu}]
\end{align*}
and so $\dim\End_{\A_n}(e_jE^\nu)=[e_jE^\nu:E^{\tilde e_j\nu}]$. By \cite[Lemma 2.2]{gkm} we then have that if $V\subseteq e_jE^\nu$ has head isomorphic to $E^{\tilde e_j\nu}$, then $V=V_k$ where $k=[V:E^{\tilde e_j\nu}]$.

Note that $E^{\tilde e_i\la}\otimes E^{\be_{n-1}}_\pm\subseteq E^\nu\da_{\A_{n-1}}$ and so
\[\soc(E^{\tilde e_i\la}\otimes E^{\be_{n-1}}_\pm)\subseteq \soc(E^\nu\da_{\A_{n-1}})\cong \soc(e_jE^\nu)\oplus\soc(e_{1-j}E^\nu)\cong E^{\tilde e_j\nu}\oplus e_{1-j}E^\nu\]
(use Lemma \ref{L060820_2} for the socle of $e_jE^\nu$ and that $e_{1-j}D^\nu$ is simple if it is non-zero). From
\[(e_iE^\la)\otimes E^{\be_{n-1}}_\pm\cong(E^\la\otimes E^{\be_n}_\pm)\da_{\A_{n-1}}\cong (E^\nu)\da_{\A_{n-1}}\]
we have that
\[[E^{\tilde e_i\la}\otimes E^{\be_{n-1}}_\pm:E^{\tilde e_\ell\nu}_{(\pm)}]\leq\eps_\ell(\nu)/\eps_i(\la)=\eps_\ell(\nu)/2\]
and then that $[E^{\tilde e_i\la}\otimes E^{\be_{n-1}}_\pm:E^{\tilde e_j\nu}]\leq 2$ and $[E^{\tilde e_i\la}\otimes E^{\be_{n-1}}_\pm:E^{\tilde e_{1-j}\nu}_{(\pm)}]=0$ (if $\eps_{1-j}(\nu)=1$), which proves the second assertion.

So the socle, and then by self-duality also the head, of $E^{\tilde e_i\la}\otimes E^{\be_{n-1}}_\pm$ is isomorphic to $E^{\tilde e_j\nu}$. Since by assumption $E^{\tilde e_i\la}\otimes E^{\be_{n-1}}_\pm$ is not simple we then have that $E^{\tilde e_i\la}\otimes E^{\be_{n-1}}_\pm\cong V_2=M$, proving the first assertion.

Further 
\[\dim\Hom_{\s_{n-1}}(f_jD^{\tilde e_j^2\nu},e_jD^\nu)=\dim\Hom_{\s_{n-2}}(D^{\tilde e_j^2\nu},e_j^2D^\nu)=2.\]
As both $f_jD^{\tilde e_j^2\nu}$  and $e_jD^\nu$ have simple head and socle isomorphic to $D^{\tilde e_j\nu}$ and so
\[\dim\Hom_{\s_{n-1}}(f_jD^{\tilde e_j^2\nu},e_jD^\nu)>1=\dim\Hom_{\s_{n-1}}(f_jD^{\tilde e_j^2\nu},\soc(e_jD^\nu)),\]
it follows that there exists a submodule $N\subseteq e_jD^\nu$ such that $N$ has simple head and socle isomorphic to $D^{\tilde e_j\nu}$, $[N:D^{\tilde e_j\nu}]\geq 2$ (as otherwise $N=\soc(e_jD^\nu)$) and such that $N$ is isomorphic to a quotient of $f_jD^{\tilde e_j^2\nu}$. Considering all but the last property of $N$ we have that $N=V_k$ by Lemma \ref{l8}. So some $V_k$ with $k\geq 2$ is isomorphic to a quotient of $f_jD^{\tilde e_j^2\nu}$. As $V_2\subseteq V_k$ and both modules are self-dual, it also follows that $V_2$ is isomorphic to a quotient of $f_jD^{\tilde e_j^2\nu}$. Upon restriction to $A_n$ we then have that $V_2$ is isomorphic also to a quotient of $f_jE^{\tilde e_j^2\nu}$

As $V_2$ is isomorphic to $E^{\tilde e_i\la}\otimes E^{\be_{n-1}}_\pm$ as well as to a quotient of $f_jE^{\tilde e_j^2\nu}$ we then have that
\begin{align*}
&\dim\Hom_{\A_{n-2}}(E^{\tilde e_j^2\nu},E^{\tilde e_i\la}\da_{\A_{n-2}}\otimes E^{\be_{n-2}})\\
&=\dim\Hom_{\A_{n-2}}(E^{\tilde e_j^2\nu},(E^{\tilde e_i\la}\otimes E^{\be_{n-1}}_\pm)\da_{\A_{n-2}})\\
&=\dim\Hom_{\A_{n-2}}(E^{\tilde e_j^2\nu},e_j(E^{\tilde e_i\la}\otimes E^{\be_{n-1}}_\pm))\\
&=\dim\Hom_{\A_{n-1}}(f_jE^{\tilde e_j^2\nu},E^{\tilde e_i\la}\otimes E^{\be_{n-1}}_\pm)\\
&\geq \dim\End_{\A_{n-1}}(V_2)\\
&=2.
\end{align*}

Since $\soc(e_jM)\subseteq\soc(e_j^2E^\nu)\cong (E^{\tilde e_j^2\nu})^{\oplus 2}$ by Lemma \ref{L060820_2} and $\eps_i(\tilde e_i\la)=1$, so that $e_iE^{\tilde e_i\la}\cong D^{\tilde e_i^2\la}\da_{\A_{n-2}}$, we have that
\begin{align*}
\soc((D^{\tilde e_i^2\la}\da_{\A_{n-2}})\otimes E^{\be_{n-2}})&\cong (E^{\tilde e_j^2\nu})^{\oplus x}\oplus X,\\
\soc((e_{1-i}E^{\tilde e_i^2\la})\otimes E^{\be_{n-2}})&\cong (E^{\tilde e_j^2\nu})^{\oplus y}\oplus Y
\end{align*}
with $x+y\geq 2$ and $X,Y\subseteq e_{1-j}M\subseteq e_{1-j}e_jE^\nu$. From
\[(D^{\tilde e_i^2\la}\da_{\A_{n-2}})^{\oplus 2}\oplus (e_{1-i}E^{\tilde e_i^2\la})\subseteq E^\la\da_{\A_{n-2}}\]
it follows that
\[(E^{\tilde e_j^2\nu})^{\oplus 2x+y}\oplus X^{\oplus 2}\oplus Y\subseteq (E^\la\otimes E^{\be_n}_\pm)\da_{\A_{n-2}}\cong E^\nu\da_{\A_{n-2}}.\]
Since $E^\la\otimes E^{\be_n}_\pm\cong E^\nu$, comparing blocks it follows that $(E^{\tilde e_j^2\nu})^{\oplus 2x+y}\subseteq e_j^2E^\nu$ and then that $2x+y\leq 2$. So $x=0$ and then $X\not=0$, from which the last assertion follows.
\end{proof}

\begin{theor}\label{T121120_2}
Suppose $p=2$, $n\equiv 0\Md{4}$ and $\la,\nu\in\Par_2(n)\setminus\Parinv_2(n)$, and that $i,j$ are residues such that $\eps_i(\la)=2$, $\eps_{1-i}(\la)=0$, $\tilde e_i\la\not\in\Parinv_2(n-1)$, $h(\nu)=4$ and all removable nodes of $\nu$ have residue $j$. If $E^\la\otimes E^{\be_n}_\pm\cong E^\nu$ then $E^{\tilde e_i\la}\otimes E^{\be_{n-1}}_{\pm}$ is irreducible.
\end{theor}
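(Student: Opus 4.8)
The plan is to argue by contradiction: assume $E^{\tilde e_i\la}\otimes E^{\be_{n-1}}_\pm$ is reducible and apply Lemma~\ref{L121120}. The first step is to verify that the hypotheses of that lemma hold for the present $\la$ and $\nu$. Since $h(\nu)=4$ and $\nu\in\Par_2(n)$, the parts $\nu_1>\nu_2>\nu_3>\nu_4\geq 1$ are distinct, and the assumption that the four removable nodes $(k,\nu_k)$ all have residue $j$ forces $\nu_k\equiv j+k\Md{2}$; in particular $\nu_1-\nu_2$ and $\nu_3-\nu_4$ are odd, and since $\nu\notin\Parinv_2(n)$ and $\nu_1+\nu_2$, $\nu_3+\nu_4$ are odd, one of $\nu_1-\nu_2$, $\nu_3-\nu_4$ is $\geq 3$. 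Analysing the $j$-signature of $\nu$ (the only addable $j$-node, if any, being $(5,1)$, which lies below all removable nodes) gives $\eps_j(\nu)=4$, $\eps_{1-j}(\nu)=0$ and that the four $j$-normal nodes of $\nu$ are $(1,\nu_1),\dots,(4,\nu_4)$, so that $\tilde e_j^{\,r}\nu$ is $\nu$ with its bottom $r$ parts each lowered by $1$ (empty parts dropped). Using this together with \cite[Theorem 1.1]{Benson}, a short check shows $\tilde e_j\nu\notin\Parinv_2(n-1)$, $\tilde e_j^{\,2}\nu\notin\Parinv_2(n-2)$ and $\tilde e_j^{\,4}\nu=\tilde e_j^{\,\eps_j(\nu)}\nu\notin\Parinv_2(n-4)$ (a part-difference which is odd and $\geq 3$, or a bottom part $\equiv 2\Md 4$, survives the lowering); the remaining hypotheses of Lemma~\ref{L121120} are exactly $\eps_i(\la)=2$, $\eps_{1-i}(\la)=0$, $\tilde e_i\la\notin\Parinv_2(n-1)$ and the reducibility assumption.

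Applying Lemma~\ref{L121120} I obtain a submodule $M\subseteq e_jD^\nu$ with $[M:D^{\tilde e_j\nu}]=2$ and $\hd(M)\cong\soc(M)\cong D^{\tilde e_j\nu}$, such that $E^{\tilde e_i\la}\otimes E^{\be_{n-1}}_\pm\cong M\da_{\A_{n-1}}$, $(D^{\tilde e_i^{\,2}\la}\da_{\A_{n-2}})\otimes E^{\be_{n-2}}\subseteq (e_{1-j}M)\da_{\A_{n-2}}$, and some $\mu\in\Par_2(n-2)$ with $E^\mu_{(\pm)}\subseteq(e_{1-j}M)\da_{\A_{n-2}}$ and $(E^\mu_{(\pm)})^{\oplus 2}\subseteq E^\nu\da_{\A_{n-2}}$. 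The next step is to analyse $e_{1-j}M\subseteq e_{1-j}e_jD^\nu$. Since $\eps_{1-j}(\nu)=0$ we have $e_{1-j}D^\nu=0$, hence $D^\nu\da_{\s_{n-1}}\cong e_jD^\nu$ and $D^\nu\da_{\s_{n-2}}\cong (e_j^{(2)}D^\nu)^{\oplus 2}\oplus e_{1-j}e_jD^\nu$; comparing blocks, $\mu$ lies in the block of $e_{1-j}e_jD^\nu$, so $(E^\mu_{(\pm)})^{\oplus 2}\subseteq(e_{1-j}e_jD^\nu)\da_{\A_{n-2}}$. Using Lemma~\ref{lspechtfiltration} I would write down a Specht filtration of $e_jD^\nu$ (each intermediate partition being $\nu$ with one part lowered by $1$) and then of $e_{1-j}e_jD^\nu$, bound $[e_{1-j}e_jD^\nu:D^\mu]$, and combine this with the multiplicity forced by $(E^\mu_{(\pm)})^{\oplus 2}$, with the inclusion $(D^{\tilde e_i^{\,2}\la}\da_{\A_{n-2}})\otimes E^{\be_{n-2}}\subseteq(e_{1-j}M)\da_{\A_{n-2}}$, and with the global identity $E^\la\otimes E^{\be_n}_\pm\cong E^\nu$ (which pins down the total multiplicities of composition factors of $D^\la\otimes D^{\be_n}$ lying outside the block of $e_j^2D^\nu$) to reach a numerical contradiction.

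A technique I expect to recur in carrying out the last step — as it does in Theorems~\ref{T100720} and~\ref{T240720} — is the following: if $(D^{\tilde e_i\la}\otimes D^{\be_{n-1}})\da_{\s_{n-2}}$, or a suitable submodule of it, splits as a direct sum of two non-zero pieces lying in different blocks but carrying equal Brauer characters, this is impossible; the equality of Brauer characters typically comes from an isomorphism after tensoring with $D^{\be_{n-2}}$, whose Brauer character is non-zero on every $2$-regular class by Lemma~\ref{cbs}. The main obstacle I anticipate is the bookkeeping in the penultimate paragraph: controlling precisely which block each restriction lands in and bounding $[e_{1-j}e_jD^\nu:D^\mu]$, together with the degenerate shapes of $\nu$ (for instance $\nu_4=1$, or $\nu_3=2$ and $\nu_4=1$) in which rows disappear under iterated $\tilde e_j$ and the Specht filtrations shorten.
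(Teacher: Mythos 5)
Your set‑up is correct: you verify the hypotheses of Lemma~\ref{L121120} (including the parity analysis showing $\eps_j(\nu)=4$, $\eps_{1-j}(\nu)=0$, and that $\tilde e_j\nu$, $\tilde e_j^2\nu$, $\tilde e_j^4\nu$ remain outside $\Parinv_2$), and you correctly read off from that lemma the existence of a $\mu$ with $E^\mu_{(\pm)}\subseteq(e_{1-j}M)\da_{\A_{n-2}}$ and $(E^\mu_{(\pm)})^{\oplus 2}\subseteq E^\nu\da_{\A_{n-2}}$. But the endgame you sketch --- bounding the \emph{composition-factor} multiplicity $[e_{1-j}e_jD^\nu:D^\mu]$ and feeding it back into the inclusion of $(D^{\tilde e_i^2\la}\da_{\A_{n-2}})\otimes E^{\be_{n-2}}$ and the global identity $E^\la\otimes E^{\be_n}_\pm\cong E^\nu$, possibly supplemented by a Brauer-character-across-blocks argument --- is not the right lever, and as stated it would not go through: there is no a priori bound on $[S^{\nu^{(k)}}:D^\mu]$ that makes the composition-factor count useful, and the extra ingredients you invoke are not needed.

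The paper's actual finish is shorter and sharper, and the quantity it controls is the \emph{socle} (equivalently head) of $e_{1-j}e_jE^\nu$, not a total composition multiplicity. Since $e_{1-j}E^\nu=0$, the conclusion $(E^\mu_{(\pm)})^{\oplus 2}\subseteq E^\nu\da_{\A_{n-2}}$ with $\mu$ in the block of $e_{1-j}e_jD^\nu$ forces $(E^\mu_{(\pm)})^{\oplus 2}\subseteq\soc(e_{1-j}e_jE^\nu)$; so it is enough to prove that this socle is multiplicity free. By James's branching theorem \cite[Theorem 9.3]{JamesBook} one has $e_{1-j}e_jD^\nu\sim W_1|W_2|W_3|W_4$, where each $W_k$ is a quotient of the Specht module $S^{\nu^{(k)}}$ with $\nu^{(k)}=(\nu_1,\ldots,\nu_k-2,\ldots,\nu_4)$ (or $W_k=0$); the $\nu^{(k)}$ are $2$-regular whenever they are partitions (all part differences of $\nu$ are odd), each $S^{\nu^{(k)}}$ has simple head $D^{\nu^{(k)}}$ with multiplicity one, and restricting to $\A_{n-2}$ one checks via $\Hom_{\A_{n-2}}(X_k\da_{\A_{n-2}},E^\mu_{(\pm)})=\Hom_{\s_{n-2}}(X_k,E^\mu_{(\pm)}\ua^{\s_{n-2}})$ that $\hd(X_k\da_{\A_{n-2}})\cong D^{\nu^{(k)}}\da_{\A_{n-2}}$. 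Since the $\nu^{(k)}$ are pairwise distinct, the head of $(X_1\oplus\cdots\oplus X_4)\da_{\A_{n-2}}$ is multiplicity free, hence so is $\soc(e_{1-j}e_jE^\nu)$, giving the contradiction immediately. You should replace your planned composition-multiplicity bookkeeping with this head/socle argument; everything before that point in your proposal is fine.
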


\begin{proof}
Since $h(\nu)=4$ and all removable nodes of $\nu$ have residue $j$, we have that $\eps_j(\nu)=4$ and $\eps_{1-j}(\nu)=0$. So $e_{1-j}E^\nu=0$ by Lemma \ref{Lemma39}. By Lemma \ref{L121120} it is then enough to check that $\soc(e_{1-j}e_jE^\nu)\cong\hd(e_{1-j}e_jE^\nu)$ is multiplicity free. For $1\leq k\leq 4$ let $\nu^{(k)}:=(\nu_1,\ldots,\nu_{k-1},\nu_k-2,\nu_{k+1},\ldots,\nu_4)$. If $\nu^{(k)}\in\Par(n-2)$ let $X_k:=S^{\nu^{(k)}}$, while if $\nu^{(k)}\not\in\Par(n-2)$ let $X_k:=0$. Since $D^\nu\cong\hd(S^\nu)$, by \cite[Theorem 9.3]{JamesBook} there exist quotients $W_k$ of $X_k$ such that
\[e_{j-1}e_jD^\la\sim W_1|W_2|W_3|W_4.\]
So it is enough to check that
\[\hd((X_1\oplus X_2\oplus X_3\oplus X_4)\da_{\A_{n-2}})\]
is multiplicity free. Note that $\nu_1-\nu_2$, $\nu_2-\nu_3$ and $\nu_3-\nu_4$ are odd, since all removable nodes of $\nu$ have residue $j$. Since $h(\nu)=4$ we then have that, for $1\leq k\leq 4$, if $\nu^{(k)}\in\Par(n-2)$ then $\nu^{(k)}\in\Par_2(n-2)$.

Assume that $X_k\not=0$. Then $\hd(X_k)\cong D^{\nu^{(k)}}$ and $[X_k:D^{\nu^{(k)}}]=1$. So, since
\[\dim\Hom_{\A_{n-2}}(X_k\da_{\A_{n-2}},E^\mu_{(\pm)})=\dim\Hom_{\s_{n-2}}(X_k,E^\mu_{(\pm)}\ua^{\s_{n-2}})=0\]
for $\mu\in\Par_2(n-2)$ with $\mu\not=\nu^{(k)}$, we have that then $\hd(X_k\da_{\A_{n-2}})\cong D^{\nu^{(k)}}\da_{\A_{n-2}}$.

In particular $\hd((X_1\oplus X_2\oplus X_3\oplus X_4)\da_{\A_{n-2}})$ is multiplicity free.
\end{proof}

\begin{theor}\label{T071220}
Suppose $p=2$, $n\equiv 0\Md{4}$ and $\la,\nu\in\Par_2(n)\setminus\Parinv_2(n)$, Assume that $\nu=\dbl(c,d,n-c-d)\in\Par_2(n)$ with $c\equiv d\equiv 1\Md{4}$ and that $i$ is a residue such that $\eps_i(\la)=2$, $\eps_{1-i}(\la)=0$ and $\tilde e_i\la\not\in\Parinv_2(n-1)$. If $E^\la\otimes E^{\be_n}_\pm\cong E^\nu$ then $E^{\tilde e_i\la}\otimes E^{\be_{n-1}}_{\pm}$ is irreducible.
\end{theor}

\begin{proof}
We will use Lemmas \ref{Lemma39} and \ref{Lemma40} without further comment.

Note that $\eps_0(\nu)=4$, $\eps_1(\nu)=0$, $\tilde e_0\nu\not\in\Parinv_2(n-1)$, $\tilde e_0^2\nu\not\in\Parinv_2(n-2)$ and $\tilde e_0^4\nu\not\in\Parinv_2(n-4)$. Suppose $M\subseteq e_0D^\nu$ has head and socle isomorphic to $D^{\tilde e_0\nu}$ and $[M:D^{\tilde e_0\nu}]=2$. By Lemma \ref{L121120} it is enough to check that if $E^\mu_{(\pm)}\subseteq (e_1M)\da_{\A_{n-2}}$ then $(E^\mu_{(\pm)})^{\oplus 2}\not\subseteq E^\nu\da_{\A_{n-2}}$. Note that for any $\s_m$-module $V$ and $\pi\in\Par_2(m)$, $D^\pi\subseteq V$ if and only if $E^\pi_{(\pm)}\subseteq V\da_{\A_m}$.

In view of Lemma \ref{L280520_2} $D^\nu$ is a composition factor of $S((c,d,n-c-d))$. In particular any composition factor $D^\psi$ of $D^\nu\da_{\s_{n-2}}$ is a composition factor of some $S(\xi)$ with $h(\xi)\leq 3$ and then again by Lemma \ref{L280520_2} either $h(\psi)\leq 4$ or $\psi$ is the double of a partition with at most three parts. By Lemma \ref{L110820_2}
\begin{align*}
\tilde f_1\mu&\unrhd (\dbl(c),\overline{\dbl}(d-1),\dbl(n-c-d)),\\
\mu^B&\unlhd \dbl(c,d-1,n-c-d)
\end{align*}
where $B$ is the bottom $1$-conormal node of $\mu$. In particular
\[(\mu^B)_5+(\mu^B)_6\geq n-c-d\geq 2,\]
so $\mu_5>0$ and then $\mu=\dbl(\overline{\mu})$ for some partition $\overline{\mu}\in\Par(n-2)$ with
\[(c-1,d-1,n-c-d)\unlhd\overline{\mu}\unlhd(c,d-1,n-c-d-1).\]
Considering the four possibilities for $\overline{\mu}$ individually (two of them can be excluded by comparing blocks), we obtain that if $E^\mu_{(\pm)}\subseteq (e_1M)\da_{\A_{n-2}}$ then $\mu=\dbl(c,d-1,n-c-d-1)$.

It can be checked that $\mu\in\Parinv_2(n-2)$ and $\tilde e_0\nu=\mu^C$ with $C$ the second highest $1$-conormal node of $\mu$. So
\[[f_1E^\mu_+:E^{\tilde e_0\nu}]+[f_1E^\mu_-:E^{\tilde e_0\nu}]=[f_1D^\mu:D^{\tilde e_0\nu}]=2\]
and then $[f_1E^\mu_\pm:E^{\tilde e_0\nu}]=1$. Since $\eps_1(\nu)=0$ (so that $e_1E^\nu=0$) and $\soc(e_0E^\nu)\cong E^{\tilde e_0\nu}$ by Lemma \ref{L060820_2}, it follows that
\begin{align*}
\dim\Hom_{\A_{n-2}}(E^\mu_\pm,E^\nu\da_{\A_{n-2}})&=\dim\Hom_{\A_{n-2}}(E^\mu_\pm,e_1e_0E^\nu)\\
&=\dim\Hom_{\A_{n-1}}(f_1E^\mu_\pm,e_0E^\nu)\\
&\leq[f_1E^\mu_\pm:E^{\tilde e_0\nu}]\\
&=1.
\end{align*}
\end{proof}

\begin{theor}\label{T071220_2}
Suppose $p=2$, $n\equiv 0\Md{4}$ and $\la,\nu\in\Par_2(n)\setminus\Parinv_2(n)$, Assume that $\nu=\dbl(c,n-c-d,d)\in\Par_2(n)$ with $d\geq 5$ and $c,d\equiv 1\Md{4}$ and $i$ is a residue such that $\eps_i(\la)=2$, $\eps_{1-i}(\la)=0$ and $\tilde e_i\la\not\in\Parinv_2(n-1)$. If $E^\la\otimes E^{\be_n}_\pm\cong E^\nu$ then $E^{\tilde e_i\la}\otimes E^{\be_{n-1}}_{\pm}$ is irreducible.
\end{theor}

\begin{proof}
Similar to Theorem \ref{T071220}. In this case $\mu=\dbl(c,n-c-d-1,d-1)$.
\end{proof}

\begin{theor}\label{T071220_5}
Suppose $p=2$, $n\equiv 0\Md{4}$ and $\la,\nu\in\Par_2(n)\setminus\Parinv_2(n)$, Assume that $\nu=\dbl(n-c-d,c,d)\in\Par_2(n)$ with $c\equiv d\equiv 3\Md{4}$ and $i$ is a residue such that $\eps_i(\la)=2$, $\eps_{1-i}(\la)=0$ and $\tilde e_i\la\not\in\Parinv_2(n-1)$. If $E^\la\otimes E^{\be_n}_\pm\cong E^\nu$ then $E^{\tilde e_i\la}\otimes E^{\be_{n-1}}_{\pm}$ is irreducible.
\end{theor}

\begin{proof}
Arguing similarly to the first part of Theorem \ref{T071220} it is enough to show that there is no partition $\mu$ with $\tilde f_0\mu\unrhd\nu_A=(\dbl(c,n-c-d),\overline{\dbl}(d-1))$ and $\mu^B\unlhd\tilde e_1\nu=\dbl(n-c-d,c,d-1)$ where $A$ is the second lowest $1$-normal node of $\nu$ and $B$ is the bottom $0$-conormal node of $\mu$. As $(\mu^B)_5+(\mu^B)_6\geq d-1\geq 2$, we have that $h(\mu)\geq 5$ and so we can again additionally assume that $\mu=\dbl(\overline{\mu})$ for some partition $\overline{\mu}\in\Par(n-2)$ with three parts. Using block decomposition it can be checked that no such partition $\mu$ exists.
\end{proof}

\begin{theor}\label{T071220_4}
Suppose $p=2$, $n\equiv 0\Md{4}$ and $\la,\nu\in\Par_2(n)\setminus\Parinv_2(n)$, Assume that $\nu=\dbl(c,n-c-d,d)\in\Par_2(n)$ with $c\equiv d\equiv 3\Md{4}$ and that $i$ is a residue such that $\eps_i(\la)=2$, $\eps_{1-i}(\la)=0$ and $\tilde e_i\la\not\in\Parinv_2(n-1)$. If $E^\la\otimes E^{\be_n}_\pm\cong E^\nu$ then $E^{\tilde e_i\la}\otimes E^{\be_{n-1}}_{\pm}$ is irreducible.
\end{theor}

\begin{proof}
Let $M$ be as in Lemma \ref{L121120}. Similar to the first part of Theorem \ref{T071220} we have that if $D^\mu\subseteq e_0M$ then $\mu=\dbl(c,n-c-d-2,d)$. Let $M\subseteq e_0D^\nu$ with $\hd(M)\cong D^{\tilde e_0\nu}$ and $[M:D^{\tilde e_0\nu}]=2$. By Lemma \ref{l8} we have that $M$ is the unique submodule with these properties and that $M$ is self-dual. By self-duality of $M$ and $e_0D^\nu$, we also have that $M$ is isomorphic to the unique quotient $N$ of $e_0D^\la$ with $\soc(N)\cong D^{\tilde e_0\nu}$ and $[N:D^{\tilde e_0\nu}]=2$. It follows that $e_0D^\nu\sim M|B|M$ for some module $B$, since $[e_1D^\nu/M:D^{\tilde e_1\nu}]=2$.

By Lemma \ref{L280520_2}, $D^\nu$ is a composition factor of $S((c,n-c-d,d))$, and then 
\begin{align*}
[e_0M:D^\mu]&\leq 1/2[e_0e_1D^\nu:D^\mu]\\
&\leq1/2[e_0e_1S((c,n-c-d,d)):D^\mu]\\
&=[S((c,n-c-d-2,d)):D^\mu]\\
&=1.
\end{align*}
Since all composition factors of the head and socle of $e_0M$ are isomorphic to $D^\mu$ and $[e_0M:D^\mu]\leq 1$, we have that $e_0M\cong D^\mu$ if $e_0M$ is non-zero. So $(D^{\dbl(c,n-c-d-1,d-1)}\otimes D^{\be_{n-2}})\da_{\A_{n-2}}\subseteq D^\mu\da_{\A_{n-2}}$ by Lemma \ref{L121120} and then $D^{\dbl(c,n-c-d-1,d-1)}\otimes D^{\be_{n-2}}\cong D^\mu$. This however contradicts \cite[Theorem 1.1]{m1}, since $h(\dbl(c,n-c-d-1,d-1))\geq 5$.
\end{proof}

\begin{theor}\label{T071220_3}
Suppose $p=2$, $n\equiv 0\Md{4}$ and $\la,\nu\in\Par_2(n)\setminus\Parinv_2(n)$. Assume that $\nu=\dbl(n-c-d,c,d)\in\Par_2(n)$ with $d\geq 5$ and $c\equiv d\equiv 1\Md{4}$ and $i$ is a residue such that $\eps_i(\la)=2$, $\eps_{1-i}(\la)=0$ and $\tilde e_i\la\not\in\Parinv_2(n-1)$. If $E^\la\otimes E^{\be_n}_\pm\cong E^\nu$ then $E^{\tilde e_i\la}\otimes E^{\be_{n-1}}_{\pm}$ is irreducible.
\end{theor}

\begin{proof}
We will use Lemma \ref{Lemma39} without further comment.

From Lemma \ref{L080620} if $D^\mu\subseteq e_1e_0D^\nu$ then $\mu$ is the double of a partition provided $h(\mu)\geq 5$. So by Lemma \ref{L110820_2} if $D^\mu\subseteq e_1e_0D^\nu$ then
\[\mu=\dbl(n-c-d-1,c,d-1)=\tilde e_1\tilde e_0\nu=\tilde e_0\tilde e_1\nu\]
(the partition $\psi=\dbl(n-c-d-1,c-1,d)$ is excluded from $\eps_1(\nu)=\eps_1(\psi)+1$ and $\tilde e_1\tilde e_0\nu\not=\psi$).

Let $\pi=\dbl(n-c-d,c,d-1)=\tilde e_0\nu=\tilde f_1\mu$. Since $\pi\not\in\Parinv_2(n-1)$, $\tilde e_0^{(\eps_0(\nu))}\nu\not\in\Parinv_2(n-\eps_0(\nu))$ and $\mu\in\Parinv_2(n-2)$ we have that $E^\pi\ua^{\s_{n-1}}\not\subseteq e_0D^\nu$ and $E^\pi\ua^{\s_{n-1}}\subseteq f_1D^\mu$ by Lemmas \ref{L060820_2} and \ref{L060820a_2}. 

For $1\leq k\leq\eps_0(\nu)$ (resp. $1\leq k\leq \phi_1(\mu)$) let $V_k$ (resp. $W_k$) be the unique submodule of $e_0D^\nu$ (resp. the unique quotient of $f_1D^\mu$) with head and socle isomorphic to $D^\pi$ and $[V_k:D^\pi]=k$ (resp. $[W_k:D^\pi]=k)$, see Lemma \ref{l8} and the dual version of Lemma \ref{l9}. From above we have that $V_2\not\cong W_2$ (by duality of $f_1D^\mu$).

If
\[\dim\Hom_{\s_{n-2}}(D^\mu,e_1e_0D^\nu)=\dim\Hom_{\s_{n-1}}(f_1D^\mu,e_0D^\nu)\geq 2\]
then there exists $k\geq 2$ such that $V_k\cong W_k\not=0$. Because $V_\ell\subseteq V_k$ and $W_\ell\subseteq W_k$ for $\ell\leq k$ (by Lemmas \ref{l8} and \ref{l9} and duality) it follows by uniqueness of $V_2$ and $W_2$ that $V_2\cong W_2$, leading to a contradiction. So $\dim\Hom_{\s_{n-2}}(D^\mu,e_1e_0D^\nu)= 1$, as $\mu=\tilde e_1\tilde e_0\nu$.

Since
\[\dim\Hom_{\A_{n-2}}(E^\mu_\pm,e_1e_0E^\nu)=\dim\Hom_{\s_{n-2}}(D^\mu,e_1e_0D^\nu)\]
we then have that $\soc(e_1e_0E^\nu)\cong E^\mu_+\oplus E^\mu_-$. Further by Lemma \ref{L060820a}
\[\soc(e_0e_1E^\nu)\cong \soc(e_0(E^{\tilde e_1\nu}_+\oplus E^{\tilde e_1\nu}_-))\cong E^\mu_+\oplus E^\mu_-.\]

In view of Lemma \ref{L121120} we have that $A:=(D^{\tilde e_i^2\la})\da_{\A_{n-2}}\otimes E^{\be_{n-2}}\subseteq e_1V_2$. Since $A^{\oplus 2}$ is a direct summand of $(E^\la\otimes E^{\be_n}_\pm)\da_{\A_{n-2}}\cong E^\nu\da_{\A_{n-2}}$, it follows by comparing blocks that $A^{\oplus 2}$ is isomorphic to a direct summand of $B:=e_1e_0E^\nu\oplus e_0e_1E^\nu$. Because $\soc(B)\subseteq (D^\mu\da_{\A_{n-2}})^{\oplus 2}$ and $A$ is fixed up to conjugation with $\s_{n-2}$, it follows that $\soc(A^{\oplus 2})\cong (D^\mu\da_{\A_{n-2}})^{\oplus 2}\cong \soc(B)$. Since $A^{\oplus 2}$ is isomorphic to a direct summand of $B$ and $A^{\oplus 2}$ and $B$ have isomorphic socles, we have that $A^{\oplus 2}\cong B$. 

Because $[e_0D^\nu/V_2:D^\pi]=2$, we have by the dual version of Lemma \ref{l8} that $e_0D^\nu\sim V_2|C|V_2$ for a certain module $C$. This leads to a contradiction comparing dimensions, since $A\subseteq e_1V_2$ and $A^{\oplus 2}\cong B=e_1e_0E^\nu\oplus e_0e_1E^\nu$ with $e_0e_1E^\nu\not=0$.
\end{proof}

\begin{theor}\label{T071220_6}
Suppose $p=2$, $n\equiv 0\Md{4}$ and $\la,\nu\in\Par_2(n)\setminus\Parinv_2(n)$, Assume that $\nu=\dbl(c,d,n-c-d)\in\Par_2(n)$ with $c\equiv d\equiv 3\Md{4}$ and that $i$ is a residue such that $\eps_i(\la)=2$, $\eps_{1-i}(\la)=0$ and $\tilde e_i\la\not\in\Parinv_2(n-1)$. Then $E^\la\otimes E^{\be_n}_\pm\not\cong E^\nu$.
\end{theor}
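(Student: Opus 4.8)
Assume, for a contradiction, that $E^\la\otimes E^{\be_n}_\pm\cong E^\nu$. Since $\la,\nu\notin\Parinv_2(n)$ while $\be_n=\dbl(n)\in\Parinv_2(n)$ (as $n\equiv 0\Md 4$), restricting to $\A_n$ and comparing composition factors gives $[D^\la\otimes D^{\be_n}]=2[D^\nu]$. First I would record the combinatorics of $\nu=\dbl(c,d,n-c-d)$, $c\equiv d\equiv 3\Md 4$, exactly as in Case~3 of the proof of Theorem~\ref{T100720}: $\eps_0(\nu)=0$, $\eps_1(\nu)=5$, $\tilde e_1\nu=\dbl(c,d,n-c-d-1)\in\Parinv_2(n-1)$ with $\eps_0(\tilde e_1\nu)=0$, $\eps_1(\tilde e_1\nu)=4$, and $\tilde e_1^2\nu=\dbl(c,d-1,n-c-d-1)\notin\Parinv_2(n-2)$ of height at least $5$. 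By Lemma~\ref{branchingbs} one has $D^{\be_n}\da_{\s_{n-1}}\cong D^{\be_{n-1}}$, $E^{\be_n}_\pm\da_{\A_{n-1}}\cong E^{\be_{n-1}}_\pm$ and $D^{\be_{n-1}}\da_{\s_{n-2}}\cong D^{\be_{n-2}}|D^{\be_{n-2}}$; combining these with $\eps_{1-i}(\la)=0$ (so $D^\la\da_{\s_{n-1}}\cong e_iD^\la$) and $\eps_0(\nu)=0$ (so $D^\nu\da_{\s_{n-1}}\cong e_1D^\nu$) gives $[e_iD^\la\otimes D^{\be_{n-1}}]=2[e_1D^\nu]$, and restricting $E^\la\otimes E^{\be_n}_\pm\cong E^\nu$ to $\A_{n-1}$ gives $(e_iE^\la)\otimes E^{\be_{n-1}}_\pm\cong e_1E^\nu$. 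Since $\tilde e_i\la\notin\Parinv_2(n-1)$ the module $E^{\tilde e_i\la}=D^{\tilde e_i\la}\da_{\A_{n-1}}$ is irreducible and (by Lemma~\ref{L060820_2}, using $\tilde e_i^2\la\notin\Parinv_2(n-2)$) $\soc(e_iE^\la)\cong E^{\tilde e_i\la}$; hence $E^{\tilde e_i\la}\otimes E^{\be_{n-1}}_\pm$, being a submodule and a quotient of $e_1E^\nu$, has head and socle inside $\hd(e_1E^\nu)\cong\soc(e_1E^\nu)\cong E^{\tilde e_1\nu}_+\oplus E^{\tilde e_1\nu}_-$ (Lemma~\ref{L060820}), with $[E^{\tilde e_i\la}\otimes E^{\be_{n-1}}_\pm:E^{\tilde e_1\nu}_\pm]\le\eps_1(\nu)/\eps_i(\la)\le 2$.

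\textbf{The irreducible case.} If $E^{\tilde e_i\la}\otimes E^{\be_{n-1}}_\pm$ were irreducible it would equal one of $E^{\tilde e_1\nu}_\pm$, so $D^{\tilde e_i\la}\otimes D^{\be_{n-1}}\cong D^{\tilde e_1\nu}$. Both tensor factors have dimension bigger than $1$ (for $E^{\tilde e_i\la}$ because $\tilde e_i\la\ne(n-1)$: the partition $(n-1)$ has only addable nodes of residue $1$, so it is not a good-node removal from a partition with $\eps_i=2$), and $n-1\ge 7$ because $\la$ has two normal nodes; so this contradicts Theorem~\ref{MT} applied to $\A_{n-1}$, available by induction on $n$. (Alternatively one restricts once more to $\s_{n-2}$ using $\eps_0(\tilde e_1\nu)=0$ and the Brauer‑character/block argument of the proof of Theorem~\ref{T100720} to reach an irreducible tensor product of a basic spin module with $D^{\tilde e_1^2\nu}$, impossible by \cite[Theorem~1.1]{m1} as $\tilde e_1^2\nu$ has height at least $5$.) Thus $E^{\tilde e_i\la}\otimes E^{\be_{n-1}}_\pm$ is not irreducible; being self-dual with head and socle inside $E^{\tilde e_1\nu}_+\oplus E^{\tilde e_1\nu}_-$, its head equals its socle.

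\textbf{The non-irreducible case (the main work).} Here the plan is to re-run the argument in the proof of Lemma~\ref{L121120} (whose hypothesis $\tilde e_1\nu\notin\Parinv_2(n-1)$ fails) and of Theorems~\ref{T071220_4}, \ref{T071220_5}, now with the complication that $e_1D^\nu$ does \emph{not} restrict irreducibly to $\A_{n-1}$: by Lemma~\ref{L060820}, $(e_1D^\nu)\da_{\A_{n-1}}\cong A\oplus B$ with $\soc(A)\cong E^{\tilde e_1\nu}_+$, $\soc(B)\cong E^{\tilde e_1\nu}_-$ and $A^\si\cong B$ for $\si\in\s_{n-1}\setminus\A_{n-1}$. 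Using $(e_iE^\la)\otimes E^{\be_{n-1}}_\pm\cong e_1E^\nu$, $\eps_i(\la)=2$, the multiplicity bound above and Lemma~\ref{simplesoc}, one identifies $E^{\tilde e_i\la}\otimes E^{\be_{n-1}}_\pm$ with the $\A_{n-1}$-restriction of the unique submodule $V_2\subseteq e_1D^\nu$ of Lemma~\ref{l8} with $[V_2:D^{\tilde e_1\nu}]=2$ and $\hd(V_2)\cong\soc(V_2)\cong D^{\tilde e_1\nu}$, and obtains $(D^{\tilde e_i^2\la}\da_{\A_{n-2}})\otimes E^{\be_{n-2}}\subseteq(e_0V_2)\da_{\A_{n-2}}$. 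Descending once more to $\s_{n-2}$ (via $\eps_i(\tilde e_i\la)=1$, $\eps_0(\tilde e_1\nu)=0$ and $D^{\be_{n-1}}\da_{\s_{n-2}}\cong D^{\be_{n-2}}|D^{\be_{n-2}}$), I would combine Lemma~\ref{L280520_2} (that $D^\nu$ is a composition factor of $S((c,d,n-c-d),\pm)$, which bounds the multiplicity of the relevant constituent of $e_0V_2$) with the dominance inequalities $\tilde f_0\mu\unrhd\nu_A$ and $\tilde e_1\nu\unrhd\mu^B$ of Lemma~\ref{L110820_2} to pin down the only admissible $\mu\in\Par_2(n-2)$ with $D^\mu\subseteq e_0V_2$, which is a double of a three-part partition, the remaining candidates being excluded by block decomposition as in the first part of Theorem~\ref{T071220}. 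Since the head and socle of $e_0V_2$ are then forced to be multiplicity-free copies of $D^\mu$ while $[e_0V_2:D^\mu]\le 1$, one gets $e_0V_2\cong D^\mu$, hence an isomorphism $D^\pi\otimes D^{\be_{n-2}}\cong D^\mu$ with $\pi=\tilde e_i^2\la$ forced by block decomposition to be a double of a three-part partition of height at least $5$ — contradicting \cite[Theorem~1.1]{m1}, exactly as at the end of the proof of Theorem~\ref{T071220_4}. (In some sub-configurations no admissible $\mu$ exists and the contradiction is immediate, as in Theorem~\ref{T071220_5}.)

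\textbf{Main obstacle.} The hard part is precisely this last case: carrying the Lemma~\ref{L121120}-style reduction through when $\tilde e_1\nu$ is a splitting partition, so that the self-dual module $e_1D^\nu$ has a genuinely decomposable restriction to $\A_{n-1}$. This forces careful bookkeeping of the two split constituents $E^{\tilde e_1\nu}_\pm$, of the $\s_{n-1}$- and $\s_{n-2}$-stability of the submodules cut out of $e_1D^\nu$, and of the multiplicities of $D^{\tilde e_1\nu}$ and $D^{\tilde e_1^2\nu}$ through the two restriction steps; once the correct $\mu$ is isolated, the contradiction is the same "first factor of height $\ge 5$" obstruction already used in Theorems~\ref{T071220_4} and~\ref{T071220_5}.
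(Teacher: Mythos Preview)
Your proposal has a genuine gap in the non-irreducible case: the identification $E^{\tilde e_i\la}\otimes E^{\be_{n-1}}_\pm\cong V_2\da_{\A_{n-1}}$ cannot be carried through as you suggest. The whole point of the hypothesis $\tilde e_j\nu\notin\Parinv_2(n-1)$ in Lemma~\ref{L121120} is to ensure (via Lemma~\ref{L060820_2}) that $\soc(e_jE^\nu)\cong E^{\tilde e_j\nu}$ is \emph{simple}, so that the $\A_{n-1}$-submodule lattice of $e_jE^\nu$ is still governed by the chain $V_1\subseteq V_2\subseteq\cdots$ of Lemma~\ref{l8}. Here $\tilde e_1\nu=\dbl(c,d,n-c-d-1)\in\Parinv_2(n-1)$, so $\soc(e_1E^\nu)\cong E^{\tilde e_1\nu}_+\oplus E^{\tilde e_1\nu}_-$ and $e_1E^\nu\cong A\oplus B$ with $\soc(A),\soc(B)$ simple and $A^\si\cong B$. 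All you know about $\soc(E^{\tilde e_i\la}\otimes E^{\be_{n-1}}_\pm)$ is that it is \emph{contained} in $E^{\tilde e_1\nu}_+\oplus E^{\tilde e_1\nu}_-$ (this is exactly what case~(iii) of Lemma~\ref{L090720} records); it may well be a single $E^{\tilde e_1\nu}_\pm$, in which case Lemma~\ref{simplesoc} puts your module inside $A$ or $B$, not inside $V_2\da_{\A_{n-1}}$. The ``careful bookkeeping of the two split constituents'' that you flag as the obstacle is not a technicality---it is the entire difficulty, and you do not indicate how to resolve it. A secondary issue is your final contradiction: you end with $D^{\tilde e_i^2\la}\otimes D^{\be_{n-2}}\cong D^\mu$, but nothing in the setup controls $h(\tilde e_i^2\la)$ or forces $\tilde e_i^2\la$ to be the double of anything, so the appeal to \cite[Theorem~1.1]{m1} is unsupported on that side (and the analogous proofs you cite use the \emph{other} residue $\tilde e_{1-i}\tilde e_i\la$, not $\tilde e_i^2\la$).

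The paper avoids the splitting difficulty altogether by working one level lower from the start. It first shows, by a parity argument on residues of normal and conormal nodes, that $\tilde e_i\la$ cannot be JS, so $e_{1-i}e_iD^\la\neq 0$. Then $E^\la\da_{\A_{n-2}}\otimes E^{\be_{n-2}}\cong E^\nu\da_{\A_{n-2}}$ decomposes on the left as $\overline W_i^{\oplus 2}\oplus W_{1-i}$ (with $\overline W_i=(e_i^{(2)}D^\la)\da_{\A_{n-2}}\otimes E^{\be_{n-2}}$) and on the right as $e_1^2E^\nu\oplus e_0e_1E^\nu$. The only $\mu$ with $D^\mu\subseteq e_0e_1D^\nu$ is $\dbl(c,d,n-c-d-2)$ (via Lemma~\ref{L110820_2} plus Lemma~\ref{L280520_2}), with multiplicity at most $1+\de_{n-c-d>2}$; comparing socles (Lemma~\ref{L060820_2} gives $\soc(e_1^2E^\nu)\cong(E^{\tilde e_1^2\nu})^{\oplus 2}$) and using that each $W_k$ is fixed under $\s_{n-2}$-conjugation forces, for some $k$, $\soc(W_{1-k})\cong(E^\mu_+\oplus E^\mu_-)^{\oplus x}$. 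The multiplicity bound then yields $D^{\tilde e_{1-k}\tilde e_i\la}\otimes D^{\be_{n-2}}\cong D^\mu$ with $n-c-d>2$, hence $h(\mu)>4$, contradicting \cite[Theorem~1.1]{m1}. The key move you are missing is the proof that $\tilde e_i\la$ is not JS and the use of the resulting extra block component $W_{1-i}$; this is what replaces the failed $V_2$-identification.
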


\begin{proof}
We will use Lemma \ref{Lemma39} without further comment. Assume for a contradiction that $E^\la\otimes E^{\be_n}_\pm\cong E^\nu$.

Let $\mu\in\Par_2(n-2)$ with $D^\mu\subseteq e_0e_1D^\nu$. Then the 2-core of $\mu$ is $(4,3,2,1)$.

By Lemma \ref{L110820_2}
\[\mu^A\unlhd\dbl(c,d,n-c-d-1)\hspace{22pt}\text{and}\hspace{22pt}\mu^B\unrhd(\overline{\dbl}(c-1),\dbl(d,n-c-d))\]
where $A$ and $B$ are the bottom and top $0$-conormal nodes of $\mu$ respectively. Furthermore, $D^\mu$ is a composition factor of $S((c,d,n-c-d-2))$ since $D^\nu$ is a composition factor of $S((c,d,n-c-d))$. In particular $h(\mu)\leq 6$ and if $h(\mu)\geq 5$ then $\mu$ is the double of a partition by Lemma \ref{L080620}. If $h(\mu)\leq 4$ then $h(\mu)=4$ and $\mu=(\mu_1,\mu_2,\mu_3,\mu_4)$ with $\mu_1$ and $\mu_3$ even and $\mu_2$ and $\mu_4$ odd (considering the core). In this case
\begin{align*}
(\mu_1,\mu_2,\mu_3,\mu_4,1)\unlhd\mu^A&\unlhd\dbl(c,d,n-c-d-1)
\end{align*}
and then $n-c-d=2$. As $(c,d,n-c-d-2)$ has only 2 parts in this case, $\mu$ is also in this case the double of a partition (again by Lemma \ref{L080620}).

Comparing cores we then have that $\mu=\dbl(c,d,n-c-d-2)$. 

Further since $D^\nu$ is a composition factor of $S((c,d,n-c-d))$
\begin{align*}
[e_0e_1E^\nu:E^\mu_\pm]&=[e_0e_1D^\nu:D^\mu]\\
&\leq[e_0e_1S((c,d,n-c-d)):D^\mu]\\
&=(1+\de_{n-c-d>2})[S((c,d,n-c-d-2)):D^\mu]\\
&=1+\de_{n-c-d>2}.
\end{align*}

In particular $E^\nu\da_{\A_{n-2}}\cong e_1^2E^\nu\oplus e_0e_1E^\nu$ and, by Lemma \ref{L060820_2} and the above,
\[\soc(e_1^2E^\nu)\cong (E^{\tilde e_1^2\nu})^{\oplus 2}\hspace{22pt}\text{and}\hspace{22pt}\soc(e_0e_1E^\nu)\cong (E^\mu_+\oplus E^\mu_-)^{\oplus x}\]
with $x\leq 1+\de_{n-c-d>2}$.

By Lemma \ref{branchingbs} $E^{\be_n}_\pm\da_{\A_{n-2}}\cong E^{\be_{n-2}}$ and so $E^\la\da_{\A_{n-2}}\otimes E^{\be_{n-2}}\cong E^\nu\da_{\A_{n-2}}$.

Assume that $\tilde e_i\la$ is JS. Then, since $n$ is even, all parts of $\tilde e_i\la$ are odd and $h(\tilde e_i\la)$ is also odd. So the unique normal node of $\tilde e_i\la$ has residue 0 while the two conormal nodes of $\tilde e_i\la$ have residue 1. This however contradicts the assumption that $\eps_i(\la)=2$, so that $\eps_i(\tilde e_i\la)=1$ and $\phi_i(\tilde e_i\la)\geq 1$. Thus $\tilde e_i\la$ is not JS, that is $\eps_{1-i}(\tilde e_i\la)\geq 1$ (as again $\eps_i(\tilde e_i\la)=1$).

For any residue $k$ let $W_k:=e_ke_iE^\la\otimes E^{\be_{n-2}}$. Since $\eps_k(\tilde e_i\la)\geq 1$ we have that $e_ke_iE^\la\not=0$ and so $W_k\not=0$. Furthermore, $W_k$ is fixed under conjugating by elements of $\s_{n-2}\setminus\A_{n-2}$ since both $E^\la$ and $E^{\be_{n-2}}$ are. The same then holds also for its socle. Further for $k=i$ we have that
\[W_i=e_i^2E^\la\otimes E^{\be_{n-2}}\cong (e_i^2D^\la)\da_{\A_{n-2}}\otimes E^{\be_{n-2}}\cong ((e_i^{(2)}D^\la)\da_{\A_{n-2}}\otimes E^{\be_{n-2}})^{\oplus 2},\]
so $W_i\cong \overline{W}_i^{\oplus 2}$ with $\overline{W}_i:=(e_i^{(2)}D^\la)\da_{\A_{n-2}}\otimes E^{\be_{n-2}}$. Also the socle of $\overline{W}_i$ is fixed by conjugation with $\s_{n-2}$.

From $E^\nu\da_{\A_{n-2}}\cong \overline{W}_i^{\oplus 2}\oplus W_{1-i}$ and
\[\soc(E^\nu\da_{\A_{n-2}})\cong (E^{\tilde e_1^2\nu})^{\oplus 2}\oplus (E^\mu_+\oplus E^\mu_-)^{\oplus x}\]
with $x\leq 2$ we then have that, for some $k$,
\begin{align*}
\soc(W_k)&\cong (E^{\tilde e_1^2\nu})^{\oplus 2}\cong\soc(e_1^2E^\nu),\\
\soc(W_{1-k})&\cong(E^\mu_+\oplus E^\mu_-)^{\oplus x}\cong\soc(e_0e_1E^\nu).
\end{align*}

From $\eps_{1-k}(\tilde e_i\la)\geq 1$ we have that $D^{\tilde e_{1-k}\tilde e_i\la}\da_{\A_{n-2}}\otimes E^{\be_{n-2}}\subseteq W_{1-k}$. In particular the head and socle of $D^{\tilde e_{1-k}\tilde e_i\la}\da_{\A_{n-2}}\otimes E^{\be_{n-2}}$ are both isomorphic to $E^\mu_+\oplus E^\mu_-$. Furthermore,
\begin{align*}
[D^{\tilde e_{1-k}\tilde e_i\la}\da_{\A_{n-2}}\otimes E^{\be_{n-2}}:E^\mu_\pm]&\leq [E^\nu\da_{\A_{n-2}}:E^\mu_\pm]/[D^\la\da_{\s_{n-2}}:D^{\tilde e_{1-k}\tilde e_i\la}]\\
&\leq [E^\nu\da_{\A_{n-2}}:E^\mu_\pm]/(\eps_i(\la)\eps_{1-k}(\tilde e_i\la))\\
&\leq (1+\de_{n-c-d>2})/2.
\end{align*}
So $n-c-d>2$ and $D^{\tilde e_{1-k}\tilde e_i\la}\da_{\A_{n-2}}\otimes E^{\be_{n-2}}\cong E^\mu_+\oplus E^\mu_-$. It follows that $D^{\tilde e_{1-k}\tilde e_i\la}\otimes D^{\be_{n-2}}\cong D^\mu$. This leads to a contradiction by \cite[Theorem 1.1]{m1} since $h(\mu)>4$.
\end{proof}

\section{Proof of Theorem \ref{MT}}\label{s10}

Assume that $V\otimes W$ is irreducible and let $\nu\in\Par_2(n)$ be such that $V\otimes W\cong E^\nu_{(\pm)}$, that is $V\otimes W$ is isomorphic to either $E^\nu$ or $E^\nu_\pm$. By Theorem \ref{T280520} we have that $\nu\not\in\Parinv_2(n)$, so $V\otimes W\cong E^\nu$ and then $V\otimes W\cong V^\si\otimes W^\si$ for $\si\in\s_n\setminus\A_n$. For $n\leq 4$ the module $W$ is 1-dimensional. For $n=5$ the theorem can be checked using decomposition matrices and character tables \cite{GAP} (in this case $W$ has dimension 2). So we will assume $n\geq 6$ from now on.

{\bf Case 1:} assume that $n\equiv 2\Md{4}$ and $V\cong E^\la$. Then 
\[(D^\la\otimes D^\be_n)\da_{\A_n}\cong E^\la\otimes E^{\be_n}\cong V\otimes W\cong E^\nu\cong D^\nu\da_{\A_n}.\]
In particular $D^\la\otimes D^{\be_n}$ is irreducible and it does not split when restricted to $\A_n$. This leads to a contradiction due to \cite[Theorem 1.1]{m1}, since by assumption neither $V$ nor $W$ (and thus also neither $D^\la$ nor $D^{\be}$) is 1-dimensional.

{\bf Case 2:} assume that $n\equiv 2\Md{4}$ and $V\cong E^\la_\pm$. From the above remarks $E^\la_\pm\otimes E^\be_n\cong E^\nu\cong E^\la_\mp\otimes E^{\be_n}$. Since $\psi^{\be_n}_\al\not=0$ for any $\al\in\Paro(n)$ by Lemma \ref{cbs}, it follows that $\psi^{\la,+}=\psi^\nu/\psi^{\be_n}=\psi^{\la,-}$, which also leads to a contradiction.

{\bf Case 3:} assume that $n\not\equiv 2\Md{4}$ and $V\cong E^\la_\pm$. By Theorem \ref{T080620} we may assume that $V$ is neither basic spin nor second basic spin. Further by Theorem \ref{T050620_3} we may assume that $\la\not=\dbl(n-a,a)$ with $3\leq a\leq n/2-3$ odd if $n\equiv 0\Md{4}$. This gives a contradiction thanks to Theorem \ref{T050620}.

{\bf Case 4:} assume that $n$ is odd and $V\cong E^\la$. Then Theorem \ref{T080620_3} leads to a contradiction.

{\bf Case 5:} assume that $n\equiv 0\Md 4$ and $V\cong E^\la$. From Theorem \ref{T050620_2}, $h(\la)\leq 6$ and $D^\nu$ is a composition factor of some module of the form $S((n-a,a))$ with $a<n/2$ or $S(p(n-a-b,a,b))$ with $a\equiv b\equiv \pm 1\Md{4}$.

{\bf Case 5.1:} assume that $D^\nu$ is a composition factor of $S((n-a,a))$ with $a<n/2$. Since $\nu\not\in\Parinv_2(n)$, by Lemma \ref{L280520_2} we then have that $\nu=(n-b,b)$ with $n-2b\geq 4$ or $\nu=\dbl(n-b,b)$ with $n-b,b\equiv 2\Md{4}$. In either case $\nu$ is JS and if the normal node of $\nu$ has residue $i$ then $\tilde e_i\nu\not\in\Parinv_2(n-1)$. So by Lemma \ref{Lemma39}
\[E^\la\da_{\A_{n-1}}\otimes E^{\be_{n-1}}_\pm\cong (E^\la\otimes E^{\be_n}_\pm)\da_{\A_{n-1}}\cong E^\nu\da_{\A_{n-1}}\cong E^{\tilde e_i\nu}\]
is irreducible, leading to a contradiction due to the above $n$ odd case (note that neither $E^{\be_{n-1}}_\pm\cong E^{\be_n}_\pm\da_{\A_{n-1}}$ nor $E^\la\da_{\A_{n-1}}$ is 1-dimensional).

{\bf Case 5.2:} assume that $D^\nu$ is a composition factor of $S(p(n-a-b,a,b))$ with $a\equiv b\equiv \pm 1\Md{4}$. From Lemma \ref{L080620_2}, $S(\la)$ is in the same block of $D^\nu$ (since from $E^\la\otimes E^{\be_n}_\pm$ it follows that $[D^\la\otimes D^{\be_n}]=2[D^\nu]$). In view of Lemma \ref{L080620}, it can then be checked that, for some $x\geq 0$, $\la$ has $x+2$ parts $\la_j\equiv a\Md{4}$ and $x$ parts $\la_j\equiv -a\Md{4}$. It follows (since $n\equiv 0\Md{4}$) that $\la$ has an odd number of parts $\la_j\equiv 2\Md{4}$. Since $\la$ has both even and odd parts, it is not JS. From \cite[Theorem 13.2]{m4} we may assume that $\la$ has two or three normal nodes and from Theorem \ref{T050620_2} that $h(\la)\leq 6$.

Since $h(\la)\geq 3$ we have that $\tilde e_i\la\not=(n-1)$ if $\eps_i(\la)>0$. So, by the $n$ odd case, $E^{\tilde e_i\la}_{(\pm)}\otimes E^{\be_{n-1}}_\pm$ is not irreducible. If $\la$ has 3 normal nodes then $E^\la\otimes E^{\be_n}_\pm$ is not irreducible by Theorem \ref{T100720}. If $\la$ has two normal nodes of different residues then we are in case (iv) of Lemma \ref{L090720} and so $E^\la\otimes E^{\be_n}_\pm$ is not irreducible by Theorem \ref{T240720}. If $\la$ has two normal nodes of the same residue then we are in cases (ii) or (iii) of Lemma \ref{L090720} and $E^\la\otimes E^{\be_n}_\pm$ is not irreducible by Theorems \ref{T121120_2} to \ref{T071220_6}.

\section{Proof of Theorem \ref{T150620}}\label{s11}

The characteristic 0 case is covered by \cite[Theorem 5.6]{bk3} and implicitly by \cite{z1}.

If $p$ is not 2, 3 or 5 this is \cite[Main Theorem]{bk2}, up to using \cite[Lemma 8.1]{m2} to be able to remove the assumption $h(\la)\not\equiv 0\Md{p}$ and \cite[l. 17-23 on p. 28]{m4} to compare the two different partitions labeling $V\otimes W$ (the two different labelings in characteristic $p\neq 2$ are due to the non-trivial isomorphism $D^\nu\da_{\A_n}\cong D^\nu\otimes \sgn\da_{\A_n}$ for $\nu\in\Par_p(n)\setminus\Parinv_p(n)$). If $p=5$ the result is \cite[Theorem 1.1]{m2} (again using \cite[l. 17-23 on p. 28]{m4}), while if $p=3$ or if $p=2$ and neither of $V$ and $W$ is basic spin then the result is \cite[Theorem 1.1]{m4}. If $p=2$ and $V$ or $W$ is basic spin the result holds by Theorem \ref{MT}.

\section*{Acknowledgments}

The author thanks Matt Fayers and the referee for comments on the paper.

\end{document}